\newtheorem{theorem}{Theorem}
\newtheorem{proposition}[theorem]{Proposition}
\newtheorem{lemma}[theorem]{Lemma}
\newtheorem{definition}[theorem]{Definition}
\newtheorem{construction}[theorem]{Construction}
\newtheorem*{theorem*}{Theorem}
\theoremstyle{definition}
\newcommand{\BV}{\operatorname{BV}} 
\newcommand{\supp}{\operatorname{supp}}
\newcommand{\dist}{\operatorname{dist}} 
\newcommand{\Id}{\operatorname{Id}}
\newcommand{\sdist}{\operatorname{sdist}}
\newcommand{\Om}{\Omega} 
\newcommand{\no}{n_{\partial\Omega}} 
\newcommand{\tano}{\tau_{\partial\Omega}} 
\newcommand{\R}{\mathbb{R}}
\newcommand{\h}{\operatorname{H}}
\newcommand{\haus}{\mathcal{H}^{d-1}}
\newcommand{\pt}{\partial_t}
\newcommand{\fract}{\frac{1}{2}}
\newcommand{\fracmu}{\frac{\mu}{2}}
\newcommand{\into}{\int_{\Om}}
\newcommand{\intpo}{\int_{\partial \Om}}
\newcommand{\C}{C}
\newcommand{\Ss}{\mathbb{S}^{d-1}}
\newcommand{\dH}{\,\mathrm{d}\mathcal{H}^{d-1}}
\newcommand{\dS}{\,\mathrm{d}S}
\newcommand{\dx}{\,\mathrm{d}x}
\newcommand{\dy}{\,\mathrm{d}y}
\newcommand{\dV}{\,\mathrm{d}V_t(x,s)}
\newcommand{\dVm}{\,\mathrm{d}|V_t|_{\Ss}}
\newcommand{\dchi}{\,\mathrm{d}|\nabla \chi|}
\newcommand{\dchiu}{\,\mathrm{d}|\nabla \chi_u|}
\newcommand{\dchiut}{\,\mathrm{d}|\nabla \chi_u(\cdot, T)|}
\newcommand{\dchiuo}{\,\mathrm{d}|\nabla \chi_u(\cdot, 0)|}
\newcommand{\dt}{\,\mathrm{d}t}
\newcommand{\cupdot}{\mathbin{\mathaccent\cdot\cup}}
\definecolor{Yellow}{rgb}{0.95,0.9,0.0} 
\definecolor{Red}{rgb}{0.8,0.1,0.1}
\definecolor{Green}{rgb}{0.1,0.65,0.2}
\definecolor{Blue}{rgb}{0.1,0.1,0.8}
\definecolor{Purple}{rgb}{0.7,0.1,0.7}
\definecolor{Grey}{rgb}{0.6,0.6,0.6}
\definecolor{YELLOW}{rgb}{0.95,0.9,0.0} 
\definecolor{RED}{rgb}{0.8,0.1,0.1}
\definecolor{GREEN}{rgb}{0.25,0.65,0.1}
\definecolor{BLUE}{rgb}{0.1,0.1,0.8}
\definecolor{PURPLE}{rgb}{0.7,0.1,0.7}
\begin{document}

\title[Stability for two-phase fluid flow with $90^\circ$ contact angle]
{Weak-strong uniqueness for the Navier--Stokes equation for two fluids with 
ninety degree contact angle and same viscosities}
    %

\author{Sebastian Hensel}
\address[Sebastian Hensel]{Institute of Science and Technology Austria (IST Austria), Am~Campus~1, 
3400 Klosterneuburg, Austria}
\email{sebastian.hensel@ist.ac.at}
\curraddr{Hausdorff Center for Mathematics, Universit{\"a}t Bonn, Endenicher Allee 62, 53115 Bonn, Germany
(\texttt{sebastian.hensel@hcm.uni-bonn.de})}
    
\author{Alice Marveggio}
\address[Alice Marveggio]{Institute of Science and Technology Austria (IST Austria), Am Campus 1, 3400 Klosterneuburg, Austria} 
\email{alice.marveggio@ist.ac.at}

	
\begin{abstract}
We consider the flow of two viscous and incompressible fluids
within a bounded domain modeled by means of a two-phase Navier--Stokes system. 
The two fluids are assumed to be immiscible, meaning that they are separated 
by an interface. With respect to the motion of the interface, we consider
pure transport by the fluid flow. Along the boundary of the domain,
a complete slip boundary condition for the fluid velocities and
a constant ninety degree contact angle condition for the interface are assumed.

The main result of the present work establishes in~2D a weak-strong uniqueness
result in terms of a varifold solution concept \`a la Abels (Interfaces Free Bound.~9, 2007).
The proof is based on a relative entropy argument. More precisely, we extend
ideas from the recent work of Fischer and the first author (Arch.\ Ration.\ Mech.\ Anal.~236, 2020)
to incorporate the contact angle condition. To focus on the effects of the 
necessarily singular geometry, we work for simplicity in the regime of
same viscosities for the two fluids.

\medskip
\noindent \textbf{Keywords:} Two-phase fluid flow, varifold solutions,
ninety degree contact angle, weak-strong uniqueness, relative entropy method

\medskip
\noindent \textbf{Mathematical Subject Classification}:
35A02,
35R35,
76B45,
35Q30,
53E10
\end{abstract}	
	
\maketitle 

\section{Introduction}	
\subsection{Context}
The question of uniqueness or non-uniqueness of weak solution
concepts in the context of classical fluid mechanics models has
seen a series of intriguing breakthroughs throughout the last three decades.
In case of the Euler equations, the journey started with the
seminal works of Scheffer~\cite{Scheffer} and Shnirelman~\cite{Shnirelman}
providing the construction of compactly supported nonzero
weak solutions. The first example of an energy dissipating weak solution
to the Euler equations is again due to Shnirelman~\cite{Shnirelman2000}.
Later, De~Lellis and Sz\'ekelyhidi Jr.\ not only strengthened these results 
in their groundbreaking works (see, e.g., \cite{DeLellisSzekelyhidi} and~\cite{Lellis2012}),  
but in retrospect even more importantly introduced a novel perspective on the problem:
their proofs are based on a nontrivial transfer of convex integration techniques from
typically geometric PDEs to the framework of the Euler equations. Indeed,
their ideas eventually culminated in the resolution of Onsager's conjecture
by Isett~\cite{Isett}; see also the work of 
Buckmaster, De~Lellis, Sz\'ekelyhidi Jr.\ and Vicol~\cite{Buckmaster2018a}.

By now, these developments also generated spectacular results for
the Navier--Stokes equations. For instance, Buckmaster and Vicol~\cite{Buckmaster2019}
as well as Buckmaster, Colombo and Vicol~\cite{Buckmaster2021}
establish that mild solutions in the energy class are non-unique. The constructed
solutions are not Leray--Hopf solutions, i.e., it is not proven that they
are subject to the energy dissipation inequality. However,
Albritton, Bru\'{e} and Colombo~\cite{Albritton2021} 
even show in a very recent preprint that one can construct an external force
such that there exists a finite time horizon so that
one may construct at least two distinct Leray--Hopf solutions
for the associated forced full-space Navier--Stokes equations in~3D
(both starting from zero initial data). 

Hence, in terms of uniqueness of weak solutions the best one
can expect in general is essentially a weak-strong uniqueness principle.
Roughly speaking, this refers to uniqueness of weak solutions
within a class of sufficiently regular solutions. In the
context of the incompressible Navier--Stokes equations, such results are
classical and can be traced back to the works of Leray~\cite{Leray1934},
Prodi~\cite{Prodi1959} and Serrin~\cite{Serrin2}. In the 
case of the compressible Navier--Stokes equations, we mention the
works of Germain~\cite{Germain}, Feireisl, Jin and Novotn{\'y}~\cite{FeireislJinNovotny}, 
as well as Feireisl and Novotn{\'y}~\cite{FeireislNovotny}.
The usual strategy to establish these results is based on
a by now widely used method which infers weak-strong uniqueness
from a quantitative stability estimate for a suitable distance measure
between two solutions, the so-called relative entropy (or relative energy).
We refer to the survey article by Wiedemann~\cite{Wiedemann2018}
for an overview on the relative entropy method in the context of
mathematical fluid mechanics.

In the present work, we are concerned with the question of weak-strong
uniqueness with respect to a two-phase free boundary
fluid problem within a physical domain~$\Omega\subset\R^d,\,d\in\{2,3\}$.
More precisely, we study this question in terms of varifold solutions \`a la Abels~\cite{Abels} 
for the specific evolution problem of the flow of
two incompressible Navier--Stokes fluids separated by a sharp interface.
Along the boundary of the domain,
a complete slip boundary condition for the fluid velocities 
as well as a constant ninety degree contact angle condition for the interface are assumed.
For the precise PDE~formulation of the model, we refer to Subsection~\ref{subsec:PDEformulation}.
For a discussion of the weak solution concept and its precise definition,
we instead refer to Subsection~\ref{subsec:varifoldSol} and Definition~\ref{Def_varsol}, 
respectively. Even when neglecting the fluid mechanics, uniqueness of weak solutions in form
of a weak-strong uniqueness principle is in general the best one can expect
also for interface evolution problems. In this context, this is due to the formation
of singularities and topology changes; see already, for instance,
the work of Brakke~\cite{Brakke} for mean curvature flow of networks
of interfaces in~$\R^2$ or the work of Angenent, Ilmanen and Chopp~\cite{AngenentIlmanenChopp}
for mean curvature flow of surfaces in~$\R^3$.

When restricting to the full-space setting $\Omega=\R^d$,
Fischer and the first author~\cite{Fischer2020} recently established
a weak-strong uniqueness principle up to the first topology change
for the corresponding two-phase free boundary fluid problem considered in this work.
Their approach relies on a suitable extension of the relative
entropy method to get control on the difference in the underlying 
geometries of two solutions; cf.\ Subsection~\ref{subsec:weakStrongUniqueness}
for a discussion in this direction. Their ideas were later generalized
by Fischer, Laux, Simon and the first author~\cite{Fischer2020a} to derive
a weak-strong uniqueness principle for BV~solutions of Laux and Otto~\cite{Laux2016} 
to mean curvature flow of networks of interfaces in~$\R^2$,
or even for canonical multiphase Brakke flows of Stuvard and Tonegawa~\cite{Stuvard2021}
(cf.\ also~\cite{Hensel2021e}).

The main goal of the present work is to extend parts of the
analysis of~\cite{Fischer2020} to include the nontrivial boundary effects.
More precisely, in our main result we establish a weak-strong uniqueness principle
in the framework of varifold solutions to the two-phase free boundary fluid problem 
specified in Subsection~\ref{subsec:PDEformulation} below. We refer to Theorem~\ref{theo:mainResult} 
for the precise mathematical formulation of our result.
In the spirit of~\cite{Fischer2020a}, we also derive a conditional
weak-strong uniqueness result in the three-dimensional setting;
cf.\ Proposition~\ref{prop:conditionalWeakStrong} for the precise statement.
%

\subsection{Strong PDE formulation of the two-phase fluid model}
\label{subsec:PDEformulation}
We start with a description of the underlying evolving geometry.
Denoting by~\(\Om\) a bounded domain in \( \R^d\) 
with smooth and orientable boundary~\(\partial \Om\), $d\in\{2,3\} $,
each of the two fluids is contained within a time-evolving domain $\Om^+(t) \subset \Omega$ 
resp.\ $\Om^-(t) \subset \Omega$, $t \in  [0, T)$.
The interface separating both fluids is given as the common boundary between the two fluid domains.
Denoting it at time $t \in [0,T)$ by~$I(t) \subset\overline{\Omega}$, 
we then have a disjoint decomposition of~$\overline{\Omega}$
in form of $\overline{\Om} = \Om^+(t) \cup \Om^-(t) \cup (I(t) \cap \Omega) \cup \partial \Om$
for every $t \in  [0, T)$. We write~$\no$ to refer to the inner pointing unit normal vector field 
of~\(\partial \Om\), as well as~$n_I(\cdot,t)$ to denote the unit normal vector field along~$I(t)$
pointing towards $\Omega^+(t)$, $t \in [0,T)$.

With respect to internal boundary conditions along the separating interface, 
first, a no-slip boundary condition is assumed.
This in fact allows to
represent the two fluid velocity fields by a single continuous vector field~\(v\). 
We also consider a single scalar field~\(p\) as the pressure, 
which in contrast may jump across the interface. 
Second, along the interface the internal forces of the fluids 
have to match a surface tension force. Denoting by~$\chi(\cdot,t)$ the characteristic function 
associated with the domain~$\Omega^+(t)$, $t \in [0,T)$, and
defining $\mu(\chi) := \mu^+\chi + \mu^{-}(1{-}\chi)$ with~\(\mu^+\) and~\(\mu^-\) 
being the viscosities of the two fluids, the stress tensor 
$\mathbb{T} := \mu(\chi)  (\nabla v {+} \nabla v^\mathsf{T}  ) - p \Id$
is required to satisfy
\begin{align} \label{kineticjump}
[[\mathbb{T} n_{I}  ]](\cdot,t) = \sigma \h_I(\cdot,t)
\quad\text{along } I(t)
\end{align}
for all $t \in [0,T)$, where moreover $[[ \cdot ]] $ denotes the jump in normal 
direction, $\sigma > 0$ is the fixed surface tension 
coefficient of the interface, and \(\h_I(\cdot,t)\) represents the mean curvature 
vector field along the interface \(I(t)\), $t \in [0,T)$.

With respect to boundary conditions along~$\partial\Omega$,
we assume in terms of the two fluids a complete slip boundary conditions. In terms of
the evolving geometry, a ninety degree contact angle condition at the contact set 
of the fluid-fluid interface with the boundary of the domain is imposed.
Mathematically, this amounts to
\begin{align}
v(\cdot,t) \cdot \no  &= 0  \label{bcv} 
&& \text{along } \partial\Omega,
\\ \label{bcT}
\big(\no \cdot \mu(\chi) (\nabla v + \nabla v ^\mathsf{T} )(\cdot,t)B\big) &= 0
&& \text{along } \partial\Omega
\end{align}
for all $t \in [0,T)$ and all tangential vector fields~$B$ along~$\partial\Omega$,
as well as
\begin{align} \label{90contactangle}
n_{I} (\cdot,t)	\cdot \no &= 0 
&& \text{along } I(t) \cap \partial\Omega
\end{align}
for all $t \in [0,T)$. These boundary conditions not only prescribe that the fluid cannot exit from the domain 
and that it can move only tangentially to its boundary, but they also exclude any external 
contribution to the viscous stress and any friction effect with the boundary.
Observe also that the ninety degree contact angle condition is consistent with the complete slip boundary conditions~\eqref{bcv} and~\eqref{bcT}, in the sense that~\eqref{90contactangle} together with~\eqref{bcv} implies~\eqref{bcT}. 
Furthermore, the ninety degree contact angle may be imposed only as an initial condition: 
for later times it can be deduced using~\eqref{bcv} and~\eqref{bcT} and a Gronwall-type argument.
For details, see the remark after Definition~\ref{Def_strongsol}.

Now, defining $\rho(\chi) := \rho^+\chi + \rho^{-}(1{-}\chi)$ with~\(\rho^+\) and~\(\rho^-\) 
representing the densities of the two fluids, the fluid motion is given 
by the incompressible Navier--Stokes equation, which by~\eqref{kineticjump}
and~\eqref{bcT} can be formulated as  
\begin{align}
\pt\big(\rho (\chi) v\big) + \nabla\cdot\big(\rho (\chi) v \otimes v\big) 
&= - \nabla p + \nabla \cdot \big(\mu(\chi) (\nabla v + \nabla v^\mathsf{T} )\big) 
+ \sigma \h_I |\nabla \chi|\llcorner\Omega, \label{NS} \\
\nabla \cdot v &= 0, \label{inc}
\end{align}
where \(|\nabla \chi|(\cdot,t)\llcorner\Omega\) represents the surface measure 
\(\haus\llcorner(I(t) \cap \Omega)\), $t \in [0,T)$.
Second, the interface is assumed to be transported along the fluid flow.
In other words, the associated normal velocity of the interface is given
by the normal component of the fluid velocity~$v$. Thanks to~\eqref{bcv},
\eqref{90contactangle} and~\eqref{inc}, this is formally equivalent to
\begin{align}
\pt \chi + (v \cdot \nabla ) \chi &= 0. \label{traneq}
\end{align}

Finally, from a modeling perspective, the total energy of the 
PDE system~\eqref{NS}--\eqref{traneq} is given by the sum of kinetic and surface tension energies
\begin{equation} \label{enintro}
E[\chi, v] := \into \frac12 \rho(\chi) |v|^2 \dx + \sigma \into 1 \dchi + \sigma^+ \intpo \chi \dS + \sigma^- \intpo (1-\chi) \dS,
\end{equation}
where $\sigma^+$ and $ \sigma^-$ are the surface tension coefficients 
of $\partial \Om \cap \overline{\Om^+_t}$ and $\partial \Om \cap \overline{\Om^-_t}$, 
respectively. Note that the ninety 
degree contact angle condition~\eqref{90contactangle} corresponds to $\sigma^-= \sigma^+$. 
Indeed, a general constant contact angle~$\alpha \in (0,\pi)$ is prescribed by Young's equation which
in our notation reads as follows 
 \[
 \sigma \cos\alpha = {\sigma^+ - \sigma^-}.
 \]
In particular, by subtracting the constant $\int_{\partial\Omega} 1 \,\mathrm{d}S$
from~\eqref{enintro} we see that the relevant part of the total energy
does not contain a surface energy contribution along~$\partial\Omega$
in our special case of a constant ninety degree contact angle.
By formal computations, one finally observes that this energy
satisfies an energy dissipation inequality 
\begin{equation} \label{eq:dissipIntro}
E[\chi, v](T') + \int_{0}^{T'} \into \frac{\mu(\chi)}{2}  
|\nabla v + \nabla v ^T|^2 \dx \,\mathrm{d}t \leq E[\chi, v](0),
\quad T' \in  [0, {T}).
\end{equation}  

\subsection{Varifold solutions for two-phase fluid flow with $90^\circ$ contact angle}
\label{subsec:varifoldSol}
In terms of weak solution theories for the 
evolution problem~\eqref{NS}--\eqref{traneq},
the energy dissipation inequality suggests to consider velocity
fields in the space $L^\infty(0,T;L^2(\Omega;\R^d)) \cap L^2(0,T;H^1(\Omega;\R^d))$,
and the evolving geometry may be modeled based on a
time-evolving set of finite perimeter so that the associated characteristic
function~$\chi$ is an element of $L^\infty(0,T;BV(\Omega;\{0,1\}))$.

However, a well-known problem arises when considering limit points
of a sequence of pairs~$(\chi_k,v_k)_{k\in\mathbb{N}}$ representing solutions originating
from an approximation scheme for~\eqref{NS}--\eqref{traneq}. 
Ignoring the time variable for the sake of the discussion, the main point is that a uniform bound 
of the form $\sup_{k \in \mathbb{N}} \|\chi_k\|_{BV(\Omega)} < \infty$ in
general does not suffice to pass to the limit (not even subsequentially)
in the surface tension force $\sigma\h_{I_k} |\nabla \chi_k|\llcorner\Omega$. 
Recalling that we work in a setting with a ninety degree angle condition, 
this term is represented in distributional form by
\begin{align}
\label{eq:meanCurvatureDistributional}
\int_{\Omega} \h_{I_k} \cdot B \,\mathrm{d}|\nabla \chi_k|
= - \int_{\Omega} (\mathrm{Id} - n_k \otimes n_k) : \nabla B \,\mathrm{d}|\nabla \chi_k|
\end{align}
for all smooth vector fields~$B$ which are tangential along~$\partial\Omega$,
where $n_k = \frac{\nabla\chi_k}{|\nabla\chi_k|}$ denotes the
measure-theoretic interface unit normal. 
One may pass to the limit on the right hand side of the previous display 
provided $|\nabla\chi_k|(\Omega) \to |\nabla\chi|(\Omega)$.
However, for standard approximation schemes 
there is in general no reason why this should be true. For instance, 
hidden boundaries may be generated within~$\Omega$ in the limit. Furthermore, but now specific
to the setting of a bounded domain, nontrivial parts of the approximating 
interfaces may converge towards the boundary~$\partial\Omega$.
  
The upshot is that one has to pass to an even weaker
representation of the surface tension force than~\eqref{eq:meanCurvatureDistributional}. 
A popular workaround 
is based on the concept of (oriented) varifolds.
In the setting of the present work and in view of the preceding discussion,
this in fact amounts to consider the space of finite Radon measures
on the product space $\overline{\Omega}{\times}{\Ss}$. Indeed, introducing
the varifold lift $V_k := |\nabla\chi_k|\llcorner\Omega \otimes (\delta_{n_k(x)})_{x\in\Omega}$
one may equivalently express the right hand side of~\eqref{eq:meanCurvatureDistributional}
in terms of the functional $B \mapsto -\int_{\overline{\Omega}{\times}{\Ss}}
(\mathrm{Id}{-}s\otimes s):\nabla B \,\mathrm{d}V_k(x,s)$ which is now
stable with respect to weak$^*$ convergence in the space of
finite Radon measures on $\overline{\Omega}{\times}{\Ss}$. 
Note also that by the choice of working
in a varifold setting, one expects $\sigma\int_{\overline{\Omega}} 1 \,\mathrm{d}|V|_{\Ss}$
instead of $\sigma \into 1 \dchi$ as the interfacial energy contribution in~\eqref{enintro},
where the finite Radon measure $|V|_{\Ss}$ denotes the mass of the varifold~$V$.

Motivated by the previous discussion, we give a
full formulation of a varifold solution concept to two-phase fluid
flow with surface tension and constant ninety degree contact angle
in Definition~\ref{Def_varsol} below. This definition is nothing else
but the suitable analogue of the definition by Abels~\cite{Abels}, 
who provides for the full-space setting a global-in-time existence theory for 
such varifold solutions with respect to rather general initial data.
Unfortunately, in the bounded domain case
with non-zero interfacial surface tension, to the best of
our knowledge a global-in-time existence result for varifold solutions is missing. 
In particular, such a result is not contained in the work of Abels~\cite{Abels}.
For this reason, we include in this work at least a sketch of an existence proof. 
To this end, one may follow on one side the higher-level structure of the argument 
given by Abels~\cite{Abels} for the full-space setting. On the other side,
additional arguments are of course necessary due to the specified boundary
conditions for the geometry and the fluids, respectively. These additional arguments
are outlined in Appendix~\ref{appendix}.

\subsection{Weak-strong uniqueness for varifold solutions of two-phase fluid flow}
\label{subsec:weakStrongUniqueness}
In case 
the two fluids occupy the full space~\(\R^d\), $d\in \{2,3\} $, a weak-strong uniqueness result for
Abels'~\cite{Abels} varifold solutions of the system~\eqref{NS}--\eqref{traneq}
was recently established by Fischer and the first author~\cite{Fischer2020}.
Given sufficiently regular 
initial data, it is shown that on the time horizon 
of existence of the associated unique strong solution, 
any varifold solution in the sense of Abels~\cite{Abels} 
starting from the same initial data has to coincide with this strong solution.

This result is achieved by extending a by now several decades old idea
in the analysis of classical PDE models from continuum mechanics to a 
previously not covered class of problems: a relative entropy method
for surface tension driven interface evolution.
The gist of this method can be described as follows. 
Based on a dissipated energy functional, 
one first tries to build an error functional --- the relative entropy ---
which penalizes the difference between two solutions 
in a sufficiently strong sense. A minimum requirement is to 
ensure that the error functional vanishes if and only if the two
solutions coincide. In a second step, one proceeds by computing the time
evolution of this error functional. In a third step, one
tries to identify all the terms appearing in this computation as
contributions which either are controlled by the error functional
itself or otherwise may be absorbed into a residual quadratic term
represented essentially by the difference of the dissipation energies.
One finally concludes by an application of Gronwall's lemma.

The novelty of the work~\cite{Fischer2020} consists of an implementation
of this strategy for the full-space version of the energy functional~\eqref{enintro}.
More precisely, the relative entropy as it was originally constructed 
in the full-space setting in~\cite{Fischer2020} essentially consists of
two contributions. The first aims for a penalization of the difference
of the underlying geometries of the two solutions. This in fact is
performed at the level of the interfaces by introducing a tilt-excess
type error functional with respect to the two associated unit normal
vector fields. To this end, the construction of a suitable extension of the unit normal vector field 
of the interface of the strong solution in the vicinity of its space-time 
trajectory is required. Furthermore, the length of this vector field
is required to decrease quadratically fast as one moves away from the 
interface of the strong solution. The merit of this
is that 
one also obtains a measure of the interface error in terms of the distance between them.

Due to the inclusion of contact point dynamics in form of a constant ninety degree contact angle,
some additional ingredients are needed for the present work. We refer to Subsection~\ref{subsec:relEntropy} 
below for a detailed and mathematical account on the geometric part of the relative entropy functional. 
There are however two notable additional difficulties
in comparison to~\cite{Fischer2020} which are worth
emphasizing already at this point. Both are related to
the required extension~$\xi$ of the unit normal vector
field associated with the evolving interface of the
strong solution. The first is concerned with the correct
boundary condition for the extension~$\xi$ along~$\partial\Omega$.
Since along the contact set the interface intersects the boundary of the 
domain orthogonally, it is natural to enforce~$\xi$ to be tangential
along~$\partial\Omega$. This indeed turns out to be the right
condition as it allows by an integration by parts to rewrite the interfacial part of
the relative entropy as the sum of interfacial energy of the weak solution and
a linear functional with respect to the characteristic function~$\chi$
of the weak solution. This is crucial to even attempt computing the time evolution.

The second difference concerns the actual construction of the
extension~$\xi$. In contrast to~\cite{Fischer2020}, where only a
finite number of sufficiently regular closed curves ($d=2$) or closed surfaces ($d=3$)
are allowed at the level of the strong solution, this results in
a nontrivial and subtle task in the context of the present work
due to the necessarily singular geometry in contact angle problems.
The main difficulty roughly speaking is to provide a construction which
on one side respects the required boundary condition and on the
other side is regular enough to support the computations and estimates
in the Gronwall-type argument. For a complete list of the
required conditions for the extension~$\xi$, we refer to 
Definition~\ref{def:calibrationTwoPhaseFluidFlow} below.

We finally turn to a brief discussion of the second contribution in the
total relative entropy functional from~\cite{Fischer2020}. In principle,
this term on first sight should be nothing else than 
the relative entropy analogue to the kinetic part of the energy of the system,
thus controlling the squared $L^2$-distance between the fluid velocities
of the two solutions. However, as recognized in~\cite{Fischer2020}
a major problem arises for the two-phase fluid problem in the regime of 
different viscosities $\mu^+\neq\mu^-$: without performing a very careful
(and in its implementation highly technical) perturbation of this naive ansatz 
for the fluid velocity error, a Gronwall-type argument will not be realizable;
cf.\ for more details the discussion in~\cite[Subsection~3.4]{Fischer2020}.
Since the main focus of the present work lies on the inclusion
of the ninety degree contact angle condition, we do not delve into these issues
and simply assume for the rest of this work that the viscosities of the two fluids coincide:
$\mu := \mu^+ = \mu^-$.
We emphasize, however, that at least for the construction of the
extension~$\xi$ and the verification of its properties 
we in fact do not rely on this assumption.

\section{Main results}	
	
\subsection{Weak-strong uniqueness and stability of evolutions}
The main result of this work reads as follows.
	
\begin{theorem}
\label{theo:mainResult}
Let $d=2$, and let $\Omega\subset\R^2$ be a bounded domain with orientable and smooth boundary.
Let $(\chi_u,u,V)$ be a varifold solution to the incompressible
Navier--Stokes equation for two fluids in the sense of Definition~\ref{Def_varsol}
on a time interval $[0,T_w)$. Let $(\chi_v,v)$ be a strong solution
to the incompressible Navier--Stokes equation for two fluids in the sense
of Definition~\ref{Def_strongsol} on a time interval $[0,T_s)$ where $T_s\leq T_w$.

Then, for every $T\in (0,T_s)$ there exists a constant $C=C(\chi_v,v,T)>0$
such that the relative entropy functional~\eqref{relent} and the bulk error
functional~\eqref{eq:bulkErrorFunctional} satisfy stability estimates of the form 
\begin{align}
\label{eq:stabilityRelEntropy}
E[\chi_u,u,V|\chi_v,v](t) &\leq Ce^{Ct}E[\chi_u,u,V|\chi_v,v](0), 
\\
\label{eq:stabilityBulkError}
E_{\mathrm{vol}}[\chi_u|\chi_v](t) &\leq Ce^{Ct}\big(
E[\chi_u,u,V|\chi_v,v](0) + E_{\mathrm{vol}}[\chi_u|\chi_v](0)\big)
\end{align}
for almost every $t\in [0,T]$. 

In particular, in case the initial data for the varifold solution and strong solution
coincide, it follows that
\begin{align}
\label{eq:BVisStrong}
\chi_u(\cdot,t) &= \chi_v(\cdot,t),\quad
u(\cdot,t) = v(\cdot,t)
&&\text{a.e.\ in } \Omega \text{ for a.e.\ } t\in [0,T_s),
\\
\label{eq:varifoldIsBV}
V_t &= (|\nabla\chi_u(\cdot,t)|\llcorner\Omega) \otimes
\big(\delta_{\frac{\nabla\chi_u(\cdot,t)}{|\nabla\chi_u(\cdot,t)|}(x)}\big)_{x\in\Omega}
&&\text{for a.e.\ } t\in [0,T_s).
\end{align}
\end{theorem}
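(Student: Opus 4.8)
The plan is to follow the relative entropy (relative energy) strategy in the spirit of Fischer and the first author, now adapted to the bounded domain with a ninety degree contact angle. The overall architecture has three stages: (i) set up the relative entropy functional $E[\chi_u,u,V|\chi_v,v]$ and the bulk error $E_{\mathrm{vol}}[\chi_u|\chi_v]$, exploiting the calibration-type extension $\xi$ of the strong solution's interface normal (Definition~\ref{def:calibrationTwoPhaseFluidFlow}) together with a transported velocity field; (ii) compute the time derivative $\ddt E[\chi_u,u,V|\chi_v,v](t)$ by combining the weak formulations of the two solutions (the varifold solution's energy dissipation inequality and momentum/transport equations, tested against objects built from the strong solution, and the strong solution's equations tested against objects built from the weak solution); (iii) estimate every resulting term so that it is either controlled by $E[\chi_u,u,V|\chi_v,v]$ itself or absorbed into the nonnegative viscous dissipation difference, then close with Gronwall. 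The bulk error estimate~\eqref{eq:stabilityBulkError} is then obtained by a separate, simpler transport-type computation: $E_{\mathrm{vol}}[\chi_u|\chi_v]$ measures a weighted $L^1$-distance between $\chi_u$ and $\chi_v$ via a signed-distance-type weight $\vartheta$, whose time derivative produces terms controlled either by $E_{\mathrm{vol}}$ or by $\sqrt{E[\chi_u,u,V|\chi_v,v]}$, again closing by Gronwall once~\eqref{eq:stabilityRelEntropy} is in hand.

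More concretely, for~\eqref{eq:stabilityRelEntropy} I would first use the tangentiality of $\xi$ along $\partial\Omega$ to integrate by parts and rewrite the interfacial part of the relative entropy as the interfacial energy of the weak solution (in its varifold form $\sigma\int 1\,\mathrm{d}|V_t|_{\Ss}$) minus a linear functional of $\chi_u$ involving $\divv\xi$; this is exactly the step the introduction flags as crucial for even being able to differentiate in time, and it is where the $90^\circ$ condition $\sigma^+=\sigma^-$ removes the boundary surface-energy contribution. Then the time derivative splits into: a geometric part controlled by the tilt-excess $\int |n_u - \xi|^2$ and by how far the weak interface/varifold sits from the strong interface (using that $|\xi|$ decays quadratically in the distance), a kinetic part giving the viscous dissipation difference plus convective and surface-tension cross terms, and genuinely new boundary contributions supported on $\partial\Omega$ arising from the contact set dynamics. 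The boundary terms must be shown to either vanish (using $v\cdot\no=0$, $u\cdot\no=0$, the slip condition~\eqref{bcT}, and $\xi$ tangential) or be of the right quadratic order; controlling these is, I expect, the main obstacle specific to this paper, since in~\cite{Fischer2020} there is no boundary and hence no contact set to track.

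The other place I anticipate real difficulty is the regularity and consistency of the extension $\xi$ near the contact set: the geometry is necessarily singular there, so terms like $\pt\xi + (v\cdot\nabla)\xi$ and $\nabla\xi$ that appear throughout the Gronwall estimate must be shown to be controlled using only the properties collected in Definition~\ref{def:calibrationTwoPhaseFluidFlow}, without tacitly assuming more smoothness than is actually available. Assuming these calibration properties are established (as the excerpt says they are, independently of the equal-viscosity hypothesis), the estimates in stage (iii) become a careful but routine bookkeeping of Cauchy--Schwarz and Young's inequality applications, using $L^\infty_t L^2_x \cap L^2_t H^1_x$ bounds for $u$ and the smoothness of $(\chi_v,v)$; the equal-viscosity assumption $\mu^+=\mu^-$ is what lets the kinetic relative entropy be the naive squared $L^2$-distance of velocities, avoiding the technical velocity perturbation needed in the general case. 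Finally, the conclusions~\eqref{eq:BVisStrong} and~\eqref{eq:varifoldIsBV} follow from~\eqref{eq:stabilityRelEntropy}--\eqref{eq:stabilityBulkError} with coinciding initial data: the right-hand sides vanish, so $E\equiv 0$ and $E_{\mathrm{vol}}\equiv 0$ for a.e.\ $t$, which forces $u=v$ a.e., $n_u=\xi$ $|\nabla\chi_u|$-a.e.\ (hence $\chi_u=\chi_v$, using also the bulk error to pin down which side of $I_v(t)$ the phase lies on), and collapses the varifold $V_t$ onto the one induced by $\nabla\chi_u(\cdot,t)$.
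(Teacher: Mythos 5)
Your overall architecture is the paper's: Theorem~\ref{theo:mainResult} is obtained there by combining the conditional weak--strong uniqueness principle (Proposition~\ref{prop:conditionalWeakStrong}, whose proof rests on the relative entropy inequality of Proposition~\ref{proprelent90} and the bulk-error identity of Lemma~\ref{lem:stabilityBulkError}) with the existence results for the boundary adapted extension~$\xi$ and the transported weight~$\vartheta$ (Proposition~\ref{prop:existenceCalibration} and Lemma~\ref{lem:existenceTransportedWeight}); your stages (i)--(iii), the integration by parts using $\xi\cdot\no=0$, the treatment of the boundary varifold mass, and the deduction of \eqref{eq:BVisStrong}--\eqref{eq:varifoldIsBV} from vanishing $E$ and $E_{\mathrm{vol}}$ all match this. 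Treating the existence of $\xi$ and $\vartheta$ as an input is also consistent with the paper's modular structure.

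However, there is a genuine gap in your stage (iii) and in your bulk-error argument: the cross term $\int_0^{T'}\int_\Omega|\chi_u-\chi_v|\,|u-v|\dx\dt$, which appears both in $R_{dt}$ and $R_{adv}$ (since $\rho^+\neq\rho^-$ is allowed, so $\rho(\chi_u)-\rho(\chi_v)$ does not vanish) and in the term $\int_0^{T'}\int_\Omega(\chi_u-\chi_v)\big((u-v)\cdot\nabla\big)\vartheta\dx\dt$ of \eqref{eq:stabilityBulkErrorFunctional}, is \emph{not} controlled by ``routine Cauchy--Schwarz and Young'' nor is it bounded by $\sqrt{E}$ times a controlled quantity: the unweighted $L^1$-distance $\|\chi_u-\chi_v\|_{L^1}$ is controlled by neither $E[\chi_u,u,V|\chi_v,v]$ nor $E_{\mathrm{vol}}[\chi_u|\chi_v]$, because the weight $\vartheta$ degenerates linearly at $I_v$ \emph{and} at $\partial\Omega$ (cf.\ \eqref{eq:lowerBoundTransportedWeight}), so the indicator mismatch near the interface and near the boundary is only seen with a vanishing weight. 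Closing the Gronwall argument requires the coercivity/slicing estimate \eqref{eq:coercivitySlicing} of Lemma~\ref{lem:coercivitySlicing} (the analogue of \cite[Lemma~20]{Fischer2020}): one measures the local interface error height along the normal directions of $I_v$ and of $\partial\Omega$, bounds its square by the $\vartheta$-weighted mismatch, and interpolates against $\|\nabla(u-v)\|_{L^2}$, yielding $\frac{C}{\delta}\int_0^{T'}(E+E_{\mathrm{vol}})\dt+\delta\int_0^{T'}\int_\Omega|\nabla(u-v)|^2\dx\dt$ with the last term absorbed into the viscous dissipation (via Korn). In the present setting this lemma itself needs a new decomposition of $\Omega$ into tubular neighborhoods of the connected interfaces, of $\partial\Omega$, and of balls around the contact points (where $\dist(\cdot,\partial\Omega)$, $\dist(\cdot,I_v)$ and $\dist(\cdot,\mathcal{T}_c)$ must be compared), which is a substantive step specific to the $90^\circ$ contact angle geometry and is absent from your proposal; without it neither \eqref{eq:stabilityRelEntropy} nor \eqref{eq:stabilityBulkError} closes.
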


The proof of Theorem~\ref{theo:mainResult} may be divided into two steps
as explained in the following two subsections.
	
\subsection{Quantitative stability by a relative entropy approach}
\label{subsec:relEntropy}
Following the general strategy of~\cite{Fischer2020}, 
our weak-strong uniqueness result essentially relies on two
ingredients: \textit{i)} the construction of a suitable extension~$\xi$ of the unit normal
vector field of the interface of a strong solution, and \textit{ii)} 
based on this extension, the introduction of a suitably defined error functional penalizing the
interface error between a varifold and a strong solution in a sufficiently strong sense.
In comparison to~\cite{Fischer2020}, the extension of the unit normal
has to be carefully constructed in the sense that the vector field~$\xi$
is required to be tangent to the domain boundary~$\partial\Omega$
(which is the natural boundary condition in case of a $90^\circ$ contact angle). 
Due to the singular nature of the geometry at the contact set, this
is a nontrivial task. The precise conditions on the extension~$\xi$ 
are summarized as follows.

\begin{definition}[Boundary adapted extension of the interface unit normal]
\label{def:calibrationTwoPhaseFluidFlow}
Let $d\in\{2,3\}$, and let $\Omega\subset\R^d$ be a bounded domain with orientable and smooth boundary.
Let $T\in (0,\infty)$ be a finite time horizon. Let $(\chi_v,v)$ be a strong solution to the incompressible
Navier--Stokes equation for two fluids in the sense of Definition~\ref{Def_strongsol}
on the time interval $[0,T]$.

In this setting, we call a vector field
$\xi\colon \overline{\Omega} \times [0,T] \to \R^d$
a \emph{boundary adapted extension of~$n_{I_v}$ for two-phase fluid flow~$(\chi_v,v)$
with~$90^\circ$ contact angle} if the following conditions are satisfied:

\begin{itemize}[leftmargin=0.4cm]
\begin{subequations}
\item In terms of regularity, it holds
			$\xi \in \big(C^0_tC^2_x\cap C^1_tC^0_x \big)\big(\overline{\Omega{\times}[0,T]}
			\setminus(I_v\cap(\partial\Omega{\times}[0,T]))\big)$.
\item The vector field~$\xi$ extends the unit normal vector field~$n_{I_v}$ (pointing inside~$\Omega^+_v$) 
			of the interface~$I_v$ subject to the conditions 
			\begin{align}
			\label{eq:coercivityByModulationOfLength}
			|\xi| &\leq \max\big\{0,1{-}C\dist^2(\cdot,I_v)\big\}
			&&\text{in } \Omega\times[0,T],
			\\
			\label{eq:boundaryValueXi}
			\xi \cdot \no &= 0
			&&\text{on } \partial\Omega\times[0,T],
			\\
			\label{eq:divConstraintXi}
			\nabla\cdot \xi &= - H_{I_v} 
			&&\text{on } I_v,
			\end{align}
			for some $C>0$. Here, $H_{I_v}$ denotes the scalar mean curvature of the interface~$I_v$
			(oriented with respect to the normal~$n_{I_v}$).
\item The fluid velocity approximately transports the vector field~\(\xi\) in form of
			\begin{align}
			\label{eq:timeEvolutionXi}
			\pt \xi + (v \cdot \nabla)\xi + (\Id - \xi \otimes \xi)(\nabla v)^\mathsf{T}\xi 
			&= O(\dist(\cdot, I_v) \wedge 1)
			&&\text{in } \Omega\times[0,T],
			\\
			\label{eq:timeEvolutionLengthXi}
			\pt |\xi|^2 + (v \cdot \nabla )|\xi|^2 
			&= O(\dist^2(\cdot, I_v) \wedge 1)
			&&\text{in } \Omega\times[0,T].
			\end{align}
\end{subequations}
\end{itemize}
\end{definition}	

Let us comment on the motivation behind this definition.
Given a vector field~$\xi$
with respect to a fixed strong solution~$(\chi_v,v)$
as in the previous definition, 
we may introduce for any varifold solution~$(\chi_u,u,V)$ 
and for all~$t \in [0,T]$ a functional
\begin{align}
\label{def:interfaceRelEntropy}
E[\chi_u,V|\chi_v](t) := \sigma\int_{\overline{\Omega}} 1 \,\mathrm{d} |V_t|_{\Ss}
- \sigma\int_{I_u(t)} \frac{\nabla\chi_u(\cdot,t)}{|\nabla\chi_u(\cdot,t)|} 
\cdot  \xi (\cdot,t)  \dH,
\end{align}
where~\(I_u(t):=\mathrm{supp}|\nabla\chi_u(\cdot,t)|\cap\Omega\) 
denotes the interface associated to the varifold solution. 
The functional~$E[\chi_u, V|\chi_v]$ is a measure for the
interfacial error between the two solutions for the following reasons.
First of all, it is a consequence of the definition of a varifold
solution, cf.\ the compatibility condition~\eqref{compcond},
that for almost every $t \in [0,T]$ it holds $|\nabla \chi_u(\cdot,t)|\llcorner\Omega
\leq |V_t|_{\Ss}\llcorner\Omega$ in the sense of measures on~$\Omega$. In particular,
it follows that the functional~$E[\chi_u, V|\chi_v]$ controls its ``$\BV$-analogue''
\begin{align}
\label{eq:BVrelEntropy}
0 \leq E[\chi_u|\chi_v](t) := \sigma\int_{I_u(t)} 
1 - \frac{\nabla\chi_u(\cdot,t)}{|\nabla\chi_u(\cdot,t)|} 
\cdot  \xi (\cdot,t)  \dH \leq E[\chi_u,V|\chi_v](t).
\end{align}
Introducing the Radon--Nikod\'{y}m derivative $\theta_t := 
\frac{\mathrm{d} |\nabla \chi_u (\cdot, t)|\llcorner\Omega}
{ \mathrm{d} |V_t|_{\Ss}\llcorner\Omega}$, 
one can be even more precise in the sense that
\begin{align}
\label{eq:VarifoldrelEntropy}
E[\chi_u,V|\chi_v](t) = \sigma \int_{\partial\Omega} 1 \,\mathrm{d} |V_t|_{\Ss}
+ \sigma \int_{\Omega} 1- \theta_t \,\mathrm{d} |V_t|_{\Ss} + E[\chi_u|\chi_v](t).
\end{align}
This representation of the functional~$E[\chi_u,V|\chi_v]$
as well as the length constraint~\eqref{eq:coercivityByModulationOfLength}
for the vector field~$\xi$ lead to the following two observations.
First, the functional~$E[\chi_u,V|\chi_v]$ controls the mass
of hidden boundaries and higher multiplicity interfaces
(i.e., where $\theta_t \in [0,1)$) in the sense of 
\begin{align}
\label{eq:multiplicityControl}
\sigma \int_{\partial\Omega} 1 \,\mathrm{d} |V_t|_{\Ss} + 
\sigma \int_{\Omega} 1- \theta_t \,\mathrm{d} |V_t|_{\Ss} \leq E[\chi_u,V|\chi_v](t).
\end{align}
Second, because of~\eqref{eq:coercivityByModulationOfLength}
it measures the interface error in the sense that
\begin{align}
\label{eq:BVtiltExcessControl}
\sigma \int_{I_{u}(t)} \frac{1}{2}\bigg| 
\frac{\nabla\chi_u(\cdot,t)}{|\nabla\chi_u(\cdot,t)|} - \xi \bigg|^2 \dH
&\leq E[\chi_u|\chi_v](t),
\\ \label{eq:BVdistControl}
\sigma \int_{I_{u}(t)} \min\big\{1,C\dist^2(\cdot,I_v(t))\big\} \dH
&\leq E[\chi_u|\chi_v](t).
\end{align}
On a different note, the compatibility condition~\eqref{compcond}
satisfied by a varifold solution together with the
boundary condition~\eqref{eq:boundaryValueXi} also allows to represent the
error functional~$E[\chi_u,V|\chi_v]$ in the alternative form
\begin{align}
\label{eq:auxRepVarifoldRelEntropy}
E[\chi_u,V|\chi_v](t) = \sigma\int_{\overline{\Omega}{\times}\Ss} 
1 - s\cdot\xi \,\mathrm{d} V_t,
\end{align}
which then entails as a consequence of~\eqref{eq:coercivityByModulationOfLength}
\begin{align}
\label{eq:tiltExcessVarifoldControl}
\sigma\int_{\overline{\Omega}{\times}\Ss} 
\frac{1}{2} |s - \xi|^2 \,\mathrm{d} V_t
&\leq E[\chi_u,V|\chi_v](t),
\\
\sigma\int_{\overline{\Omega}} \min\big\{1,C\dist^2(\cdot,I_v(t))\big\} \,\mathrm{d} |V_t|_{\Ss}
&\leq E[\chi_u,V|\chi_v](t).
\end{align}
Finally, let us quickly discuss what is implied by $E[\chi_u,V|\chi_v](t) = 0$.
We claim that~\eqref{eq:varifoldIsBV} and $I_u(t) \subset I_v(t)$
up to $\mathcal{H}^{d-1}$-negligible sets have to
be satisfied. Indeed, the latter follows directly from~\eqref{eq:BVrelEntropy}
and~\eqref{eq:BVdistControl}. The former is best seen when 
representing the varifold~$V_t\llcorner(\Omega{\times}{\Ss})$ by its 
disintegration~$(|V_t|_{\Ss}\llcorner\Omega)\otimes(\nu_{x,t})_{x\in\Omega}$.
Then, it follows on one side from~\eqref{eq:multiplicityControl} that
$|V_t|_{\Ss}\llcorner\partial\Omega=0$ and
$|V_t|_{\Ss}\llcorner\Omega = |\nabla\chi_u(\cdot,t)|\llcorner\Omega$ as measures 
on~$\partial\Omega$ and~$\Omega$, respectively, and then
on the other side that~$\nu_{x,t}=\delta_{\frac{\nabla\chi_u(\cdot,t)}{|\nabla\chi_u(\cdot,t)|}(x)}$ 
for $|\nabla\chi_u(\cdot,t)|$-a.e.\ $x\in\Omega$ due to
\begin{align*}
&\int_{\Omega} \int_{\Ss} \frac{1}{2} \bigg|s - \frac{\nabla\chi_u(\cdot,t)}{|\nabla\chi_u(\cdot,t)|}(x)\bigg|^2
\,\mathrm{d}\nu_{x,t}(s) \,\mathrm{d}(|\nabla\chi_u(\cdot,t)|\llcorner\Omega)(x)
\\&
= \int_{\Omega} \int_{\Ss} 1 - s \cdot \frac{\nabla\chi_u(\cdot,t)}{|\nabla\chi_u(\cdot,t)|}(x)
\,\mathrm{d}\nu_{x,t}(s) \,\mathrm{d}(|\nabla\chi_u(\cdot,t)|\llcorner\Omega)(x)
= 0,
\end{align*}
where for the last equality we simply plugged in the compatibility condition~\eqref{compcond}
and again $|V_t|_{\Ss}\llcorner\partial\Omega=0$ as well as
$|V_t|_{\Ss}\llcorner\Omega = |\nabla\chi_u(\cdot,t)|\llcorner\Omega$.

Apart from these coercivity conditions, it is
equally important to be able to estimate the time evolution
of the error functional~$E[\chi_u,V|\chi_v]$. The main
observation in this regard is that the functional
can be rewritten as a perturbation of the interface 
energy~$E[\chi_u,V](t) := \sigma\int_{\overline{\Omega}} 1 \,\mathrm{d} |V_t|_{\Ss}$
which is linear in the dependence on the indicator function~$\chi_u$. 
Indeed, thanks to the boundary condition~\eqref{eq:boundaryValueXi}
for the extension~$\xi$, a simple integration by parts readily reveals
\begin{align}
\label{eq:repRelEntropyTimeEvol}
E[\chi_u,V|\chi_v](t) = E[\chi_u,V](t) 
+ \sigma \int_{\Omega} \chi_u(\cdot,t)(\nabla\cdot\xi)(\cdot,t) \dx.
\end{align}
This structure is in fact the very reason why we call~$E[\chi_u,V|\chi_v]$ a relative entropy.
Computing the time evolution of~$E[\chi_u|\chi_v]$ then
only requires to exploit the dissipation of energy and
using $\nabla\cdot\xi$ as a test function in the evolution
equation of the phase indicator~$\chi_u$ of the varifold solution.
The latter in turn requires knowledge on the time evolution
of~$\xi$ itself, which is encoded in terms of the fluid velocity~$v$
through the equations~\eqref{eq:timeEvolutionXi} and~\eqref{eq:timeEvolutionLengthXi}.
The condition~\eqref{eq:divConstraintXi} is natural in view of the interpretation of~$\xi$ 
as an extension of the unit normal~$n_{I_v}$ away from the
interface~$I_v$.

Even though all of this may already be quite promising,
there is one small caveat: obviously, one can not deduce 
from~$E[\chi_u,V|\chi_v]=0$ that $\chi_u=\chi_v$ (e.g., $\chi_u$
representing an empty phase is consistent with having vanishing relative entropy). This
lack of coercivity in the regime of vanishing interface measure
motivates to introduce a second
error functional which directly
controls the deviation of~$\chi_u$ from~$\chi_v$.
The main input to such a functional is captured in the following definition.

\begin{definition}[Transported weight]
\label{def:transportedWeight}
Let $d\in\{2,3\}$, and let $\Omega\subset\R^d$ be a bounded domain with orientable and smooth boundary.
Let $T\in (0,\infty)$ be a finite time horizon, consider a solenoidal vector field 
$v\in L^2([0,T];H^1(\Omega;\R^d))$ with $(v\cdot n_{\partial\Omega})|_{\partial\Omega}=0$, 
and let $(\Omega_v^+(t))_{t\in [0,T]}$ be a family of
sets of finite perimeter in $\Omega$. Denote by $I_v(t)$, $t\in [0,T]$,
the reduced boundary of $\Omega_v^+(t)$ in $\Omega$. Writing $\chi_v(\cdot,t)$
for the indicator function associated to $\Omega_v^+(t)$, assume that
$\partial_t\chi_v = -\nabla\cdot (\chi_vv)$ in a weak sense.

In this setting, we call a map 
$
\vartheta\colon\overline{\Omega}\times [0,T] \to [-1,1]
$
a \emph{transported weight with respect to $(\chi_v,v)$}
if the following conditions are satisfied:
\begin{itemize}[leftmargin=0.7cm]
\item (Regularity) It holds $\vartheta\in W^{1,\infty}_{x,t}(\Omega\times [0,T])$.
\item (Coercivity) Throughout the essential interior of $\Omega_v^+$ (relative to $\Omega$) it holds
			$\vartheta < 0$, throughout the essential exterior of $\Omega_v^+$ (relative to $\Omega$)
			it holds $\vartheta > 0$, and along~$I_v \cup \partial\Omega$ we have $\vartheta = 0$.
			There also exists $C>0$ such that
			\begin{align}
			\label{eq:lowerBoundTransportedWeight}
			\dist(\cdot,\partial\Omega) \wedge \dist(\cdot,I_v) \wedge 1
			\leq C|\vartheta| \quad\text{in } \Omega\times [0,T].
			\end{align}
\item (Transport equation) There exists~$C>0$ such that
			\begin{align}
			\label{eq:advDerivTransportedWeight}
			|\partial_t \vartheta + (v\cdot\nabla)\vartheta| \leq C|\vartheta|
			 \quad\text{in } \Omega\times [0,T].
			\end{align}
\end{itemize}
\end{definition}

The merit of the previous two definitions is now the following result.
It reduces the proof of Theorem~\ref{theo:mainResult}
to the existence of a boundary adapted extension~$\xi$ of the interface unit normal 
and a transported weight~$\vartheta$ with respect to a strong solution~$(\chi_v,v)$, respectively.

\begin{proposition}[Conditional weak-strong uniqueness principle]
\label{prop:conditionalWeakStrong}
Let $d\in\{2,3\}$, and let $\Omega\subset\R^d$ be a bounded domain with orientable and smooth boundary.
Let $(\chi_u,u,V)$ be a varifold solution to the incompressible
Navier--Stokes equation for two fluids in the sense of Definition~\ref{Def_varsol}
on a time interval $[0,T]$. Consider in addition a strong solution $(\chi_v,v)$ to the incompressible
Navier--Stokes equation for two fluids in the sense of Definition~\ref{Def_strongsol}
on a time interval $[0,T]$.

Assume there exists a boundary adapted extension~$\xi$ of the unit normal~$n_{I_v}$
as well as a transported weight~$\vartheta$
with respect to~$(\chi_v,v)$ in the sense of Definition~\ref{def:calibrationTwoPhaseFluidFlow}
and Definition~\ref{def:transportedWeight}, respectively.
Then the stability estimates~\eqref{eq:stabilityRelEntropy} and~\eqref{eq:stabilityBulkError}
for the relative entropy functional~\eqref{relent} and the bulk error
functional~\eqref{eq:bulkErrorFunctional} are satisfied, respectively.
Moreover, if the initial data of the varifold solution and the strong solution
coincide, we may conclude that
\begin{align*}
\chi_u(\cdot,t) &= \chi_v(\cdot,t),\quad
u(\cdot,t) = v(\cdot,t)
&&\text{a.e.\ in } \Omega \text{ for a.e.\ } t\in [0,T],
\\
V_t &= (|\nabla\chi_u(\cdot,t)|\llcorner\Omega) \otimes
\big(\delta_{\frac{\nabla\chi_u(\cdot,t)}{|\nabla\chi_u(\cdot,t)|}(x)}\big)_{x\in\Omega}
&&\text{for a.e.\ } t\in [0,T].
\end{align*}
\end{proposition}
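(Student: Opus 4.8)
The plan is to run the relative entropy method in the form pioneered in~\cite{Fischer2020}, but with the geometric inputs abstracted into the two hypotheses: the boundary adapted extension~$\xi$ from Definition~\ref{def:calibrationTwoPhaseFluidFlow} and the transported weight~$\vartheta$ from Definition~\ref{def:transportedWeight}. Accordingly I would structure the proof in two halves: first the stability estimate~\eqref{eq:stabilityRelEntropy} for the full relative entropy $E[\chi_u,u,V|\chi_v,v]$, then the estimate~\eqref{eq:stabilityBulkError} for the bulk error $E_{\mathrm{vol}}[\chi_u|\chi_v]$; the qualitative conclusion about coincidence of solutions follows by Gronwall from these two together with the coercivity properties~\eqref{eq:BVrelEntropy}, \eqref{eq:BVdistControl}, \eqref{eq:multiplicityControl}, \eqref{eq:tiltExcessVarifoldControl} and~\eqref{eq:lowerBoundTransportedWeight}, plus the disintegration argument already spelled out in the excerpt for the case $E[\chi_u,V|\chi_v]=0$.

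For the first half, I would write the relative entropy as the sum of a velocity contribution $\frac12\int_\Omega \rho(\chi_u)|u-v|^2\dx$ (here using $\mu^+=\mu^-$, so no viscous perturbation of $v$ is needed) and the interfacial contribution $E[\chi_u,V|\chi_v]$, which by~\eqref{eq:repRelEntropyTimeEvol} equals $E[\chi_u,V](t) + \sigma\int_\Omega \chi_u(\nabla\cdot\xi)\dx$. The computation then proceeds in the now-standard way: take the dissipation inequality~\eqref{eq:dissipIntro} for the varifold solution; subtract the weak formulation of the momentum equation for the strong solution tested against $v$, and the weak formulation for the varifold solution tested against $v$; use the transport equation for $\chi_u$ tested against $\nabla\cdot\xi$; and add the analogous strong-solution identities. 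One collects all resulting terms and sorts them into (a) terms manifestly controlled by $E[\chi_u,u,V|\chi_v,v]$, (b) terms that combine into a negative quadratic dissipation term of the form $-c\int_\Omega \mu(\chi_u)|\nabla(u-v)|^2\dx$ which can absorb the remaining quadratic errors, and (c) genuinely geometric error terms. It is precisely in handling (c) that the structural properties of $\xi$ are invoked: the transport-type identities~\eqref{eq:timeEvolutionXi} and~\eqref{eq:timeEvolutionLengthXi} control the advective derivative of $\xi$ and of $|\xi|^2$ modulo terms of order $\dist(\cdot,I_v)$ resp.\ $\dist^2(\cdot,I_v)$, which are in turn absorbed by the tilt-excess and distance coercivity~\eqref{eq:BVtiltExcessControl}--\eqref{eq:BVdistControl} (and their varifold analogues), the divergence constraint~\eqref{eq:divConstraintXi} identifies the leading-order surface tension contribution, and the boundary condition~\eqref{eq:boundaryValueXi} ensures that every integration by parts producing a boundary term either vanishes (using $v\cdot n_{\partial\Omega}=0$, $u\cdot n_{\partial\Omega}=0$) or is consistent with the $90^\circ$ angle. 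The multiplicity/hidden-boundary defect $\sigma\int_{\partial\Omega}1\,\mathrm{d}|V_t|_{\Ss} + \sigma\int_\Omega (1-\theta_t)\,\mathrm{d}|V_t|_{\Ss}$ is nonnegative and is simply carried along, being controlled by $E$ via~\eqref{eq:multiplicityControl}. After all cancellations one arrives at $\ddt E[\chi_u,u,V|\chi_v,v] \leq C(\chi_v,v,T)\,E[\chi_u,u,V|\chi_v,v]$ in the integrated/a.e.\ sense, and Gronwall gives~\eqref{eq:stabilityRelEntropy}.

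For the second half I would introduce the bulk error functional $E_{\mathrm{vol}}[\chi_u|\chi_v](t) := \int_\Omega |\chi_u(\cdot,t)-\chi_v(\cdot,t)|\,|\vartheta(\cdot,t)|\dx$ (this should be what~\eqref{eq:bulkErrorFunctional} refers to), and compute its time evolution by testing the difference of the two transport equations $\partial_t\chi_u = -\nabla\cdot(\chi_u u)$ and $\partial_t\chi_v = -\nabla\cdot(\chi_v v)$ against $\vartheta\,\sign(\chi_u-\chi_v)$ in a regularized fashion. The advective derivative of $\vartheta$ contributes a term bounded by $C E_{\mathrm{vol}}$ via~\eqref{eq:advDerivTransportedWeight}; the transport term involving the velocity difference $u-v$ contributes, after Cauchy--Schwarz, a term controlled by $\|u-v\|_{L^2}$ times a factor measuring how far $\chi_u$ differs from $\chi_v$ near $I_v$, and this is where~\eqref{eq:lowerBoundTransportedWeight} is essential: it guarantees that $|\vartheta|$ does not degenerate away from $I_v\cup\partial\Omega$, so that $\chi_u\neq\chi_v$ on a set of positive measure forces $E_{\mathrm{vol}}$ to be positive, while near $I_v$ the distance coercivity~\eqref{eq:BVdistControl} lets one bound the contribution by $E[\chi_u,u,V|\chi_v,v]$; the boundary term along $\partial\Omega$ vanishes because $\vartheta=0$ there and both velocities are tangential. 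One then obtains a differential inequality $\ddt E_{\mathrm{vol}}[\chi_u|\chi_v] \leq C\big(E_{\mathrm{vol}}[\chi_u|\chi_v] + E[\chi_u,u,V|\chi_v,v]\big)$, and combining with the already-established bound on $E[\chi_u,u,V|\chi_v,v]$ and Gronwall yields~\eqref{eq:stabilityBulkError}.

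The main obstacle, as in~\cite{Fischer2020}, is not any single estimate but the bookkeeping in step (c): making sure that after subtracting the two momentum weak formulations and the two transport equations, every term that is not obviously of lower order is matched — e.g.\ the terms coupling $\nabla v$, $\xi$, and the interface normal $n_{I_u}$ of the weak solution must reorganize, using~\eqref{eq:timeEvolutionXi}, into the tilt-excess quantity $|n_{I_u}-\xi|^2$ plus a genuine error of order $\dist(\cdot,I_v)$; and the surface tension term $\sigma\int \h_{I_u}\cdot(\cdot)\,\mathrm{d}|\nabla\chi_u|$ appearing on the weak side, which by~\eqref{eq:meanCurvatureDistributional} is only available in the weak $(\Id - n\otimes n):\nabla B$ form, has to be paired against a test vector field built from $\xi$ and $v$ in exactly the way that the divergence constraint~\eqref{eq:divConstraintXi} and the regularity $\xi\in C^0_tC^2_x$ make legitimate. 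A secondary technical point is that the varifold $V$ enters several of these terms (the compatibility condition~\eqref{compcond} must be used to pass between $|\nabla\chi_u|$ and $|V_t|_{\Ss}$, picking up the nonnegative defect $1-\theta_t$ each time), so one must consistently track that defect rather than discard it. Everything else — the Gronwall applications, the regularization needed to test $\chi_u$'s transport equation against only Lipschitz/$C^2$ functions, the final disintegration argument — is routine given the hypotheses.
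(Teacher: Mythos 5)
Your overall architecture (relative entropy inequality for $E$, evolution identity for $E_{\mathrm{vol}}$ via the transported weight, Gronwall, then the disintegration argument for the varifold) matches the paper, and your second half correctly reproduces the content of Lemma~\ref{lem:stabilityBulkError}. But there is a genuine gap in the first half: you claim that after the cancellations one arrives at a self-closing inequality $\ddt E[\chi_u,u,V|\chi_v,v]\leq C\,E[\chi_u,u,V|\chi_v,v]$. This is not attainable from the hypotheses, because several of the remainder terms in the relative entropy inequality are of the form $\int_\Omega|\chi_u-\chi_v|\,|u-v|\dx$: the terms $R_{dt}$ and $R_{adv}$ (present whenever $\rho^+\neq\rho^-$, which is allowed here --- only the viscosities are assumed equal) and, even for equal densities, the surface-tension remainder $\sigma\int_\Omega(\chi_u-\chi_v)\big((u-v)\cdot\nabla\big)(\nabla\cdot\xi)\dx$. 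Neither the relative entropy nor the dissipation controls this product directly; the tilt-excess and distance coercivity \eqref{eq:BVtiltExcessControl}--\eqref{eq:BVdistControl} are surface integrals over $I_u$ and do not bound a bulk integral of $|\chi_u-\chi_v|$ near $I_v$, while away from $I_v\cup\partial\Omega$ only the weight $\vartheta$ (i.e.\ $E_{\mathrm{vol}}$) sees $|\chi_u-\chi_v|$. The paper's proof therefore hinges on a separate coercivity estimate (Lemma~\ref{lem:coercivitySlicing}, inequality \eqref{eq:coercivitySlicing}): $\int_0^{T'}\int_\Omega|\chi_u-\chi_v||u-v|\dx\dt\leq \frac{C}{\delta}\int_0^{T'}(E+E_{\mathrm{vol}})\dt+\delta\int_0^{T'}\int_\Omega|\nabla u-\nabla v|^2\dx\dt$, proved by a slicing argument in tubular neighborhoods of the interface, of $\partial\Omega$, and of the contact points, where the local interface error height is controlled through \eqref{eq:lowerBoundTransportedWeight}; this is then combined with Korn's inequality and an absorption into the dissipation. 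Your proposal never supplies this mechanism (in the second half you gesture at it via ``Cauchy--Schwarz plus \eqref{eq:BVdistControl}'', but bounding $|\nabla\vartheta|$ by a constant just reproduces the same uncontrolled quantity).

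A structural consequence you should also fix: because of this coupling, the Gronwall argument cannot be run for $E$ alone; it must be run for the pair, i.e.\ one first derives $E(T')+E_{\mathrm{vol}}(T')+c\int_0^{T'}\int_\Omega|\nabla(u-v)+\nabla(u-v)^\mathsf{T}|^2\dx\dt\leq E(0)+E_{\mathrm{vol}}(0)+C\int_0^{T'}(E+E_{\mathrm{vol}})\dt$ and applies Gronwall to the sum, which is exactly how the paper obtains \eqref{eq:stabilityRelEntropy} and \eqref{eq:stabilityBulkError} and then the coincidence statements. With Lemma~\ref{lem:coercivitySlicing} (or an equivalent substitute) added and the two differential inequalities closed jointly rather than separately, your argument becomes the paper's proof; without it, the first half does not close.
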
	

A proof of this conditional weak-strong uniqueness principle
is presented in Subsection~\ref{subsec:proofCondWeakStrongUniqueness} below.
We emphasize again that it is valid for~$d\in\{2,3\}$. The key ingredient
to the stability estimate~\eqref{eq:stabilityRelEntropy} is the following
relative entropy inequality. We refer to Subsection~\ref{subsec:timeEvolRelEntropy} for a proof.

\begin{proposition}[Relative entropy inequality in case of a $90^\circ$ contact angle] 
\label{proprelent90}
Let $d\in\{2,3\}$, and let $\Omega\subset\R^d$ be a smooth and bounded domain.
Let $(\chi_u,u,V)$ be a varifold solution to the incompressible
Navier--Stokes equation for two fluids in the sense of Definition~\ref{Def_varsol}
on a time interval $[0,T]$. In particular, let~\(\theta\) be the density 
$\theta_t := \frac{\mathrm{d} |\nabla \chi_u (\cdot, t)|\llcorner\Omega}
{ \mathrm{d} |V_t|_{\Ss}\llcorner\Omega}$ 
as defined in~\eqref{thetat}. Furthermore, let $(\chi_v,v)$ be a strong solution 
in the sense of Definition~\ref{Def_strongsol} on the same time interval $[0,T]$, and assume there exists a 
boundary adapted extension~$\xi$ of the interface unit normal~$n_{I_v}$ 
with respect to $(\chi_v,v)$ as in Definition~\ref{def:calibrationTwoPhaseFluidFlow}.
		
Then, the total relative entropy defined by
(recall the definition~\eqref{def:interfaceRelEntropy} of the interface 
contribution $E[\chi_u,V|\chi_v]$)
\begin{align}
\label{relent}
E[\chi_u, u, V|\chi_v, v](t) &:= 
\into \frac{1}{2}\rho(\chi_u(\cdot, t)) |u(\cdot, t)-v(\cdot, t)|^2 \dx  
+ E[\chi_u,V|\chi_v](t)
\end{align} 
satisfies the relative entropy inequality
\begin{align}
		& 	E[\chi_u, u, V|\chi_v, v](T')
		+ \int_{0}^{T'} \into \fracmu |\nabla (u-v) + \nabla (u-v)^\mathsf{T} |^2 \dx \dt  \nonumber \\
		&\leq  	E[\chi_u, u, V|\chi_v, v](0) + R_{dt} +R_{adv} + R_{surTen}, 
		\label{relentineq}
		\end{align} 
		for almost every $T' \in [0,T]$, where we made use of the abbreviations
		(denote by $n_u:=\frac{\nabla\chi_u}{|\nabla\chi_u|}$ the measure-theoretic unit normal)
		\begin{align}
		R_{dt}  = - \int_{0}^{T'} \into (\rho(\chi_v) - \rho(\chi_u)) (u-v) \cdot \pt v  \dx \dt , \nonumber 
		\end{align}
		\begin{align}
		R_{adv} = &- \int_{0}^{T'} \into (\rho(\chi_u) - \rho(\chi_v)) (u-v) \cdot (v \cdot \nabla) v   \dx \dt  \nonumber \\
		&	- \int_{0}^{T'} \into \rho(\chi_u) (u-v) \cdot ((u-v) \cdot \nabla) v   \dx \dt , \nonumber 
		\end{align}
		as well as
		\begin{align*}
		R_{surTen} = 
		& -  \sigma \int_{0}^{T'} \int_{\overline{\Om} \times \Ss} 
		(s - \xi ) \cdot ((s - \xi) \cdot \nabla) v \dV \dt   \\
		&+\sigma \int_{0}^{T'} \into (1 - \theta_t) \xi \cdot  ( \xi \cdot \nabla) v \dVm \dt 	\\
		&+ \sigma \int_{0}^{T'} \int_{\partial\Omega}  \xi \cdot  ( \xi \cdot \nabla) v \dVm \dt \\
		&+ \sigma \int_{0}^{T'} \into (\chi_u -\chi_v) ((u-v)\cdot \nabla ) (\nabla \cdot \xi) \dx \dt  \\
		& - \sigma \int_{0}^{T'} \into (n_u - \xi) \cdot (\pt \xi + (v \cdot \nabla )\xi 
			+ (\mathrm{Id}{-}\xi\otimes\xi)(\nabla v)^\mathsf{T} \xi) \dchiu \dt \\
		& - \sigma \int_0^{T'} \into ((n_u - \xi) \cdot \xi ) (\xi\otimes\xi:\nabla v)\dchiu \dt \\
		& - \sigma \int_{0}^{T'} \into \Big(\pt \frac{1}{2}|\xi|^2 + (v \cdot \nabla )\frac{1}{2}|\xi|^2 \Big) \dchiu \dt \\
		& + \sigma \int_{0}^{T'} \into  (1 - n_u \cdot \xi) (\nabla \cdot v) \dchiu \dt. 
\end{align*}
\end{proposition}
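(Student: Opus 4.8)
The plan is to derive the relative entropy inequality \eqref{relentineq} by differentiating in time the three constituent pieces of $E[\chi_u,u,V|\chi_v,v]$ separately and then reassembling. For the kinetic part $\into \frac12\rho(\chi_u)|u-v|^2$, I would expand the square as $\frac12\rho(\chi_u)|u|^2 - \rho(\chi_u)u\cdot v + \frac12\rho(\chi_u)|v|^2$ and test the weak momentum equation for the varifold solution $(\chi_u,u,V)$ with $v$, the weak momentum equation for the strong solution $(\chi_v,v)$ with both $u$ and $v$, and use the transport equation for $\chi_u$ (with integrand depending on $v$) and for $\chi_v$. Because the two viscosities coincide, $\mu:=\mu^+=\mu^-$, the viscous terms assemble cleanly into the good dissipation term $\into \fracmu|\nabla(u-v)+\nabla(u-v)^\mathsf{T}|^2$ up to a term $-\into \mu(\nabla v+\nabla v^\mathsf{T}):(\nabla(u-v))$ which will later combine with the surface tension contributions; here one must be careful to use the complete-slip boundary conditions \eqref{bcv}–\eqref{bcT} so that all boundary integrals arising from integration by parts of the stress vanish. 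The advective and time-derivative-of-$\rho$ remainders $R_{adv}$ and $R_{dt}$ come out of the discrepancy between testing the two momentum balances; the density-difference structure $\rho(\chi_u)-\rho(\chi_v)$ appears exactly because $\rho(\chi)$ is affine in $\chi$ and the two phase indicators differ.

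Next I would handle the interfacial part. Using the representation \eqref{eq:repRelEntropyTimeEvol}, $E[\chi_u,V|\chi_v](t)=E[\chi_u,V](t)+\sigma\into\chi_u(\nabla\cdot\xi)\dx$, the time derivative splits into the derivative of the varifold energy $E[\chi_u,V](t)=\sigma\int_{\overline\Omega}1\dVm$, which the varifold solution concept controls (Definition~\ref{Def_varsol}) by the first variation/transport identity for $V_t$ tested against $v$, and the derivative of the linear-in-$\chi_u$ term. For the latter, $\ddt\sigma\into\chi_u(\nabla\cdot\xi)=\sigma\into(\pt\chi_u)(\nabla\cdot\xi)+\sigma\into\chi_u\pt(\nabla\cdot\xi)$; in the first summand I use $\nabla\cdot\xi$ as a test function in the weak transport equation $\pt\chi_u=-\nabla\cdot(\chi_u u)$, which produces $\sigma\into\chi_u\,u\cdot\nabla(\nabla\cdot\xi)$, and in the second summand I commute $\pt$ and $\nabla\cdot$ and insert the transport structure of $\xi$ from \eqref{eq:timeEvolutionXi}. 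Rewriting $u=(u-v)+v$ and collecting, the $v$-only contributions from the varifold energy and from the $\chi_u$ term cancel against each other (this is the key algebraic cancellation that makes the functional a \emph{relative} entropy), leaving precisely the terms in $R_{surTen}$ involving $(\chi_u-\chi_v)((u-v)\cdot\nabla)(\nabla\cdot\xi)$ together with the normal-defect terms weighted by $\dchiu$. The terms $(s-\xi)\cdot((s-\xi)\cdot\nabla)v$, $(1-\theta_t)\xi\cdot(\xi\cdot\nabla)v$ and the $\partial\Omega$ analogue arise by rewriting $s$ as $(s-\xi)+\xi$ inside the varifold first-variation expression $\int_{\overline\Omega\times\Ss}(\Id-s\otimes s):\nabla v\,\mathrm dV_t$ and peeling off the part that reconstructs $-\int_{I_u}(\Id-n_u\otimes n_u):\nabla v\dchiu$, which combines with the leftover viscous term from the kinetic computation.

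The remaining $\dchiu$-integrals in $R_{surTen}$ — the ones featuring $\pt\xi+(v\cdot\nabla)\xi+(\Id-\xi\otimes\xi)(\nabla v)^\mathsf{T}\xi$, the term $((n_u-\xi)\cdot\xi)(\xi\otimes\xi:\nabla v)$, the term with $\pt\frac12|\xi|^2+(v\cdot\nabla)\frac12|\xi|^2$, and the $(1-n_u\cdot\xi)(\nabla\cdot v)$ term — are obtained by carefully organizing the $\nabla v$-contractions against $n_u\otimes n_u$ versus $\xi\otimes\xi$ and using $\nabla\cdot v=0$ to rewrite $\nabla\cdot v=0$ wherever convenient (the $(1-n_u\cdot\xi)(\nabla\cdot v)$ term is kept in the generic $d$-statement since Definition~\ref{Def_varsol} of the varifold solution may only give $\nabla\cdot v=0$ in the bulk sense, and this term is harmless once $\nabla\cdot v=0$). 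Throughout, all integrations by parts in space generate boundary terms on $\partial\Omega$; these vanish or are absorbed thanks to $\xi\cdot\no=0$ from \eqref{eq:boundaryValueXi}, $v\cdot\no=0$ from \eqref{bcv}, and the fact that $\xi$ is tangential so $\nabla\cdot\xi$ has no problematic trace. Finally one passes from the equality (valid whenever all objects are smooth enough in time) to the inequality \eqref{relentineq} by invoking the energy dissipation inequality \eqref{eq:dissipIntro} built into the varifold solution concept — it is precisely at this point that ``$\leq$'' rather than ``$=$'' enters, and the good viscous term $\int_0^{T'}\into\fracmu|\nabla(u-v)+\nabla(u-v)^\mathsf{T}|^2$ lands on the left-hand side.

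The main obstacle I anticipate is the bookkeeping of the surface-tension/viscous interaction: one must show that the leftover term $-\into\mu(\nabla v+\nabla v^\mathsf{T}):\nabla(u-v)$ from completing the square in the kinetic part, the first-variation term for the varifold energy against $v$, and the $\xi$-transport terms all conspire to leave exactly the list in $R_{surTen}$ and nothing else — in particular the $v$-linear pieces must cancel completely, which forces a very specific way of splitting every occurrence of $s$, $n_u$, and $u$ into ``deviation from $\xi$/$v$'' plus ``$\xi$/$v$''. A secondary subtlety is justifying the manipulations at the level of regularity available: the varifold solution only has the integrated (in time) weak formulations, so the time derivatives above should really be read in the sense of the integral identities from Definition~\ref{Def_varsol}, and one should either mollify in time or work directly with the difference quotients / test-function-in-time formulation; since $\xi\in C^1_tC^0_x$ and $\vartheta\in W^{1,\infty}_{x,t}$ this causes no genuine difficulty but does require care at the endpoint $T'$, where the use of Lebesgue points of $t\mapsto E[\chi_u,u,V|\chi_v,v](t)$ explains the ``almost every $T'$'' in the statement.
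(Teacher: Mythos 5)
Your overall route is essentially the paper's proof in Subsection~\ref{subsec:timeEvolRelEntropy}: rewrite the interfacial part via \eqref{eq:repRelEntropyTimeEvol}, test the varifold momentum balance \eqref{vbalmom} with $v$ and the strong solution's momentum identity with $u-v$ (after converting the curvature term into a bulk term by means of \eqref{eq:divConstraintXi} and \eqref{eq:boundaryValueXi}), use $\sigma\nabla\cdot\xi$ as a test function in the transport equation \eqref{vtraneq}, split $s=(s-\xi)+\xi$ and $n_u$ against $\xi$ in the surface-tension terms, and let boundary terms vanish through $\xi\cdot\no=0$ and $v\cdot\no=0$. However, one step as you describe it cannot be carried out: you propose to differentiate the varifold energy $E[\chi_u,V](t)=\sigma|V_t|_{\Ss}(\overline{\Omega})$ in time and to control this derivative "by the first variation/transport identity for $V_t$ tested against $v$". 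Definition~\ref{Def_varsol} contains no evolution law for $V_t$ at all --- the varifold enters only through \eqref{vbalmom}, the compatibility condition \eqref{compcond}, and the energy dissipation inequality \eqref{endisineq} --- so there is nothing with which to compute $\ddt\,\sigma|V_t|_{\Ss}(\overline{\Omega})$, and consequently there is no "equality version" of \eqref{relentineq} that is later degraded to an inequality. The actual argument never differentiates the varifold mass: it simply adds the inequality \eqref{endisineq} (not the formal \eqref{eq:dissipIntro} you cite, which concerns the strong formulation) to the tested identities; it is \eqref{endisineq} alone that supplies $\sigma|V_{T'}|_{\Ss}(\overline{\Omega})$ at the later time, the viscous dissipation on the left, and the initial interfacial energy on the right, while the first-variation expression $\int_{\overline{\Om}\times\Ss}(\Id-s\otimes s):\nabla v\,\mathrm{d}V_t$ arises exclusively from testing \eqref{vbalmom} with $v$, which you do have. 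Once you reorganize your second paragraph along these lines, the remainder of your computation matches the paper's Steps 2 and 3.

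Two admissibility points you gloss over are also where hypotheses are genuinely used. First, $v$ is not obviously an admissible test field in \eqref{vbalmom}, whose test functions require $W^{2,p}$ spatial regularity; the paper rescues this by observing that, since $\mu^+=\mu^-$ and the interfacial stress balance holds, $\nabla v$ does not jump across $I_v$ --- this, rather than only the clean assembly of the viscous terms, is the place where equal viscosities matter in this proposition. Second, $u-v$ is merely $L^2_tH^1_x$, so before inserting it into the strong solution's momentum identity one must enlarge the class of test fields by a mollification plus Helmholtz-projection argument (using the regularity estimates for $P_\Omega$ recalled in Appendix~\ref{appendix}); your "mollify in time" remark addresses the temporal but not this spatial issue. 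Likewise, the vanishing of the boundary term $\int_{\partial\Om}\chi_u\,(\no\cdot((\xi\cdot\nabla)v-(v\cdot\nabla)\xi))\,\mathrm{d}S$ in Step 3 is not automatic from $\xi\cdot\no=0$ and $v\cdot\no=0$ alone; it requires the tangential-gradient substitution argument carried out in the paper.
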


The stability estimate~\eqref{eq:stabilityBulkError} for the bulk error functional 
is in turn based on the following auxiliary result; see Subsection~\ref{subsec:timeEvolBulkError}
for a proof.

\begin{lemma}[Time evolution of the bulk error]
\label{lem:stabilityBulkError}
Let $d\in\{2,3\}$, and let $\Omega\subset\R^d$ be a smooth and bounded domain.
Let $T\in (0,\infty)$ be a finite time horizon, and let $(\chi_v,v)$ be as 
in Definition~\ref{def:transportedWeight} of a transported weight. Let 
$(\chi_u,u,V)$ be a varifold solution to the incompressible
Navier--Stokes equation for two fluids in the sense of Definition~\ref{Def_varsol}
on $[0,T]$. Assume there exists a transported weight $\vartheta$ with 
respect to~$(\chi_v,v)$ in the sense of Definition~\ref{def:transportedWeight},
and define the bulk error functional
\begin{align}
\label{eq:bulkErrorFunctional}
E_{\mathrm{vol}}[\chi_u | \chi_v](t) := 
\int_{\Omega} |\chi_u(\cdot,t) - \chi_v(\cdot,t)| |\vartheta(\cdot,t)| \dx.
\end{align}
Then the following identity holds true
for almost every $T'\in [0,T]$
\begin{align}
\label{eq:stabilityBulkErrorFunctional}
E_{\mathrm{vol}}[\chi_u | \chi_v](T')
&= E_{\mathrm{vol}}[\chi_u | \chi_v](0)
+ \int_0^{T'}\int_\Omega (\chi_u-\chi_v)(\partial_t\vartheta + (v\cdot\nabla)\vartheta) \dx\dt
\\&~~~ \nonumber
+ \int_0^{T'}\int_\Omega (\chi_u-\chi_v)\big((u-v)\cdot\nabla\big)\vartheta \dx\dt.
\end{align}
\end{lemma}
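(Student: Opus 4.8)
The plan is to compute the time evolution of $E_{\mathrm{vol}}[\chi_u|\chi_v](t) = \int_\Omega |\chi_u - \chi_v||\vartheta|\dx$ directly, exploiting the structure of the integrand. The key observation is that both $\chi_u$ and $\chi_v$ are indicator functions, so $|\chi_u - \chi_v| = |\chi_u - \chi_v|^2 = \chi_u + \chi_v - 2\chi_u\chi_v = \chi_u(1-\chi_v) + \chi_v(1-\chi_u)$. Moreover, by the coercivity and sign conditions in Definition~\ref{def:transportedWeight}, $\vartheta < 0$ on the essential interior of $\Omega_v^+$ (where $\chi_v = 1$) and $\vartheta > 0$ on the essential exterior (where $\chi_v = 0$), so the sign of $\vartheta$ is constant on each of the two regions contributing to $|\chi_u - \chi_v|$. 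Concretely, $|\chi_u - \chi_v||\vartheta| = \chi_u(1-\chi_v)\vartheta - \chi_v(1-\chi_u)\vartheta = (\chi_u - \chi_v)\vartheta$ almost everywhere, since on $\{\chi_v = 1\}$ one has $|\chi_u - \chi_v||\vartheta| = (1-\chi_u)(-\vartheta) = (\chi_u - 1)\vartheta = (\chi_u - \chi_v)\vartheta$, and on $\{\chi_v = 0\}$ one has $|\chi_u - \chi_v||\vartheta| = \chi_u\vartheta = (\chi_u - \chi_v)\vartheta$. Hence the functional linearizes:
\begin{align}
\label{eq:bulkLinearization}
E_{\mathrm{vol}}[\chi_u|\chi_v](t) = \int_\Omega (\chi_u(\cdot,t) - \chi_v(\cdot,t))\,\vartheta(\cdot,t)\dx.
\end{align}

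With this linearization in hand, the remaining task is to differentiate the right-hand side of~\eqref{eq:bulkLinearization} in time and justify the manipulations. Since $\vartheta \in W^{1,\infty}_{x,t}(\Omega\times[0,T])$, it is an admissible test function (after the usual mollification in time) in the weak formulations of the transport equations $\partial_t\chi_u = -\nabla\cdot(\chi_u u)$ for the varifold solution and $\partial_t\chi_v = -\nabla\cdot(\chi_v v)$ for the strong solution. Testing the equation for $\chi_u - \chi_v$ against $\vartheta$ and integrating by parts yields
\begin{align}
\label{eq:bulkComputation}
\int_\Omega (\chi_u - \chi_v)\vartheta\dx\Big|_0^{T'}
= \int_0^{T'}\!\!\int_\Omega (\chi_u - \chi_v)\partial_t\vartheta\dx\dt
+ \int_0^{T'}\!\!\int_\Omega \big(\chi_u u - \chi_v v\big)\cdot\nabla\vartheta\dx\dt,
\end{align}
where no boundary terms arise because $(u\cdot n_{\partial\Omega})|_{\partial\Omega} = 0 = (v\cdot n_{\partial\Omega})|_{\partial\Omega}$. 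Finally, I would rewrite the advective term by splitting $\chi_u u - \chi_v v = (\chi_u - \chi_v)v + \chi_u(u - v)$ and then further $\chi_u(u-v)\cdot\nabla\vartheta = (\chi_u - \chi_v)(u-v)\cdot\nabla\vartheta + \chi_v(u-v)\cdot\nabla\vartheta$; the last summand integrates to zero upon noting that $(u-v)\cdot\nabla\vartheta = \nabla\cdot((u-v)\vartheta)$ by solenoidality of both velocity fields, that $\vartheta$ vanishes along $\partial\Omega$ with the correct trace, and that $(u-v)\cdot n_{\partial\Omega} = 0$ — alternatively one uses that $\vartheta = 0$ $\mathcal H^{d-1}$-a.e.\ on $I_v\cup\partial\Omega$ together with $\nabla\chi_v = n_{I_v}\haus\llcorner I_v$ after an integration by parts moving $\nabla$ onto $\chi_v$. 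Collecting terms, $\int_\Omega (\chi_u u - \chi_v v)\cdot\nabla\vartheta = \int_\Omega (\chi_u - \chi_v)(v\cdot\nabla)\vartheta + \int_\Omega (\chi_u - \chi_v)((u-v)\cdot\nabla)\vartheta$, which substituted into~\eqref{eq:bulkComputation} gives exactly~\eqref{eq:stabilityBulkErrorFunctional}.

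The main obstacle I anticipate is not the algebra but the rigorous justification of~\eqref{eq:bulkComputation}: one must argue that $\vartheta$, which is merely Lipschitz and in particular not smooth across $I_v$, is a legitimate test function, and that the time-derivative identity holds in the pointwise-a.e.-in-$t$ sense claimed rather than merely in an integrated-against-test-functions sense. This is handled by the standard device of mollifying $\vartheta$ in time (and, if needed, in space away from the behaviour near $I_v\cup\partial\Omega$), applying the weak transport formulations of Definition~\ref{Def_varsol} and Definition~\ref{def:transportedWeight}, and then passing to the limit using the $W^{1,\infty}$ bounds together with the $L^\infty$-in-time, $L^2$-in-space control on $u$ and $v$ and the fact that $\chi_u, \chi_v$ are uniformly bounded; the Lebesgue differentiation theorem then upgrades the integrated identity to one valid for a.e.\ $T'$. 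A secondary, minor point is verifying that the boundary contributions genuinely vanish, which as indicated follows from the no-penetration conditions on $u$ and $v$ and the vanishing of $\vartheta$ on $\partial\Omega$.
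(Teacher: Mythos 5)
Your proposal is correct and follows essentially the same route as the paper: the sign conditions linearize the functional to $\int_\Omega(\chi_u-\chi_v)\vartheta\dx$, the weak transport equations for $\chi_u$ and $\chi_v$ tested with $\vartheta$ give the identity with the flux term $(\chi_u u-\chi_v v)\cdot\nabla\vartheta$, and the term $\int_\Omega\chi_v\,((u-v)\cdot\nabla)\vartheta\dx$ is discarded by integration by parts using solenoidality, the no-penetration conditions, and $\vartheta=0$ along $I_v\cup\partial\Omega$. Only note that your first justification of the vanishing term must invoke $\vartheta=0$ on $I_v$ (not merely on $\partial\Omega$), since the integrand carries the factor $\chi_v$; the ``alternative'' argument you give, moving the divergence onto $\chi_v$, is exactly the paper's and settles this.
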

	
\subsection{Existence of boundary adapted extensions of the interface unit normal 
and transported weights in planar case}
\label{subsec:existenceCalibrations}
To upgrade the conditional weak-strong uniqueness principle
of Proposition~\ref{prop:conditionalWeakStrong} to the
statement of Theorem~\ref{theo:mainResult}, it remains to
construct a boundary adapted extension~$\xi$ of~$n_{I_v}$ and a transported weight~$\vartheta$
associated to a given strong solution~$(\chi_v,v)$. In the context of the present
work, we perform this task for simplicity in the planar regime~$d=2$. 
However, it is expected that the principles of the construction
carry over to the case~$d=3$ involving contact lines.

\begin{proposition}
\label{prop:existenceCalibration}
Let $d=2$, and let $\Omega\subset\R^2$ be a bounded domain with orientable and smooth boundary.
Let $(\chi_v,v)$ be a strong solution to the incompressible
Navier--Stokes equation for two fluids in the sense of Definition~\ref{Def_strongsol}
on a time interval $[0,T]$. Then there exists a boundary adapted extension $\xi$
of~$n_{I_v}$ w.r.t.\ $(\chi_v,v)$ in the sense of Definition~\ref{def:calibrationTwoPhaseFluidFlow}.
\end{proposition}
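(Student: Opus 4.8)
The plan is to build $\xi$ from two kinds of \emph{local} model constructions patched together by a partition of unity: an \emph{interior} construction on a neighbourhood of $I_v$ that stays away from $\partial\Omega$, which is essentially the one from~\cite{Fischer2020}, and a \emph{contact point} construction on small (time-dependent) neighbourhoods of the finitely many contact point trajectories $t\mapsto p_t\in\overline{I_v(t)}\cap\partial\Omega$. This split is legitimate because, by compactness of $[0,T]$ and the regularity of the strong solution, $I_v(t)$ stays at a uniform positive distance from $\partial\Omega$ for all $t\in[0,T]$ outside fixed neighbourhoods of the $p_t$; hence one may take the cutoff width $\delta_0$ so small that $\xi$ vanishes near $\partial\Omega$ except inside those contact point neighbourhoods, where~\eqref{eq:boundaryValueXi} is the only additional constraint. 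For the interior piece I would set $\xi_{\mathrm{int}}:=\eta(s_v)\nabla s_v$, with $s_v(\cdot,t)$ the signed distance to $I_v(t)$ (positive in $\Omega_v^+$, so $\nabla s_v=n_{I_v}\circ P_{I_v}$ extends the normal) and $\eta$ a fixed smooth cutoff with $\eta(0)=1$, $\eta'(0)=0$, $\eta''(0)\ll 0$, $\supp\eta\subset(-\delta_0,\delta_0)$: then $\xi_{\mathrm{int}}=n_{I_v}$ on $I_v$ is immediate, $\nabla\cdot\xi_{\mathrm{int}}=-H_{I_v}$ on $I_v$ follows from $\eta'(0)=0$ and $\Delta s_v|_{I_v}=-H_{I_v}$, \eqref{eq:coercivityByModulationOfLength} from $\eta''(0)\ll0$, and \eqref{eq:timeEvolutionXi}--\eqref{eq:timeEvolutionLengthXi} by Taylor expansion around $I_v$ using the transport of $I_v$ by $v$ and $\partial_t s_v+(v\cdot\nabla)s_v=O(s_v)$, exactly as in~\cite{Fischer2020}.

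The genuinely new work is the contact point construction. Fixing $p_t$ and working in (time-dependent) Fermi coordinates $(z,r)$ of $\partial\Omega$ --- so $\partial\Omega=\{r=0\}$, $\Omega=\{r>0\}$, and the coordinate frame is Euclidean-orthonormal on $\partial\Omega$ --- the interface is a $C^3$ graph $z=g(r,t)$, and the $90^\circ$ condition~\eqref{90contactangle} is precisely $\partial_r g(0,t)=0$. I would extend $g$ to $r\in(-\delta,\delta)$ by a $C^3$-preserving (Whitney/Seeley) extension, which automatically preserves $\partial_r g(0,t)=0$ and hence is even to second order at $p_t$, producing a $C^3$ curve $\tilde I_v(t)\supset I_v(t)$ whose extra part lies outside $\overline\Omega$; let $\tilde s_v$ be the Euclidean signed distance to $\tilde I_v(t)$, so $\tilde s_v\in C^3_x$ and $\nabla\tilde s_v\in C^2_x$ near $p_t$ (passing to the extension is forced: the signed distance to a curve with an endpoint on $\partial\Omega$ would not be $C^2_x$). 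I then set
\[
\xi_{\mathrm{cp}}(\cdot,t):=\hat\eta\big(\tilde s_v(\cdot,t)\big)\,R\big(\tilde\beta(\cdot,t)\big)\,\nabla\tilde s_v(\cdot,t),
\]
where $R(\cdot)$ denotes rotation in $\R^2$, $\hat\eta$ is a cutoff as above, and $\tilde\beta(\cdot,t)\in C^2_x$ is an angle field with $\tilde\beta\equiv0$ on $I_v(t)$ and, on $\partial\Omega$, $\tilde\beta$ equal to the signed angle rotating $\nabla\tilde s_v$ into the tangent space $T\partial\Omega$. Such a $C^2$ field exists because its two prescribed restrictions are $C^2$ and \emph{compatible at $p_t$}: there $\nabla\tilde s_v(p_t)=\pm\tau_{\partial\Omega}(p_t)$ is already tangential (again by $90^\circ$), so both restrictions vanish at $p_t$. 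One then verifies: \eqref{eq:boundaryValueXi} holds \emph{exactly}, since $R(\tilde\beta)\nabla\tilde s_v\in T\partial\Omega$ on $\partial\Omega$ by design; $\xi_{\mathrm{cp}}=n_{I_v}$ on $I_v$ since $\hat\eta(0)=1$, $\tilde\beta=0$, $\nabla\tilde s_v=n_{I_v}$ there; \eqref{eq:coercivityByModulationOfLength} since $|\xi_{\mathrm{cp}}|=|\hat\eta(\tilde s_v)|\le1-C\tilde s_v^2$ and $\dist(\cdot,\tilde I_v)$ is comparable to $\dist(\cdot,I_v)$ inside $\overline\Omega$ near $p_t$ (the second-order evenness makes the nearest point on $\tilde I_v$ of an $x\in\overline\Omega$ comparable to that on $I_v$); and \eqref{eq:divConstraintXi} with the choice $\hat\eta'(0)=0$, because $\nabla\cdot(R(\tilde\beta)\nabla\tilde s_v)|_{I_v}=\Delta\tilde s_v|_{I_v}+\partial_{\tau_{I_v}}\tilde\beta|_{I_v}=-H_{I_v}+0$, the tangential derivative of $\tilde\beta$ along $I_v$ vanishing since $\tilde\beta\equiv0$ there. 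Finally \eqref{eq:timeEvolutionXi}--\eqref{eq:timeEvolutionLengthXi} follow by differentiating the displayed formula, using the transport of $I_v$ by $v$, the identity $\partial_t\tilde s_v+(v\cdot\nabla)\tilde s_v=O(\tilde s_v)$ (after extending $v$ past $\partial\Omega$, which is possible since $v$ is tangential on $\partial\Omega$), and $v\cdot\no=0$; the key point is that $\tilde\beta$, vanishing on $I_v$, is $O(\dist(\cdot,I_v))$, so $R(\tilde\beta)=\Id+O(\dist(\cdot,I_v))$ contributes an $O(\dist(\cdot,I_v))$ error to~\eqref{eq:timeEvolutionXi} --- matching exactly the tolerance there --- whereas the rotation leaves $|\xi_{\mathrm{cp}}|^2=\hat\eta(\tilde s_v)^2$ unchanged, so~\eqref{eq:timeEvolutionLengthXi} retains the sharper $O(\dist^2(\cdot,I_v))$ error.

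With a $C^1_t$ partition of unity $\{\varphi_i\}$ subordinate to the interior region and the (moving) contact point neighbourhoods, I would set $\xi:=\sum_i\varphi_i\xi_i$ and check that all requirements of Definition~\ref{def:calibrationTwoPhaseFluidFlow} pass to the combination: \eqref{eq:coercivityByModulationOfLength} from $|\xi|\le\sum_i\varphi_i|\xi_i|$; $\xi=n_{I_v}$ on $I_v$ since every $\xi_i$ does; $\nabla\cdot\xi|_{I_v}=\sum_i\varphi_i\,\nabla\cdot\xi_i|_{I_v}+n_{I_v}\cdot\nabla\!\big(\sum_i\varphi_i\big)=-H_{I_v}$; linearity for~\eqref{eq:boundaryValueXi}; and the transport identities using $\sum_i(\partial_t\varphi_i+(v\cdot\nabla)\varphi_i)=0$ together with the fact that each $\xi_i$ agrees with $n_{I_v}\circ P_{I_v}$ up to $O(\dist(\cdot,I_v))$, and up to $O(\dist^2(\cdot,I_v))$ in its length, so that the patching terms are of the admissible orders.

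I expect the main obstacle to be the contact point construction together with the verification of its properties, and within it two points are delicate. First, securing the prescribed $C^2_x$-regularity of $\xi$: this forces one to use the signed distance of a $C^3$ extension $\tilde I_v$ of the interface past $\partial\Omega$, so that all the \emph{exact} boundary and interface conditions must be re-established for $\tilde I_v$, including a careful comparison of $\dist(\cdot,\tilde I_v)$ with $\dist(\cdot,I_v)$ inside $\overline\Omega$. Second, propagating the approximate transport equations~\eqref{eq:timeEvolutionXi}--\eqref{eq:timeEvolutionLengthXi} through both the angle rotation $R(\tilde\beta)$ --- whose angle is only first-order small in $\dist(\cdot,I_v)$, which is precisely why the definition tolerates an $O(\dist)$ rather than $O(\dist^2)$ error in~\eqref{eq:timeEvolutionXi} --- and the time-dependent localization around the moving contact point, where one must control the time derivatives of the Fermi chart, of the extension $\tilde g$, and of the partition of unity.
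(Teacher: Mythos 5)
Your overall architecture (a bulk extension given by the gradient of the signed distance, a separate construction near each moving contact point, all glued by a transported partition of unity with quadratic decay of $|\xi|$) is the same as the paper's; the genuinely different ingredient is your contact point field $\xi_{\mathrm{cp}}=\hat\eta(\tilde s_v)R(\tilde\beta)\nabla\tilde s_v$, built from a $C^3$ extension of the interface across $\partial\Omega$ and a rotation angle obtained by Whitney extension, in place of the paper's explicit second-order ansatz in the two frames $(n_{I_v},\tau_{I_v})$ and $(n_{\partial\Omega},\tau_{\partial\Omega})$ interpolated on wedges (Proposition~\ref{prop:CalibrationContactPoint}). Your structural checks are essentially sound: \eqref{eq:boundaryValueXi} holds exactly, $\xi=n_{I_v}$ and \eqref{eq:divConstraintXi} on $I_v$ follow since $\tilde\beta\equiv 0$ there kills the extra divergence term, and the comparability of $\dist(\cdot,\tilde I_v)$ with $\dist(\cdot,I_v)$ inside $\overline\Omega$ indeed only needs $\partial_r\tilde g(0,t)=0$, so \eqref{eq:coercivityByModulationOfLength} is fine.

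The genuine gap is in the verification of \eqref{eq:timeEvolutionXi}--\eqref{eq:timeEvolutionLengthXi}, which is exactly where the paper invests its effort. Differentiating your formula produces the term $\hat\eta(\tilde s_v)\,R'(\tilde\beta)\nabla\tilde s_v\,\big(\partial_t\tilde\beta+(v\cdot\nabla)\tilde\beta\big)$, and since $R'(\tilde\beta)\nabla\tilde s_v$ is a unit vector you need the \emph{advective derivative} of the angle to be $O(\dist(\cdot,I_v))$; your stated reason, $\tilde\beta=O(\dist(\cdot,I_v))$ hence $R(\tilde\beta)=\Id+O(\dist)$, only controls the terms in which $R(\tilde\beta)$ is not differentiated and says nothing about this one. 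That $\tilde\beta$ vanishes on $I_v(t)$ for every $t$ gives vanishing of $\partial_t\tilde\beta+(v\cdot\nabla)\tilde\beta$ \emph{on} $I_v$, but upgrading this to $O(\dist(\cdot,I_v))$ requires a Lipschitz bound on the advective derivative, i.e.\ control of mixed derivatives like $\nabla\partial_t\tilde\beta$ up to the contact point --- and an abstract Whitney/Seeley extension of angle data prescribed on the cross $I_v\cup\partial\Omega$ comes with no such transport property; it has to be built in by hand. This is precisely the role played in the paper by the explicitly transported building blocks, the interpolation functions with bounded advective derivative \eqref{eq:AdvectiveDerivativeLambda}, the compatibility estimates \eqref{eq:compBoundLocalCalibrations1}--\eqref{eq:compBoundLocalCalibrations2}, and the higher-order compatibility condition \eqref{eq:compHigher} at the contact point (Lemma~\ref{lem:higherOrderCompConditions}), which encodes the interplay of the interface transport with the slip boundary condition and enters the proof of the evolution estimate for the boundary building block. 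A second, related unproven assertion is the identity $\partial_t\tilde s_v+(v\cdot\nabla)\tilde s_v=O(\tilde s_v)$: the extended arc $\tilde I_v\setminus I_v$ is moved by whatever the Seeley extension does in time, not by any extension of $v$, so for points of $\overline\Omega$ whose nearest point on $\tilde I_v$ lies beyond the contact point (these exist near concave boundary portions) the transport identity requires a separate quantitative argument rather than the parenthetical appeal to extending $v$. Until the angle field (and the extended signed distance) are constructed so that their advective derivatives are controlled to the stated orders, the proof of \eqref{eq:timeEvolutionXi}--\eqref{eq:timeEvolutionLengthXi}, and hence of the proposition, is not complete.
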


A proof of this result is presented in Subsection~\ref{eq:proofExistenceCalibration} below.
One major step in the proof consists of reducing the global construction to certain
local constructions being supported in the bulk~$\Omega$ or
in the vicinity of contact points along~$\partial\Omega$, respectively.
The main ingredients for this reduction argument are provided in Subsection~\ref{subsec:partitionOfUnity}.
The construction of suitable local vector fields subject to
conditions as in Definition~\ref{def:calibrationTwoPhaseFluidFlow}
is in turn relegated to Section~\ref{sec:localCalibrationBulk}
(bulk construction) and Section~\ref{sec:localCalibrationContactPoint}
(construction near contact points). 
We finally provide the construction of a transported weight
in Section~\ref{sec:constructionWeight}.

\begin{lemma}
\label{lem:existenceTransportedWeight}
Let $d=2$, and let $\Omega\subset\R^2$ be a bounded domain with orientable and smooth boundary.
Let $(\chi_v,v)$ be a strong solution to the incompressible
Navier--Stokes equation for two fluids in the sense of Definition~\ref{Def_strongsol}
on a time interval $[0,T]$. Then there exists a transported weight $\vartheta$
w.r.t.\ $(\chi_v,v)$ in the sense of Definition~\ref{def:transportedWeight}.
\end{lemma}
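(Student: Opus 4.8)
Lemma~\ref{lem:existenceTransportedWeight} asks for a transported weight $\vartheta$ with respect to a strong solution $(\chi_v,v)$ in $d=2$. The plan is to build $\vartheta$ essentially as a signed distance to the interface $I_v(t)$, suitably cut off near $\partial\Omega$ so that it also vanishes on the boundary, and then truncated to take values in $[-1,1]$. Concretely, I would first fix a tubular neighborhood $U_\delta := \{(x,t): \dist(x,I_v(t)) < \delta\}$ of the space-time interface on which the signed distance function $s(x,t) := \sdist(x,I_v(t))$ (negative in $\Omega_v^+$, positive in $\Omega_v^-$, say) is smooth; this is possible for small $\delta$ because the strong solution has a uniformly $C^2$-in-space, $C^1$-in-time interface that meets $\partial\Omega$ orthogonally, so the orthogonality guarantees the distance function stays smooth up to the boundary near contact points. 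Away from $I_v$ I would define $\vartheta$ to interpolate smoothly to a function that is bounded away from zero in the bulk of each phase but forced back to zero in a collar of $\partial\Omega$. A clean way to organize this: let $\beta\colon\R\to[-1,1]$ be a fixed smooth odd truncation with $\beta(r)=r$ for $|r|\le \tfrac{1}{4}$, $\beta$ strictly increasing, $\beta(r)=\pm 1$ for $\pm r\ge \tfrac12$, and let $\eta(x) := \min\{1, c\,\dist(x,\partial\Omega)\}$ be a Lipschitz weight vanishing on $\partial\Omega$. Then set $\vartheta(x,t) := -\,\beta\big(s(x,t)\big)\,\eta(x)$ inside $U_\delta$ and extend it by $\mp\eta(x)$ on $\Omega_v^\pm(t)\setminus U_\delta$; by the choice of $\beta$ these two prescriptions agree on the overlap once $\delta<\tfrac14$, so $\vartheta$ is globally well-defined.

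The regularity claim $\vartheta\in W^{1,\infty}_{x,t}$ then follows because $s$ is smooth on $U_\delta$ with bounded space-time derivatives (using that $(\chi_v,v)$ is a strong solution, so $I_v$ evolves smoothly and the normal velocity equals $v\cdot n_{I_v}$), $\beta$ is smooth, $\eta$ is globally Lipschitz, and the product/composition of such functions is Lipschitz; one just needs to check there is no loss of Lipschitz regularity at $\partial U_\delta$, which is exactly what the matching condition via $\beta$ ensures. The coercivity statement --- $\vartheta<0$ in the essential interior of $\Omega_v^+$, $\vartheta>0$ in the essential exterior, and $\vartheta=0$ on $I_v\cup\partial\Omega$ --- is immediate from the sign conventions: $\beta(s)$ has the sign of $s$, hence of the phase, $\eta\ge 0$, and both $s$ and $\eta$ vanish precisely on $I_v$ and $\partial\Omega$ respectively. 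The quantitative lower bound \eqref{eq:lowerBoundTransportedWeight}, namely $\dist(\cdot,\partial\Omega)\wedge\dist(\cdot,I_v)\wedge 1 \le C|\vartheta|$, follows because near $I_v$ we have $|s|\sim\dist(\cdot,I_v)$ and $\eta$ is bounded below by a constant times $\dist(\cdot,\partial\Omega)\wedge 1$, while away from $I_v$ the factor $|\beta(s)|$ is bounded below by a positive constant; combining the two regimes yields the estimate with a suitable $C$ depending on $\delta$, $\beta$, and the geometry of the strong solution.

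The remaining condition is the approximate transport inequality \eqref{eq:advDerivTransportedWeight}, $|\partial_t\vartheta + (v\cdot\nabla)\vartheta|\le C|\vartheta|$. Here I would split into the two regimes. Near $I_v$, write $\partial_t\vartheta + (v\cdot\nabla)\vartheta = -\beta'(s)\,\eta\,\big(\partial_t s + (v\cdot\nabla)s\big) - \beta(s)\,(v\cdot\nabla)\eta$. The first bracket is $O(\dist(\cdot,I_v))$: indeed $\partial_t s + (v\cdot\nabla)s$ vanishes on $I_v$ because the interface is transported by the normal component of $v$ and $\nabla s = n_{I_v}$ on $I_v$, and it is a smooth function on $U_\delta$, so it is controlled by $|s|\sim\dist(\cdot,I_v)$; multiplied by the bounded factor $\beta'(s)\eta$ and using $\eta\le C|\vartheta|/|\beta(s)|\cdot(\ldots)$ — more simply, $|\beta'(s)|\,|s|\le C|\beta(s)|$ since $\beta$ is smooth increasing with $\beta(0)=0$ — this term is $\le C\,|\beta(s)|\,\eta = C|\vartheta|$. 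For the second term, $(v\cdot\nabla)\eta$ is bounded (it is even zero away from the $\partial\Omega$-collar, and where $\eta$ is active we use $v\cdot n_{\partial\Omega}=0$ so $(v\cdot\nabla)\eta$ picks up only the tangential derivative of $\dist(\cdot,\partial\Omega)$, which is $O(\dist(\cdot,\partial\Omega))$ — hence $\le C\eta$), so $|\beta(s)\,(v\cdot\nabla)\eta|\le C|\beta(s)|\eta = C|\vartheta|$. Away from $I_v$, $\vartheta=\mp\eta$ is time-independent and the same estimate on $(v\cdot\nabla)\eta$ using $v\cdot n_{\partial\Omega}=0$ gives $|(v\cdot\nabla)\vartheta|\le C\eta=C|\vartheta|$. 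I expect the main obstacle to be the careful bookkeeping near contact points: one must verify that the signed distance $s$ really does remain $C^2$ up to $\partial\Omega$ there — which relies on the $90^\circ$ condition \eqref{90contactangle} so that $I_v$ hits $\partial\Omega$ orthogonally and the nearest-point projection onto $I_v$ is well-behaved — and that the interplay of the two cutoffs $\beta(s)$ and $\eta$ does not destroy the lower bound \eqref{eq:lowerBoundTransportedWeight} in the corner region where both $\dist(\cdot,I_v)$ and $\dist(\cdot,\partial\Omega)$ are small; this is where the factorized form $\vartheta=-\beta(s)\eta$ pays off, since there $|\vartheta|\sim \dist(\cdot,I_v)\cdot\dist(\cdot,\partial\Omega)$, and one checks $\dist(\cdot,I_v)\wedge\dist(\cdot,\partial\Omega)\le C\,\dist(\cdot,I_v)\cdot\dist(\cdot,\partial\Omega)$ fails in general — so in fact a small correction is needed: replace $\eta(x)\beta(s)$ by a function behaving like $\min\{|s|,\eta,1\}\cdot\sign(s)$ near the corner, e.g.\ take $\vartheta := -\sign(s)\min\{|\tilde s|, \tilde\eta\}$ with smoothed $\tilde s,\tilde\eta$, which directly realizes \eqref{eq:lowerBoundTransportedWeight} while the transport and Lipschitz bounds survive because the minimum of two Lipschitz functions each satisfying an estimate of the form $|\partial_t f + v\cdot\nabla f|\le C|f|$ again satisfies it. I would present the construction in this $\min$-form from the start, with $\beta$ used only to smooth out the non-differentiability of the minimum, and then verify the four bulleted conditions in turn as sketched above.
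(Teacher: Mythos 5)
Your final construction (the ``signed min'' / clamp form $\vartheta=-\operatorname{sign}(s)\min\{|s|,\eta\}$, equivalently the truncation of the signed distance to $I_v$ at the levels $\pm c\,(\dist(\cdot,\partial\Omega)\wedge 1)$) is essentially correct, and it takes a genuinely different route from the paper. The paper (Section~\ref{sec:constructionWeight}) builds $\vartheta$ by gluing three explicit local profiles --- a truncated signed distance to $\mathcal{T}_i$ near the interface, a truncated signed distance to $\partial\Omega$ near the boundary (with sign fixed by the phase), and the constants $\mp 1$ in the bulk --- and handles the corner at each contact point with the wedge decomposition of Definition~\ref{def:locRadius} and the interpolation functions $\lambda^\pm_c$ of Lemma~\ref{lemma:existenceInterpolationFunction}, where the key inputs are the bounded advective derivative~\eqref{eq:AdvectiveDerivativeLambda} and the compatibility bound $|\bar\vartheta_i-\bar\vartheta^\pm_{\partial\Omega}|\leq C(\dist(\cdot,\partial\Omega)\wedge\dist(\cdot,I_v))$. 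Your approach replaces all of this by the elementary observation that only $W^{1,\infty}_{x,t}$ regularity is demanded by Definition~\ref{def:transportedWeight}, so one may take minima/truncations of Lipschitz functions each satisfying an inequality of the type $|\partial_t f+(v\cdot\nabla)f|\leq C|f|$, and the truncation inherits it a.e.; this directly yields~\eqref{eq:lowerBoundTransportedWeight} since $|\vartheta|\sim\dist(\cdot,I_v)\wedge\dist(\cdot,\partial\Omega)\wedge 1$. What the paper's heavier construction buys is consistency with the machinery already set up for the extension $\xi$ (so the contact-point verification is systematic) and smoothness away from the contact set; what yours buys is brevity. You also correctly diagnose that your first, product-type ansatz $-\beta(s)\eta$ fails: in the corner both distances are small and the product is strictly weaker than the minimum, so~\eqref{eq:lowerBoundTransportedWeight} would be violated.

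Two caveats you should fix when writing this up. First, a sign slip: with your stated convention $s<0$ in $\Omega^+_v$, both $-\beta(s)\eta$ and $-\operatorname{sign}(s)\min\{|s|,\eta\}$ are \emph{positive} in $\Omega^+_v$, whereas Definition~\ref{def:transportedWeight} requires $\vartheta<0$ there; simply orient $s$ with $\nabla s=n_{I_v}$ (so $s>0$ in $\Omega^+_v$), as the paper does. Second, the transport estimate $|\partial_t s+(v\cdot\nabla)s|\leq C\,|s|$ must also be verified on the set of points near a contact point whose nearest point on $I_v(t)$ is the contact point $c(t)$ itself; there the tubular-neighborhood identity $\partial_t s=-(v(P_{I_v}(x),t)\cdot\nabla)s$ is not available, and $s$ is only Lipschitz (your appeal to $C^2$ regularity of $s$ up to $\partial\Omega$ is neither available in that sliver nor needed for the clamp form). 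The estimate still holds there by a direct computation: at such points $s=\pm|x-c(t)|$, and since $v\cdot n_{\partial\Omega}=0$ together with the $90^\circ$ condition~\eqref{eq:compZero} gives $\frac{\mathrm{d}}{\mathrm{d}t}c(t)=v(c(t),t)$, one finds $\partial_t s+(v\cdot\nabla)s=\pm\frac{(x-c(t))\cdot(v(x,t)-v(c(t),t))}{|x-c(t)|}=O(|x-c(t)|)=O(|s|)$. With these two points supplied (and noting that the smoothing by $\beta$ is optional, since the minimum of Lipschitz functions is already Lipschitz), your argument closes.
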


	
\subsection{Definition of varifold and strong solutions}
In this subsection, we present definitions of strong and 
varifold solutions for the free-boundary problem
of the evolution of two immiscible, incompressible, viscous fluids separated 
by a sharp interface with surface tension inside a bounded domain $\Omega\subset\R^d$, $d\in \{2,3\}$,
with smooth and orientable boundary. Recall in this context that we
restrict ourselves to the case of a~$90^\circ$ contact 
angle between the interface and the boundary of the domain~\(\Om\).
In order to define a notion of strong solutions, we first  
introduce the notion of a smoothly evolving domain
within~$\Omega$.

\begin{definition}[Smoothly evolving domains and smoothly evolving interfaces with $90^\circ$ contact angle]
\label{Def_smdom}
Let $d\in\{2,3\}$, and let $\Omega\subset\R^d$ be a bounded domain with orientable and smooth boundary.
Let $T\in (0,\infty)$ be a finite time horizon. Consider an open subset $\Omega^+_0\subset\Omega$
subject to the following regularity conditions:
\begin{itemize}[leftmargin=0.7cm]
\item Denoting by $I_0$ the closure of $\partial\Omega^+_0\cap\Omega$ in $\overline{\Omega}$,
			we require $I_0$ to be a $(d{-}1)$-dimensional uniform $C^3_x$ submanifold of $\overline{\Omega}$
			with or without boundary. Moreover, $I_0$ is compact and consists of finitely many connected components.
\item Interior points of $I_0$ are contained in $\Omega$, whereas boundary points of $I_0$
			are contained in $\partial\Omega$. In particular, $I_0\cap\partial\Omega$ is a
			$(d{-}2)$-dimensional uniform $C^3_x$ submanifold of $\partial\Omega$.
\item Whenever $I_0$ intersects with $\partial\Omega$, it does so by forming an angle of $90^\circ$.
\end{itemize}
Now, consider a set $\Omega^+=\bigcup_{t\in [0,T]} \Omega^+(t) {\times}\{t\}$
represented in terms of open subsets $\Omega^+(t) \subset \Omega$ for all $t\in [0,T]$.
Denote by $I(t)$ the closure of $\partial\Omega^+(t)\cap\Omega$ in $\overline{\Omega}$,
$t\in [0,T]$. We call $\Omega^+$ a \emph{smoothly evolving domain in $\Omega$}, and 
$I=\bigcup_{t\in [0,T]} I(t){\times}\{t\}$ a \emph{smoothly evolving interface with $90^\circ$ contact angle},
if there exists a flow map $\psi\colon \overline{\Omega} \times [0,T] \to \overline{\Omega}$
such that the following requirements are satisfied:
\begin{itemize}[leftmargin=0.7cm]
\item $\psi(\cdot,0) = \mathrm{Id}$. For any $t\in [0,T]$, the map $\psi_t:=\psi(\cdot,t)\colon \overline{\Omega}
			\to \overline{\Omega}$ is a $C^3_x$ diffeomorphism such that $\psi_t(\Omega)=\Omega$,
			$\psi_t(\partial\Omega)=\partial\Omega$ and 
			$\sup_{t\in[0,T]}\|\psi_t\|_{W^{3,\infty}_x(\overline{\Omega})} < \infty$.
\item For all $t\in [0,T]$, it holds $\Omega^+(t) = \psi_t(\Omega^+_0)$ and
			$I(t) = \psi_t(I_0)$.
\item $\partial_t\psi\in C([0,T];C^1(\overline{\Omega}))$ such that
			$\sup_{t\in[0,T]}\|\partial_t\psi(\cdot,t)\|_{W^{1,\infty}_x(\overline{\Omega})} < \infty$.
\item Whenever $I(t)$, $t\in [0,T]$, intersects $\partial\Omega$ it does so by forming
			an angle of $90^\circ$.
\end{itemize}
\end{definition}

With the geometric setup in place, we can proceed with our notion of
strong solutions to two-phase Navier--Stokes flow with $90^\circ$ contact angle.

\begin{definition}[Strong solution]  
\label{Def_strongsol}
Let $d\in\{2,3\}$, and let $\Omega\subset\R^d$ be a bounded domain with orientable and smooth boundary.
Let a surface tension constant $\sigma > 0$, the densities and shear viscosity 
of the two fluids $\rho^\pm, \mu> 0$, and a finite time $T_s > 0$ be given.
Let $\chi_0$ denote the indicator function of an open subset $\Omega^+_0\subset\Omega$
subject to the conditions of Definition~\ref{Def_smdom}. Denoting the associated
initial interface by~$I_v(0)$, let a solenoidal initial 
velocity profile $v_0 \in L^2(\Om;\R^d)$ be given 
such that it holds $v_0 \in C^2(\overline{\Om}\setminus I_v(0))$.
(Of course, additional compatibility conditions in terms of an initial pressure~$p_0$ 
have to be satisfied by~$v_0$ to allow for the below required regularity of the solution.)

A pair $(\chi_v,v)$ consisting of a velocity field 
$
v\colon \overline{\Omega} \times [0,T_s) \to \R^d
$
and an indicator function
$
\chi_v\colon \overline{\Omega} \times [0,T_s) \to \{0,1\}
$
is called a \emph{strong solution to the free boundary problem for the 
Navier--Stokes equation for two fluids with $90^\circ$ contact angle 
and initial data $(\chi_0,v_0)$} if for all $T\in (0,T_s)$
it is a \emph{strong solution on} $[0,T]$ in the following sense:
\begin{itemize}[leftmargin=0.7cm]
\item It holds
			\begin{align}
			&v \in 
			W^{1,\infty}([0, T]; W^{1,\infty}(\Om; \R^d)), \nonumber \\
			&\nabla v \in L^{1}([0, T]; \BV(\Om; \R^{d \times d})) , \nonumber \\
			&\chi_v \in L^\infty ([0, T]; \BV(\Om; \{0,1\} )).\nonumber 
			\end{align}
\item Define $\Omega^+_v(t) := \{x \in \Om: \chi_v(x,t) = 1\}$. Then, 
			$\Omega^+_v=\bigcup_{t\in[0,T]} \Omega^+_v(t) {\times} \{t\}$ is a smoothly evolving domain in $\Omega$
			in the sense of Definition~\ref{Def_smdom} with $\Omega^+_v(0)=\Omega^+_0$.
			Denoting by $I_v(t)$ the closure of $\partial\Omega^+_v(t)\cap\Omega$
			in $\overline{\Omega}$ for all $t\in [0,T]$, the set $I_v=\bigcup_{t\in [0,T]} I_v(t){\times}\{t\}$
			is a smoothly evolving interface with $90^\circ$ contact angle in the sense of Definition~\ref{Def_smdom}.
			In particular, for every $t\in [0,T]$ and every contact point $c(t)\in I_v(t)\cap\partial\Omega$
			\begin{align}
			\label{sangcond}
   		n_{\partial \Omega} (c(t))\cdot n_{I_v}(c(t),t) = 0.
   		\end{align}
			Moreover, for every $t\in [0,T]$ and every $c(t)\in I_v(t)\cap\partial\Omega$
			the following higher-order compatibility condition is required to hold:
			\begin{align}
			\label{eq:higherOrderCompContactPoints}
   			-\big((n_{\partial \Omega}\cdot\nabla) (n_{I_v}\cdot v)\big)(c(t),t) 
				= H_{\partial \Omega}(c(t)) (n_{I_v}\cdot v) (c(t),t), 
   		\end{align}
   		where~$H_{\partial \Omega}$ denotes the scalar mean curvature of~\(\partial \Omega\)
			(with respect to the inward pointing unit normal~$n_{\partial\Omega}$).
\item The velocity field $v$ has vanishing divergence $\nabla \cdot  v = 0$,
			and it satisfies the boundary conditions
			\begin{align}
			\label{eq:boundaryCondFluid1}
			v(\cdot,t) \cdot \no  &= 0
			&&\text{along } \partial\Omega,
			\\
			\label{eq:boundaryCondFluid2}
			\big(\no \cdot \mu (\nabla v + \nabla v ^\mathsf{T} )(\cdot,t)B\big) &= 0
			&& \text{along } \partial\Omega
			\end{align}
			for all $t \in [0,T]$ and all tangential vector fields~$B$ along~$\partial\Omega$.
			Moreover, the equation for the momentum balance
   		\begin{align} 
			\nonumber 
   		\into &\rho(\chi_v(\cdot, T')) v(\cdot, T') \cdot \eta (\cdot, T') \dx 
			- \into \rho(\chi_0)) v_0 \cdot \eta (\cdot, 0) \dx
			\\ \label{sbalmom}
   		=& \int_{0}^{T'} \into \rho(\chi_v) v \cdot \partial_t \eta \dx \dt 
			+ \int_{0}^{T'} \into \rho(\chi_v) v \otimes v : \nabla \eta \dx \dt  
			\\ \nonumber
   		&- \int_{0}^{T'} \into \mu (\nabla v + \nabla v^\mathsf{T}) : \nabla \eta \dx \dt 
			+ \sigma \int_{0}^{T'} \int_{I_v(t)} \mathrm{H}_{I_v} \cdot \eta \dS \dt 
   		\end{align}
   		holds true for almost every $T' \in  [0,T]$ and	every
			$\eta \in C^\infty(\overline{\Omega} \times [0,T];\R^d)$ such 
			that $\nabla \cdot \eta = 0$ as well as $(\eta \cdot \no )|_{\partial \Om} = 0$. 
			Here, $\mathrm{H}_{I_v}(\cdot,t)$ denotes 
			the mean curvature vector of the interface $I_v(t)$. 
			For the sake of brevity, we have used the abbreviation $\rho(\chi) := \rho^+\chi + \rho^-(1 - \chi)$.
\item The indicator function $\chi_v$ is transported by the fluid velocity $v$ in form of
   		\begin{equation} \label{straneq}
   		\into \chi_v(\cdot, T') \varphi(\cdot, T') \dx 
			- \into \chi_0 \varphi(\cdot, 0) \dx 
			= \int_{0}^{T'} \into \chi_v (\pt \varphi {+} (v \cdot \nabla) \varphi ) \dx \dt 
   		\end{equation}
   		for almost every \(T' \in  [0, T]\) and all 
			$\varphi \in C^\infty(\overline{\Omega} \times [0, T])$.
\item It holds $v\in C^1_tC^0_x(\overline{\Omega{\times}[0,T]}\setminus I_v) 
			\cap C^0_tC^2_x(\overline{\Omega{\times}[0,T]}\setminus I_v)$.
\end{itemize}
\end{definition}

We conclude the discussion on strong solutions with a series of remarks.
First, by standard arguments one may deduce from~\eqref{straneq},
the solenoidality of~$v$, and the boundary 
condition~$(v \cdot \no )|_{\partial \Om} = 0$
that $V_{I_v}=v\cdot n_{I_v}$ holds true along the interface~$I_v$
for the normal speed~$V_{I_v}$ of~$I_v$ (oriented with respect to~$n_{I_v}$).
Second, as a consequence of the contact point condition~\eqref{sangcond} it holds
for all $t\in [0,T_s)$
\begin{align*}
\int_{I_v(t)} \mathrm{H}_{I_v} \cdot \eta \dS
= - \int_{I_v(t)} \big(\mathrm{Id}{-}n_{I_v}(\cdot,t)
\otimes n_{I_v}(\cdot,t)\big):\nabla\eta \dS
\end{align*}
for all test fields $\eta \in C^\infty(\overline{\Om};\R^d)$ 
subject to $\nabla \cdot \eta = 0$ and $(\eta \cdot \no )|_{\partial \Om} = 0$. 
Third, note that Definition~\ref{Def_strongsol} implies that
all pairs of two distinct contact points at the initial time remain distinct
at all later times within a finite time horizon. This in fact is a consequence
of the regularity of the velocity field and the evolving interface.
Indeed, denoting by $t\mapsto c(t)\in I_v(t)\cap\partial\Omega$
resp.\ $t\mapsto \hat c(t)\in I_v(t)\cap\partial\Omega$ the trajectories
of two distinct contact points, we may estimate the time evolution
of their squared distance $\alpha(t):= \frac{1}{2}|c(t){-}\hat c(t)|^2$ by means of
\begin{align*}
\frac{\mathrm{d}}{\mathrm{d}t} \alpha(t)
= \big(c(t){-}\hat c(t)\big) \cdot \big(v(c(t),t){-}v(\hat c(t),t)\big)
\geq -2\|\nabla v\|_{L^\infty_{x,t}} \alpha(t).
\end{align*}
Using Gronwall's Lemma, we can conclude that
$\alpha(t) \geq \alpha(0) \exp(- 2\|\nabla v\|_{L^\infty_{x,t}}t)$.

Fourth, we remark that it actually suffices to require the compatibility 
conditions~\eqref{sangcond} and~\eqref{eq:higherOrderCompContactPoints}
at the initial time~$t=0$ only. For later times~$t\in (0,T]$, they
are in fact consequences of the regularity of a strong solution, which
can be seen as follows. For the sake fo simplicity, consider the case $d=2$. By means of the chain rule, the fact that $v \cdot \no=0$ along $\partial \Om$, 
and the formulas for~$\nabla \no$ and~$\nabla \tau_{\partial\Omega}$ from Lemma~\ref{lem:gradientsFrames},
we may rewrite the boundary condition $(\mu (\nabla v + \nabla v ^\mathsf{T} ) : 
\no \otimes \tau_{\partial \Om})=0$ along $\partial \Om$ as
\[H_{\partial\Omega} (v \cdot \tau_{\partial \Omega} )+ (\no \cdot \nabla )
(v \cdot \tau_{\partial\Omega} ) =0 \quad \text{along } \partial \Om,
\]
which holds in particular at a contact point \(c(t)\) for any $t \in [0,T]$.
Then, since the quantities
$
|\tau_{\partial \Omega}  \cdot \tau_{I_v}  |=|n_{I_v}  \cdot n_{\partial \Omega}  |
$,
$|\tau_{\partial \Omega}  - n_{I_v}  |$ ,
$|n_{\partial \Omega}  + \tau_{I_v}  | $ evaluated at a contact point can all 
be bounded from above by $\sqrt{1-n_{I_v}  \cdot \tau_{\partial \Omega} }$, 
we may compute by adding zeros (see also the formulas for~$\nabla \no$ 
and~$\nabla \tau_{\partial\Omega}$ as well as the expressions 
for~$\frac{\mathrm{d}}{\mathrm{d}t}  \tau_{\partial\Omega}(c(t))$
and~$\frac{\mathrm{d}}{\mathrm{d}t} n_{I_v}(c(t),t)$ from Lemma~\ref{lem:gradientsFrames} 
and Lemma~\ref{lem:higherOrderCompConditions}, respectively)
\begin{align*}
&\frac{\mathrm{d}}{\mathrm{d}t}  [1- n_{I_v}(c(t),t) \cdot \tau_{\partial \Omega}(c(t)) ] \\
&\quad = - \big((n_{I_v} \cdot n_{\partial \Omega})
( (\no \cdot \nabla )(v \cdot \tau_{\partial \Omega}) + ( \tau_{I_v} \cdot \nabla )(v \cdot n_{I_v})
) \big)\big|_{(c(t),t)} \\
&\quad = - \big((n_{I_v} \cdot n_{\partial \Omega})
( \nabla v :  (\tau_{\partial \Omega} -  n_{I_v}) \otimes \no
+  \nabla v : n_{I_v} \otimes (\no + \tau_{I_v} ) \\ 
&\quad \quad \quad 
- H_{I_v}(v \cdot \tau_{I_v}) (\tau_{\partial \Om} \cdot \tau_{I_v}))
\big) \big|_{(c(t),t)} \\
&\quad \leq C \|\nabla v\|_{L^\infty_{x,t}} [1- n_{I_v}(c(t),t) \cdot \tau_{\partial \Omega}(c(t)) ] 
\end{align*}	
for some $C>0$ and any $t \in [0,T]$. From an application of a Gronwall-type argument 
and the validity of the contact angle condition~\eqref{sangcond} at the initial time $t=0$, 
we may conclude that~\eqref{sangcond} is indeed satisfied for any $t \in [0,T]$. 
The compatibility condition~\eqref{eq:higherOrderCompContactPoints}
in turn follows from differentiating in time the angle condition~\eqref{sangcond} along
a smooth trajectory $t\mapsto c(t)\in I_v(t)\cap\partial\Omega$ of a contact point,
see for details the proof of Lemma~\ref{lem:higherOrderCompConditions}.

We proceed with the notion of a varifold solution.

\begin{definition}[Varifold solution in case of $90^\circ$ contact angle condition]  \label{Def_varsol}
Let a surface tension constant $\sigma > 0$, the densities and shear viscosity of the 
two fluids $\rho^\pm, \mu > 0$, a finite time $T_w > 0$, a solenoidal initial velocity 
profile $u_0 \in L^2(\Om;\R^d)$, and an indicator function~$\chi_0 \in \BV(\Om)$ be given.
   	
A triple $(\chi_u,u, V)$ consisting of a velocity field \(u\), an indicator function \(\chi_u\), 
and an oriented varifold \(V\) with
	\begin{align}
	&u \in L^{2}([0, T_w]; H^1(\Om; \R^d)) \cap L^{\infty}([0, T_w]; L^{2}(\Om; \R^d)), \nonumber \\
	&\chi_u \in L^\infty ([0, T_w]; \BV(\Om; \{0,1\} )) ,\nonumber \\
	& V \in L_w^\infty ([0, T_w]; \mathcal{M}(\overline{\Om} \times \Ss)), \nonumber 
	\end{align}
is called a \emph{varifold solution to the free boundary problem for the Navier-Stokes 
equation for two fluids with $90^\circ$ contact angle and initial data $(\chi_0,u_0)$} if the following conditions are satisfied:
\begin{itemize}[leftmargin=0.7cm]
		\item The velocity field $u$ has vanishing divergence $\nabla \cdot  u = 0$, 
		its trace a vanishing normal component on the boundary of the domain $(u \cdot \no) |_{\partial \Om} = 0$, 
		and the equation for the momentum balance 
		\begin{align}\label{vbalmom}
		\into &\rho(\chi_u(\cdot, T)) u(\cdot, T) \cdot \eta (\cdot, T) \dx 
		- \into \rho(\chi_0)) u_0 \cdot \eta (\cdot, 0) \dx \nonumber \\
		=& \int_{0}^T \into \rho(\chi_u) u \cdot \partial_t \eta \dx \dt 
		+ \int_{0}^T \into \rho(\chi_u) u \otimes u: \nabla \eta \dx \dt  \\
		&- \int_{0}^T \into \mu (\nabla u+ \nabla u^\mathsf{T}) : \nabla \eta \dx \dt \nonumber \\
		&- \sigma \int_{0}^T \int_{\overline{\Om} \times \Ss} (\Id - s \otimes s) : \nabla \eta \dV \dt \nonumber
		\end{align}
		is satisfied for almost every $T \in  [0,T_w)$ and for every test vector field~$\eta$
		subject to $\eta \in C^\infty([0,T_w );C^1(\overline{\Omega};\R^d)\cap\bigcap_{p\geq 2}W^{2,p}(\Omega;\R^d))$,
		$\nabla \cdot \eta = 0$ as well as $(\eta \cdot \no) |_{\partial \Om} = 0$.
		We again made use of the 
		abbreviation $\rho(\chi) := \rho^+\chi + \rho^-(1 - \chi)$.
		\item The indicator $\chi_u$ satisfies the weak formulation of the transport equation
		\begin{equation}\label{vtraneq}
		\into \chi_u(\cdot, T) \varphi(\cdot, T) \dx - \into \chi_0 \varphi(\cdot, 0) \dx 
		= \int_{0}^T \into \chi_u (\pt \varphi {+} (u \cdot \nabla) \varphi ) \dx \dt 
		\end{equation}
		for almost every \(T \in  [0, T_w)\) and all 
		$\varphi \in C^\infty(\overline{\Omega} \times [0, T_w))$.
		\item The energy dissipation inequality 
		\begin{align} \label{endisineq}
		&\into \fract \rho(\chi_u(\cdot, T)) |u(\cdot, T)|^2 \dx + \sigma |V_T|_{\Ss}(\overline{\Om}) 
		+ \int_{0}^T \into \fracmu |\nabla u {+} \nabla u^\mathsf{T}|^2 \dx \dt \nonumber \\
		&\leq \into \fract \rho(\chi_0(\cdot )) |u_0(\cdot)|^2\dx + \sigma |\nabla \chi_0|(\Om) 
		\end{align}
		is satisfied for almost every \(T \in  [0, T_w)\).
		\item The phase boundary $\partial^* \{\chi_u(\cdot ,t) = 0\}\cap\Omega$ 
					and the varifold $V_t$ satisfy the compatibility condition 
		\begin{equation} \label{compcond}
		\int_{\overline{\Om} \times \Ss} \psi (x) \cdot s \dV 
		= \into \psi(x) \cdot \,\mathrm{d} \nabla \chi_u (x,t)
		\end{equation}
		for almost every \(t \in  [0, T_w)\) and every smooth function 
		\(\psi \in C^\infty(\overline{\Om};\R^d) \)
		such that $(\psi \cdot \no) |_{\partial \Om} = 0$.
	\end{itemize}
Finally, if $(\chi_u,V)$ satisfy~\eqref{eq:varifoldIsBV} we call
the pair~$(\chi_u,u)$ a \emph{$\BV$~solution to the free boundary problem for the Navier-Stokes 
equation for two fluids with $90^\circ$ contact angle and initial data $(\chi_0,u_0)$}.
\end{definition}

We conclude with a remark concerning the notion of
varifold solutions. Denote by $V_t \in \mathcal{M} (\overline{\Om} \times \Ss)$ the non-negative measure 
representing at time $t\in [0,T_w)$ the varifold associated to a varifold solution 
$(\chi_u, u, V )$. The compatibility condition~\eqref{compcond} entails 
that $|\nabla \chi_u(\cdot,t)|\llcorner\Omega$ is absolutely continuous with respect 
to $|V_t|_{\Ss}\llcorner\Omega$; in fact, 
$|\nabla \chi_u(\cdot,t)|\llcorner\Omega \leq |V_t|_{\Ss}\llcorner\Omega$ 
in the sense of measures on~$\Omega$. Hence, we may define the Radon--Nikodym derivative
	\begin{equation} 
	\theta_t := \frac{\mathrm{d} |\nabla \chi_u (\cdot, t)|\llcorner \Omega}
	{ \mathrm{d} |V_t|_{\Ss}\llcorner\Omega},
	\label{thetat}
	\end{equation}
which is a $(|V_t|_{\Ss}\llcorner\Omega)$-measurable function with $|\theta_t| \leq 1$ valid
$(|V_t|_{\Ss}\llcorner\Omega)$-almost everywhere in~$\Omega$. In other words, the quantity~$\frac{1}{\theta_t}$ 
represents the multiplicity of the varifold (in the interior). With this notation in place, it then holds
	\begin{equation} \label{compcond2}
	\into f(x) \, \mathrm{d} |\nabla \chi_u (\cdot, t)|(x)
	= \into \theta_t (x) f(x) \, \mathrm{d} |V_t|_{\Ss}(x)
	\end{equation}
for every $f \in L^1(\Om, |\nabla \chi_u(\cdot , t)|)$ and almost every $t \in [0, T_w)$.

\subsection{Notation}
Throughout the present work, we employ the notational
conventions of~\cite{Fischer2020}. A notable addition
is the following convention. If $D \subset \R^d$
is an open subset and $\Gamma \subset D$ a closed
subset of Hausdorff-dimension $k \in \{0,\ldots,d{-}1\}$,
we write $C^k(\overline{D}\setminus\Gamma)$
for all maps $f\colon D\to\R$ which
are $k$-times continuously differentiable
throughout $D\setminus\Gamma$ such that
the function together with all its derivatives
stays bounded throughout~$D\setminus\Gamma$.
Analogously, one defines the space $C^k_tC^m_x(\overline{D}\setminus\Gamma)$
for $D=\bigcup_{t\in[0,T]} D(t) \times \{t\}$
and $\Gamma =\bigcup_{t\in[0,T]} \Gamma(t) \times \{t\}$,
where $(D(t))_{t\in[0,T]}$ is a family of open subsets of~$\R^d$
and $(\Gamma(t))_{t\in[0,T]}$ is a family of closed
subsets $\Gamma(t)\subset D(t)$ of constant Hausdorff-dimension
$k \in \{0,\ldots,d{-}1\}$.
%
	

\section{Proof of main results}
	
\subsection{Relative entropy inequality: Proof of Proposition~\ref{proprelent90}}
\label{subsec:timeEvolRelEntropy}
The general structure of the proof is in parts
similar to the proof of~\cite[Proposition~10]{Fischer2020}. 
In what follows, we thus mainly focus on how to exploit
the boundary conditions for the velocity fields~$(u,v)$ and
a boundary adapted extension~$\xi$ of the strong interface unit normal in these computations.

\textit{Step 1:} Since~$\rho(\chi_v)$ is an affine function of~$\chi_v$,
it consequently satisfies
\begin{equation} \label{step1}
\into \rho(\chi_v(\cdot, T')) \varphi(\cdot, T') \dx 
- \into \rho(\chi^0_v) \varphi(\cdot, 0) \dx 
= \int_{0}^T \into \rho(\chi_v) (\pt \varphi + (v \cdot \nabla) \varphi ) \dx \dt 
\end{equation}
for almost every \(T' \in  [0, T]\) and all $\varphi \in \C^\infty(\overline{\Omega} \times [0, T])$. 
By the regularity of~$v$ and an approximation argument, we may test this equation with 
$v \cdot \eta$ for any $\eta \in C^\infty(\overline{\Omega}\times [0,T];\R^d)$, 
yielding
\begin{align} 
\label{step2} 
\nonumber
&\into \rho(\chi_v(\cdot, T')) v(\cdot, T') \cdot \eta(\cdot, T')  \dx 
- \into \rho(\chi^0_v) v(\cdot, 0) \cdot \eta(\cdot, 0)\dx 
\\
&=\int_{0}^{T'}\into \rho(\chi_v) (v \cdot \pt \eta + \eta \cdot \pt v)  \dx \dt 
\\&~~~\nonumber
+ \int_{0}^{T'} \into \rho(\chi_v)(\eta \cdot (v \cdot \nabla) v + v \cdot  (v \cdot \nabla) \eta  ) \dx \dt   
\end{align}
for almost every \(T' \in  [0, T]\). Next, we subtract from~\eqref{step2} 
the equation for the momentum balance~\eqref{sbalmom} of the strong solution.
It follows that the velocity field $v$ of the strong solution satisfies
\begin{align} \label{step3}
0 =& \int_{0}^{T'} \into \rho(\chi_v)  \eta \cdot \pt v  \dx \dt + 
\int_{0}^{T'} \into \rho(\chi_v)\eta \cdot (v \cdot \nabla) v   \dx \dt
\\ \nonumber 
&+ \int_{0}^{T'} \into \mu (\nabla v + \nabla v^T) : \nabla \eta \dx \dt 
- \sigma \int_{0}^{T'} \int_{I_v(t)} \h_{I_v} \cdot \, \eta \dS \dt 
\end{align}
for almost every $T' \in  [0,T]$ and every test vector field 
$\eta \in C^\infty(\overline{\Omega}\times [0,T];\R^d)$
such that $\nabla \cdot \eta = 0$ 
and $(\eta \cdot \no) |_{\partial \Om} = 0$. 
For any such test vector field~$\eta$, note that by means of~\eqref{eq:divConstraintXi}, 
the incompressibility of~$\eta$ as well as $(\eta \cdot \no)|_{\partial \Om} =0$,
we may rewrite
\begin{align}
- \sigma \int_{0}^{T'} \int_{I_v(t)} \h_{I_v} \cdot \, \eta \dS \dt 
&= \sigma \int_{0}^{T'} \int_{I_v(t)} (\nabla \cdot \xi) \eta \cdot n_{I_v} \dS \dt  \nonumber \\
&= - \sigma \int_{0}^{T'} \into \chi_v (\eta\cdot \nabla ) (\nabla \cdot \xi) \dx \dt. \label{surtenAux1}
\end{align}
Hence, we deduce from inserting~\eqref{surtenAux1} back into~\eqref{step3} that
\begin{align} \label{step3Aux}
0 =& \int_{0}^{T'} \into \rho(\chi_v)  \eta \cdot \pt v  \dx \dt + 
\int_{0}^{T'} \into \rho(\chi_v)\eta \cdot (v \cdot \nabla) v   \dx \dt
\\ \nonumber 
&+ \int_{0}^{T'} \into \mu (\nabla v + \nabla v^T) : \nabla \eta \dx \dt 
- \sigma \int_{0}^{T'} \into \chi_v (\eta\cdot \nabla ) (\nabla \cdot \xi) \dx \dt 
\end{align}
for almost every $T' \in  [0,T]$ and every test vector field 
$\eta \in C^\infty(\overline{\Omega}\times [0,T];\R^d)$ 
such that $\nabla \cdot \eta = 0$ 
and $(\eta \cdot \no) |_{\partial \Om} = 0$. The merit
of rewriting~\eqref{step3} into the form~\eqref{step3Aux}
consists of the following observation. Consider a test vector
field $\eta \in C^\infty([0,T];H^1(\Omega;\R^d))$ 
such that $\nabla \cdot \eta = 0$ and $(\eta \cdot \no) |_{\partial \Om} = 0$.
Denoting by~$\psi$ a standard mollifier, for every~$k\in\mathbb{N}$ by $\psi_k:=k^d\psi(k\cdot)$
its usual rescaling, and by~$P_{\Omega}$ the Helmholtz projection associated 
with the smooth domain~$\Omega$, it follows from standard
theory (e.g., by a combination of~\cite{Simader1992} and
standard $W^{m,2}(\Omega)$-elliptic regularity theory -- see also Appendix \ref{appendix}) that $\eta_k := P_{\Omega}(\psi_k \ast \eta)$
is an admissible test vector field for~\eqref{step3Aux}. Moreover,
taking the limit~$k\to\infty$ in~\eqref{step3Aux} with~$\eta_k$
as test vector fields is admissible and results in
\begin{align} \label{step3Aux2}
0 =& \int_{0}^{T'} \into \rho(\chi_v)  \eta \cdot \pt v  \dx \dt + 
\int_{0}^{T'} \into \rho(\chi_v)\eta \cdot (v \cdot \nabla) v   \dx \dt
\\ \nonumber 
&+ \int_{0}^{T'} \into \mu (\nabla v + \nabla v^T) : \nabla \eta \dx \dt 
- \sigma \int_{0}^{T'} \into \chi_v (\eta\cdot \nabla ) (\nabla \cdot \xi) \dx \dt 
\end{align}
for almost every $T' \in  [0,T]$ and every test vector field 
$\eta \in C^\infty([0,T];H^1(\Omega;\R^d))$ 
such that $\nabla \cdot \eta = 0$ and $(\eta \cdot \no) |_{\partial \Om} = 0$.
As an important consequence, because of the boundary condition
for the velocity fields~$(u,v)$ and their solenoidality, 
we may choose (after performing a mollification argument in the time variable) 
$\eta=u-v$ as a test function in~\eqref{step3Aux2} which entails
for almost every $T' \in  [0,T]$
\begin{align} \label{1}
0 =& \int_{0}^{T'} \into \rho(\chi_v)  (u-v) \cdot \pt v  \dx \dt + 
\int_{0}^{T'} \into \rho(\chi_v) (u-v) \cdot (v \cdot \nabla) v   \dx \dt 
\\ \nonumber 
&+ \int_{0}^{T'} \into \mu (\nabla v {+} \nabla v^\mathsf{T}) : \nabla (u{-}v) \dx \dt 
- \sigma \int_{0}^{T'} \into \chi_v ((u{-}v)\cdot \nabla ) (\nabla \cdot \xi) \dx \dt.
\end{align}
We proceed by testing the analogue of~\eqref{step1} for the phase-dependent 
density~$\rho(\chi_u)$ with the test function~$\frac{1}{2}|v|^2$, obtaining
for almost every $T' \in  [0,T]$
\begin{align} \label{2}
&\into  \frac{1}{2}\rho(\chi_u(\cdot, T')) |v(\cdot, T')|^2 \dx 
- \into \frac{1}{2} \rho(\chi^0_u) |v_0(\cdot)|^2 \dx  \nonumber \\
&=\int_{0}^{T'} \into \rho(\chi_u) v \cdot \pt v  \dx \dt 
+ \int_{0}^{T'} \into \rho(\chi_u)  v \cdot (u \cdot \nabla) v \dx \dt. 
\end{align}
We next want to test~\eqref{vbalmom} with 
the fluid velocity~$v$. Modulo a mollification argument in the time variable, we have
to argue that~$\nabla v$ does not jump across the interface so that~$v$ is an admissible
test function. Indeed, since the tangential derivative~$(\tau_{I_v}\cdot\nabla)v$
is continuous across the interface it follows from $\nabla\cdot v=0$ that also $n_{I_v}\cdot(n_{I_v}\cdot\nabla)v$
does not jump across~$I_v$. The only component which may jump is thus~$\tau_{I_v}\cdot(n_{I_v}\cdot\nabla)v$.
However, this is ruled out by the equilibrium condition for the stresses along~$I_v$ 
together with having~$\mu_+=\mu_-$. In summary, using~$v$ in~\eqref{vbalmom} implies
\begin{align}\label{4}
\nonumber
-&\into \rho(\chi_u(\cdot, T')) u(\cdot, T') \cdot v (\cdot, T') \dx 
+ \into \rho(\chi_u^0)) u_0 \cdot v_0 (\cdot) \dx   \\
-& \int_{0}^{T'} \into \mu (\nabla u + \nabla u^\mathsf{T}) : \nabla v \dx \dt \nonumber \\
= &-\int_{0}^{T'} \into \rho(\chi_u) u \cdot \partial_t v \dx \dt 
- \int_{0}^{T'} \into \rho(\chi_u) u \cdot ( u \cdot \nabla)v \dx \dt  \\
&+ \sigma \int_{0}^{T'} \int_{\overline{\Om} \times \Ss} (\Id - s \otimes s) : \nabla v \dV \dt \nonumber
\end{align}
for almost every $T' \in  [0,T]$.
We finally use $\sigma (\nabla \cdot  \xi) $ as a test function
in the transport equation \eqref{vtraneq} for the indicator function $\chi_u$ 
of the varifold solution. Hence, we obtain
\begin{align*} 
&\sigma \into \chi_u(\cdot, T') (\nabla \cdot  \xi)(\cdot, T') \dx 
- \into \chi^0_u (\nabla \cdot  \xi)(\cdot, 0) \dx \\
&= \sigma  \int_{0}^{T'} \into \chi_u (\nabla \cdot  \pt \xi + (u \cdot \nabla) (\nabla \cdot  \xi) ) \dx \dt.
\end{align*}
for almost every $T' \in  [0,T]$. Based on the boundary condition~\eqref{eq:boundaryValueXi},
which in turn in particular implies $(\pt \xi \cdot \no ) |_{ \partial \Om}
=\pt (\xi \cdot \no)|_{ \partial \Om}=0$, we may integrate by parts
to upgrade the previous display to
\begin{align} \label{5}
&- \sigma \into n_u(\cdot, T') \cdot  \xi (\cdot, T') \dchiut  + \into n^0_u \cdot  \xi(\cdot, 0) \dchiuo \nonumber \\ 
&=- \sigma \int_{0}^{T'}  \into n_u \cdot  \pt \xi  \dchiu \dt   +
\sigma  \int_{0}^{T'} \into \chi_u  (u \cdot \nabla) (\nabla \cdot  \xi)  \dx \dt 
\end{align}
for almost every $T' \in  [0,T]$. 

\textit{Step 2:} Summing~\eqref{1}, \eqref{2}, \eqref{endisineq} as well as~\eqref{4}, we obtain
\begin{align}\label{sum}
&LHS_{kin}(T') + LHS_{visc} + LHS_{surEn}(T') \nonumber \\
&\leq RHS_{kin}(0) + RHS_{surEn}(0) + RHS_{dt} + RHS_{adv} + RHS_{surTen},
\end{align}
where the individual terms are given by (cf.\ the proof of~\cite[Proposition~10]{Fischer2020}) 
\begin{align}
\label{RLHSkin}
LHS_{kin}(T') &:= \into \frac{1}{2}\rho(\chi_u(\cdot, T')) |u{-}v|^2(\cdot, T') \dx,\,
\\
\label{RLHSkin2}
RHS_{kin}(0) &:= \into \fract \rho(\chi^0_u) |u_0-v_0|^2\dx,
\\
\label{LHSsuren}
LHS_{surEn}(T') &:=	\sigma |\nabla \chi_u(\cdot, T')|(\Om) 
+ \sigma \into (1- \theta_{T'}) \, \mathrm{d} |V_{T'}|_{\Ss}(x),
\\
\label{RHSsuren}
RHS_{surEn}(0) &:= \sigma |\nabla \chi^0_u(\cdot)|(\Om),
\\
\label{LHSvisc}
LHS_{visc} &:= \int_{0}^{T'} \into \fracmu |\nabla (u-v) + \nabla (u-v)^\mathsf{T} |^2 \dx \dt,
\\
\label{RHSdt}
RHS_{dt} &:=- \int_{0}^{T'}  \into (\rho(\chi_v) - \rho(\chi_u)) (u-v) \cdot \pt v  \dx \dt,
\\
\label{RHSadv}
RHS_{adv} &:= - \int_{0}^{T'}  \into (\rho(\chi_u) - \rho(\chi_v)) (u-v) \cdot (v \cdot \nabla) v   \dx \dt 
\\&~~~~ \nonumber
- \int_{0}^{T'}  \into \rho(\chi_u) (u-v) \cdot ((u-v) \cdot \nabla) v   \dx \dt,
\\
\label{surten.0}
RHS_{surTen} &:= - \sigma \int_{0}^{T'} \into \chi_v ((u{-}v)\cdot \nabla ) 
(\nabla \cdot \xi) \dx \dt
\\&~~~~\nonumber
+ \sigma \int_{0}^{T'}  \int_{\overline{\Om} \times \Ss} (\Id - s \otimes s) : \nabla v \dV \dt. 
\end{align} 
Adding zeros, $\nabla \cdot v =0$, the boundary condition
$n_{\partial\Omega} \cdot (\nabla v {+} (\nabla v)^\mathsf{T})\xi = 
n_{\partial\Omega} \cdot (\nabla v {+} (\nabla v)^\mathsf{T})
(\mathrm{Id} - n_{\partial\Omega}\otimes n_{\partial\Omega})\xi = 0$
due to~\eqref{eq:boundaryCondFluid2} and~\eqref{eq:boundaryValueXi}, 
and the compatibility condition~\eqref{compcond}
allow to rewrite the second term of~\eqref{surten.0} as follows
\begin{align} \label{surten2.0}
\nonumber
\sigma& \int_{0}^{T'} \int_{\overline{\Om} \times \Ss} (\Id - s \otimes s) : \nabla v \dV \dt 
\\
= &  -  \sigma \int_{0}^{T'} \int_{\overline{\Om} \times \Ss} (s - \xi ) \cdot ((s - \xi) \cdot \nabla) v \dV \dt \nonumber \\
&  -  \sigma \int_{0}^{T'} \int_{\overline{\Om} \times \Ss}  
s \cdot (\nabla v + (\nabla v)^\mathsf{T})\xi \dV \dt 
\nonumber \\
&+ \sigma \int_{0}^{T'} \int_{\overline{\Om} \times \Ss}  \xi \cdot  ( \xi \cdot \nabla) v \dV \dt
\nonumber \\
= &   -  \sigma \int_{0}^{T'} \int_{\overline{\Om} \times \Ss} (s - \xi ) \cdot ((s - \xi) \cdot \nabla) v \dV \dt \\
& -  \sigma \int_{0}^{T'} \into \xi \cdot  (n_u \cdot \nabla) v \dchiu \dt
-  \sigma \int_{0}^{T'} \into  n_u \cdot (\xi \cdot \nabla) v \dchiu \dt 
\nonumber \\
&+ \sigma \int_{0}^{T'} \int_{\overline{\Omega}}  \xi \cdot  ( \xi \cdot \nabla) v \dVm \dt. 	\nonumber 
\end{align}
Furthermore, because of~\eqref{compcond2} we obtain 
\begin{align}  \label{surten2.1}
&\sigma \int_{0}^{T'} \int_{\overline{\Omega}}  \xi \cdot  ( \xi \cdot \nabla) v \dVm \dt  \\
&=\sigma \int_{0}^{T'} \into (1 {-} \theta_t) \xi \cdot  ( \xi \cdot \nabla) v \dVm \dt 
+ \sigma \int_{0}^{T'} \into \theta_t \xi \cdot  ( \xi \cdot \nabla) v \dVm \dt   \nonumber
\\&~~~ \nonumber
+ \sigma \int_{0}^{T'} \int_{\partial\Omega}  \xi \cdot  ( \xi \cdot \nabla) v \dVm \dt 
\\
&=\sigma \int_{0}^{T'} \into (1 - \theta_t) \xi \cdot  ( \xi \cdot \nabla) v \dVm \dt 
+ \sigma \int_{0}^{T'} \into \xi \cdot  ( \xi \cdot \nabla) v \dchiu \dt   \nonumber 
\\&~~~ \nonumber
+ \sigma \int_{0}^{T'} \int_{\partial\Omega}  \xi \cdot  ( \xi \cdot \nabla) v \dVm \dt.
\end{align}
The combination of~\eqref{surten.0}, \eqref{surten2.0} and~\eqref{surten2.1}
together with~$\nabla\cdot v=0$ then implies 
\begin{align} 	\label{RHSsurten}       
RHS_{surTen} =& 
-  \sigma \int_{0}^{T'} \int_{\overline{\Om} \times \Ss} (s - \xi ) \cdot ((s - \xi) \cdot \nabla) v \dV \dt \\
&+\sigma \int_{0}^{T'} \into (1 - \theta_t) \xi \cdot  ( \xi \cdot \nabla) v \dVm \dt \nonumber \\
&+ \sigma \int_{0}^{T'} \int_{\partial\Omega}  \xi \cdot  ( \xi \cdot \nabla) v \dVm \dt \\
&- \sigma \int_{0}^{T'} \into \chi_v ((u-v)\cdot \nabla ) (\nabla \cdot \xi) \dx \dt  \nonumber \\
& - \sigma \int_{0}^{T'} \into   \xi \cdot  ( (n_u - \xi )\cdot \nabla) v \dchiu \dt \nonumber \\
& - \sigma \int_{0}^{T'} \into  (n_u - \xi ) \cdot  ( \xi \cdot \nabla) v \dchiu \dt \nonumber \\
&+ \sigma \int_{0}^{T'} \into   ( \Id - \xi \otimes \xi ): \nabla v \dchiu \dt \nonumber.    
\end{align}
In summary, plugging back \eqref{RLHSkin}--\eqref{RHSadv} and~\eqref{RHSsurten} into~\eqref{sum}, 
and then summing~\eqref{5} to the resulting inequality yields
in view of the definition~\eqref{relent} of the relative entropy
\begin{align} \label{finalsum}
&E[\chi_u, u, V|\chi_v, v](T')  
+ \int_{0}^{T'} \into \fracmu |\nabla (u-v) + \nabla (u-v)^\mathsf{T} |^2 \dx \dt  \nonumber \\
&\leq  E[\chi_u, u, V|\chi_v, v](0) + R_{dt} + R_{adv} + R^{(1)}_{surTen} + R^{(2)}_{surTen}   
\end{align}
for almost every $T' \in [0,T]$, where in addition to the notation of Proposition~\ref{proprelent90}
we also defined the two auxiliary quantities
\begin{align}
\label{eq:auxRsurTen}
R^{(1)}_{surTen} &:= -\sigma \int_{0}^{T'} \int_{\overline{\Om} \times \Ss} (s - \xi ) 
\cdot ((s - \xi) \cdot \nabla) v \dV \dt
\\&~~~~\nonumber
+\sigma \int_{0}^{T'} \into (1 - \theta_t) \xi \cdot  ( \xi \cdot \nabla) v \dVm \dt
\\&~~~~\nonumber
+ \sigma \int_{0}^{T'} \int_{\partial\Omega}  \xi \cdot  ( \xi \cdot \nabla) v \dVm \dt,
\\\label{eq:auxRsurTen2}
R^{(2)}_{surTen} &:= \sigma  \int_{0}^T \into \chi_u  (u \cdot \nabla) (\nabla \cdot  \xi)  \dx \dt 
\\&~~~~\nonumber
- \sigma \int_{0}^{T'} \into \chi_v ((u{-}v)\cdot \nabla ) (\nabla \cdot \xi) \dx \dt
\\&~~~~\nonumber
- \sigma \int_{0}^{T'} \into   \xi \cdot  ( (n_u - \xi )\cdot \nabla) v \dchiu \dt 
\\&~~~~\nonumber
- \sigma \int_{0}^{T'} \into  (n_u - \xi ) \cdot  ( \xi \cdot \nabla) v \dchiu \dt
\\&~~~~\nonumber
+ \sigma \int_{0}^{T'} \into   ( \Id - \xi \otimes \xi ): \nabla v \dchiu \dt 
\\&~~~~\nonumber
- \sigma \int_{0}^{T'}  \into n_u \cdot  \pt \xi  \dchiu \dt.
\end{align}
The remainder of the proof is concerned with the post-processing of the term~$R^{(2)}_{surTen}$.

\textit{Step 3:} By adding zeros, we can rewrite the last right hand 
side term of~\eqref{eq:auxRsurTen2} as
\begin{align}
\label{eq:step3aux1}
\nonumber
&- \sigma \int_{0}^{T'}  \into n_u \cdot  \pt \xi  \dchiu \dt  
\\&
=- \sigma \int_{0}^{T'} \into (n_u {-} \xi ) \cdot  
(\pt \xi {+} (v \cdot \nabla)\xi {+} (\mathrm{Id}{-}\xi\otimes\xi)(\nabla v)^\mathsf{T} \xi)  \dchiu \dt 
\\\nonumber
&~~~ - \sigma \int_0^{T'} \into ((n_u - \xi) \cdot \xi ) (\xi\otimes\xi:\nabla v)\dchiu \dt 
\\\nonumber
&~~~ - \sigma \int_{0}^{T'}   \into \Big(\pt \frac{1}{2}|\xi|^2 + (v \cdot \nabla)\frac{1}{2}|\xi|^2\Big)   \dchiu \dt 
\\\nonumber
&~~~ + \sigma \int_{0}^{T'}   \into \xi \otimes (n_u - \xi ) : \nabla v \dchiu \dt 
\\\nonumber
&~~~ +  \sigma \int_{0}^{T'}   \into n_u \cdot ((v \cdot \nabla)\xi)\dchiu \dt .
\end{align}
We proceed by manipulating the last term in the latter identity. To this end, 
we compute applying the product rule in the first step and then adding	zero
\begin{align}
\label{eq:step3aux2}
\nonumber
&\sigma \int_{0}^{T'}  \into n_u \cdot ((v \cdot \nabla)\xi)\dchiu \dt 
\\&
=  \sigma \int_{0}^{T'}  \into n_u \cdot (\nabla \cdot (\xi \otimes v)) \dchiu \dt 
\\ \nonumber
&~~~+ \, \sigma \int_{0}^{T'}  \into (1 - n_u \cdot \xi) (\nabla \cdot v) \dchiu \dt 
-  \sigma \int_{0}^{T'}  \into \Id  : \nabla v \dchiu \dt  .
\end{align}
Noting that for symmetry reasons $\nabla \cdot (\nabla \cdot ( \xi \otimes v))
= \nabla \cdot (\nabla \cdot ( v \otimes \xi))$, 
an integration by parts based on the boundary conditions~\eqref{eq:boundaryValueXi} 
and~$(v\cdot n_{\partial\Omega})|_{\partial\Omega}=0$ entails 
\begin{align*}
\sigma& \int_{0}^{T'}  \into n_u \cdot (\nabla \cdot (\xi \otimes v)) \dchiu \dt \\
=& - \sigma \int_{0}^{T'} \into \chi_u \nabla \cdot (\nabla \cdot ( v \otimes \xi)) \dx \dt   
- \sigma \int_{0}^{T'}  \int_{ \partial \Om} \chi_u (\no \otimes v : \nabla \xi )\dS \dt \\
=& \; \sigma \int_{0}^{T'}  \into n_u \cdot (\nabla \cdot (v \otimes \xi)) \dchiu \dt
\\& 
+ \sigma \int_{0}^{T'}  \int_{ \partial \Om} \chi_u (\no \cdot ((\xi \cdot \nabla)v- (v\cdot \nabla)\xi)) \dS \dt .
\end{align*}
We next observe that the last right hand side term of the previous display is zero. 
Indeed, note first that thanks to the boundary conditions~\eqref{eq:boundaryValueXi} 
and~$(v\cdot n_{\partial\Omega})|_{\partial\Omega}=0$ the involved gradients are in fact
tangential gradients along~$\partial\Omega$. Since the tangential gradient
of a function only depends on its definition along the manifold, we are free to 
substitute~$(\xi \cdot \tano) \tano$ for~$\xi$ resp.\ $(v \cdot \tano) \tano$ for~$v$, 
obtaining in the process  
\begin{align*}
&\int_{0}^{T'} \int_{\partial \Om} \chi_u (\no \cdot ((\xi \cdot \nabla ) v - (v \cdot \nabla ) \xi))\dS \dt  \\
&= \int_{0}^{T'} \int_{\partial \Om} \chi_u  [ (\xi \cdot \nabla ) (v \cdot \tano)
-  (v \cdot \nabla ) (\xi \cdot \tano)] (\tano \cdot \no) \dS \dt \\
&~~~ + \int_{0}^{T'} \int_{\partial \Om} \chi_u [((v \cdot \tano) \xi 
- (\xi \cdot \tano) v)\cdot \nabla ) \tano ] \cdot\no  \dS \dt =0 .
\end{align*}
The combination of the previous two displays together with an integration by parts
and an application of the product rule thus yields
\begin{align*}
& \sigma  \int_{0}^{T'}  \into n_u \cdot (\nabla \cdot (\xi \otimes v)) \dchiu \dt 
\\& =  \sigma \int_{0}^{T'}  \into (n_u \cdot v)(\nabla \cdot \xi ) \dchiu \dt 
+  \sigma \int_{0}^{T'}  \into n_u \otimes \xi : \nabla v \dchiu \dt. 
\end{align*}
By another integration by parts, relying in the process 
also on $\nabla \cdot v =0$ and $(v \cdot \no)|_{\partial \Om} =0$,
we may proceed computing
\begin{align}
\label{eq:step3aux3}
\nonumber
\sigma & \int_{0}^{T'}  \into n_u \cdot (\nabla \cdot (\xi \otimes v)) \dchiu \dt 
\\\nonumber
= &  
- \sigma \int_{0}^{T'}  \into \chi_u \nabla \cdot( v(\nabla \cdot \xi )) \dx \dt 
+  \sigma \int_{0}^{T'}  \into n_u \otimes \xi : \nabla v \dchiu \dt 
\\
= &  
- \sigma \int_{0}^{T'}  \into \chi_u (v \cdot \nabla )(\nabla \cdot \xi ) \dx \dt 
+  \sigma \int_{0}^{T'}  \into n_u \otimes \xi : \nabla v \dchiu \dt. 
\end{align}
In summary, taking together~\eqref{eq:step3aux1}--\eqref{eq:step3aux3}
and adding for a last time zero yields 
\begin{align}
\label{eq:step3aux4}
\nonumber
-& \sigma \int_{0}^{T'}  \into n_u \cdot  \pt \xi  \dchiu \dt  \\
=&- \sigma \int_{0}^{T'}  \into \chi_u (v \cdot \nabla )(\nabla \cdot \xi ) \dx \dt \\\nonumber
&- \sigma \int_{0}^{T'} \into (n_u {-} \xi ) \cdot  
(\pt \xi {+} (v \cdot \nabla)\xi {+} (\mathrm{Id}{-}\xi\otimes\xi)(\nabla v)^\mathsf{T} \xi)  \dchiu \dt 
\\\nonumber& - \sigma \int_0^{T'} \into ((n_u - \xi) \cdot \xi ) (\xi\otimes\xi:\nabla v)\dchiu \dt \\\nonumber
& - \sigma \int_{0}^{T'}   \into \Big(\pt \frac{1}{2}|\xi|^2 + (v \cdot \nabla)\frac{1}{2}|\xi|^2\Big)   \dchiu \dt 
\\\nonumber&+ \sigma \int_{0}^{T'} \into  (1- n_u \cdot \xi)  (\nabla \cdot v )\dchiu \dt \\\nonumber
& +  \sigma \int_{0}^{T'}  \into (n_u - \xi) \otimes \xi  : \nabla v \dchiu \dt
+ \sigma \int_{0}^{T'} \into \xi \otimes (n_u - \xi ) : \nabla v \dchiu \dt \\\nonumber
& 	- \sigma \int_{0}^{T'}  \into (\Id - \xi\otimes \xi ): \nabla v \dchiu \dt.
\end{align}
Inserting~\eqref{eq:step3aux4} into~\eqref{eq:auxRsurTen2}
then implies that~$R^{(1)}_{surTen} + R^{(2)}_{surTen}$
combines to the desired term~$R_{surTen}$. In particular,
the estimate~\eqref{finalsum} upgrades to~\eqref{relentineq}
as asserted. \qed


\subsection{Time evolution of the bulk error: Proof of Lemma~\ref{lem:stabilityBulkError}}
\label{subsec:timeEvolBulkError}
Note that the sign conditions for the transported weight $\vartheta$,
see Definition~\ref{def:transportedWeight}, ensure that
\begin{align*}
E_{\mathrm{vol}}[\chi_u|\chi_v](t) = \int_\Omega 
\big(\chi_u(\cdot,t) - \chi_v(\cdot,t)\big) \vartheta(\cdot,t) \dx
\end{align*} 
for all $t\in [0,T]$. Hence, as a consequence of the transport equations for $\chi_v$ and $\chi_u$
(see Definition~\ref{Def_strongsol} and Definition~\ref{Def_varsol}, respectively)
one obtains
\begin{align}
\label{eq:auxComputation1}
E_{\mathrm{vol}}[\chi_u|\chi_v](T') 
&= E_{\mathrm{vol}}[\chi_u|\chi_v](0)
\\&~~~\nonumber
+ \int_0^{T'} \int_{\Omega} (\chi_u {-} \chi_v) \partial_t\vartheta \dx \dt
+ \int_0^{T'} \int_{\Omega} (\chi_uu {-} \chi_vv) \cdot\nabla\vartheta \dx \dt
\end{align}
for almost every $T'\in [0,T]$. Note that for any
sufficiently regular solenoidal vector field $F$
with $(F\cdot n_{\partial\Omega})|_{\partial\Omega}=0$, 
since $\vartheta=0$ along $I_v$ (see Definition~\ref{def:transportedWeight}), 
an integration by parts yields 
\begin{align}
\label{eq:auxComputation2}
\int_\Omega \chi_v (F\cdot\nabla)\vartheta \dx = 0.
\end{align}
Adding zero in~\eqref{eq:auxComputation1} and making use of~\eqref{eq:auxComputation2}
with respect to the choices $F=u$ and $F=v$ in form of 
$\int_\Omega \chi_v \big((u{-}v)\cdot\nabla\big)\vartheta \dx = 0$
then updates~\eqref{eq:auxComputation1} to~\eqref{eq:stabilityBulkErrorFunctional}.
This concludes the proof of Lemma~\ref{lem:stabilityBulkError}. \qed

\subsection{Conditional weak-strong uniqueness: Proof of Proposition~\ref{prop:conditionalWeakStrong}}
\label{subsec:proofCondWeakStrongUniqueness}
Starting point for a proof of the conditional weak-strong uniqueness principle
is the following important coercivity estimate (cf.\ \cite[Lemma~20]{Fischer2020}).

\begin{lemma}
\label{lem:coercivitySlicing}
Let the assumptions and notation of Proposition~\ref{prop:conditionalWeakStrong}
be in place. Then there exists a constant $C=C(\chi_v,v,T)>0$
such that for all $\delta\in (0,1]$ it holds
\begin{align}
\nonumber
\int_0^{T'} \int_{\Omega} |\chi_v{-}\chi_u||u{-}v| \dx \dt
&\leq \frac{C}{\delta} \int_0^{T'} E[\chi_u,u,V|\chi_v,v](t) + E_{\mathrm{vol}}[\chi_u|\chi_v](t) \dt
\\&~~~\label{eq:coercivitySlicing}
+ \delta \int_0^{T'} \int_{\Omega} |\nabla u - \nabla v|^2 \dx \dt
\end{align}
for all $T'\in [0,T]$.
\end{lemma}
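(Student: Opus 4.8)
\emph{Proof plan.} The plan is to prove the estimate for a fixed time $t$ in the pointwise-in-time form
\[
\into |\chi_u{-}\chi_v|\,|u{-}v| \dx
\leq \frac{C}{\delta}\big(E[\chi_u,u,V|\chi_v,v](t) + E_{\mathrm{vol}}[\chi_u|\chi_v](t)\big)
+ \delta\into |\nabla(u{-}v)|^2 \dx
\]
for all $\delta\in(0,1]$, and then integrate over $[0,T']$; throughout we use that the kinetic part of the relative entropy dominates $\|u{-}v\|_{L^2(\Omega)}^2$ up to the density bounds, i.e.\ $\|u{-}v\|_{L^2(\Omega)}^2 \leq C\,E$. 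Fix a length scale $r_0>0$ depending only on the geometry of $I_v$ and $\partial\Omega$ on $[0,T]$ (smaller than the reach of $I_v(t)$ and than the width of a one-sided tubular neighbourhood of $\partial\Omega$, uniformly in $t$), and split $\Omega$ into the interface tube $U:=\{\ddist(\cdot,I_v(t))<r_0\}$, the boundary tube $W:=\{\ddist(\cdot,\partial\Omega)<r_0\}\setminus U$, and the deep bulk $\Omega\setminus(U\cup W)$.

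On $\Omega\setminus(U\cup W)$ the coercivity $\ddist(\cdot,\partial\Omega)\wedge\ddist(\cdot,I_v)\wedge 1\leq C|\vartheta|$ from Definition~\ref{def:transportedWeight} gives $|\vartheta|\geq c(r_0)>0$, so by Cauchy--Schwarz and Young's inequality
\[
\int_{\Omega\setminus(U\cup W)} |\chi_u{-}\chi_v|\,|u{-}v| \dx
\leq \|u{-}v\|_{L^2(\Omega)}\Big(\tfrac{1}{c(r_0)}\into |\chi_u{-}\chi_v|\,|\vartheta| \dx\Big)^{1/2}
\leq C\,E + \tfrac{C}{\delta}\,E_{\mathrm{vol}},
\]
which is of the desired form since $\delta\leq 1$. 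It remains to treat the two tubes.

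For $U$ we run the one-dimensional slicing argument of \cite[Lemma~20]{Fischer2020}. Since $r_0$ lies below the reach of $I_v(t)$, introduce Fermi coordinates $x=z+r\,n_{I_v}(z,t)$, $(z,r)\in I_v(t)\times(-r_0,r_0)$, with Jacobian bounded above and below, in which $\chi_v$ is the indicator of $\{r>0\}$. For $\haus$-a.e.\ normal fibre, $r\mapsto\chi_u(z+r\,n_{I_v}(z,t))$ is in $\BV((-r_0,r_0))$, and the set where it disagrees with $\mathbf 1_{\{r>0\}}$ is (the union of) intervals each having one endpoint on $\{r=0\}$ and the other at a point of $I_u(t)$ — \emph{unless} $\chi_u$ is constant along the whole fibre, in which case the disagreement fills an entire half-fibre. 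In the first case, anchoring $|u{-}v|$ at the $I_u(t)$-crossing via the one-dimensional fundamental theorem of calculus along the fibre and re-integrating over $z\in I_v(t)$ with Fubini, one obtains (after a change of variables $z\mapsto z+r_\ast(z)n_{I_v}$ onto $I_u(t)\cap U$) the bound
\[
C\int_{I_u(t)\cap U}\!\! \ddist(\cdot,I_v(t))\,|u{-}v| \dH
+ C\,r_0^{1/2}\Big(\int_{I_u(t)\cap U}\!\! \ddist^2(\cdot,I_v(t)) \dH\Big)^{1/2}\!\Big(\int_{U}|\nabla(u{-}v)|^2\dx\Big)^{1/2};
\]
here Cauchy--Schwarz, the relative-entropy control $\sigma\int_{I_u(t)}\min\{1,C\ddist^2(\cdot,I_v(t))\}\dH\leq E$ from~\eqref{eq:BVdistControl}, a trace estimate for $H^1$ functions onto the finite-perimeter interface $I_u(t)$, and Young's inequality (with the free parameters tuned to $\delta$ and $r_0$) bring this to the form $\tfrac{C}{\delta}E+\delta\int|\nabla(u{-}v)|^2\dx$. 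The constant-along-fibre case is absorbed into $E_{\mathrm{vol}}$: there the disagreement region satisfies $|\vartheta|\gtrsim\min\{1,\ddist(\cdot,I_v)\}$, and combining with the deep-bulk estimate at the inner endpoints of the fibres closes it.

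The boundary tube $W$ is handled in exactly the same way, now slicing along the normal fibres of $\partial\Omega$ and using $\ddist(\cdot,\partial\Omega)\wedge 1\leq C|\vartheta|$ in place of the $I_v$-distance: away from $I_v(t)$ these fibres do not meet the strong interface, so $\chi_v$ is constant along each, the same dichotomy applies, the inner endpoints fall into the already-treated deep bulk, and the mass bound $\sigma\int_{\partial\Omega}1\,\mathrm d|V_t|_{\Ss}\leq E$ from~\eqref{eq:multiplicityControl}, \eqref{eq:BVrelEntropy} supplies control on the trace of $\haus\llcorner I_u(t)$ towards $\partial\Omega$. The one region where the two foliations overlap is the finitely many neighbourhoods of the contact points $I_v(t)\cap\partial\Omega$, where $I_v(t)$ meets $\partial\Omega$ orthogonally; there we use a boundary-adapted chart compatible with the $90^\circ$ angle (of the type used in the construction near contact points in Section~\ref{sec:localCalibrationContactPoint}) in which both $\ddist(\cdot,I_v)$ and $\ddist(\cdot,\partial\Omega)$ are comparable to coordinate functions, and repeat the slicing. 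Reconciling the two slicings on these overlap regions — in particular ensuring that nothing is lost through the simultaneous degeneration $|\vartheta|\gtrsim\ddist(\cdot,I_v)\wedge\ddist(\cdot,\partial\Omega)$ of the transported weight near the contact set — is the main technical point; away from the contact set the argument is verbatim that of \cite[Lemma~20]{Fischer2020}. Summing the three contributions, integrating over $[0,T']$, and using $\|u{-}v\|_{L^2(\Omega)}^2(t)\leq C\,E(t)$ yields~\eqref{eq:coercivitySlicing}.
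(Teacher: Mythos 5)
Your global decomposition (deep bulk handled through the weight $\vartheta$, a tube around $I_v$, a tube around $\partial\Omega$, and a separate discussion of the contact-point overlap) is the same skeleton as the paper's proof, and your deep-bulk estimate via \eqref{eq:lowerBoundTransportedWeight} and Young's inequality is correct. The gap is in the core per-fiber argument in the tubes. You anchor $|u{-}v|$ at an $I_u(t)$-crossing of each fiber and are then left with terms like $\int_{I_u(t)\cap U}\dist(\cdot,I_v(t))\,|u{-}v|\dH$, which you propose to close by Cauchy--Schwarz, \eqref{eq:BVdistControl}, and ``a trace estimate for $H^1$ functions onto the finite-perimeter interface $I_u(t)$''. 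No such trace estimate exists with a constant depending only on $(\chi_v,v,T)$: $I_u(t)$ is merely the reduced boundary of a set of finite perimeter over which one has no control beyond its total mass, and in two dimensions an $H^1$ velocity field (think of a logarithmic spike) can have arbitrarily large squared trace integral over a unit-length rectifiable set concentrated near the spike. Since the constant in \eqref{eq:coercivitySlicing} must not depend on the varifold solution, this step cannot be repaired as stated; any argument that integrates $|u{-}v|^2$ over $I_u(t)$ against $\mathcal{H}^1$ is off-limits.

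The paper avoids $I_u(t)$ entirely: on each normal fiber of the smooth objects ($\mathcal{T}_i$ resp.\ $\partial\Omega$) it measures only the length $h$ of the disagreement set, uses the elementary fact $h^2\leq C\int |\chi_u{-}\chi_v|\,y\,\mathrm{d}y$ together with \eqref{eq:lowerBoundTransportedWeight} to get $h^2\lesssim\int|\chi_u{-}\chi_v||\vartheta|$ along the fiber (so $\int_{I_v} h^2 \lesssim E_{\mathrm{vol}}$), and then bounds $\sup_{\text{fiber}}|u{-}v|$ by the one-dimensional interpolation inequality in terms of $\|u{-}v\|_{L^2}$ and $\|\nabla(u{-}v)\|_{L^2}$ on the fiber, exactly as in \cite[Lemma~20]{Fischer2020}; Cauchy--Schwarz in the base point and Young then give $\frac{C}{\delta}(E+E_{\mathrm{vol}})+\delta\int|\nabla(u{-}v)|^2$. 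This also sidesteps your structural claim about the disagreement set, which is moreover false as stated: a disagreement interval can have both endpoints on $I_u(t)$, or reach the edge of the tube with no $I_u(t)$-crossing even though $\chi_u$ is not constant on the fiber, so the dichotomy ``one endpoint on $I_v$, one on $I_u$, unless constant'' does not hold, and the ``absorbed into $E_{\mathrm{vol}}$'' clause for the exceptional case is not a proof. Finally, the contact-point overlap that you flag as ``the main technical point'' but leave open is resolved in the paper by splitting $\Omega\cap B_{2r}(\mathcal{T}_c(t))$ into three regions in which $\dist(\cdot,\partial\Omega)$, $\dist(\cdot,\mathcal{T}_i(t))$, or both are comparable to $\dist(\cdot,\partial\Omega)\wedge\dist(\cdot,I_v(t))$, and then running the interface-tube or boundary-tube slicing on each region; with the $h$-based argument in place this requires no new reconciliation of the two foliations.
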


\begin{proof}
It turns out to be convenient to introduce a decomposition of
the interface $I_v$ into its topological features:
the connected components of $I_v\cap\Omega$ and the connected components
of $I_v\cap \partial\Omega$. Let $N\in\mathbb{N}$ denote the total
number of such topological features of $I_v$, and split
$\{1,\ldots,N\}=:\mathcal{I}\cupdot\mathcal{C}$ as follows. 
The subset $\mathcal{I}$ enumerates the space-time connected components
of $I_v\cap\Omega$ (being time-evolving connected \textit{interfaces}), whereas
the subset $\mathcal{C}$ enumerates the space-time connected components of $I_v\cap\partial\Omega$
(being time-evolving \textit{contact points} if $d=2$, or time-evolving connected \textit{contact lines} if $d=3$).
If $i\in\mathcal{I}$, we let $\mathcal{T}_i$ denote the space-time trajectory in $\Omega$ of the 
corresponding connected interface. Furthermore, for every $c\in\mathcal{C}$ we write $\mathcal{T}_c$
representing the space-time trajectory in $\partial\Omega$ of the corresponding contact point
(if $d=2$) or line (if $d=3$). Finally, let us write $i\sim c$ for $i\in\mathcal{I}$
and $c\in\mathcal{C}$ if and only if $\mathcal{T}_i$ ends at $\mathcal{T}_c$.
With this language and notation in place, the proof is now split into five steps.

\textit{Step 1: (Choice of a suitable localization scale)} 
Denote by $n_{\partial\Omega}$ the unit normal vector field of $\partial\Omega$
pointing into $\Omega$, and by $n_{I_v}(\cdot,t)$ the unit normal vector field
of $I_v(t)$ pointing into $\Omega_v(t)$. Because of the uniform $C^2_x$ regularity 
of the boundary $\partial\Omega$ and the uniform $C_tC^2_x$ regularity of the interface $I_v(t)$,
$t\in [0,T]$, we may choose a scale $r \in (0,\frac{1}{2}]$ such that for all $t\in [0,T]$
and all $i\in\mathcal{I}$ the maps
\begin{align}
\label{eq:diffeoBoundaryOmega}
\Psi_{\partial\Omega}\colon \partial\Omega \times (-3r,3r) \to \R^d,
&\quad (x,y) \mapsto x + yn_{\partial\Omega}(x), 
\\
\label{eq:diffeoInterface}
\Psi_{\mathcal{T}_i(t)}\colon \mathcal{T}_i(t) \times (-3r,3r) \to \R^d,
&\quad (x,y) \mapsto x + yn_{I_v}(x,t)
\end{align}
are $C^1$ diffeomorphisms onto their image. By uniform regularity of $\partial\Omega$
and $I_v$ (the latter in space-time), we have bounds 
\begin{align}
\label{eq:boundsDiffeoBoundaryOmega}
\sup_{\partial\Omega{\times}[-r, r]} |\nabla\Psi_{\partial\Omega}| \leq C,
&\quad \sup_{\Psi_{\partial\Omega}(\partial\Omega{\times}[-r, r])}
|\nabla\Psi_{\partial\Omega}^{-1}| \leq C,
\\ \label{eq:boundsDiffeoInterface}
\sup_{t\in [0,T]} \sup_{\mathcal{T}_i(t){\times}[-r,r]} 
|\nabla\Psi_{\mathcal{T}_i(t)}| \leq C,
&\quad \sup_{t\in [0,T]} \sup_{\Psi_{\mathcal{T}_i(t)}(\mathcal{T}_i(t){\times}[-r,r])} 
|\nabla\Psi_{\mathcal{T}_i(t)}^{-1}| \leq C
\end{align}
for all $i\in\mathcal{I}$. By possibly choosing $r\in (0,\frac{1}{2}]$ even
smaller, we may also guarantee that for all $t\in [0,T]$ and all $i\in\mathcal{I}$ it holds
\begin{align}
\label{eq:locProperty1}
\Psi_{\mathcal{T}_i(t)}(\mathcal{T}_i(t){\times}[-r,r]) \cap 
\Psi_{\mathcal{T}_{i'}(t)}(\mathcal{T}_{i'}(t){\times}[-r,r]) &= \emptyset
\text{ for all } i'\in\mathcal{I},\,i'\neq i,
\\ \label{eq:locProperty2}
\Psi_{\mathcal{T}_i(t)}(\mathcal{T}_i(t){\times}[-r,r]) \cap 
\Psi_{\partial\Omega}(\partial\Omega{\times}[-r, r]) &\neq \emptyset
\,\Leftrightarrow\, \exists c \in \mathcal{C} \colon i \sim c,
\\ \label{eq:locProperty3}
\Psi_{\mathcal{T}_i(t)}(\mathcal{T}_i(t){\times}[-r,r]) \cap 
\Psi_{\partial\Omega}(\partial\Omega{\times}[-r, r]) &\subset 
B_{2r}(\mathcal{T}_c(t))
\text{ if } \exists c \in \mathcal{C} \colon i \sim c
\\ \label{eq:locProperty4}
B_{2r}(\mathcal{T}_c(t)) \cap B_{2r}(\mathcal{T}_{c'}(t)) &= \emptyset
\text{ for all } c,c'\in\mathcal{C},\,c'\neq c.
\end{align}
Note finally that because of the $90^\circ$ contact angle condition
and by possibly choosing $r\in (0,\frac{1}{2}]$ even smaller, we can 
furthermore ensure that
\begin{equation}
\begin{aligned}
\label{eq:inclusionLargeDistance}
&\Omega\setminus\Big(\Psi_{\partial\Omega}(\partial\Omega{\times}[-r, r]) \cup
\bigcup_{i\in\mathcal{I}}\Psi_{\mathcal{T}_i(t)}(\mathcal{T}_i(t){\times}[-r,r])\Big)
\\&
\subset \Omega \cap \{x\in\R^d\colon \dist(x,\partial\Omega) \wedge \dist(x,I_v(t)) > r\}
\end{aligned}
\end{equation}
for all $t\in [0,T]$.  Indeed, for $x\in\Omega\setminus\big(\Psi_{\partial\Omega}(\partial\Omega{\times}[-r, r]) \cup
\bigcup_{i\in\mathcal{I}}\Psi_{\mathcal{T}_i(t)}(\mathcal{T}_i(t){\times}[-r,r])\big)$
it follows that $\dist(x,\partial\Omega) > r$. In case the interface $I_v(t)$
intersects $\partial\Omega$ it may not be immediately clear that also $\dist(x,I_v(t)) > r$
holds true. Assume there exists a point $x\in\Omega\setminus\big(\Psi_{\partial\Omega}(\partial\Omega{\times}[-r, r]) \cup
\bigcup_{i\in\mathcal{I}}\Psi_{\mathcal{T}_i(t)}(\mathcal{T}_i(t){\times}[-r,r])\big)$
such that $\dist(x,I_v(t)) \leq r$. Then necessarily 
$x\in(\Omega\cap B_{r}(c(t)))\setminus
\bigcup_{i\in\mathcal{I}}\Psi_{\mathcal{T}_i(t)}(\mathcal{T}_i(t){\times}[-r,r])$
for some boundary point $c(t)\in\partial\Omega\cap I_v(t)$. Hence, because of the
uniform $C^2_x$ regularity of $\partial\Omega$ and $I_v(t)$ intersecting $\partial\Omega$
at an angle of $90^\circ$, one may choose $r\in (0,\frac{1}{2}]$ small enough
such that $x\in(\Omega\cap B_{r}(c(t)))$ implies $\dist(x,\partial\Omega)\leq r$.
As we have already seen, this contradicts $x\in\Omega\setminus
\Psi_{\partial\Omega}(\partial\Omega{\times}[-r, r])$.

\textit{Step 2: (A reduction argument)} We may estimate by a union bound 
and~\eqref{eq:inclusionLargeDistance}
\begin{align}
\nonumber
&\int_0^{T'} \int_{\Omega} |\chi_v{-}\chi_u||u{-}v| \dx \dt
\\& \label{eq:reductionCoercivity}
\leq \int_0^{T'} \int_{\Omega \cap \Psi_{\partial\Omega}(\partial\Omega{\times}[-r, r])
\setminus \bigcup_{c\in\mathcal{C}} B_{2r}(\mathcal{T}_c(t))} 
|\chi_v{-}\chi_u||u{-}v| \dx \dt
\\&~~~ \nonumber
+ \sum_{i\in\mathcal{I}}\int_0^{T'} 
\int_{\Omega \cap \Psi_{\mathcal{T}_i(t)}(\mathcal{T}_i(t){\times}[-r,r])
\setminus \bigcup_{c\in\mathcal{C}} B_{2r}(\mathcal{T}_c(t))} 
|\chi_v{-}\chi_u||u{-}v| \dx \dt
\\&~~~ \nonumber
+ C \sum_{c\in\mathcal{C}}\int_0^{T'} 
\int_{\Omega\cap B_{2r}(\mathcal{T}_c(t))}
|\chi_v{-}\chi_u||u{-}v| \dx \dt
\\&~~~ \nonumber
+ \int_0^{T'} \int_{\Omega \cap \{\dist(\cdot,\partial\Omega)\wedge\dist(\cdot,I_v(t)) > r\}} 
|\chi_v{-}\chi_u||u{-}v| \dx \dt.
\end{align}
An application of H\"older's inequality and Young's inequality,
the definition~\eqref{relent} of the relative entropy functional,
the coercivity estimate~\eqref{eq:lowerBoundTransportedWeight} for the transported weight,
and the definition~\eqref{eq:bulkErrorFunctional} of the bulk error functional
further imply 
\begin{align*}
&\int_0^{T'} \int_{\Omega \cap \{\dist(\cdot,\partial\Omega)\wedge\dist(\cdot,I_v(t)) > r\}} 
|\chi_v{-}\chi_u||u{-}v| \dx \dt
\\&
\leq C\int_0^{T'} \int_{\Omega \cap \{\dist(\cdot,\partial\Omega)
\wedge\dist(\cdot,I_v(t)) > r\}} |\chi_v{-}\chi_u| \dx \dt
+ C \int_0^{T'} E[\chi_u,u,V|\chi_v,v](t) \dt
\\&
\leq C\int_0^{T'} E[\chi_u,u,V|\chi_v,v](t) + E_{\mathrm{vol}}[\chi_u,\chi_v](t) \dt.
\end{align*}
Hence, it remains to estimate the first three terms on the
right hand side of~\eqref{eq:reductionCoercivity}.

\textit{Step 3: (Estimate near the interface but away from contact points)}
First of all, because of the localization 
properties~\eqref{eq:locProperty1}--\eqref{eq:locProperty3} it holds
for all $i\in\mathcal{I}$
\begin{align}
\label{eq:distTopFeatures1}
\dist(\cdot,\mathcal{T}_i) = \dist(\cdot,\partial\Omega) 
\wedge \dist(\cdot,I_v(t))
\end{align}
in $\Omega \cap \Psi_{\mathcal{T}_i(t)}(\mathcal{T}_i(t){\times}[-r,r])
\setminus \bigcup_{c\in\mathcal{C}} B_{2r}(\mathcal{T}_c(t))$.
Hence, the local interface error height as measured in the direction 
of $n_{I_v}$ on $\mathcal{T}_i$
\begin{align*}
h_{\mathcal{T}_i}(x,t) := \int_{-r}^r 
|\chi_u - \chi_v|(\Psi_{\mathcal{T}_i(t)}(x,y),t) \dy,
\quad x\in\mathcal{T}_i(t),\,t\in [0,T],
\end{align*}
is, because of~\eqref{eq:distTopFeatures1} and the coercivity
estimate~\eqref{eq:lowerBoundTransportedWeight} of the transported 
weight $\vartheta$, subject to the estimate
\begin{align}
\nonumber
h_{\mathcal{T}_i}^2(x,t) &\leq 
C\int_{-r}^r |\chi_u - \chi_v|(\Psi_{\mathcal{T}_i(t)}(x,y),t) y \dy
\\& \label{eq:boundInterfaceErrorHeight1}
\leq C \int_{-r}^r |\chi_u - \chi_v|(\Psi_{\mathcal{T}_i(t)}(x,y),t) 
|\vartheta|(\Psi_{\mathcal{T}_i(t)}(x,y),t) \dy
\end{align}
for all $x\in\mathcal{T}_i(t)\setminus\bigcup_{c\in\mathcal{C}}B_{2r}(\mathcal{T}_c(t))$,
all $t\in [0,T]$ and all $i\in\mathcal{I}$. Carrying out the slicing argument of the proof
of~\cite[Lemma~20]{Fischer2020} in $\Omega \cap \Psi_{\mathcal{T}_i(t)}(\mathcal{T}_i(t){\times}[-r,r])
\setminus \bigcup_{c\in\mathcal{C}} B_{2r}(\mathcal{T}_c(t))$ by means of~$ \Psi_{\mathcal{T}_i(t)}$, 
which is indeed admissible thanks to~\eqref{eq:diffeoInterface}, 
\eqref{eq:boundsDiffeoInterface} and~\eqref{eq:boundInterfaceErrorHeight1},
shows that one obtains an estimate of required form
\begin{align*}
&\sum_{i\in\mathcal{I}}\int_0^{T'} 
\int_{\Omega \cap \Psi_{\mathcal{T}_i(t)}(\mathcal{T}_i(t){\times}[-r,r])
\setminus \bigcup_{c\in\mathcal{C}} B_{2r}(\mathcal{T}_c(t))} 
|\chi_v{-}\chi_u||u{-}v| \dx \dt
\\&
\leq \frac{C}{\delta} \int_0^{T'} E[\chi_u,u,V|\chi_v,v](t) + E_{\mathrm{vol}}[\chi_u|\chi_v](t) \dt
+ \delta \int_0^{T'} \int_{\Omega} |\nabla u - \nabla v|^2 \dx \dt.
\end{align*}

\textit{Step 4: (Estimate near the boundary of the domain but away from contact points)}
The argument is similar to the one of the previous step, with the only major
difference being that the slicing argument of the proof
of~\cite[Lemma~20]{Fischer2020} is now carried out in $\Omega \cap \Psi_{\partial\Omega}(\partial\Omega{\times}[-r, r])
\setminus \bigcup_{c\in\mathcal{C}} B_{2r}(\mathcal{T}_c(t))$
by means of $\Psi_{\partial\Omega}$. This
in turn is facilitated by the following facts. First, the localization 
properties~\eqref{eq:locProperty1}--\eqref{eq:locProperty3} ensure 
\begin{align}
\label{eq:distTopFeatures2}
\dist(\cdot,\partial\Omega) = \dist(\cdot,\partial\Omega) 
\wedge \dist(\cdot,I_v(t))
\end{align}
in  $\Omega \cap \Psi_{\partial\Omega}(\partial\Omega{\times}[-r, r])
\setminus \bigcup_{c\in\mathcal{C}} B_{2r}(\mathcal{T}_c(t))$.
Second, as a consequence of~\eqref{eq:distTopFeatures2} and
the coercivity estimate~\eqref{eq:lowerBoundTransportedWeight} of the transported 
weight $\vartheta$, the local interface error height as measured in 
the direction of $n_{\partial\Omega}$
\begin{align*}
h_{\partial\Omega}(x,t) := \int_{-r}^r |\chi_u - \chi_v|(\Psi_{\partial\Omega}(x,y),t) \dy,
\quad x\in\partial\Omega,\,t\in [0,T],
\end{align*}
satisfies the estimate
\begin{align}
\nonumber
h_{\partial\Omega}^2(x,t) &\leq 
C\int_{-r}^r |\chi_u - \chi_v|(\Psi_{\partial\Omega}(x,y),t) y \dy
\\& \label{eq:boundInterfaceErrorHeight2}
\leq C \int_{-r}^r |\chi_u - \chi_v|(\Psi_{\partial\Omega}(x,y),t) 
|\vartheta|(\Psi_{\partial\Omega}(x,y),t) \dy.
\end{align}
Hence, we obtain
\begin{align*}
&\int_0^{T'} \int_{\Omega \cap \Psi_{\partial\Omega}(\partial\Omega{\times}[-r, r])
\setminus \bigcup_{c\in\mathcal{C}} B_{2r}(\mathcal{T}_c(t))} 
|\chi_v{-}\chi_u||u{-}v| \dx \dt
\\&
\leq \frac{C}{\delta} \int_0^{T'} E[\chi_u,u,V|\chi_v,v](t) + E_{\mathrm{vol}}[\chi_u|\chi_v](t) \dt
+ \delta \int_0^{T'} \int_{\Omega} |\nabla u - \nabla v|^2 \dx \dt.
\end{align*}

\textit{Step 5: (Estimate near contact points)} Fix $c\in\mathcal{C}$,
and let $i\in\mathcal{I}$ denote the unique connected interface $\mathcal{T}_i$
such that $i\sim c$. Because of the regularity of $\partial\Omega$,
the regularity of $\mathcal{T}_i$, and the $90^\circ$ contact angle condition
we may decompose the neighborhood $\Omega\cap B_{2r}(\mathcal{T}_c(t))$---by possibly reducing the localization 
scale $r\in (0,\frac{1}{2}]$ even further---into three pairwise disjoint open sets
$W_{\partial\Omega}(t)$, $W_{\mathcal{T}_i}(t)$ and $W_{\partial\Omega\sim\mathcal{T}_i}(t)$
such that $\Omega\cap B_{2r}(\mathcal{T}_c(t)) \setminus 
\big(W_{\partial\Omega}(t) \cup W_{\mathcal{T}_i}(t) \cup W_{\partial\Omega\sim\mathcal{T}_i}(t)\big)$
is an $\mathcal{H}^d$ null set and
\begin{align}
\label{eq:distTopFeatures3}
\dist(\cdot,\partial\Omega) &= \dist(\cdot,\partial\Omega) \wedge \dist(\cdot,I_v(t))
&&\text{ in } W_{\partial\Omega}(t), 
\\ \label{eq:distTopFeatures4}
\dist(\cdot,\mathcal{T}_i(t)) &= \dist(\cdot,\partial\Omega) \wedge \dist(\cdot,I_v(t))
&&\text{ in } W_{\mathcal{T}_i}(t), 
\\ \label{eq:distTopFeatures5}
\dist(\cdot,\partial\Omega) &\sim \dist(\cdot,\mathcal{T}_i(t))
\sim \dist(\cdot,I_v(t))
&&\text{ in } W_{\partial\Omega\sim\mathcal{T}_i}(t),
\end{align}
as well as
\begin{align}
\label{eq:locProperty5}
 W_{\partial\Omega}(t) &\subset \Psi_{\partial\Omega}(\partial\Omega{\times}(-3r,3r)),
\\ \label{eq:locProperty6}
W_{\mathcal{T}_i}(t) &\subset \Psi_{\mathcal{T}_i(t)}(\mathcal{T}_i(t){\times}(-3r,3r)),
\\ \label{eq:locProperty7}
W_{\partial\Omega\sim\mathcal{T}_i}(t) &\subset 
\Psi_{\partial\Omega}(\partial\Omega{\times}(-3r,3r))
\cap \Psi_{\mathcal{T}_i(t)}(\mathcal{T}_i(t){\times}(-3r,3r)).
\end{align}
(Up to a rigid motion, these sets can in fact be defined independent of $t\in [0,T]$.)
Hence, applying the argument of \textit{Step~3} based on~\eqref{eq:distTopFeatures4}
and~\eqref{eq:locProperty6} with respect to
$\Omega\cap B_{2r}(\mathcal{T}_c(t))\cap W_{\mathcal{T}_i}(t)$,
the argument of \textit{Step~4} based on~\eqref{eq:distTopFeatures3}
and~\eqref{eq:locProperty5} with respect to
$\Omega\cap B_{2r}(\mathcal{T}_c(t))\cap W_{\partial\Omega}(t)$,
and either the argument of \textit{Step~3} or \textit{Step~4}
based on~\eqref{eq:distTopFeatures5} and~\eqref{eq:locProperty7}
with respect to $\Omega\cap B_{2r}(\mathcal{T}_c(t))\cap W_{\partial\Omega\sim\mathcal{T}_i}(t)$
entails
\begin{align*}
&\sum_{c\in\mathcal{C}}\int_0^{T'} 
\int_{\Omega\cap B_{2r}(\mathcal{T}_c(t))}
|\chi_v{-}\chi_u||u{-}v| \dx \dt
\\&
\leq \frac{C}{\delta} \int_0^{T'} E[\chi_u,u,V|\chi_v,v](t) + E_{\mathrm{vol}}[\chi_u|\chi_v](t) \dt
+ \delta \int_0^{T'} \int_{\Omega} |\nabla u - \nabla v|^2 \dx \dt.
\end{align*}
This in turn concludes the proof of Lemma~\ref{lem:coercivitySlicing}.
\end{proof}

\begin{proof}[Proof of Proposition~\ref{prop:conditionalWeakStrong}]
The proof proceeds in three steps.

\textit{Step 1: (Post-processing the relative entropy inequality~\eqref{relentineq})}
It follows immediately from the $L^\infty_{x,t}$-bound for $\partial_t v$ and 
$\rho(\chi_v)-\rho(\chi_u)=(\rho^+{-}\rho^-)(\chi_v{-}\chi_u)$ that
\begin{align}
\label{eq:postProcessedRdt}
|R_{dt}| \leq C \int_0^{T'} \int_{\Omega} |\chi_v{-}\chi_u||u{-}v| \dx \dt
\end{align}
for almost every $T'\in [0,T]$. Furthermore, the $L^\infty_tW^{1,\infty}_x$-bound
for $v$, the definition~\eqref{relent} of the relative entropy functional, and again 
the identity $\rho(\chi_v)-\rho(\chi_u)=(\rho^+{-}\rho^-)(\chi_v{-}\chi_u)$ imply that
\begin{align}
\label{eq:postProcessedRadv}
|R_{adv}| \leq C \int_0^{T'} \int_{\Omega} |\chi_v{-}\chi_u||u{-}v| \dx \dt
+ C \int_0^{T'} E[\chi_u,u,V|\chi_v,v](t) \dt
\end{align}
for almost every $T'\in [0,T]$. For a bound on the interface contribution
$R_{surTen}$, we rely on the $L^\infty_tW^{1,\infty}_x$-bound
for $v$, the $L^\infty_tW^{2,\infty}_x$-bound for $\xi$, the
$L^\infty_tW^{1,\infty}_x$-bound for $B$, the definition~\eqref{relent} of the relative entropy functional,
as well as the estimates~\eqref{eq:timeEvolutionXi} and~\eqref{eq:timeEvolutionLengthXi}
of a boundary adapted extension~$\xi$ of~$n_{I_v}$ to the effect that
\begin{align}
\label{eq:postProcessedRsurTen}
|R_{surTen}| &\leq 
C \int_0^{T'} \int_{\Omega} |\chi_v{-}\chi_u||u{-}v| \dx \dt
\\&~~~ \nonumber
+ C \int_0^{T'} \int_{\overline{\Omega}\times\mathbb{S}^{d-1}} |s-\xi|^2 \,\mathrm{d}V_t(x,s) \dt
\\&~~~ \nonumber
+ C \int_{0}^{T'} \int_{\Omega} 1- \theta_t \,\mathrm{d} |V_t|_{\Ss} \dt
\\&~~~ \nonumber
+ C  \int_{0}^{T'} \int_{\partial\Omega} 1 \,\mathrm{d} |V_t|_{\Ss} \dt 
\\&~~~ \nonumber
+ C \int_0^{T'} \int_{\Omega} |n_u - \xi|^2 \,\mathrm{d}|\nabla\chi_u| \dt
\\&~~~ \nonumber
+ C \int_0^{T'} \int_{\Omega} \dist^2(\cdot,I_v) \wedge 1 
\,\mathrm{d}|\nabla\chi_u| \dt
\\&~~~ \nonumber
+ C \int_0^{T'} \int_{\Omega} |\xi \cdot (\xi - n_u)|
\,\mathrm{d}|\nabla\chi_u| \dt
\\&~~~ \nonumber
+ C \int_0^{T'} E[\chi_u,u,V|\chi_v,v](t) \dt
\end{align}
for almost every $T'\in [0,T]$. 
It follows from property~\eqref{eq:coercivityByModulationOfLength} of a boundary adapted extension~$\xi$
and the trivial estimates $|\xi \cdot (\xi - n_u)| \leq (1{-}|\xi|^2) + (1 {-} n_u \cdot \xi )
\leq 2(1{-}|\xi|) + (1 {-} n_u \cdot \xi )$ and $1-|\xi| \leq 1 - n_u\cdot\xi$ that
\begin{align}
\label{eq:postProcessedAux1}
&\int_0^{T'} \int_{\Omega} \dist^2(\cdot,I_v) \wedge 1 
\,\mathrm{d}|\nabla\chi_u| \dt  + \int_0^{T'} \int_{\Omega} |\xi \cdot (\xi - n_u)|
\,\mathrm{d}|\nabla\chi_u| \dt \\
&\leq C \int_0^{T'} E[\chi_u,u,V|\chi_v,v](t) \dt. \nonumber 
\end{align}
Moreover, the trivial estimate $|n_u-\xi|^2 \leq 2(1-n_u\cdot \xi)$ implies
\begin{align}
\label{eq:postProcessedAux2}
\int_0^{T'} \int_{\Omega} |n_u - \xi|^2 \,\mathrm{d}|\nabla\chi_u| \dt
\leq C \int_0^{T'} E[\chi_u,u,V|\chi_v,v](t) \dt.
\end{align}
Recall finally from~\eqref{eq:tiltExcessVarifoldControl}
and~\eqref{eq:multiplicityControl} that
\begin{align}
\nonumber
\int_0^{T'} \int_{\overline{\Omega}\times\mathbb{S}^{d-1}} |s-\xi|^2 \,\mathrm{d}V_t(x,s) \dt
&\leq C \int_0^{T'} E[\chi_u,u,V|\chi_v,v](t) \dt,
\\ \label{eq:postProcessedAux3}
\int_{0}^{T'} \int_{\Omega} 1- \theta_t \,\mathrm{d} |V_t|_{\Ss} \dt
+ \int_{0}^{T'} \int_{\partial\Omega} 1 \,\mathrm{d} |V_t|_{\Ss} \dt 
&\leq C \int_0^{T'} E[\chi_u,u,V|\chi_v,v](t) \dt.
\end{align}

By inserting back the estimates~\eqref{eq:postProcessedRdt}--\eqref{eq:postProcessedAux3}
into the relative entropy inequality~\eqref{relentineq}, then making use of the
coercivity estimate~\eqref{eq:coercivitySlicing} and Korn's inequality, and finally
carrying out an absorption
argument, it follows that there exist two constants $c=c(\chi_v,v,T)>0$ and $C=C(\chi_v,v,T)>0$ such that
for almost every $T'\in [0,T]$
\begin{align}
\nonumber
&E[\chi_u,u,V|\chi_v,v](T') + c\int_{0}^{T'} \int_{\Omega} |\nabla (u{-}v) + \nabla (u{-}v)^\mathsf{T} |^2 \dx \dt
\\& \label{eq:postProcessedRelEntropyInequality}
\leq E[\chi_u,u,V|\chi_v,v](0) 
+ C\int_0^{T'} E[\chi_u,u,V|\chi_v,v](t) + E_{\mathrm{vol}}[\chi_u|\chi_v](t) \dt.
\end{align}

\textit{Step 2: (Post-processing the identity~\eqref{eq:stabilityBulkErrorFunctional})}
By the $L^\infty_tW^{1,\infty}_x$-bound for the transported weight $\vartheta$,
the estimate~\eqref{eq:advDerivTransportedWeight} on the advective derivative of the transported weight $\vartheta$,
and the definition~\eqref{eq:bulkErrorFunctional} of the bulk error functional
we infer that
\begin{align*}
E_{\mathrm{vol}}[\chi_u|\chi_v](T') &\leq E_{\mathrm{vol}}[\chi_u|\chi_v](0)
+ C\int_0^{T'} E_{\mathrm{vol}}[\chi_u|\chi_v](t) \dt  
\\&~~~
+ C\int_0^{T'} \int_{\Omega} |\chi_v{-}\chi_u||u{-}v| \dx \dt
\end{align*}
for almost every $T'\in [0,T]$. Adding~\eqref{eq:postProcessedRelEntropyInequality}
to the previous display, and making use of the coercivity estimate~\eqref{eq:coercivitySlicing}
in combination with Korn's inequality and an absorption argument thus implies that
for almost every $T'\in [0,T]$
\begin{align}
\nonumber
&E[\chi_u,u,V|\chi_v,v](T') + E_{\mathrm{vol}}[\chi_u|\chi_v](T')
+ c\int_{0}^{T'} \int_{\Omega} |\nabla (u{-}v) + \nabla (u{-}v)^\mathsf{T} |^2 \dx \dt
\\& \label{eq:postProcessedBulkError}
\leq E[\chi_u,u,V|\chi_v,v](0) + E_{\mathrm{vol}}[\chi_u|\chi_v](0)
\\&~~~ \nonumber
+ C\int_0^{T'} E[\chi_u,u,V|\chi_v,v](t) + E_{\mathrm{vol}}[\chi_u|\chi_v](t) \dt.
\end{align}

\textit{Step 3: (Conclusion)} The stability estimates~\eqref{eq:stabilityRelEntropy} 
and~\eqref{eq:stabilityBulkError} are an immediate consequence of the
estimate~\eqref{eq:postProcessedBulkError} by an application of Gronwall's lemma. In case
of coinciding initial conditions, it follows that $E_{\mathrm{vol}}[\chi_u|\chi_v](t) = 0$
for almost every $t\in [0,T]$. This in turn implies that $\chi_u(\cdot,t)=\chi_v(\cdot,t)$
almost everywhere in $\Omega$ for almost every $t\in[0,T]$. The asserted representation
of the varifold follows from the fact that $E[\chi_u,u,V|\chi_v,v](t) = 0$ for almost
every $t\in[0,T]$. This concludes the proof of the conditional
weak-strong uniqueness principle.
\end{proof}

\subsection{Proof of Theorem~\ref{theo:mainResult}}
This is now an immediate consequence of Proposition~\ref{prop:conditionalWeakStrong} 
and the existence results of Proposition~\ref{prop:existenceCalibration}
and Lemma~\ref{lem:existenceTransportedWeight}, respectively. \qed

\section{Bulk extension of the interface unit normal}
\label{sec:localCalibrationBulk}
The aim of this short section is the construction of an extension of
the interface unit normal in the vicinity of
a space-time trajectory in~$\Omega$ of a connected component of the 
interface~$I_v$ corresponding to a strong solution in the sense of Definition~\ref{Def_strongsol}
on a time interval~$[0,T]$.

Mainly for reference purposes in later sections, it turns out to be beneficial
to introduce already at this stage some notation in relation
to a decomposition of the interface~$I_v$ into its topological features:
the connected components of $I_v\cap\Omega$ and the connected components
of $I_v\cap \partial\Omega$. Denoting by $N\in\mathbb{N}$ the total
number of such topological features present in the interface~$I_v$ we split
$\{1,\ldots,N\}=:\mathcal{I}\cupdot\mathcal{C}$ by means of two disjoint subsets.
In particular, the subset~$\mathcal{I}$ enumerates the space-time connected components
of $I_v\cap\Omega$, i.e., time-evolving connected \textit{interfaces}, whereas
the subset $\mathcal{C}$ enumerates the space-time connected components of 
$I_v\cap\partial\Omega$, i.e., time-evolving \textit{contact points}. If $i\in\mathcal{I}$, 
we denote by $\mathcal{T}_i:=\bigcup_{t\in [0,T]}\mathcal{T}_{i}(t){\times}\{t\}\subset I_v\cap (\Omega{\times}[0,T])$ 
the space-time trajectory of the corresponding connected interfaces
$\mathcal{T}_{i}(t)\subset I_v(t)\cap\Omega$, $t\in [0,T]$.

For each~$i\in\mathcal{I}$, we want to define a vector field~$\xi^{i}$
subject to conditions as in Definition~\ref{def:calibrationTwoPhaseFluidFlow}; at least
in a suitable neighborhood of~$\mathcal{T}_i$. We first formalize what we mean by 
the latter in form of the following definition.

\begin{definition}
\label{def:locRadiusInterface}
Let $d=2$, and let $\Omega\subset\R^2$ be a bounded domain with orientable and smooth boundary.
Let $(\chi_v,v)$ be a strong solution to the incompressible
Navier--Stokes equation for two fluids in the sense of Definition~\ref{Def_strongsol}
on a time interval $[0,T]$. Fix a two-phase interface $i\in\mathcal{I}$.
We call $r_i\in (0,1]$ an \emph{admissible localization radius for the interface} 
$\mathcal{T}_i \subset I_v\cap (\Omega{\times}[0,T])$ if the map
	\begin{align} \label{eq:diffeoInterfaceLoc} 
	\Psi_{\mathcal{T}_i}\colon \mathcal{T}_{i} \times (-2r_i,2r_i) \to \R^2\times[0,T], \; \;
	(x,t,s) \mapsto \big(x + s n_{I_v}(x,t),t\big)
	\end{align}
is bijective onto its image~$\mathrm{im}(\Psi_{\mathcal{T}_i})
:=\Psi_{\mathcal{T}_i}\big(\mathcal{T}_{i} {\times} (-2r_i,2r_i)\big)$,
and its inverse is a diffeomorphism of class $C^0_tC^2_x(\overline{\mathrm{im}(\Psi_{\mathcal{T}_i})})
\cap C^1_tC^0_x(\overline{\mathrm{im}(\Psi_{\mathcal{T}_i})})$.

In case such a scale~$r_i\in (0,1]$ exists, we may express the
inverse by means of $\Psi_{\mathcal{T}_i}^{-1}=:
(P_{\mathcal{T}_i},\Id, s_{\mathcal{T}_i})\colon \mathrm{im}(\Psi_{\mathcal{T}_i})
\to\mathcal{T}_{i} {\times} (-2r_i,2r_i)$. Hence,
the map~$P_{\mathcal{T}_i}$ represents in each time slice the \emph{nearest-point projection} 
onto the interface $\mathcal{T}_i(t) \subset I_v(t) \cap \Omega$, $t\in [0,T]$,
whereas~$s_{\mathcal{T}_i}$ bears the interpretation of a \emph{signed distance function}
with orientation fixed by~$\nabla s_{\mathcal{T}_i} = n_{I_v}$. In particular,
$s_{\mathcal{T}_i}\in C^0_tC^3_x(\overline{\mathrm{im}(\Psi_{\mathcal{T}_i})})
\cap C^1_tC^1_x(\overline{\mathrm{im}(\Psi_{\mathcal{T}_i})})$
as well as $P_{\mathcal{T}_i}\in C^0_tC^2_x(\overline{\mathrm{im}(\Psi_{\mathcal{T}_i})})
\cap C^1_tC^0_x(\overline{\mathrm{im}(\Psi_{\mathcal{T}_i})})$.

By a slight abuse of notation, we extend to~$\mathrm{im}(\Psi_{\mathcal{T}_i})$ 
the definition of the normal vector field resp.\ the scalar mean curvature of~$\mathcal{T}_i$
by means of 
\begin{align}
\label{eq:extNormal}
n_{I_v}\colon \mathrm{im}(\Psi_{\mathcal{T}_i})\to\mathbb{S}^{1},\quad
&(x,t)\mapsto n_{I_v}(P_{\mathcal{T}_i}(x,t),t) = \nabla s_{\mathcal{T}_i}(x,t),
\\
\label{eq:extMeanCurvature}
H_{I_v}\colon \mathrm{im}(\Psi_{\mathcal{T}_i})\to\mathbb{R},\quad
&(x,t)\mapsto -(\Delta s_{\mathcal{T}_i})(P_{\mathcal{T}_i}(x,t),t).
\end{align} 
Hence, we may register that $n_{I_v}\in C^0_tC^2_x(\overline{\mathrm{im}(\Psi_{\mathcal{T}_i})})
\cap C^1_tC^0_x(\overline{\mathrm{im}(\Psi_{\mathcal{T}_i})})$
as well as $H_{I_v} \in C^0_tC^1_x(\overline{\mathrm{im}(\Psi_{\mathcal{T}_i})})$.
\end{definition}

It is clear from Definition~\ref{Def_strongsol} of a strong solution to the 
incompressible Navier--Stokes equation for two fluids, in particular Definition~\ref{Def_smdom} 
of smoothly evolving domains and interfaces, that all interfaces admit an admissible localization radius in
the sense of Definition~\ref{def:locRadiusInterface} as a consequence of the tubular
neighborhood theorem. 

\begin{construction}
\label{const:CalibrationInterface}
	Let $d=2$, and let $\Omega\subset\R^2$ be a bounded domain with orientable and smooth boundary.
	Let $(\chi_v,v)$ be a strong solution to the incompressible
	Navier--Stokes equation for two fluids in the sense of Definition~\ref{Def_strongsol}
	on a time interval $[0,T]$. Fix a two-phase interface $i\in\mathcal{I}$ and let $r_i\in (0,1]$ be an admissible localization radius for the interface $\mathcal{T}_i \subset I_v$ in the sense of Definition~\ref{def:locRadiusInterface}.
	Then \emph{a bulk extension of the unit normal~$n_{I_v}$ along a smooth interface} 
	$\mathcal{T}_i $ is the vector field~$\xi^i$ defined by
	\begin{align} 
		&\xi^i(x,t) := n_{I_v}(x,t), \label{eq:DefXiInterface} 
		&& (x,t) \in \mathrm{im}(\Psi_{\mathcal{T}_i}) \cap (\Omega{\times}[0,T]).
	\end{align}
\end{construction}

We record the required properties of the vector field~$\xi^{i}$.

\begin{proposition}
	\label{prop:CalibrationInterface}
	Let the assumptions and notation of Construction~\ref{const:CalibrationInterface}
	be in place. Then, in terms of regularity it holds that $\xi^{i}\in C_t^0C_x^2 \cap C_t^1C_x^0
	(\overline{\mathrm{im}(\Psi_{\mathcal{T}_i})\cap(\Omega{\times}[0,T])})$.
	Moreover, we have
	\begin{align}
	\label{eq:divMeanCurvature}
	\nabla\cdot\xi^{i} + H_{I_v} = O(\dist(\cdot,\mathcal{T}_i)),
	\\
	\label{eq:evolEquXiInterface}
	\partial_t\xi^{i}
	+ (v\cdot\nabla)\xi^{i}
	+ (\Id - \xi^i \otimes \xi^i)(\nabla v)^\mathsf{T}\xi^{i} 
	&= O(\dist(\cdot,\mathcal{T}_i)),
	\\
	\label{eq:evolEquLengthXiInterface}
	\partial_t|\xi^{i}|^2
	+ (v\cdot\nabla)|\xi^{i}|^2
	&= 0
	\end{align}
	throughout the space-time domain~$\mathrm{im}(\Psi_{\mathcal{T}_i})\cap(\Omega{\times}[0,T])$.
\end{proposition}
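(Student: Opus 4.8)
The proof is essentially a direct verification exercise: since $\xi^i = n_{I_v}$ is defined as the extended normal field $\nabla s_{\mathcal{T}_i}$ in the tubular neighborhood $\mathrm{im}(\Psi_{\mathcal{T}_i})$, each of the three asserted identities reduces to a computation about the signed distance function and the nearest-point projection, combined with the known evolution of the interface $\mathcal{T}_i$ under the flow of $v$. The regularity claim $\xi^i \in C^0_tC^2_x \cap C^1_tC^0_x$ is immediate from Definition~\ref{def:locRadiusInterface}, which already records $n_{I_v} \in C^0_tC^2_x \cap C^1_tC^0_x$ on the closure of the (space-time) tubular neighborhood, intersected with $\Omega{\times}[0,T]$.

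First I would handle~\eqref{eq:divMeanCurvature}. Since $\xi^i(x,t) = \nabla s_{\mathcal{T}_i}(x,t)$, we have $\nabla \cdot \xi^i = \Delta s_{\mathcal{T}_i}$. The standard fact about signed distance functions is that along $\mathcal{T}_i$ itself one has $\Delta s_{\mathcal{T}_i} = -H_{I_v}$ (this is precisely how $H_{I_v}$ is extended in~\eqref{eq:extMeanCurvature}); away from the interface $\Delta s_{\mathcal{T}_i}$ differs from $-H_{I_v}\circ P_{\mathcal{T}_i}$ by a term of order $\dist(\cdot,\mathcal{T}_i)$ because $\Delta s_{\mathcal{T}_i}$ equals the sum of principal curvatures of the parallel hypersurface at signed distance $s_{\mathcal{T}_i}$, which is a smooth ($C^1_x$, by the recorded regularity of $H_{I_v}$) perturbation in $s_{\mathcal{T}_i}$ of the curvature on $\mathcal{T}_i$. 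In the planar case $d=2$ this is just the familiar formula $\Delta s = \kappa/(1+s\kappa)$; a first-order Taylor expansion in $s = s_{\mathcal{T}_i}$ gives~\eqref{eq:divMeanCurvature}. The bound on the $O(\cdot)$ constant is uniform in $t$ because of the uniform bounds built into Definition~\ref{def:locRadiusInterface} and Definition~\ref{Def_smdom}.

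Next, \eqref{eq:evolEquLengthXiInterface} is the easiest: $|\xi^i|^2 = |\nabla s_{\mathcal{T}_i}|^2 \equiv 1$ throughout $\mathrm{im}(\Psi_{\mathcal{T}_i})$ (the eikonal identity), so both $\partial_t|\xi^i|^2$ and $(v\cdot\nabla)|\xi^i|^2$ vanish identically. For~\eqref{eq:evolEquXiInterface}, I would argue as follows. Differentiating the identity $|\xi^i|^2 \equiv 1$ shows $(\nabla\xi^i)^\mathsf{T}\xi^i = 0$ and $\partial_t\xi^i \cdot \xi^i = 0$, so $\partial_t\xi^i + (v\cdot\nabla)\xi^i$ is pointwise orthogonal to $\xi^i$; likewise the term $(\Id - \xi^i\otimes\xi^i)(\nabla v)^\mathsf{T}\xi^i$ is orthogonal to $\xi^i$ by construction. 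Thus the whole left-hand side is tangential, and it suffices to check the identity holds \emph{on} $\mathcal{T}_i$ up to $O(\dist(\cdot,\mathcal{T}_i))$ — since every term is $C^0_x$ in a way that is Lipschitz transverse to $\mathcal{T}_i$ (inherited from the $C^0_tC^2_x$ regularity), the value at a general point $(x,t)$ differs from the value at the projection $(P_{\mathcal{T}_i}(x,t),t)$ by $O(\dist(\cdot,\mathcal{T}_i))$. On $\mathcal{T}_i$, the normal $n_{I_v}$ evolves according to the transport-type equation $\partial_t n_{I_v} + (v\cdot\nabla)n_{I_v} = -(\Id - n_{I_v}\otimes n_{I_v})(\nabla v)^\mathsf{T}n_{I_v}$, which is the standard formula for the time derivative of the unit normal of a hypersurface transported with normal velocity $v\cdot n_{I_v}$ (equivalently, this is exactly the content of the cited Lemma~\ref{lem:higherOrderCompConditions} / Lemma~\ref{lem:gradientsFrames} on $\frac{\mathrm d}{\mathrm dt}n_{I_v}$, noting that tangential reparametrization does not affect $n_{I_v}$). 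Rearranging gives precisely~\eqref{eq:evolEquXiInterface} on $\mathcal{T}_i$, hence up to the claimed error everywhere in $\mathrm{im}(\Psi_{\mathcal{T}_i})\cap(\Omega{\times}[0,T])$.

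The main obstacle is not conceptual but bookkeeping: one must be careful that "$O(\dist(\cdot,\mathcal{T}_i))$" is interpreted with a constant that is uniform in time on the space-time neighborhood, and that the transverse Lipschitz/differentiability used to pass from $\mathcal{T}_i$ to nearby points is actually available — this is exactly why the regularity was packaged as $C^0_tC^2_x\cap C^1_tC^0_x$ on the \emph{closure} in Definition~\ref{def:locRadiusInterface}, and why $r_i\le 1$ is fixed so all the relevant quantities (curvature of parallel surfaces, $\nabla v$, $\partial_t\psi$) are bounded. A minor subtlety worth stating explicitly is that the derivation of the evolution equation for $n_{I_v}$ on $\mathcal{T}_i$ uses that $\mathcal{T}_i$ is transported with normal speed $v\cdot n_{I_v}$, which follows from the transport equation~\eqref{straneq} for $\chi_v$ together with $\nabla\cdot v = 0$ and $(v\cdot n_{\partial\Omega})|_{\partial\Omega}=0$ as already noted in the remarks after Definition~\ref{Def_strongsol}.
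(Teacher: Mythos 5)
Your proposal is correct and in essence follows the paper's own route: all three identities are read off from $\xi^i=\nabla s_{\mathcal{T}_i}$, the unit-length (eikonal) property, and the transport identity $\partial_t s_{\mathcal{T}_i}(x,t)=-\big(v(P_{\mathcal{T}_i}(x,t),t)\cdot\nabla\big)s_{\mathcal{T}_i}(x,t)$, with a Lipschitz estimate supplying the $O(\dist(\cdot,\mathcal{T}_i))$ errors. The only point to make explicit is that your ``verify on $\mathcal{T}_i$, then extend by transverse Lipschitz continuity'' step for~\eqref{eq:evolEquXiInterface} requires $\partial_t\xi^i$ and $(\nabla v)^\mathsf{T}\xi^i$ to be Lipschitz in $x$ across the interface, which does not follow from the packaged $C^1_tC^0_x$ regularity of $n_{I_v}$ alone but from the explicit formula for $\partial_t s_{\mathcal{T}_i}$ together with the continuity of $\nabla v$ across $\mathcal{T}_i$ (available here since $\mu^+=\mu^-$); the paper instead differentiates the transport identity throughout the tubular neighborhood, where $\nabla P_{\mathcal{T}_i}=\Id-n_{I_v}\otimes n_{I_v}-s_{\mathcal{T}_i}\nabla n_{I_v}$ annihilates the normal derivative of $v$ (which is why its argument is independent of the equal-viscosity assumption), and note that Lemma~\ref{lem:gradientsFrames}/Lemma~\ref{lem:higherOrderCompConditions} are not quite the right citations for the normal's evolution law, as they concern the spatial gradients of the frames and the contact point trajectory, respectively.
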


\begin{proof} 
The asserted regularity of~$\xi^{i}$ is a direct consequence of its 
definition~\eqref{eq:DefXiInterface} and the regularity of~$n_{I_v}$
from Definition~\ref{def:locRadiusInterface}. 
In view of the definitions~\eqref{eq:DefXiInterface}, \eqref{eq:extNormal} 
and \eqref{eq:extMeanCurvature}, the estimate~\eqref{eq:divMeanCurvature}
is directly implied by a Lipschitz estimate based on the regularity of~$H_{I_v}$
from Definition~\ref{def:locRadiusInterface}. The equation~\eqref{eq:evolEquLengthXiInterface}
is trivially fulfilled because~$\xi^{i}$ is a unit vector, cf.\ the definition~\eqref{eq:DefXiInterface}.

For a proof of~\eqref{eq:evolEquXiInterface}, we first note that
$\partial_t s_{\mathcal{T}_i}(x,t) = -\big(v(P_{\mathcal{T}_i}(x,t),t)\cdot\nabla\big) s_{\mathcal{T}_i}(x,t)$ 
for all~$(x,t)\in\mathrm{im}(\Psi_{\mathcal{T}_i})\cap(\Omega{\times}[0,T])$.
Indeed, $\partial_t s_{\mathcal{T}_i}$ equals the normal speed (oriented with respect to~$-n_{I_v}$) 
of the nearest point on the connected interface~$\mathcal{T}_i$, 
which in turn by~$n_{I_v}=\nabla s_{\mathcal{T}_i}$
is precisely given by the asserted right hand side term. Differentiating
the equation for the time evolution of~$s_{\mathcal{T}_i}$ then yields~\eqref{eq:evolEquXiInterface}
by means of~$\nabla P_{\mathcal{T}_i} = \mathrm{Id} - n_{I_v}\otimes n_{I_v} - s_{\mathcal{T}_i} \nabla n_{I_v}$,
the chain rule, and the regularity of~$v$. Note carefully that this argument
is actually valid regardless of the assumption~$\mu_-=\mu_+$ since~$(\tau_{I_v}\cdot\nabla)v$
does not jump across the interface~$\mathcal{T}_i$.
\end{proof} 

\section{Extension of the interface unit normal at a $90^\circ$ contact point}
\label{sec:localCalibrationContactPoint}
This section constitutes the core of the present work. We establish
the existence of a boundary adapted extension of the interface unit normal 
in the vicinity of a space-time trajectory 
of a~$90^\circ$ contact point on the boundary~$\partial\Omega$. 

The vector field from the previous section serves
as the main building block for an extension of~$n_{I_v}$ away
from the domain boundary~$\partial\Omega$. However,
it is immediately clear that the bulk construction in general does not
respect the necessary boundary condition~$n_{\partial\Omega}\cdot\xi=0$
along~$\partial\Omega$. (Even more drastically, on non-convex parts of~$\partial\Omega$
the domain of definition for the bulk construction from the previous section may not
even include~$\partial\Omega$!) Hence, in the vicinity of contact points
a careful perturbation of the rather trivial construction from the previous section is required
to enforce the boundary condition. That this can indeed be achieved is summarized 
in the following Proposition~\ref{prop:CalibrationContactPoint}, representing the main result 
of this section.

For its formulation, it is convenient for the purposes of Section~\ref{sec:globalConstruction}
to recall the notation in relation to the decomposition of the interface~$I_v$ 
in terms of its topological features. More precisely, denoting by $N\in\mathbb{N}$ the total
number of such topological features present in the interface~$I_v$, we split
$\{1,\ldots,N\}=:\mathcal{I}\cupdot\mathcal{C}$, where~$\mathcal{I}$ enumerates 
the time-evolving connected \textit{interfaces} of~$I_v\cap\Omega$, whereas~$\mathcal{C}$ 
enumerates the time-evolving \textit{contact points} of~$I_v\cap\partial\Omega$. 
If~$i\in\mathcal{I}$, $\mathcal{T}_i:=\bigcup_{t\in [0,T]}\mathcal{T}_{i}(t){\times}\{t\}
\subset I_v\cap (\Omega{\times}[0,T])$ denotes the space-time trajectory of the 
corresponding connected interface, whereas if~$c\in\mathcal{C}$, we denote by 
$\mathcal{T}_c:=\bigcup_{t\in [0,T]}\mathcal{T}_{c}(t){\times}\{t\}
\subset I_v \cap (\partial\Omega{\times}[0,T])$ the space-time trajectory of the corresponding contact point.
Finally, we write $i\sim c$ for $i\in\mathcal{I}$ and $c\in\mathcal{C}$ if and only 
if $\mathcal{T}_i$ ends at $\mathcal{T}_c$; otherwise $i\not\sim c$.

\begin{proposition}
\label{prop:CalibrationContactPoint}
Let~${d=2}$, and let $\Omega\subset\R^2$ be a bounded domain with orientable and smooth boundary~$\partial\Omega$.
Let~$(\chi_v,v)$ be a strong solution to the incompressible
Navier--Stokes equation for two fluids in the sense of Definition~\ref{Def_strongsol}
on a time interval~$[0,T]$. 
Fix a contact point~$c\in\mathcal{C}$ and let~$i\in\mathcal{I}$ be such that~$i\sim c$. 
Let $r_c\in (0,1]$ be an associated admissible localization radius in 
the sense of Definition~\ref{def:locRadius} below.

There exists a potentially smaller radius~$\widehat r_c\in (0,r_c]$, and a vector field
\begin{align*}
\xi^{c}\colon \mathcal{N}_{\widehat r_c,c}(\Omega) \to \mathbb{S}^1
\end{align*} 
defined on the space-time domain $\mathcal{N}_{\widehat r_c,c}(\Omega)
:=\bigcup_{t\in [0,T]} \big(B_{\widehat r_c}(\mathcal{T}_c(t)) 
\cap \Omega\,\big) {\times} \{t\}$,
such that the following conditions are satisfied:
	
\begin{itemize}[leftmargin=0.7cm]
\item[i)] It holds $\xi^c \in \big(C^0_tC^2_x\cap C^1_tC^0_x \big)
					\big(\overline{\mathcal{N}_{\widehat r_c,c}(\Omega)}\setminus\mathcal{T}_c\big)$.
\item[ii)] We have $\xi^c(\cdot,t)=n_{I_v}(\cdot,t)$ and 
					$ \nabla \cdot \xi^c(\cdot,t)=- H_{I_v}(\cdot,t)$
					along $\mathcal{T}_i(t) \cap B_{\widehat r_c}(\mathcal{T}_c(t))$ 
					for all $t\in [0,T]$.
\item[iii)] The required boundary condition is satisfied even
						away from the contact point, namely $\xi^c \cdot \no=0$ 
						along $\mathcal{N}_{\widehat r_c,c}(\Omega) \cap (\partial\Omega{\times}[0,T])$.
\item[iv)] The following estimates on the time evolution of~$\xi^c$hold true in~$\mathcal{N}_{\widehat r_c,c}(\Omega)$
					\begin{align}
					\label{eq:timeEvolutionXiContactPoint}
					\partial_t\xi^c + (v\cdot\nabla)\xi^c 
					+ (\Id - \xi^c\otimes \xi^c)(\nabla v)^\mathsf{T}\xi^c 
					&= O\big(\dist(\cdot ,\mathcal{T}_i)\big),	\\
					\label{eq:timeEvolutionLengthXiContactPoint}
					\partial_t|\xi^c|^2 + (v\cdot\nabla)|\xi^c|^2 
					&= 0.
					\end{align}
\item[v)] Let~$r_i\in (0,1]$ be an admissible localization radius
					for the interface~$\mathcal{T}_i$, and let~$\xi^i$ be
					the bulk extension of the interface unit normal on scale~$r_i$
					as provided by Proposition~\ref{prop:CalibrationInterface}.
					The vector field~$\xi^c$ is a perturbation of the bulk extension~$\xi^i$
					in the sense that the following compatibility bounds hold true
\begin{align}
\label{eq:compBoundLocalCalibrations1}
|\xi^{i}(\cdot,t) - \xi^c(\cdot,t)| 
+ |\nabla\cdot\xi^{i}(\cdot,t) - \nabla\cdot\xi^c(\cdot,t)| &\leq C\dist(\cdot,\mathcal{T}_i(t)),
\\
\label{eq:compBoundLocalCalibrations2}
|\xi^{i}(\cdot,t)\cdot(\xi^{i} {-} \xi^c)(\cdot,t)| &\leq C\dist^2(\cdot,\mathcal{T}_i(t))
\end{align}
within~$B_{\widehat r_c \wedge r_i}(\mathcal{T}_c(t)) \cap 
\big(W^c_{\mathcal{T}_i}(t)\cup W^c_{\Omega^\pm_v}(t)\big)$
for all~$t\in [0,T]$, cf.\ Definition~\ref{def:locRadius}.
\end{itemize}
A vector field $\xi^{c}$ subject to these requirements
will be referred to as a \emph{contact point extension 
of the interface unit normal on scale~$\widehat r_c$}.
\end{proposition}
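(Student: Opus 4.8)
The plan is to work in a fixed time-dependent coordinate frame adapted simultaneously to the boundary~$\partial\Omega$ and to the interface trajectory~$\mathcal{T}_i$ near the contact point~$\mathcal{T}_c$. Since $\partial\Omega$ is smooth and $\mathcal{T}_i$ meets it at a right angle, after flattening (via the diffeomorphism $\Psi_{\partial\Omega}$ and a further normalization of~$\mathcal{T}_i$, as encoded in Definition~\ref{def:locRadius}) we may assume that near the contact point the boundary is (locally) the line $\{x_2=0\}$ and the interface $\mathcal{T}_i(t)$ is (to leading order) the line $\{x_1=0\}$, so that $n_{I_v}$ is close to $\pm e_1$ and $n_{\partial\Omega}$ close to $e_2$. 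First I would take the bulk extension $\xi^i = n_{I_v}$ from Construction~\ref{const:CalibrationInterface} and observe that its only deficiency near $\mathcal{T}_c$ is the failure of $\xi^i\cdot n_{\partial\Omega}=0$ on $\partial\Omega$: since the $90^\circ$ contact angle and the higher-order compatibility condition~\eqref{eq:higherOrderCompContactPoints} force $n_{I_v}\cdot n_{\partial\Omega}$ to vanish quadratically in the distance to $\mathcal{T}_c$ along $\partial\Omega$, the correction needed is itself small. The construction of $\xi^c$ is then: project $\xi^i$ onto the tangent bundle of $\partial\Omega$ and renormalize, but do this only in a thin boundary layer, glued to $\xi^i$ itself via a cutoff at distance $\sim\dist(\cdot,\partial\Omega)$. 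Concretely, writing $\pi_{\partial\Omega}$ for the orthogonal projection onto $T\partial\Omega$, set $\xi^c := \xi^i$ away from the boundary and interpolate towards $\pi_{\partial\Omega}\xi^i / |\pi_{\partial\Omega}\xi^i|$ as one approaches $\partial\Omega$, with the interpolation weight itself built out of the signed distance to $\partial\Omega$ so that the renormalization is harmless (the projected vector has length bounded below, since $|\xi^i\cdot n_{\partial\Omega}|$ is small).

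The verification of the listed properties then proceeds item by item. Regularity (i) follows from the regularity of $n_{I_v}$ in $\mathrm{im}(\Psi_{\mathcal{T}_i})$ (Definition~\ref{def:locRadiusInterface}), the smoothness of $\partial\Omega$, and the fact that away from $\mathcal{T}_c$ all the building blocks are $C^0_tC^2_x\cap C^1_tC^0_x$; the only potential loss of regularity is at the contact point itself, which is exactly why $\mathcal{T}_c$ is excised from the closure in the statement. Property (ii) is immediate from the construction, since on $\mathcal{T}_i\cap B_{\widehat r_c}(\mathcal{T}_c)$ we are away from the boundary layer (choose $\widehat r_c$ small enough that the boundary layer of width $\sim\dist(\cdot,\partial\Omega)$ does not reach the part of the interface where we impose $\xi^c=n_{I_v}$), hence $\xi^c=\xi^i=n_{I_v}$ and $\nabla\cdot\xi^c=\nabla\cdot\xi^i=-H_{I_v}$ there by~\eqref{eq:divMeanCurvature} evaluated on $\mathcal{T}_i$. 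Property (iii) is built into the interpolation: on $\partial\Omega$ the weight is chosen so that $\xi^c$ equals the (renormalized) tangential projection, whence $\xi^c\cdot n_{\partial\Omega}=0$ identically on $\mathcal{N}_{\widehat r_c,c}(\Omega)\cap(\partial\Omega{\times}[0,T])$. The length equation~\eqref{eq:timeEvolutionLengthXiContactPoint} holds trivially since $\xi^c$ is $\mathbb{S}^1$-valued by construction.

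The transport equation~\eqref{eq:timeEvolutionXiContactPoint} and the compatibility bounds (v) are the technical heart. For~\eqref{eq:timeEvolutionXiContactPoint}, I would differentiate the interpolation formula: the term $\partial_t\xi^i+(v\cdot\nabla)\xi^i+(\Id-\xi^i\otimes\xi^i)(\nabla v)^\mathsf{T}\xi^i$ is $O(\dist(\cdot,\mathcal{T}_i))$ by Proposition~\ref{prop:CalibrationInterface}, and the additional terms generated by the projection-and-renormalization carry a factor controlled by $|\xi^i\cdot n_{\partial\Omega}|$ (and its derivatives), which by the $90^\circ$ condition and~\eqref{eq:higherOrderCompContactPoints} is $O(\dist(\cdot,\mathcal{T}_c))=O(\dist(\cdot,\mathcal{T}_i))$ in the boundary layer near $\mathcal{T}_c$; one must also use that the advective derivative $\partial_t+(v\cdot\nabla)$ of the signed distance to $\partial\Omega$ is itself $O(\dist(\cdot,\partial\Omega))$ because $v$ is tangential on $\partial\Omega$, which keeps the cutoff-derivative terms of the right order. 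For (v), within $W^c_{\mathcal{T}_i}\cup W^c_{\Omega^\pm_v}$ one is either outside the boundary layer (where $\xi^c=\xi^i$ exactly and both bounds are trivial) or inside it but at distance comparable to $\dist(\cdot,\mathcal{T}_i)$ from $\mathcal{T}_i$; the difference $\xi^i-\xi^c$ is then again controlled by $|\xi^i\cdot n_{\partial\Omega}|$ and the cutoff, giving the linear bound~\eqref{eq:compBoundLocalCalibrations1}, while~\eqref{eq:compBoundLocalCalibrations2} gains an extra power because $\xi^i\cdot(\xi^i-\xi^c)$ only sees the component of the correction along $\xi^i\approx n_{I_v}$, and the correction is essentially orthogonal to $n_{I_v}$ (it lies in the $n_{\partial\Omega}$ direction), so the inner product is quadratically small. \textbf{The main obstacle} I anticipate is not any single estimate but organizing the construction so that one single cutoff scale simultaneously (a) keeps the tangential projection non-degenerate, (b) does not interfere with $\xi^c=n_{I_v}$ on the part of $\mathcal{T}_i$ near the contact point, and (c) produces error terms of the \emph{correct} order $\dist(\cdot,\mathcal{T}_i)$ (not just $\dist(\cdot,\mathcal{T}_c)$) in~\eqref{eq:timeEvolutionXiContactPoint} — this forces a careful choice of the interpolation profile as a function of the two competing distances, and is presumably why Definition~\ref{def:locRadius} introduces the partition $W^c_{\partial\Omega}$, $W^c_{\mathcal{T}_i}$, $W^c_{\Omega^\pm_v}$ into regions where the geometry is under control.
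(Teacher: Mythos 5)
There is a genuine gap: the construction you propose—take the bulk extension $\xi^i=n_{I_v}\circ P_{\mathcal{T}_i}$ everywhere and glue it, via a cutoff in $\dist(\cdot,\partial\Omega)$, to its renormalized tangential projection in a thin boundary layer—cannot deliver the stated properties, for three concrete reasons. First, $\xi^i$ is only defined on the tubular neighborhood $\mathrm{im}(\Psi_{\mathcal{T}_i})$, and near a contact point this neighborhood in general does \emph{not} cover the boundary wedges $W^{\pm,c}_{\partial\Omega}$ (the normal line to $\mathcal{T}_i$ at $c(t)$ is tangent to $\partial\Omega$, so depending on the curvature of $\partial\Omega$ the nearest-point projection onto $\mathcal{T}_i$ fails to exist on part of $B_{\widehat r_c}(\mathcal{T}_c(t))\cap\Omega$ near $\partial\Omega$); this is exactly why the paper builds a second, independent block $\xi^c_{\partial\Omega}$ out of the boundary frame $(\tau_{\partial\Omega},n_{\partial\Omega})$ on $\mathrm{im}(\Psi_{\partial\Omega})$ rather than projecting $\xi^i$. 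Second, a cutoff depending on $\dist(\cdot,\partial\Omega)$ alone cannot reconcile properties (ii) and (iii): since $\mathcal{T}_i(t)$ ends \emph{on} $\partial\Omega$ at $c(t)$, every neighborhood of $\partial\Omega\cap B_{\widehat r_c}(\mathcal{T}_c(t))$ contains interface points arbitrarily close to the contact point, so no choice of $\widehat r_c$ makes the boundary layer "not reach" $\mathcal{T}_i\cap B_{\widehat r_c}(\mathcal{T}_c)$ as you require; and on those interface points the correction is of size $|n_{\partial\Omega}\cdot\xi^i|\sim\dist(\cdot,\mathcal{T}_c)$, which is not $O(\dist(\cdot,\mathcal{T}_i))$ there, so (ii) and \eqref{eq:timeEvolutionXiContactPoint} would both fail. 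The only way out is the angular (wedge) interpolation centered at $\mathcal{T}_c$ that you mention at the very end but do not implement.

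Third, and most importantly, once one does interpolate on the wedges $W^c_{\Omega^\pm_v}$, mere $O(\dist(\cdot,\mathcal{T}_c))$ closeness of the two local fields is not enough: the interpolation functions $\lambda^\pm_c$ necessarily have $|\nabla\lambda^\pm_c|\sim\dist^{-1}(\cdot,\mathcal{T}_c)$ and $|\nabla^2\lambda^\pm_c|\sim\dist^{-2}(\cdot,\mathcal{T}_c)$ (cf.\ \eqref{eq:LambdaFirstDeriv}--\eqref{eq:LambdaSecondDeriv}), so to keep $\xi^c$ in $C^0_tC^2_x$ with bounded derivatives (property (i)) and to get the $O(\dist(\cdot,\mathcal{T}_i))$ orders in (iv), the two blocks must agree to \emph{first order} (values and gradients) at the contact point, i.e.\ their difference must be $O(\dist^2(\cdot,\mathcal{T}_c))$. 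The renormalized tangential projection of $\xi^i$ agrees with $\xi^i$ only to zeroth order: at $c(t)$ one has $\nabla(n_{\partial\Omega}\cdot\xi^i)=-H_{\partial\Omega}n_{I_v}+H_{I_v}\tau_{I_v}\neq 0$ in general, so the gradients do not match and the difference is genuinely linear in $\dist(\cdot,\mathcal{T}_c)$; the interpolation then produces second derivatives of size $\dist^{-1}$ and evolution errors of size $O(1)$ instead of $O(\dist(\cdot,\mathcal{T}_i))$. This is precisely what the paper's ans\"atze \eqref{eq:AnsatzAuxiliaryXiInterface}--\eqref{eq:AnsatzAuxiliaryXiBuondary} are designed to repair: the first-order corrections $\alpha_{\mathcal{T}_i}s_{\mathcal{T}_i}\tau_{I_v}$ and $\alpha_{\partial\Omega}s_{\partial\Omega}n_{\partial\Omega}$, with coefficients $-H_{\partial\Omega}(c(t))$ and $-H_{I_v}(c(t))$, enforce the first-order compatibility of Lemma~\ref{lem:firstOrderCompXi} (whose verification, together with the evolution estimate for the boundary block in Lemma~\ref{lemma:PropertiesAuxPairs}, uses the higher-order contact-point condition \eqref{eq:compHigher} coming from \eqref{eq:higherOrderCompContactPoints}), while the quadratic terms $-\tfrac12\alpha^2s^2$ keep the length equal to $1$ up to $O(\dist^4)$ so that the final normalization is harmless. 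Your heuristic for \eqref{eq:compBoundLocalCalibrations2} (the correction is essentially orthogonal to $n_{I_v}$) is sound, and the overall architecture you anticipate (two adapted fields, wedge decomposition, use of the $90^\circ$ and stress boundary conditions) is the right one, but without the separately constructed boundary block and without the curvature-tuned first-order corrections the proposed proof does not go through.
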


A proof of Proposition~\ref{prop:CalibrationContactPoint}
is provided in Subsection~\ref{sec:proofCalibrationContactPoint}.
The preceding three subsections collect all the ingredients
required for the construction. 

\subsection{Description of the geometry close to a moving contact point,
choice of orthonormal frames, and a higher-order compatibility condition} 
\label{sec:notationContactPoint}
We provide a suitable decomposition for a space-time neighborhood 
of a moving contact point~$\mathcal{T}_c$, $c\in\mathcal{C}$. 
The main ingredient is given by the following notion 
of an admissible localization radius.
Though rather technical and lengthy in appearance, all requirements in the definition
are essentially a direct consequence of the regularity of a strong solution. The main purpose of
the notion of an admissible localization radius is to collect in a unified way
notation and properties which will be referred to numerous times in the sequel.

\begin{definition}
\label{def:locRadius}
Let~${d=2}$, and let $\Omega\subset\R^2$ be a bounded domain with orientable and smooth boundary~$\partial\Omega$.
Let~$(\chi_v,v)$ be a strong solution to the incompressible
Navier--Stokes equation for two fluids in the sense of Definition~\ref{Def_strongsol}
on a time interval~$[0,T]$. Fix a contact point~$c\in\mathcal{C}$ and let~$i\in\mathcal{I}$ 
be such that~$i\sim c$. Let~$r_i\in (0,1]$ be an admissible localization radius for
the connected interface~$\mathcal{T}_i$ in the sense of Definition~\ref{def:locRadiusInterface}.
We call $r_c\in (0,r_i]$ an 
\emph{admissible localization radius for the moving~$90^\circ$ contact point~$\mathcal{T}_c$} 
if the following list of properties is satisfied:
\begin{itemize}[leftmargin=0.7cm]
\item[i)] Let the map $\Psi_{\partial\Omega}\colon \partial\Omega {\times} (-2r_c,2r_c) \to \mathbb{R}^2$
					be given by $(x,s)\mapsto x{+}sn_{\partial\Omega}(x)$. We require~$\Psi_{\partial\Omega}$
					to be bijective onto its image~$\mathrm{im}(\Psi_{\partial\Omega})
					:=\Psi_{\partial\Omega}\big(\partial\Omega {\times} (-2r_c,2r_c)\big)$,
					and its inverse $\Psi_{\partial\Omega}^{-1}$ is a diffeomorphism of class 
					$C^2_{x}(\overline{\mathrm{im}(\Psi_{\partial\Omega})})$.
					We may express the inverse by means of $\Psi_{\partial\Omega}^{-1}=:
					(P_{\partial\Omega},s_{\partial\Omega})\colon \mathrm{im}(\Psi_{\partial\Omega})
					\to\partial\Omega {\times} (-2r_c,2r_c)$. Hence,
					$P_{\partial\Omega}$ represents the \emph{nearest-point projection} 
					onto~$\partial\Omega$, whereas~$s_{\partial\Omega}$ is 
					the \emph{signed distance function} with orientation fixed 
					by~$\nabla s_{\partial\Omega} = n_{\partial\Omega}$. In particular,
					$s_{\partial\Omega}\in C^3_x(\overline{\mathrm{im}(\Psi_{\partial\Omega})})$
					and $P_{\partial\Omega}\in C^2_x(\overline{\mathrm{im}(\Psi_{\partial\Omega})})$.
					
					By a slight abuse of notation, we extend to~$\mathrm{im}(\Psi_{\partial\Omega})$ 
					the definition of the normal vector field resp.\ the scalar mean curvature of~$\partial\Omega$
					by means of 
					\begin{align}
					\label{eq:extNormalBoundary}
					n_{\partial\Omega}\colon \mathrm{im}(\Psi_{\partial\Omega})\to\mathbb{S}^{1},\quad
					&(x,t)\mapsto n_{\partial\Omega}(P_{\partial\Omega}(x)) = \nabla s_{\partial\Omega}(x),
					\\
					\label{eq:extMeanCurvatureBoundary}
					H_{\partial\Omega}\colon \mathrm{im}(\Psi_{\partial\Omega})\to\mathbb{R},\quad
					&(x,t)\mapsto -(\Delta s_{\partial\Omega})(P_{\partial\Omega}(x)).
					\end{align} 
					Hence, we note that $n_{\partial\Omega}\in 
					C^2_x(\overline{\mathrm{im}(\Psi_{\partial\Omega})})$
					and $H_{\partial\Omega} \in C^1_x(\overline{\mathrm{im}(\Psi_{\partial\Omega})})$.
\item[ii)] There exist sets $W^c_{\mathcal{T}_i}=\bigcup_{t\in[0,T]}W^c_{\mathcal{T}_i}(t){\times}\{t\}$,
					 $W_{\Omega^\pm_v}^c=\bigcup_{t\in[0,T]}W_{\Omega^\pm_v}^c(t){\times}\{t\}$ and
					 $W_{\partial\Omega}^{\pm,c}=\bigcup_{t\in[0,T]}W_{\partial\Omega}^{\pm,c}(t){\times}\{t\}$
					 with the following properties: 					
					
					 First, for every~$t\in [0,T]$, the sets~$W^c_{\mathcal{T}_i}(t)$, $W_{\Omega^\pm_v}^c(t)$
					 and~$W_{\partial\Omega}^{\pm,c}(t)$ are non-empty subsets of~$\overline{B_{r_c}(\mathcal{T}_c(t))}$
					 with pairwise disjoint interior. For all~$t\in [0,T]$, each of these sets is represented
				   by a cone with apex at the contact point~$\mathcal{T}_c(t)$ intersected with 
					 $\overline{B_{r_c}(\mathcal{T}_c(t))}$. More precisely, there exist six time-dependent pairwise distinct
					 unit-length vectors~$X^{\pm}_{\mathcal{T}_i}$, $X_{\Omega^\pm_v}$ and~$X_{\partial\Omega}^{\pm}$
					 of class~$C^1_t([0,T])$ such that for all~$t\in [0,T]$ it holds
					 \begin{align}
					 \label{eq:defWedgeInterface}
					 ~~~~W^c_{\mathcal{T}_i}(t)
					 &= \big(\mathcal{T}_c(t){+}\{\alpha X^{+}_{\mathcal{T}_i}(t) + \beta X^-_{\mathcal{T}_i}(t)\colon
						  \alpha,\beta\in [0,\infty)\}\big) \cap \overline{B_{r_c}(\mathcal{T}_c(t))},
					 \\
					 \label{eq:defWedgeInterpol}
					 ~~~~W_{\Omega^\pm_v}^c(t)
					 &= \big(\mathcal{T}_c(t){+}\{\alpha X_{\Omega^\pm_v}(t) + \beta X^\pm_{\mathcal{T}_i}(t)\colon
							\alpha,\beta\in [0,\infty)\}\big) \cap \overline{B_{r_c}(\mathcal{T}_c(t))},
					 \\
					 \label{eq:defWedgeBoundary}
					 ~~~~W_{\partial\Omega}^{\pm,c}(t)
					 &= \big(\mathcal{T}_c(t){+}\{\alpha X^\pm_{\partial\Omega}(t) + \beta X_{\Omega^\pm_v}(t)\colon
							\alpha,\beta\in [0,\infty)\}\big) \cap \overline{B_{r_c}(\mathcal{T}_c(t))}.
					 \end{align}
					 The opening angles of these cones are constant, and numerically fixed by
					 \begin{align}
					 \label{eq:openingAnglesWedges}
					 X^\pm_{\partial\Omega} \cdot X_{\Omega^\pm_v}
					 = X^+_{\mathcal{T}_i} \cdot X^-_{\mathcal{T}_i}&= \cos(\pi/3), \quad
					 X_{\Omega^\pm_v} \cdot X^\pm_{\mathcal{T}_i} = \cos(\pi/6). 
					 \end{align}
					
					 Second, for every~$t\in [0,T]$, the sets~$W^c_{\mathcal{T}_i}(t)$, $W_{\Omega^\pm_v}^c(t)$
					 and~$W_{\partial\Omega}^{\pm,c}(t)$ provide a decomposition of $B_{r_c}(\mathcal{T}_c(t))$ in form of
					 \begin{equation}
					 \begin{aligned}
					 \label{eq:decompContactPoint}
					 &\overline{B_{r_c}(\mathcal{T}_c(t))} \cap \overline{\Omega}
					 \\&
					 = \big(W^c_{\mathcal{T}_i}(t)\cup W^c_{\Omega^+_v}(t)\cup W^c_{\Omega^{-}_v}(t)
					 \cup W_{\partial\Omega}^{+,c}(t)\cup W_{\partial\Omega}^{-,c}(t)\big) 
					 \cap \overline{\Omega}.
		       \end{aligned}
					 \end{equation}
					
					 Third, for each~$t\in [0,T]$, the following inclusions hold true
					 (recall from Definition~\ref{def:locRadiusInterface} 
					 the notation for the diffeomorphism~$\Psi_{\mathcal{T}_i}$):
					 \begin{align}
					 \label{eq:interpolWedge1}
					 &\overline{B_{r_c}(\mathcal{T}_c(t))} \cap \mathcal{T}_i(t)
					 \subset \big(W^c_{\mathcal{T}_i}(t)\setminus\mathcal{T}_c(t)\big) 
					 \subset \{x\in\Omega\colon (x,t)\in\mathrm{im}(\Psi_{\mathcal{T}_i})\},
					 \\ 
					 \label{eq:interpolWedge2}
					 &\overline{B_{r_c}(\mathcal{T}_c(t))} \cap \partial\Omega
					 \subset W_{\partial\Omega}^{+,c}(t) \cup W_{\partial\Omega}^{-,c}(t),
					 \\
					 \label{eq:inclusionWedges3}
					 &W_{\partial\Omega}^{\pm,c}(t) \subset \{x\in\mathbb{R}^2\colon 
					 x\in\mathrm{im}(\Psi_{\partial\Omega})\},
					 \\&
					 \label{eq:inclusionWedges4}
			     W_{\Omega^\pm_v}^c(t)\setminus\mathcal{T}_c(t) \subset \Omega^\pm_v(t)
			     \cap \{x\in\Omega\colon (x,t)\in\mathrm{im}(\Psi_{\mathcal{T}_i}),\,
					 x \in \mathrm{im}(\Psi_{\partial\Omega})\}.
					 \end{align}
\item[iii)] Finally, there exists a constant~$C>0$ such that 
					  \begin{align} 
						\label{eq:distCondition}
					  \dist (\cdot, \mathcal{T}_c) \vee \dist (\cdot, \partial \Om )
						\leq C\dist (\cdot, \mathcal{T}_i)
						&&\text{on } W^{c}_{\Omega^\pm_v} \cup W^{\pm,c}_{\partial\Omega},
						\end{align}									
\end{itemize}
We refer from here onwards to $W^c_{\mathcal{T}_i}$ as the \emph{interface wedge}, 
$W^{\pm,c}_{\partial\Omega}$ as \emph{boundary wedges}, and $W^c_{\Omega^\pm_v}$ 
as \emph{interpolation wedges}. 
\end{definition}

\begin{figure}
	\begin{subfigure}[b]{0.4\textwidth}
	\begin{center}
		\begin{tikzpicture}
		\fill[inner color = red!30!, outer color = red!30!] (0, 0) --	(1.41,0) arc (0:30:1.41cm);
		\fill[inner color = red!30!, outer color = red!30!] (0, 0) --	(1.41,0) arc (0:-30:1.41cm);
		\draw[scale=6.5,domain=0:0.19,smooth,variable=\s,orange] plot ({\s},{(0.58)*\s	});;
		\draw[scale=6.5,domain=-0.109:0.109,smooth,variable=\s,blue] plot ({\s},{(1.73)*\s	});;
		\draw[scale=6.5,domain=0:0.19,smooth,variable=\s,orange] plot ({\s},{-(0.58)*\s	});;
		\draw[scale=6.5,domain=-0.109:0.109,smooth,variable=\s,blue] plot ({\s},{-(1.73)*\s	});;
		\node (none) at (-0.15,+1.7) {\small $\partial \Om$};
		\node (none) at (2,0) {\small $\mathcal{T}_i$};
		\node[left] (-1,0) {\small $\mathcal{T}_c$};;
		\draw[scale=6.5,domain=0:0.3,thick,smooth,variable=\s,red] plot ({\s},{(0.5)*\s*\s	});;
		\draw[dashed,thick,black!40!green] (0,0) circle (1.41cm);
		\draw[scale=4,domain=-0.42:0.42,thick,smooth,variable=\s,black] plot ({-(0.65)*\s*\s},{ \s	});;
		\node[left] at (0.04,0) [circle,fill,red,inner sep=0.7pt]  {};;
		\end{tikzpicture}
	\end{center}
	\caption{Interface wedge $W^c_{\mathcal{T}_i}$.}
	\label{Fig_InterfaceWedge}
    \end{subfigure}
 
 \begin{subfigure}[b]{0.4\textwidth}
	\begin{center}
		\begin{tikzpicture}
		\fill[inner color = blue!30!, outer color = blue!30!] (0, 0) --	(0,1.41) arc (90:120:1.41cm);
		\fill[inner color = blue!30!, outer color = blue!30!] (0, 0) --	(0,1.41) arc (90:60:1.41cm);
		\fill[inner color = blue!30!, outer color = blue!30!] (0, 0) --	(0,-1.41) arc (270:300:1.41cm);
		\fill[inner color = blue!30!, outer color = blue!30!] (0, 0) --	(0,-1.41) arc (270:240:1.41cm);
		\draw[scale=6.5,domain=0:0.19,smooth,variable=\s,orange] plot ({\s},{(0.58)*\s	});;
		\draw[scale=6.5,domain=-0.109:0.109,smooth,variable=\s,blue] plot ({\s},{(1.73)*\s	});;
		\draw[scale=6.5,domain=0:0.19,smooth,variable=\s,orange] plot ({\s},{-(0.58)*\s	});;
		\draw[scale=6.5,domain=-0.109:0.109,smooth,variable=\s,blue] plot ({\s},{-(1.73)*\s	});;
		\node (none) at (-0.15,+1.7) {\small $\partial \Om$};
		\node (none) at (2,0) {\small $\mathcal{T}_i$};
		\node[left] (-1,0) {\small $\mathcal{T}_c$};;
		\draw[scale=6.5,domain=0:0.3,thick,smooth,variable=\s,red] plot ({\s},{(0.5)*\s*\s	});;
		\draw[dashed,thick,black!40!green] (0,0) circle (1.41cm);
		\draw[scale=4,domain=-0.42:0.42,thick,smooth,variable=\s,black] plot ({-(0.65)*\s*\s},{ \s	});;
		\node[left] at (0.04,0) [circle,fill,red,inner sep=0.7pt]  {};;
		\end{tikzpicture}
	\end{center}
	\caption{Boundary wedges $W^\pm_{\partial \Om}$.}
	\label{Fig_BoundaryWedges}
\end{subfigure}

\begin{subfigure}[b]{0.4\textwidth}
	\begin{center}
		\begin{tikzpicture}
		\fill[inner color = yellow!30!, outer color = yellow!30!] (0, 0) --	(1.22,0.705) arc (30:60:1.41cm);
		\fill[inner color = yellow!30!, outer color = yellow!30!] (0, 0) --	(0.705,-1.22) arc (300:330:1.41cm);
		\draw[scale=6.5,domain=0:0.19,smooth,variable=\s,orange] plot ({\s},{(0.58)*\s	});;
		\draw[scale=6.5,domain=-0.109:0.109,smooth,variable=\s,blue] plot ({\s},{(1.73)*\s	});;
		\draw[scale=6.5,domain=0:0.19,smooth,variable=\s,orange] plot ({\s},{-(0.58)*\s	});;
		\draw[scale=6.5,domain=-0.109:0.109,smooth,variable=\s,blue] plot ({\s},{-(1.73)*\s	});;
		\node (none) at (-0.15,+1.7) {\small $\partial \Om$};
		\node (none) at (2,0) {\small $\mathcal{T}_i$};
		\node[left] (-1,0) {\small $\mathcal{T}_c$};;
		\draw[scale=6.5,domain=0:0.3,thick,smooth,variable=\s,red] plot ({\s},{(0.5)*\s*\s	});;
		\draw[dashed,thick,black!40!green] (0,0) circle (1.41cm);
		\draw[scale=4,domain=-0.42:0.42,thick,smooth,variable=\s,black] plot ({-(0.65)*\s*\s},{ \s	});;
		\node[left] at (0.04,0) [circle,fill,red,inner sep=0.7pt]  {};;
		\end{tikzpicture}
	\end{center}
	\caption{Interpolation wedges $W^c_{\Omega^\pm_v}$.}
	\label{Fig_InterpolationWedges}
\end{subfigure}
\caption{Decomposition for a space-time neighborhood of $\mathcal{T}_c$.}
\label{Fig_Wedges}
\end{figure}
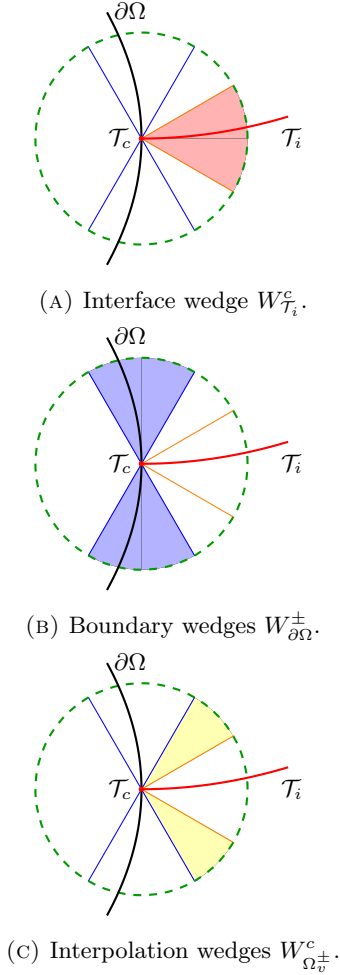

Figures \ref{Fig_Wedges}--\ref{Fig_InclusionDiffeo} contain several illustrations of the previous definition. 
Before moving on, we briefly discuss the existence of an admissible localization radius.

\begin{lemma}
\label{lemma:existenceLocRadius}
Let the assumptions and notation of Definition~\ref{def:locRadius} be in place.
There exists a constant $C=C(\partial\Omega,\chi_v,v,T)\geq 1$ such that each $r_c\in (0,\frac{1}{C}]$
is an admissible localization radius for the contact point~$\mathcal{T}_c$ in the sense
of Definition~\ref{def:locRadius}.
\end{lemma}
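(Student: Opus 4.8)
The plan is to fix once and for all the moving contact point as $\mathcal{T}_c(t)=\psi_t(c_0)$, where $c_0$ is the single point constituting the connected component of $I_0\cap\partial\Omega$ under consideration (recall $d=2$, so $I_0\cap\partial\Omega$ is $0$-dimensional), and then to verify the three groups of requirements i)--iii) of Definition~\ref{def:locRadius} one after another. Each verification reduces to taking $r_c$ below an explicit threshold determined only by $\partial\Omega$ and by the uniform regularity bounds of a strong solution recorded in Definition~\ref{Def_smdom} and Definition~\ref{Def_strongsol}: uniform $C^3_x$ control of $I_v(t)$ together with its uniform $C^1_t$ evolution, smoothness of $\partial\Omega$, and the bounds on $\psi$ and $\partial_t\psi$. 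Taking $C$ to be the reciprocal of the smallest of these finitely many thresholds gives the claim; since all the underlying bounds are uniform in $t\in[0,T]$, every construction below is uniform in time, and I suppress the variable $t$ where no confusion arises.

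For i), the map $\Psi_{\partial\Omega}\colon (x,s)\mapsto x+sn_{\partial\Omega}(x)$ is the normal exponential map of the compact smooth hypersurface $\partial\Omega$; by the tubular neighbourhood theorem there is $\delta_0=\delta_0(\partial\Omega)>0$ such that $\Psi_{\partial\Omega}$ is a smooth diffeomorphism onto its image on $\partial\Omega\times(-2\delta_0,2\delta_0)$, and the asserted regularity and identities for $s_{\partial\Omega}$, $P_{\partial\Omega}$, $n_{\partial\Omega}$, $H_{\partial\Omega}$ are the standard properties of the signed distance function and nearest-point projection of a smooth hypersurface. Hence it suffices to demand $r_c\le\delta_0$ here; no uniformity in $t$ is needed since $\partial\Omega$ is fixed.

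For ii), I would first record the contact-point frame. Let $\tau_{I_v}$ denote the unit tangent of the interface $\mathcal{T}_i(t)$ at $\mathcal{T}_c(t)$ pointing into $\Omega$, and $n_{I_v}$ the interface unit normal at $\mathcal{T}_c(t)$ (pointing into $\Omega^+_v$); by the $90^\circ$ condition~\eqref{sangcond} the pair $(n_{I_v},\tau_{I_v})$ agrees up to signs with $(\tau_{\partial\Omega},n_{\partial\Omega})$ at $\mathcal{T}_c(t)$, and both $\tau_{I_v}$ and $\tau_{\partial\Omega}$ evaluated along $\mathcal{T}_c$ are of class $C^1_t$, as follows from $\psi\in C^1_t$ and the regularity of a strong solution, cf.\ the computations of $\frac{\mathrm{d}}{\mathrm{d}t}\tau_{\partial\Omega}(c(t))$ and $\frac{\mathrm{d}}{\mathrm{d}t}n_{I_v}(c(t),t)$ in the remarks after Definition~\ref{Def_strongsol}. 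I then define the six unit vectors by rotating this frame by the fixed angles dictated by~\eqref{eq:openingAnglesWedges}: $X^\pm_{\mathcal{T}_i}$ at angle $\pm\pi/6$ from $\tau_{I_v}$, then $X_{\Omega^\pm_v}$ at a further $\pm\pi/6$, then $X^\pm_{\partial\Omega}$ at a further $\pm\pi/3$, so that going around $\mathcal{T}_c(t)$ one meets the rays in the cyclic order $X^+_{\partial\Omega},X_{\Omega^+_v},X^+_{\mathcal{T}_i},X^-_{\mathcal{T}_i},X_{\Omega^-_v},X^-_{\partial\Omega}$ with $\tau_{I_v}$ bisecting the interface wedge and $\pm\tau_{\partial\Omega}$ bisecting the boundary wedges (consistent as $\tau_{I_v}\perp\tau_{\partial\Omega}$); the labelling of the superscripts is fixed so that $X_{\Omega^\pm_v}$ lies on the $\Omega^\pm_v$ side of the interface. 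These six vectors inherit $C^1_t$ regularity. Defining the wedges by~\eqref{eq:defWedgeInterface}--\eqref{eq:defWedgeBoundary}, the decomposition~\eqref{eq:decompContactPoint} and the inclusions~\eqref{eq:interpolWedge1}--\eqref{eq:inclusionWedges4} then follow from a small-radius argument: inside $B_{r_c}(\mathcal{T}_c(t))$ the curves $\partial\Omega$ and $\mathcal{T}_i(t)$ deviate from their tangent lines at $\mathcal{T}_c(t)$ by at most $Cr_c^2$ by the uniform curvature bounds, hence for $r_c\le1/C$ by less than the fixed angular margins ($\ge\pi/6$) between consecutive rays; consequently $\partial\Omega\cap B_{r_c}$ lies in the union of the two boundary wedges, $\mathcal{T}_i(t)\cap B_{r_c}$ lies in the open interface wedge, the five wedges tile $\overline{B_{r_c}(\mathcal{T}_c(t))}\cap\overline\Omega$, and---since $\mathcal{T}_i(t)\cap\overline{B_{r_c}(\mathcal{T}_c(t))}$ divides $\overline{B_{r_c}(\mathcal{T}_c(t))}\cap\Omega$ into two components, one lying in $\Omega^+_v$ and one in $\Omega^-_v$, which near $\mathcal{T}_c(t)$ hug the two sides of the interface tangent ray---the interpolation wedge $W^c_{\Omega^\pm_v}$, being angularly trapped strictly on the $\Omega^\pm_v$ side of the interface wedge, satisfies $W^c_{\Omega^\pm_v}\setminus\mathcal{T}_c\subset\Omega^\pm_v$; the inclusions into $\mathrm{im}(\Psi_{\partial\Omega})$ and $\mathrm{im}(\Psi_{\mathcal{T}_i})$ hold once $r_c\le\delta_0\wedge r_i$, since every wedge point then lies within distance $\lesssim r_c$ of the respective curve.

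For iii), the decisive fact is that every point of $W^c_{\Omega^\pm_v}\cup W^{\pm,c}_{\partial\Omega}$ lies on a ray from $\mathcal{T}_c(t)$ making angle at least $\pi/6$ with the ray $\mathcal{T}_c(t)+\mathbb{R}_{\ge0}\tau_{I_v}$, whereas---after shrinking $r_c$ further so that $\mathcal{T}_i(t)\cap B_{2r_c}(\mathcal{T}_c(t))$ is a single connected arc---$\mathcal{T}_i(t)\cap B_{2r_c}(\mathcal{T}_c(t))$ stays within angle $Cr_c\le\pi/12$ of that ray. Thus for $x$ in such a wedge: any $y\in\mathcal{T}_i(t)\cap B_{2r_c}(\mathcal{T}_c(t))$ satisfies $|x-y|\ge\sin(\pi/12)\,|x-\mathcal{T}_c(t)|$ by this angular separation, while any $y\in\mathcal{T}_i(t)\setminus B_{2r_c}(\mathcal{T}_c(t))$ satisfies $|x-y|\ge r_c\ge|x-\mathcal{T}_c(t)|\ge\sin(\pi/12)\,|x-\mathcal{T}_c(t)|$ because $x\in\overline{B_{r_c}(\mathcal{T}_c(t))}$; taking the infimum over $y$ gives $\dist(x,\mathcal{T}_i)\gtrsim|x-\mathcal{T}_c(t)|=\dist(x,\mathcal{T}_c)\ge\dist(x,\partial\Omega)$, the last step since $\mathcal{T}_c(t)\in\partial\Omega$, which is exactly~\eqref{eq:distCondition}. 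The main obstacle, I expect, lies not in any single estimate but in the bookkeeping of step ii): one must check simultaneously and uniformly in $t$ that the five straight cones tile the curved half-disk $\overline{B_{r_c}(\mathcal{T}_c(t))}\cap\overline\Omega$ with pairwise disjoint interiors and no gaps, that the curved interface stays inside the interface wedge and correctly separates the two interpolation wedges into $\Omega^\pm_v$, and that all wedges lie in the domains of the two local diffeomorphisms---each requirement forcing $r_c$ below an explicit regularity-dependent threshold whose minimum, inverted, yields $C$.
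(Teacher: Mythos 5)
Your proof is correct and essentially the same as the paper's: part i) via the tubular neighbourhood theorem, the six wedge directions obtained by rotating the contact-point frame $(n_{\partial\Omega},n_{I_v}(\cdot,t))$ through the fixed angles of \eqref{eq:openingAnglesWedges} (so they inherit $C^1_t$ regularity), and properties ii)--iii) by shrinking $r_c$ using the uniform curvature bounds and the $90^\circ$ condition \eqref{sangcond} --- where the paper simply declares these ``obviously valid for sufficiently small radii'', you supply the quantitative angular-margin and distance-comparison estimates. One small point of care: for the inclusions \eqref{eq:interpolWedge1} and \eqref{eq:inclusionWedges4} into $\mathrm{im}(\Psi_{\mathcal{T}_i})$, mere proximity to the open arc is not by itself sufficient near its endpoint; the correct reason is already contained in your angular observation, namely that every wedge direction makes an angle strictly less than $\pi/2$ with the interface tangent ray at $\mathcal{T}_c(t)$, so for small $r_c$ the orthogonal foot point of a wedge point lies on $\mathcal{T}_i(t)$ itself.
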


\begin{proof} 
The first item in the definition of an admissible localization radius
is an immediate consequence of the tubular neighborhood theorem, which in turn is
facilitated by the regularity of the domain boundary~$\partial\Omega$.
	
For a construction of the wedges, we only have to provide a definition
for the vectors~$X^{\pm}_{\mathcal{T}_i}$, $X_{\Omega^\pm_v}$ and~$X_{\partial\Omega}^{\pm}$
A possible choice is the following. Fix $t\in [0,T]$ and let~$\{c(t)\}=\mathcal{T}_c(t)$. 
The desired unit vectors are obtained through rotation of the inward-pointing unit normal 
$n_{\partial\Omega}(c(t))$. Note that 
$\big(n_{\partial\Omega}(c(t)),\,n_{I_v}(c(t),t)\big)$
form an orthonormal basis of~$\mathbb{R}^2$ thanks to the 
contact angle condition~\eqref{sangcond}. We then let~$X^\pm_{\mathcal{T}_i}(t)$ be the unique
unit vector with $X^\pm_{\mathcal{T}_i}(t)\cdot n_{\partial\Omega}(c(t))=\frac{\sqrt{3}}{2}$
as well as $\mathrm{sign}\big(X^\pm_{\mathcal{T}_i}(t)\cdot n_{I_v}(c(t),t)\big)=\pm 1$.
Similarly, $X_{\Omega^\pm_v}(t)$ represents the unique
unit vector with $X_{\Omega_v^\pm}(t)\cdot n_{\partial\Omega}(c(t))=\frac{1}{2}$
and $\mathrm{sign}\big(X_{\Omega^\pm_v}(t)\cdot n_{I_v}(c(t),t)\big)=\pm 1$. 
Finally, $X^\pm_{\partial\Omega}(t)$ denotes the unique
unit vector with $X^\pm_{\Omega}(t)\cdot n_{\partial\Omega}(c(t))=-\frac{1}{2}$
and $\mathrm{sign}\big(X^\pm_{\Omega}(t)\cdot n_{I_v}(c(t),t)\big)=\pm 1$. 
For an illustration, we refer again to Figure~\ref{Fig_Wedges}.

The wedges~$W^c_{\mathcal{T}_i}(t)$, $W_{\Omega^\pm_v}^c(t)$ and~$W_{\partial\Omega}^{\pm,c}(t)$
may now be defined through the right hand sides of~\eqref{eq:defWedgeInterface},
\eqref{eq:defWedgeInterpol} and~\eqref{eq:defWedgeBoundary}, respectively.
The properties~\eqref{eq:decompContactPoint}--\eqref{eq:distCondition} are then
obviously valid for sufficiently small radii as a consequence of the regularity of the domain
boundary~$\partial\Omega$, the regularity of the interface~$I_v$
due to Definition~\ref{Def_strongsol} of a strong solution, as well
as the~$90^\circ$ contact angle condition~\eqref{sangcond}.
\end{proof}

A main step in the construction of a contact point extension
of the interface unit normal 
consists of perturbing the bulk construction of Section~\ref{sec:localCalibrationBulk}
by introducing suitable tangential terms, cf.\ Subsection~\ref{subsec:step_1CP} 
below. (This in turn becomes necessary due to the boundary constraint 
$n_{\partial\Omega}\cdot\xi^c=0$ along~$\partial\Omega$.)
To this end, the following constructions and formulas will be of frequent use.

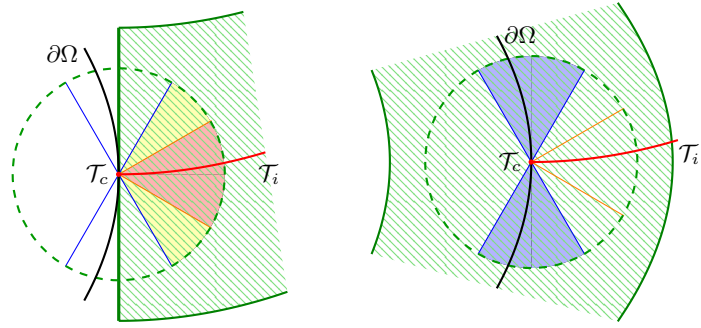
\begin{figure}
\begin{subfigure}[b]{0.4\textwidth}
	\begin{center}
		\begin{tikzpicture}
		\fill[inner color = red!30!, outer color = red!30!] (0, 0) --	(1.41,0) arc (0:30:1.41cm);
		\fill[inner color = red!30!, outer color = red!30!] (0, 0) --	(1.41,0) arc (0:-30:1.41cm);
		\fill[inner color = yellow!30!, outer color = yellow!30!] (0, 0) --	(1.22,0.705) arc (30:60:1.41cm);
		\fill[inner color = yellow!30!, outer color = yellow!30!] (0, 0) --	(0.705,-1.22) arc (300:330:1.41cm);
			\fill [pattern=north west lines, pattern color=black!25!green!50,scale=6.5,domain=0:0.255, variable=\x]
		(0, 0)
		-- plot ({\x}, {(0.5)*\x*\x+0.3})
		-- (0.3,0.045)
		-- cycle;
		\fill [pattern=north west lines, pattern color=black!25!green!50,scale=6.5,domain=0:0.345, variable=\x]
		(0,0)
		-- plot ({\x}, {(0.5)*\x*\x-0.3})
		-- (0.3,0.045)
		-- cycle;
		\draw[scale=6.5,domain=0:0.255, thick,smooth,variable=\s,black!50!green] plot ({\s},{(0.5)*\s*\s	+0.3});;
		\draw[scale=6.5,domain=0:0.345, thick,smooth,variable=\s,black!50!green] plot ({\s},{(0.5)*\s*\s	-0.3});;
		\draw[scale=6.5,very thick,smooth,variable=\s,black!50!green] (0,-0.3)--(0,+0.3);;
		\draw[scale=6.5,domain=0:0.19,smooth,variable=\s,orange] plot ({\s},{(0.58)*\s	});;
		\draw[scale=6.5,domain=-0.109:0.109,smooth,variable=\s,blue] plot ({\s},{(1.73)*\s	});;
		\draw[scale=6.5,domain=0:0.19,smooth,variable=\s,orange] plot ({\s},{-(0.58)*\s	});;
		\draw[scale=6.5,domain=-0.109:0.109,smooth,variable=\s,blue] plot ({\s},{-(1.73)*\s	});;
		\node (none) at (-0.75,1.6) {\small $\partial \Om$};
		\node (none) at (2,0) {\small $\mathcal{T}_i$};
		\node[left] (-1,0) {\small $\mathcal{T}_c$};;
		\draw[scale=6.5,domain=0:0.3,thick,smooth,variable=\s,red] plot ({\s},{(0.5)*\s*\s	});;
		\draw[dashed,thick,black!40!green] (0,0) circle (1.41cm);
		\draw[scale=4,domain=-0.42:0.42,thick,smooth,variable=\s,black] plot ({-(0.65)*\s*\s},{ \s	});;
		\node[left] at (0.04,0) [circle,fill,red,inner sep=0.7pt]  {};;
		\end{tikzpicture}
	\end{center}
	\caption{Inclusion in the image of $\Psi_{\mathcal{T}_i}$.}
	\label{Fig_InclusionDiffeoInterface}
\end{subfigure}
\begin{subfigure}[b]{0.4\textwidth}
	\begin{center}
		\begin{tikzpicture}
		\fill[inner color = blue!30!, outer color = blue!30!] (0, 0) --	(0,1.41) arc (90:120:1.41cm);
		\fill[inner color = blue!30!, outer color = blue!30!] (0, 0) --	(0,1.41) arc (90:60:1.41cm);
		\fill[inner color = blue!30!, outer color = blue!30!] (0, 0) --	(0,-1.41) arc (270:300:1.41cm);
		\fill[inner color = blue!30!, outer color = blue!30!] (0, 0) --	(0,-1.41) arc (270:240:1.41cm);
			\fill [pattern=north west lines, pattern color=black!25!green!50,scale=4,domain=-0.53:0.53, variable=\x]
		(-0.11466,-0.42)
		-- plot ({-(0.65)*\x*\x+0.47},{ \x	})
		-- (-0.11466,0.42)
		-- cycle;
		\fill [pattern=north west lines, pattern color=black!25!green!50,scale=4,domain=-0.306:0.306, variable=\x]
		(-0.11466,-0.42)
		-- plot ({-(0.65)*\x*\x-0.47},{ \x	})
		-- (-0.11466,0.42)
		-- cycle;
			\draw[scale=4,domain=-0.306:0.306, thick,smooth,variable=\x,black!50!green] plot ({-(0.65)*\x*\x-0.47},{ \x	});;
		\draw[scale=4,domain=-0.53:0.53, thick,smooth,variable=\x,black!50!green] plot ({-(0.65)*\x*\x+0.47},{ \x	});;
		\draw[scale=6.5,domain=0:0.19,smooth,variable=\s,orange] plot ({\s},{(0.58)*\s	});;
		\draw[scale=6.5,domain=-0.109:0.109,smooth,variable=\s,blue] plot ({\s},{(1.73)*\s	});;
		\draw[scale=6.5,domain=0:0.19,smooth,variable=\s,orange] plot ({\s},{-(0.58)*\s	});;
		\draw[scale=6.5,domain=-0.109:0.109,smooth,variable=\s,blue] plot ({\s},{-(1.73)*\s	});;
		\node (none) at (-0.15,+1.7) {\small $\partial \Om$};
		\node (none) at (2.1,0.1) {\small $\mathcal{T}_i$};
		\node[left] (-1,0) {\small $\mathcal{T}_c$};;
		\draw[scale=6.5,domain=0:0.3,thick,smooth,variable=\s,red] plot ({\s},{(0.5)*\s*\s	});;
		\draw[dashed,thick,black!40!green] (0,0) circle (1.41cm);
		\draw[scale=4,domain=-0.42:0.42,thick,smooth,variable=\s,black] plot ({-(0.65)*\s*\s},{ \s	});;
		\node[left] at (0.04,0) [circle,fill,red,inner sep=0.7pt]  {};;
		\end{tikzpicture}
	\end{center}
	\caption{Inclusion in the image of $\Psi_{\partial\Omega}$.}
	\label{Fig_InclusionDiffeoBuondary}
\end{subfigure}
\caption{Inclusion properties of diffeomorphisms.}
\label{Fig_InclusionDiffeo}
\end{figure}

\begin{lemma}
\label{lem:gradientsFrames}
Let the assumptions and notation of Definition~\ref{def:locRadiusInterface}
and Definition~\ref{def:locRadius} be in place. Let~$r_c$ be an admissible 
localization radius of a contact point~$\mathcal{T}_c$ and let~$i\in\mathcal{I}$
such that~$i \sim c$. Define $\mathcal{N}_{r_c,c}(\Omega):=
\bigcup_{t\in [0,T]} \big(B_{r_c}(\mathcal{T}_c(t)) 
\cap \overline{\Omega}\,\big) {\times} \{t\}$.
We fix unit-length tangential vector fields~$\widetilde\tau_{I_v}$ 
resp.\ $\widetilde\tau_{\partial\Omega}$ along $\mathcal{N}_{r_c,c}(\Omega) \cap \mathcal{T}_i$
resp.\ $\partial\Omega$ with orientation chosen such that~$\widetilde\tau_{I_v} =-n_{\partial\Omega}$
resp.\ $\widetilde\tau_{\partial\Omega}= n_{I_v}$ hold true at the contact point~$\mathcal{T}_c$.
We then define extensions
\begin{align*}
\tau_{I_v}\colon \mathcal{N}_{r_c,c}(\Omega) \cap \mathrm{im}(\Psi_{\mathcal{T}_i})
\to \mathbb{S}^1,\quad &(x,t) \mapsto \widetilde\tau_{I_v}(P_{\mathcal{T}_i}(x,t),t),
\\
\tau_{\partial\Omega}\colon \mathrm{im}(\Psi_{\partial\Omega})
\to \mathbb{S}^1,\quad &x \mapsto \widetilde\tau_{\partial\Omega}(P_{\partial\Omega}(x)), 
\end{align*}

Then, it holds $\tau_{I_v}\in C^0_tC^2_x(\overline{\mathcal{N}_{r_c,c}(\Omega) \cap \mathrm{im}(\Psi_{\mathcal{T}_i})})
\cap C^1_tC^0_x(\overline{\mathcal{N}_{r_c,c}(\Omega) \cap \mathrm{im}(\Psi_{\mathcal{T}_i})})$
as well as $\tau_{\partial\Omega}\in C^2_x(\overline{\mathrm{im}(\Psi_{\partial\Omega})})$. Moreover,
\begin{align}
\label{eq:gradientNo}
\nabla n_{I_v} &= - H_{I_v} \tau_{I_v}\otimes\tau_{I_v}
+ O(\dist(\cdot,\mathcal{T}_i))
&&\text{in } \mathcal{N}_{r_c,c}(\Omega) \cap \mathrm{im}(\Psi_{\mathcal{T}_i}),
\\
\label{eq:gradientTau}
\nabla\tau_{I_v} &= H_{I_v} n_{I_v}\otimes\tau_{I_v}
+ O(\dist(\cdot,\mathcal{T}_i))
&&\text{in } \mathcal{N}_{r_c,c}(\Omega) \cap \mathrm{im}(\Psi_{\mathcal{T}_i}).
\end{align}
Analogous formulas hold on~$\mathrm{im}(\Psi_{\partial\Omega})$
for the orthonormal frame $(n_{\partial\Omega},\tau_{\partial\Omega})$. 
\end{lemma}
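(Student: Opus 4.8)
\textit{Proof plan.}
The plan is to read off both formulas from the classical structure of signed distance functions, feeding in only the explicit definitions of the extensions $n_{I_v}$, $H_{I_v}$ (and their boundary analogues) via the nearest-point projections $P_{\mathcal{T}_i}$, $P_{\partial\Omega}$ recorded in Definition~\ref{def:locRadiusInterface} and Definition~\ref{def:locRadius}~i). The first step is to establish the rotation structure and the regularity simultaneously. Since $d=2$, each time slice $\mathcal{T}_i(t)$ is a curve, so there is a rotation $R$ by $\pm 90^\circ$ with $\widetilde\tau_{I_v} = R\,n_{I_v}$ along $\mathcal{N}_{r_c,c}(\Omega)\cap\mathcal{T}_i$; the assignment of this rotation to a point is continuous and discrete-valued, hence locally constant, hence constant on the connected set $\mathcal{T}_i$, and the orientation convention $\widetilde\tau_{I_v} = -n_{\partial\Omega}$ at $\mathcal{T}_c$ fixes which of the two rotations it is. Composing with $P_{\mathcal{T}_i}$ and using $n_{I_v}(\cdot) = n_{I_v}(P_{\mathcal{T}_i}(\cdot))$ on $\mathrm{im}(\Psi_{\mathcal{T}_i})$ then gives the pointwise identity $\tau_{I_v}(x,t) = R\,n_{I_v}(x,t)$ on all of $\mathrm{im}(\Psi_{\mathcal{T}_i})$. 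From this, $\tau_{I_v}$ inherits the regularity $C^0_tC^2_x\cap C^1_tC^0_x$ of the extended normal, and $\{n_{I_v}(x,t),\tau_{I_v}(x,t)\}$ is automatically an orthonormal basis of $\R^2$ at every $(x,t)$. The boundary case is identical and time-independent: on each connected component of $\partial\Omega$ there is a fixed rotation $R'$ with $\widetilde\tau_{\partial\Omega} = R'\,n_{\partial\Omega}$, so $\tau_{\partial\Omega} = R'\,n_{\partial\Omega}$ on $\mathrm{im}(\Psi_{\partial\Omega})$ and hence $\tau_{\partial\Omega}\in C^2_x$ by Definition~\ref{def:locRadius}~i).

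Next I would compute $\nabla n_{I_v}$. Since $s := s_{\mathcal{T}_i}$ is a signed distance function, it satisfies $|\nabla s|^2 \equiv 1$ on $\mathrm{im}(\Psi_{\mathcal{T}_i})$; differentiating yields $\nabla^2 s\,\nabla s = 0$, i.e.\ $\nabla n_{I_v}\,n_{I_v} = 0$ because $\nabla s = n_{I_v}$ there. As $s\in C^0_tC^3_x$, the matrix $\nabla n_{I_v} = \nabla^2 s$ is symmetric, so $n_{I_v}$ is an eigenvector with eigenvalue $0$ and the complementary eigenvector is $\tau_{I_v}$; hence $\nabla n_{I_v} = \nabla^2 s = \mu\,\tau_{I_v}\otimes\tau_{I_v}$ with $\mu = \operatorname{tr}\nabla^2 s = \Delta s$. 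It then remains to compare $\Delta s(x,t)$ with $\Delta s(P_{\mathcal{T}_i}(x,t),t) = -H_{I_v}(x,t)$, which is exactly the defining formula~\eqref{eq:extMeanCurvature}: since $\Delta s\in C^0_tC^1_x$ is Lipschitz in $x$ with constant uniform in $t\in[0,T]$, and $|x - P_{\mathcal{T}_i}(x,t)| = \dist(x,\mathcal{T}_i(t))$, the difference is $O(\dist(\cdot,\mathcal{T}_i))$, which is~\eqref{eq:gradientNo}. For~\eqref{eq:gradientTau} I would simply apply the constant rotation $R$ to~\eqref{eq:gradientNo}: $\nabla\tau_{I_v} = R\nabla n_{I_v} = -H_{I_v}\,(R\tau_{I_v})\otimes\tau_{I_v} + O(\dist(\cdot,\mathcal{T}_i))$, and $R\tau_{I_v} = R^2 n_{I_v} = -n_{I_v}$ since $R^2 = -\mathrm{Id}$ in two dimensions, which delivers precisely $H_{I_v}\,n_{I_v}\otimes\tau_{I_v} + O(\dist(\cdot,\mathcal{T}_i))$. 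The boundary formulas follow verbatim with $s_{\partial\Omega}$, $H_{\partial\Omega}$, $P_{\partial\Omega}$ and $R'$ replacing their interface counterparts and $\dist(\cdot,\partial\Omega)$ as the error scale.

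There is no genuine obstacle here: the computation is the textbook identity expressing the Hessian of a signed distance function through the second fundamental form of its level sets, specialized to a planar curve. The only places asking for care are (i) pinning down the rotations $R$, $R'$ consistently, so that $\tau_{I_v}$ is literally the $P_{\mathcal{T}_i}$-extension $R\,n_{I_v}$ of its interface values rather than merely a field agreeing with it on $\mathcal{T}_i$, and (ii) ensuring that all Lipschitz constants and the implied constants in the $O(\cdot)$-terms are uniform in $t\in[0,T]$, which is precisely what the uniform $C^0_tC^3_x\cap C^1_tC^1_x$ regularity of $s_{\mathcal{T}_i}$ in Definition~\ref{def:locRadiusInterface} provides.
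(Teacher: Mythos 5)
Your proposal is correct and follows essentially the same route as the paper: relate the tangent to the normal by a constant $90^\circ$ rotation fixed by the orientation convention (so regularity is inherited from the extended normal), use symmetry of $\nabla n_{I_v}=\nabla^2 s_{\mathcal{T}_i}$ together with $|\nabla s_{\mathcal{T}_i}|=1$ to see that only the $\tau_{I_v}\otimes\tau_{I_v}$ component survives, identify it with $-H_{I_v}$ up to a Lipschitz error in $\dist(\cdot,\mathcal{T}_i)$, and obtain \eqref{eq:gradientTau} from \eqref{eq:gradientNo} via the constant rotation. The only cosmetic difference is that you name the eigenvalue as $\Delta s_{\mathcal{T}_i}$ and compare explicitly with its value at the projected point, whereas the paper phrases the same step through the regularity of $H_{I_v}$.
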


\begin{proof}
By the choice of the orientations, there exists a constant matrix~$R$ 
representing rotation by~$90^\circ$ so that~$n_{I_v}=R\tau_{I_v}$
and~$n_{\partial\Omega}=R\tau_{\partial\Omega}$. The regularity of
the tangential fields~$\tau_{I_v}$ and~$\tau_{\partial\Omega}$ 
thus follows from Definition~\ref{def:locRadiusInterface}
and Definition~\ref{def:locRadius}, respectively. Moreover,
the formula~\eqref{eq:gradientTau} simply follows from~\eqref{eq:gradientNo}
and the product rule. For a proof of~\eqref{eq:gradientNo}, note first that
$(n_{I_v}\cdot\nabla)n_{I_v}=\nabla\frac{1}{2}|n_{I_v}|^2=0$
and, as a consequence of $\nabla n_{I_v} = \nabla^2 s_{\mathcal{T}_i}$
being symmetric, that $(\nabla n_{I_v})^\mathsf{T} n_{I_v}=(n_{I_v}\cdot\nabla)n_{I_v}=0$.
The only surviving component of~$\nabla n_{I_v}$ is thus the one
in the direction of~$\tau_{I_v}\otimes\tau_{I_v}$, which on
the interface in turn evaluates to~$-H_{I_v}$, see~\eqref{eq:extMeanCurvature}.
The regularity of the map~$H_{I_v}$ from Definition~\ref{def:locRadiusInterface}
then entails~\eqref{eq:gradientNo}. Of course, the exact same argument works
in terms of the orthonormal frame $(n_{\partial\Omega},\tau_{\partial\Omega})$.
\end{proof}

The values of a contact point extension in the sense 
of Proposition~\ref{prop:CalibrationContactPoint} are highly
constrained along the domain boundary~$\partial\Omega$ 
(i.e., $n_{\partial\Omega}\cdot\xi^c=0$) or along the interface~$\mathcal{T}_i$ 
(i.e., $\xi^c=n_{I_v}$), respectively. This will be reflected in the construction 
by stitching together certain local building blocks (i.e., $\xi^c_{\partial\Omega}$ 
and $\xi^c_{\mathcal{T}_i}$, see Subsection~\ref{subsec:step_1CP}  below)
which in turn take care of these restrictions on an individual
basis (i.e., $n_{\partial\Omega}\cdot\xi^c_{\partial\Omega}=0$
along~$\partial\Omega$, or $\xi^c_{\mathcal{T}_i}=n_{I_v}$ along~$\mathcal{T}_i$,
in the vicinity of the contact point). These local building blocks will be 
unified into a single vector field by interpolation 
(see Subsection~\ref{subsec:step_2CP} below).
With this in mind, it is of no surprise that 
compatibility conditions (including a higher-order one) at the contact point
are needed to implement this procedure. Indeed, recall from Proposition~\ref{prop:CalibrationContactPoint}
that a contact point extension requires a certain amount of regularity in combination 
with a control on its time evolution. We therefore collect
for reference purposes the necessary compatibility conditions in the following result.

\begin{lemma}
\label{lem:higherOrderCompConditions}
Let the assumptions and notation of Definition~\ref{def:locRadiusInterface},
Definition~\ref{def:locRadius} and Lemma~\ref{lem:gradientsFrames} be in place.
Then it holds
\begin{align}
\label{eq:compZero}
n_{I_v}(\cdot,t) = \tau_{\partial\Omega}(\cdot), \quad 
\tau_{I_v}(\cdot,t) &= - n_{\partial\Omega}(\cdot)
&&\text{at } \mathcal{T}_c(t),\,t\in[0,T],
\\
\label{eq:compHigher}
\big(\tau_{I_v}(\cdot,t)\cdot\nabla\big)(n_{I_v}\cdot v)(\cdot,t)  
&= H_{\partial\Omega}(\cdot)(n_{I_v}\cdot v)(\cdot,t)  
&&\text{at } \mathcal{T}_c(t),\,t\in[0,T].
\end{align}
\end{lemma}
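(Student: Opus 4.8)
The statement collects two compatibility conditions at the moving contact point $\mathcal{T}_c(t)$. The first, \eqref{eq:compZero}, is essentially a restatement of the $90^\circ$ contact angle condition \eqref{sangcond} together with the orientation conventions fixed in Lemma~\ref{lem:gradientsFrames}, so the plan is to simply unwind those definitions. At the contact point $\{c(t)\}=\mathcal{T}_c(t)$, the pair $(n_{\partial\Omega}(c(t)),n_{I_v}(c(t),t))$ is orthonormal by \eqref{sangcond}. The tangential field $\widetilde\tau_{I_v}$ was chosen with orientation so that $\widetilde\tau_{I_v}=-n_{\partial\Omega}$ at $\mathcal{T}_c$, and similarly $\widetilde\tau_{\partial\Omega}=n_{I_v}$ at $\mathcal{T}_c$; since $\tau_{I_v}$ and $\tau_{\partial\Omega}$ are defined as the compositions of these with the respective nearest-point projections, and the projection of a point of $\mathcal{T}_c$ onto $\mathcal{T}_i$ (resp.\ $\partial\Omega$) is the point itself, \eqref{eq:compZero} follows verbatim. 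I would also remark that this is consistent: $R$ being rotation by $90^\circ$ with $n_{I_v}=R\tau_{I_v}$ and $n_{\partial\Omega}=R\tau_{\partial\Omega}$ forces the two orientation choices to be compatible, which is why such a common $R$ exists.

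For \eqref{eq:compHigher}, the plan is to differentiate the angle condition \eqref{sangcond} along the contact point trajectory $t\mapsto c(t)$ and combine it with the higher-order compatibility condition \eqref{eq:higherOrderCompContactPoints} built into the definition of a strong solution. Concretely, write $\alpha(t):=n_{\partial\Omega}(c(t))\cdot n_{I_v}(c(t),t)\equiv 0$ and differentiate in time; the derivative of $c(t)$ is $v(c(t),t)$ by the transport of the interface, so one obtains a relation among $(\dot c\cdot\nabla)n_{\partial\Omega}$, $\partial_t n_{I_v}+(\dot c\cdot\nabla)n_{I_v}$, and the frame vectors. Using the formulas for $\nabla n_{\partial\Omega}$ and $\nabla n_{I_v}$ from Lemma~\ref{lem:gradientsFrames} (the analogue of \eqref{eq:gradientNo} on $\mathrm{im}(\Psi_{\partial\Omega})$ for $n_{\partial\Omega}$, and \eqref{eq:gradientNo} itself for $n_{I_v}$, both evaluated at the contact point where the error terms vanish since $\dist(\cdot,\mathcal{T}_i)=0$ there), together with the evolution formula for $\partial_t n_{I_v}+(v\cdot\nabla)n_{I_v}$ along the interface — which is the standard identity $\partial_t n_{I_v}+(v\cdot\nabla)n_{I_v}=-(\mathrm{Id}-n_{I_v}\otimes n_{I_v})(\nabla v)^\mathsf{T} n_{I_v}$ used also in Proposition~\ref{prop:CalibrationInterface} — one can express $\frac{\mathrm d}{\mathrm dt}\alpha(t)$ in terms of $\nabla v$, $H_{\partial\Omega}$, $H_{I_v}$ and the frame vectors. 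Setting this to zero and using \eqref{eq:compZero} to identify $\tau_{\partial\Omega}=n_{I_v}$, $\tau_{I_v}=-n_{\partial\Omega}$ at $\mathcal{T}_c$ should, after rearrangement, yield exactly \eqref{eq:compHigher}. Alternatively — and perhaps more cleanly — one can start directly from \eqref{eq:higherOrderCompContactPoints}, which reads $-((n_{\partial\Omega}\cdot\nabla)(n_{I_v}\cdot v))(c(t),t)=H_{\partial\Omega}(c(t))(n_{I_v}\cdot v)(c(t),t)$, and observe that at $\mathcal{T}_c(t)$ one has $n_{\partial\Omega}=-\tau_{I_v}$ and $\tau_{\partial\Omega}=n_{I_v}$ by \eqref{eq:compZero}, so $-(n_{\partial\Omega}\cdot\nabla)=(\tau_{I_v}\cdot\nabla)$, giving \eqref{eq:compHigher} immediately.

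I expect the routine-but-delicate part to be bookkeeping the orientations and the identification $n_{\partial\Omega}=-\tau_{I_v}$, $\tau_{\partial\Omega}=n_{I_v}$ at the contact point when substituting into \eqref{eq:higherOrderCompContactPoints}; once that substitution is made correctly, \eqref{eq:compHigher} is essentially a rewriting of a hypothesis on strong solutions. If one instead derives \eqref{eq:compHigher} from scratch by differentiating \eqref{sangcond}, the main obstacle is correctly handling the error terms $O(\dist(\cdot,\mathcal{T}_i))$ in the gradient formulas from Lemma~\ref{lem:gradientsFrames} — but these all vanish at $\mathcal{T}_c$, so the computation localizes cleanly. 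In either route the proof is short: it is a direct consequence of Definition~\ref{Def_strongsol} (specifically \eqref{sangcond} and \eqref{eq:higherOrderCompContactPoints}), the orientation conventions of Lemma~\ref{lem:gradientsFrames}, and the frame identities established there.
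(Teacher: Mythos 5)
Your primary route (differentiating the contact angle condition along the trajectory $t\mapsto c(t)$, using $\tfrac{\mathrm{d}}{\mathrm{d}t}c(t)=v(c(t),t)=(n_{I_v}\cdot v)\,n_{I_v}$ at the contact point together with the frame formulas of Lemma~\ref{lem:gradientsFrames}) is exactly the paper's argument: the paper differentiates the equivalent identity $n_{I_v}(c(t),t)=\tau_{\partial\Omega}(c(t))$ and compares $\tfrac{\mathrm{d}}{\mathrm{d}t}\tau_{\partial\Omega}(c(t))=H_{\partial\Omega}(n_{I_v}\cdot v)\,n_{\partial\Omega}$ with $\tfrac{\mathrm{d}}{\mathrm{d}t}n_{I_v}(c(t),t)=-\big((\tau_{I_v}\cdot\nabla)(n_{I_v}\cdot v)\big)\tau_{I_v}$, and your computation (with the error terms in the gradient formulas vanishing at $\mathcal{T}_c$ and $v\cdot\tau_{I_v}=0$ there) reproduces this; \eqref{eq:compZero} is, as you say, immediate from the orientation conventions. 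One caveat concerning your ``cleaner'' alternative: while Definition~\ref{Def_strongsol} as literally stated does require \eqref{eq:higherOrderCompContactPoints} for all $t\in[0,T]$, so that substituting $-n_{\partial\Omega}=\tau_{I_v}$ would indeed give \eqref{eq:compHigher} at once, the remark following Definition~\ref{Def_strongsol} asserts that \eqref{eq:higherOrderCompContactPoints} at positive times is itself \emph{derived} by the very computation in this lemma; taking the shortcut alone would therefore render that remark circular, so the time-differentiation argument should be regarded as the actual proof and the substitution only as a consistency check.
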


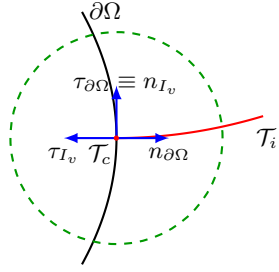
\begin{figure}
	\begin{center}
		\begin{tikzpicture}
		\draw[scale=6.5,domain=0:0.3,thick,smooth,variable=\s,red] plot ({\s},{(0.5)*\s*\s	});;
		\draw[scale=4,domain=-0.42:0.42,thick,smooth,variable=\s,black] plot ({-(0.65)*\s*\s},{ \s	});;
		\node (none) at (-0.15,+1.7) {\small $\partial \Om$};
		\node (none) at (2,0) {\small $\mathcal{T}_i$};
		\node (none) at (-0.22,-0.22) {\small $\mathcal{T}_c$};;
		\node (none) at (0.13,0.7) {\small $\tau_{\partial \Om } \equiv n_{I_v}$};;
		\node[below]  (none) at (-0.7,0) {\small $\tau_{I_v} $};
		\node[below]  (none) at (0.7,0) {\small $\no$};
		\draw[->,thick,blue,arrows={-latex}] (0,0) -- (0,0.7);
		\draw[->,thick,blue,arrows={-latex}] (0,0) -- (-0.7,0);
		\draw[->,thick,blue,arrows={-latex}] (0,0) -- (0.7,0);
		\draw[dashed,thick,black!40!green] (0,0) circle (1.41cm);	
		\node[left] at (0.04,0) [circle,fill,red,inner sep=0.7pt]  {};;
		\end{tikzpicture}
	\end{center}
	\caption{Orientation of normal and tangential vectors at $\mathcal{T}_c$.}
	\label{Fig_Normals}
\end{figure}

\begin{proof}
The relations~\eqref{eq:compZero} are immediate from
the choices made in the statement of Lemma~\ref{lem:gradientsFrames}.
Let $\{c(t)\}=\mathcal{T}_c(t)$ for all $t\in [0,T]$. The
compatibility condition~\eqref{eq:compHigher} follows
from differentiating in time the condition~$n_{I_v}(c(t),t)=\tau_{\partial\Omega}(c(t))$.
Indeed, one one side we may compute by means of the chain rule, the analogue
of~\eqref{eq:gradientTau} for~$\tau_{\partial\Omega}$, \eqref{eq:compZero}, 
and~$\frac{\mathrm{d}}{\mathrm{d}t}c(t)=\big(n_{I_v}(c(t),t)\cdot v(c(t),t)\big)n_{I_v}(c(t),t)$ 
that
\begin{align*}
\frac{\mathrm{d}}{\mathrm{d}t} \tau_{\partial\Omega}(c(t))
= H_{\partial\Omega}(c(t))\big(n_{I_v}(c(t),t)\cdot v(c(t),t)\big)n_{\partial\Omega}(c(t)).
\end{align*}
On the other side, it follows from an application of the chain rule,
the formula~\eqref{eq:gradientNo}, the previous expression of~$\frac{\mathrm{d}}{\mathrm{d}t}c(t)$,
$\partial_t s_{\mathcal{T}_i}(\cdot,t)=-n_{I_v}(\cdot,t)\cdot v(P_{\mathcal{T}_i}(\cdot,t),t)$, as well as
$n_{I_v}=\nabla s_{\mathcal{T}_i}$ that
\begin{align*}
\frac{\mathrm{d}}{\mathrm{d}t} n_{I_v}(c(t),t)
= - \big(\tau_{I_v}(c(t),t)\cdot\nabla\big)
\big(n_{I_v}\cdot v\big)(c(t),t) \tau_{I_v}(c(t),t).
\end{align*}
The second condition of~\eqref{eq:compZero} together with
the previous two displays thus implies the compatibility 
condition~\eqref{eq:compHigher} as asserted.
\end{proof}

\subsection{Construction and properties of local building blocks}
\label{subsec:step_1CP} 
We have everything in place to proceed on with the first major
step in the construction of a contact point extension in the
sense of Proposition~\ref{prop:CalibrationContactPoint}.
We define auxiliary extensions~$\xi^c_{\mathcal{T}_i}$
resp.\ $\xi^c_{\partial\Omega}$
of the unit normal vector field in the space-time 
domains~$\mathcal{N}_{r_c,c}(\Omega)\cap\mathrm{im}(\Psi_{\mathcal{T}_i})$ 
resp.\ $\mathcal{N}_{r_c,c}(\Omega)\cap(\mathrm{im}(\Psi_{\partial\Omega}){\times}[0,T])$. 
In other words, we construct the extensions separately in the regions close to the 
interface or close to the boundary (but always near to the contact point).

\subsubsection{Definition and regularity properties of local building blocks 
for the extension of the unit normal}
\label{par:ansatzXi}
A suitable ansatz for the two vector fields 
$\xi^c_{\mathcal{T}_i}$ and $\xi^c_{\partial\Omega}$ may be provided as follows.

\begin{construction}
\label{AnsatzAuxiliaryXi}
Let the assumptions and notation of Definition~\ref{def:locRadiusInterface},
Definition~\ref{def:locRadius} and Lemma~\ref{lem:gradientsFrames} be in place.
Expressing~$\{c(t)\}=\mathcal{T}_c(t)$ for all~$t\in [0,T]$,
we define coefficients
\begin{align}
\label{Condition_alpha}
\alpha_{\mathcal{T}_i}\colon \mathcal{N}_{r_c,c}(\Omega)\cap\mathrm{im}(\Psi_{\mathcal{T}_i}) \to \mathbb{R},
\quad &(x,t) \mapsto -H_{\partial\Omega}(c(t),t),
\\
\label{Condition_alphaprime}
\alpha_{\partial\Omega}\colon \mathcal{N}_{r_c,c}(\Omega)
\cap(\mathrm{im}(\Psi_{\partial\Omega}){\times}[0,T]) \to \mathbb{R},
\quad &(x,t) \mapsto -H_{I_v}(c(t),t).
\end{align}
Based on these coefficient functions, we then define extensions 
\begin{align*}
\xi^c_{\mathcal{T}_i}\colon \mathcal{N}_{r_c,c}(\Omega)\cap\mathrm{im}(\Psi_{\mathcal{T}_i}) \to \mathbb{R}^2, \quad
\xi^c_{\partial \Om}\colon \mathcal{N}_{r_c,c}(\Omega)\cap
\big(\mathrm{im}(\Psi_{\partial\Omega}){\times}[0,T]\big) \to \mathbb{R}^2
\end{align*}
of the normal vector field~$n_{I_v}$ by means of an expansion ansatz
\begin{align}
\label{eq:AnsatzAuxiliaryXiInterface}
\xi^c_{\mathcal{T}_i} &:= n_{I_v} + \alpha_{\mathcal{T}_i}s_{\mathcal{T}_i}\tau_{I_v}
- \frac{1}{2}\alpha_{\mathcal{T}_i}^2s_{\mathcal{T}_i}^2n_{I_v},
\\
\label{eq:AnsatzAuxiliaryXiBuondary}
\xi^c_{\partial \Om} &:= \tau_{\partial\Omega} + \alpha_{\partial\Omega}s_{\partial\Omega}n_{\partial\Omega}
- \frac{1}{2}\alpha_{\partial\Omega}^2s_{\partial\Omega}^2\tau_{\partial\Omega}.
\end{align}
\end{construction}	

Regularity properties of $\xi^c_{\mathcal{T}_i}$ and $\xi^c_{\partial\Omega}$,
in particular compatibility up to first order at the contact point, 
are the content of the following result.

\begin{lemma} 
\label{lem:firstOrderCompXi}
Let the assumptions and notation of Construction \ref{AnsatzAuxiliaryXi} be in place. 
Then the auxiliary vector fields satisfy $\xi^c_{\mathcal{T}_i}\in (C^0_tC^2_x\cap C^1_tC^0_x)
(\overline{\mathcal{N}_{r_c,c}(\Omega)\cap\mathrm{im}(\Psi_{\mathcal{T}_i})})$
and $\xi^c_{\partial\Omega}\in (C^0_tC^2_x\cap C^1_tC^0_x)
(\overline{\mathcal{N}_{r_c,c}(\Omega)\cap(\mathrm{im}(\Psi_{\partial\Omega}){\times}[0,T])})$,
with corresponding estimates for $k \in \{0, 1, 2\}$
\begin{align}
\label{eq:reqXiInterface}
|\nabla^k\xi^c_{\mathcal{T}_i}| + |\partial_t\xi^c_{\mathcal{T}_i}| &\leq C,
\quad\text{on } \overline{\mathcal{N}_{r_c,c}(\Omega)\cap\mathrm{im}(\Psi_{\mathcal{T}_i})},
\\
\label{eq:reqXiBoundary}
|\nabla^k\xi^c_{\partial\Omega}| + |\partial_t\xi^c_{\partial\Omega}| &\leq C,
\quad\text{on } \overline{\mathcal{N}_{r_c,c}(\Omega)\cap(\mathrm{im}(\Psi_{\partial\Omega}){\times}[0,T])}.
\end{align}	
Moreover, the constructions are compatible to first order at the contact point
in the sense that
\begin{align}
\label{eq:compatibilityXiContactPoint}
\xi^c_{\mathcal{T}_i} (\cdot,t)= \xi^c_{\partial \Om} (\cdot,t), \quad
\nabla \xi^c_{\mathcal{T}_i} (\cdot,t)= \nabla \xi^c_{\partial \Om} (\cdot,t)
\quad\text{at } \mathcal{T}_c(t),\,t\in[0,T].
\end{align}
\end{lemma}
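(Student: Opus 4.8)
The plan is to establish the regularity bounds and the first-order compatibility at the contact point directly from the explicit expansion ans\"atze~\eqref{eq:AnsatzAuxiliaryXiInterface} and~\eqref{eq:AnsatzAuxiliaryXiBuondary}, exploiting that all ingredients are either globally controlled geometric quantities or the explicitly given coefficients~$\alpha_{\mathcal{T}_i},\alpha_{\partial\Omega}$. First I would record the regularity of the building blocks: on $\overline{\mathcal{N}_{r_c,c}(\Omega)\cap\mathrm{im}(\Psi_{\mathcal{T}_i})}$ the fields $n_{I_v},\tau_{I_v}$ lie in $C^0_tC^2_x\cap C^1_tC^0_x$ (Definition~\ref{def:locRadiusInterface} and Lemma~\ref{lem:gradientsFrames}), the signed distance $s_{\mathcal{T}_i}$ lies in $C^0_tC^3_x\cap C^1_tC^1_x$ with $\nabla s_{\mathcal{T}_i}=n_{I_v}$, and $\alpha_{\mathcal{T}_i}(x,t)=-H_{\partial\Omega}(c(t),t)$ is a function of time only, of class $C^1_t$ since $t\mapsto c(t)$ is a $C^1$ trajectory (cf.\ the remark after Definition~\ref{Def_strongsol}) and $H_{\partial\Omega}\in C^1_x$. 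Since $\xi^c_{\mathcal{T}_i}$ is a polynomial in $s_{\mathcal{T}_i}$ with coefficients built algebraically from $n_{I_v},\tau_{I_v},\alpha_{\mathcal{T}_i}$, the product and chain rules give $\xi^c_{\mathcal{T}_i}\in C^0_tC^2_x\cap C^1_tC^0_x$ together with the bounds~\eqref{eq:reqXiInterface} for $k\in\{0,1,2\}$; here one uses $|s_{\mathcal{T}_i}|\le 2r_c\le 2$ and the uniform bounds on $\partial\Omega$, $I_v$ and $v$ to absorb everything into a single constant $C$. The identical argument with the orthonormal frame $(n_{\partial\Omega},\tau_{\partial\Omega})$, the signed distance $s_{\partial\Omega}\in C^3_x$, and $\alpha_{\partial\Omega}(x,t)=-H_{I_v}(c(t),t)\in C^1_t$ yields~\eqref{eq:reqXiBoundary}.

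Next I would verify the compatibility~\eqref{eq:compatibilityXiContactPoint} at a point $c(t)\in\mathcal{T}_c(t)$. Evaluating at $c(t)$: by construction $s_{\mathcal{T}_i}(c(t),t)=0$ and $s_{\partial\Omega}(c(t),t)=0$, so the ans\"atze collapse to $\xi^c_{\mathcal{T}_i}(c(t),t)=n_{I_v}(c(t),t)$ and $\xi^c_{\partial\Omega}(c(t),t)=\tau_{\partial\Omega}(c(t))$, and these agree by the relation $n_{I_v}(\cdot,t)=\tau_{\partial\Omega}(\cdot)$ at $\mathcal{T}_c(t)$ from~\eqref{eq:compZero}. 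This gives the zeroth-order matching. For the gradient, differentiate the ans\"atze: because every term carrying a factor $s_{\mathcal{T}_i}^2$ contributes a gradient proportional to $s_{\mathcal{T}_i}$, which vanishes at $c(t)$, one finds at the contact point
\begin{align*}
\nabla\xi^c_{\mathcal{T}_i}(c(t),t)
&= \nabla n_{I_v}(c(t),t) + \alpha_{\mathcal{T}_i}(t)\,\tau_{I_v}(c(t),t)\otimes\nabla s_{\mathcal{T}_i}(c(t),t)
\\
&= -H_{I_v}(c(t),t)\,\tau_{I_v}\otimes\tau_{I_v} - H_{\partial\Omega}(c(t))\,\tau_{I_v}\otimes n_{I_v},
\end{align*}
using $\nabla s_{\mathcal{T}_i}=n_{I_v}$ and the formula~\eqref{eq:gradientNo} from Lemma~\ref{lem:gradientsFrames} (the $O(\dist(\cdot,\mathcal{T}_i))$ term drops at $c(t)$ since $c(t)\in\mathcal{T}_i(t)$). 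Symmetrically,
\begin{align*}
\nabla\xi^c_{\partial\Omega}(c(t),t)
= H_{\partial\Omega}(c(t))\,n_{\partial\Omega}\otimes n_{\partial\Omega} - H_{I_v}(c(t),t)\,n_{\partial\Omega}\otimes\tau_{\partial\Omega}.
\end{align*}

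Finally I would match the two gradient expressions by substituting the contact-point identities of~\eqref{eq:compZero}, namely $n_{I_v}=\tau_{\partial\Omega}$ and $\tau_{I_v}=-n_{\partial\Omega}$ at $\mathcal{T}_c(t)$. Under these substitutions $\tau_{I_v}\otimes\tau_{I_v}=n_{\partial\Omega}\otimes n_{\partial\Omega}$ and $\tau_{I_v}\otimes n_{I_v}=-n_{\partial\Omega}\otimes\tau_{\partial\Omega}$, so $\nabla\xi^c_{\mathcal{T}_i}(c(t),t)=-H_{I_v}\,n_{\partial\Omega}\otimes n_{\partial\Omega}+H_{\partial\Omega}\,n_{\partial\Omega}\otimes\tau_{\partial\Omega}$, which is \emph{not} yet manifestly equal to the expression for $\nabla\xi^c_{\partial\Omega}(c(t),t)$; the apparent discrepancy in the coefficients of the two rank-one pieces is exactly what the higher-order compatibility condition~\eqref{eq:compHigher} is designed to reconcile — recall $\big(\tau_{I_v}\cdot\nabla\big)(n_{I_v}\cdot v)=H_{\partial\Omega}(n_{I_v}\cdot v)$ at $\mathcal{T}_c(t)$, which ties the two curvatures together through the velocity and is precisely where the coefficients $\alpha_{\mathcal{T}_i},\alpha_{\partial\Omega}$ were chosen as curvatures of the \emph{other} manifold. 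Sorting out this matching — showing that after using~\eqref{eq:compZero} and~\eqref{eq:compHigher} the two Jacobians literally coincide as linear maps on $\mathbb{R}^2$ — is the one genuinely non-routine point, and I expect it to be the main obstacle; everything else is bookkeeping with the chain rule and the uniform geometric bounds.
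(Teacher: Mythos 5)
Your regularity argument and the zeroth-order matching are fine and follow the same route as the paper. The problem is in your gradient computation for the boundary building block: the expression you give for $\nabla\xi^c_{\partial\Omega}(c(t),t)$ has the two rank-one tensors attached to the wrong curvature coefficients. From the ansatz~\eqref{eq:AnsatzAuxiliaryXiBuondary}, the analogue of~\eqref{eq:gradientTau} for the frame $(n_{\partial\Omega},\tau_{\partial\Omega})$, namely $\nabla\tau_{\partial\Omega}=H_{\partial\Omega}\,n_{\partial\Omega}\otimes\tau_{\partial\Omega}+O(\dist(\cdot,\partial\Omega))$, and $\nabla s_{\partial\Omega}=n_{\partial\Omega}$, one gets (cf.~\eqref{eq:gradientXiBoundary})
\begin{align*}
\nabla\xi^c_{\partial\Omega}
= H_{\partial\Omega}\, n_{\partial\Omega}\otimes\tau_{\partial\Omega}
+ \alpha_{\partial\Omega}\, n_{\partial\Omega}\otimes n_{\partial\Omega}
+ O(\dist(\cdot,\partial\Omega))
= H_{\partial\Omega}\, n_{\partial\Omega}\otimes\tau_{\partial\Omega}
- H_{I_v}\, n_{\partial\Omega}\otimes n_{\partial\Omega}
+ O(\dist(\cdot,\partial\Omega)),
\end{align*}
not $H_{\partial\Omega}\,n_{\partial\Omega}\otimes n_{\partial\Omega}-H_{I_v}\,n_{\partial\Omega}\otimes\tau_{\partial\Omega}$ as you wrote. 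With the corrected formula, substituting the frame identities~\eqref{eq:compZero} ($n_{I_v}=\tau_{\partial\Omega}$, $\tau_{I_v}=-n_{\partial\Omega}$ at $\mathcal{T}_c(t)$) into your (correct) expression $\nabla\xi^c_{\mathcal{T}_i}(c(t),t)=-H_{I_v}\,\tau_{I_v}\otimes\tau_{I_v}-H_{\partial\Omega}\,\tau_{I_v}\otimes n_{I_v}$ gives $-H_{I_v}\,n_{\partial\Omega}\otimes n_{\partial\Omega}+H_{\partial\Omega}\,n_{\partial\Omega}\otimes\tau_{\partial\Omega}$, which coincides exactly with $\nabla\xi^c_{\partial\Omega}(c(t),t)$. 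The two Jacobians therefore agree using only~\eqref{eq:compZero} and the coefficient choices~\eqref{Condition_alpha}--\eqref{Condition_alphaprime}; this is precisely why the coefficients were chosen as the curvature of the \emph{other} hypersurface.

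Consequently the ``genuinely non-routine point'' you flag does not exist: there is no residual discrepancy to reconcile, and the higher-order compatibility condition~\eqref{eq:compHigher} plays no role in this lemma (in the paper it is used later, in the proof of the evolution estimate~\eqref{eq:evolEquAuxXiBuondary} of Lemma~\ref{lemma:PropertiesAuxPairs}). As submitted, your argument is incomplete because you leave the first-order matching open as an expected obstacle; once the tensor slip in $\nabla\xi^c_{\partial\Omega}$ is corrected, the proof closes and is essentially identical to the paper's.
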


\begin{proof}
\textit{Step 1 (Regularity estimates):} 
Note first that~$\alpha_{\mathcal{T}_i},\alpha_{\partial\Omega}\in C^1_t([0,T])$
due to the regularity of the maps~$H_{I_v}$ resp.\ $H_{\partial\Omega}$
from~\eqref{eq:extMeanCurvature} resp.\ \eqref{eq:extMeanCurvatureBoundary}.
The asserted bounds~\eqref{eq:reqXiInterface} and~\eqref{eq:reqXiBoundary} for the derivatives of the vector 
fields $\xi^c_{\mathcal{T}_i}$ and $\xi^c_{\partial \Om}$ can thus 
be inferred from the definitions~\eqref{eq:AnsatzAuxiliaryXiInterface} and~\eqref{eq:AnsatzAuxiliaryXiBuondary}
in combination with the regularity of~$s_{\mathcal{T}_i},n_{I_v}$ from Definition~\ref{def:locRadiusInterface},
the regularity of~$s_{\partial\Omega},n_{\partial\Omega}$ from Definition~\ref{def:locRadius},
as well as the regularity of~$\tau_{I_v},\tau_{\partial\Omega}$ from Lemma~\ref{lem:gradientsFrames}. 
 
\textit{Step 2 (First order compatibility at the contact point):} 
The zeroth order condition of~\eqref{eq:compatibilityXiContactPoint} is a 
direct consequence of the definitions~\eqref{eq:AnsatzAuxiliaryXiInterface} 
and~\eqref{eq:AnsatzAuxiliaryXiBuondary} in combination with the
compatibility condition~\eqref{eq:compZero}. In order to prove the
first order condition, it directly follows from~\eqref{eq:gradientNo}--\eqref{eq:gradientTau}
and their analogues for the frame~$(n_{\partial\Omega},\tau_{\partial\Omega})$,
as well as the definitions~\eqref{eq:AnsatzAuxiliaryXiInterface} 
and~\eqref{eq:AnsatzAuxiliaryXiBuondary} that
\begin{align}
\label{eq:gradientXiInterface}
\nabla \xi^c_{\mathcal{T}_i}  &=  
- H_ {I_v} \tau_{I_v} \otimes \tau_{I_v} + \alpha_{\mathcal{T}_i} \tau_{I_v} \otimes n_{I_v} 
+ O(\dist(\cdot,\mathcal{T}_i)),
\\
\label{eq:gradientXiBoundary}
\nabla \xi^c_{\partial \Om} 
&=  H_{\partial \Om}  n_{\partial \Om}  \otimes \tau_{\partial \Om}  
+ \alpha_{\partial \Om} n_{\partial \Om}  \otimes \no + O(\dist(\cdot,\partial\Omega)).
\end{align}
Finally, since we have~\eqref{eq:compZero} due to the conventions 
adopted, using~\eqref{Condition_alpha} and~\eqref{Condition_alphaprime}
we can deduce the first order compatibility condition of~\eqref{eq:compatibilityXiContactPoint}.
\end{proof}  

\subsubsection{Evolution equations for local building blocks} 
The following lemma provides the approximate evolution equations
for our local constructions~$\xi^c_{\mathcal{T}_i}$ and~$\xi^c_{\partial \Om}$, 
which will eventually lead us to~\eqref{eq:timeEvolutionXiContactPoint}--\eqref{eq:timeEvolutionLengthXiContactPoint}.

\begin{lemma} 
\label{lemma:PropertiesAuxPairs}
Let the assumptions and notation of Construction~\ref{AnsatzAuxiliaryXi} be in place.
	Then it holds
	\begin{align}
	\label{eq:evolEquAuxXiInterface}
	\partial_t \xi^c_{\mathcal{T}_i}
	+ (v\cdot\nabla) \xi^c_{\mathcal{T}_i}
	+ (\Id - \xi^c_{\mathcal{T}_i} \otimes \xi^c_{\mathcal{T}_i})
	(\nabla v)^\mathsf{T} \xi^c_{\mathcal{T}_i}
	&= O(\dist(\cdot,\mathcal{T}_i)),
	\\
	\label{eq:evolEquLengthAuxXiInterface} 
	\partial_t|\xi^c_{\mathcal{T}_i}|^2
	+ (v\cdot\nabla)|\xi^c_{\mathcal{T}_i}|^2
	&  = O(\dist^3(\cdot,\mathcal{T}_i)),
	\\
	\label{eq:lengthControlAuxXiInterface}
	|1-|\xi^c_{\mathcal{T}_i}|^2| 
	&  = O(\dist^4(\cdot,\mathcal{T}_i))
	\end{align}
	throughout the space-time domain $\mathcal{N}_{r_c,c}(\Omega)\cap\mathrm{im}(\Psi_{\mathcal{T}_i})$.
	Moreover, we have
	\begin{align}
	\label{eq:evolEquAuxXiBuondary}
	\partial_t \xi^c_{\partial \Om}
	{+} (v\cdot\nabla) \xi^c_{\partial \Om}
	{+} (\Id - \xi^c_{\partial \Om} \otimes \xi^c_{\partial \Om}) 
	(\nabla v)^\mathsf{T} \xi^c_{\partial \Om}
	&= O(\dist(\cdot,\partial \Om)\vee\dist(\cdot,\mathcal{T}_c)),
	\\
	\label{eq:evolEquLengthAuxXiBuondary}
	\partial_t|\xi^c_{\partial \Om}|^2
	+ (v\cdot\nabla)|\xi^c_{\partial \Om}|^2
	& = O(\dist^3(\cdot,\partial \Om)),
	\\
	\label{eq:lengthControlAuxXiBuondary}
	|1-|\xi^c_{\partial \Om}|^2| 
	& = O(\dist^4(\cdot,\partial \Om))
	\end{align}
	throughout the space-time domain $\mathcal{N}_{r_c,c}(\Omega)\cap
\big(\mathrm{im}(\Psi_{\partial\Omega}){\times}[0,T]\big)$.
\end{lemma}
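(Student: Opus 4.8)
The plan is to verify each of the six asserted estimates by direct Taylor expansion in the signed distance variable, exploiting the product structure of the ansatz~\eqref{eq:AnsatzAuxiliaryXiInterface}--\eqref{eq:AnsatzAuxiliaryXiBuondary} together with the frame derivative formulas from Lemma~\ref{lem:gradientsFrames} and the evolution equation for the signed distance function. The key observation driving everything is that $\xi^c_{\mathcal{T}_i}$ is a second-order truncation of the \emph{exact} unit vector $\cos(\alpha_{\mathcal{T}_i}s_{\mathcal{T}_i})n_{I_v} + \sin(\alpha_{\mathcal{T}_i}s_{\mathcal{T}_i})\tau_{I_v}$ (the rotation of $n_{I_v}$ by angle $\alpha_{\mathcal{T}_i}s_{\mathcal{T}_i}$), and likewise for $\xi^c_{\partial\Omega}$; this immediately explains why $|1-|\xi^c_{\mathcal{T}_i}|^2|$ is of order $\dist^4$ (the dropped term in $|\cdot|^2$ is the square of the quartic tail), giving~\eqref{eq:lengthControlAuxXiInterface} and~\eqref{eq:lengthControlAuxXiBuondary}. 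For these two, one simply computes $|\xi^c_{\mathcal{T}_i}|^2 = 1 + \tfrac14\alpha_{\mathcal{T}_i}^4 s_{\mathcal{T}_i}^4$ algebraically using $|n_{I_v}|=|\tau_{I_v}|=1$, $n_{I_v}\cdot\tau_{I_v}=0$, and boundedness of $\alpha_{\mathcal{T}_i}$.

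For the length-transport identities~\eqref{eq:evolEquLengthAuxXiInterface} and~\eqref{eq:evolEquLengthAuxXiBuondary}, I would apply $\partial_t + (v\cdot\nabla)$ directly to the expression $1 + \tfrac14\alpha_{\mathcal{T}_i}^4 s_{\mathcal{T}_i}^4$. Since $\alpha_{\mathcal{T}_i}(x,t) = -H_{\partial\Omega}(c(t),t)$ depends only on $t$, the advective derivative hits only $s_{\mathcal{T}_i}^4 = 4 s_{\mathcal{T}_i}^3(\partial_t + v\cdot\nabla)s_{\mathcal{T}_i}$, and the transport equation $\partial_t s_{\mathcal{T}_i} = -(v(P_{\mathcal{T}_i}(\cdot),\cdot)\cdot\nabla)s_{\mathcal{T}_i}$ from the proof of Proposition~\ref{prop:CalibrationInterface} shows $(\partial_t + v\cdot\nabla)s_{\mathcal{T}_i} = \big((v - v\circ P_{\mathcal{T}_i})\cdot n_{I_v}\big) = O(\dist(\cdot,\mathcal{T}_i))$ by Lipschitz continuity of $v$; together with the $\dot\alpha$ term which carries the full factor $s_{\mathcal{T}_i}^4$, one gets the claimed $O(\dist^3)$ (on the interface side; on the boundary side $s_{\partial\Omega}$ is exactly transported by $v$ tangentially since $v\cdot n_{\partial\Omega}=0$, so $(v\cdot\nabla)s_{\partial\Omega}$ is genuinely order $\dist$ near $\partial\Omega$).

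The main work—and the step I expect to be the real obstacle—is the first-order transport equations~\eqref{eq:evolEquAuxXiInterface} and~\eqref{eq:evolEquAuxXiBuondary}. For~\eqref{eq:evolEquAuxXiInterface} I would substitute the ansatz into the operator $L[\zeta] := \partial_t\zeta + (v\cdot\nabla)\zeta + (\Id - \zeta\otimes\zeta)(\nabla v)^\mathsf{T}\zeta$, expand everything to first order in $s_{\mathcal{T}_i}$ using~\eqref{eq:gradientNo}--\eqref{eq:gradientTau}, the transport equation for $s_{\mathcal{T}_i}$, the chain rule for $\partial_t n_{I_v}, \partial_t\tau_{I_v}$ (these are transported by $v\circ P_{\mathcal{T}_i}$, analogously to the computation in the proof of Proposition~\ref{prop:CalibrationInterface}), and the bound $\dot\alpha_{\mathcal{T}_i} = O(1)$. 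The leading ($s_{\mathcal{T}_i}^0$) term is exactly $L[n_{I_v}]$ restricted to the interface, which vanishes to order $\dist(\cdot,\mathcal{T}_i)$ by~\eqref{eq:evolEquXiInterface} of Proposition~\ref{prop:CalibrationInterface}; the delicate point is the $O(s_{\mathcal{T}_i})$ coefficient, where one must see that the contributions of the $\alpha_{\mathcal{T}_i}s_{\mathcal{T}_i}\tau_{I_v}$ correction term cancel against the corresponding first-order expansion of $L[n_{I_v}]$. In fact since both $n_{I_v}$ and $\xi^c_{\mathcal{T}_i}$ are (truncations of) rotations of $n_{I_v}$ by a spatially-constant-to-leading-order angle, and $L$ is the geometric transport operator for unit normals under the flow of $v$, the correction is designed precisely so that $\xi^c_{\mathcal{T}_i}$ remains, to the order claimed, a solution; I would make this rigorous by a careful bookkeeping of the $s_{\mathcal{T}_i}$-linear terms rather than invoking the heuristic. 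For~\eqref{eq:evolEquAuxXiBuondary} the computation is structurally identical with the frame $(n_{\partial\Omega},\tau_{\partial\Omega})$ replacing $(\tau_{I_v},n_{I_v})$, except that the leading term $L[\tau_{\partial\Omega}]$ need not vanish (there is no analogue of Proposition~\ref{prop:CalibrationInterface} along $\partial\Omega$); this is why the error is only $O(\dist(\cdot,\partial\Omega)\vee\dist(\cdot,\mathcal{T}_c))$—away from the contact point one has genuine transport only up to terms controlled by how far $\tau_{\partial\Omega}$ is from being transported, which by the higher-order compatibility condition~\eqref{eq:compHigher} of Lemma~\ref{lem:higherOrderCompConditions} is $O(\dist(\cdot,\mathcal{T}_c))$ at the boundary, combined with the $O(\dist(\cdot,\partial\Omega))$ Taylor remainder off $\partial\Omega$. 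Throughout, all constants are admissible by the uniform regularity of $v$, $\partial\Omega$, $I_v$ and the boundedness of $s_{\mathcal{T}_i}, s_{\partial\Omega}$ on the localized domains.
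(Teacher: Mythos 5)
Your overall route---direct expansion of the ansatz \eqref{eq:AnsatzAuxiliaryXiInterface}--\eqref{eq:AnsatzAuxiliaryXiBuondary} using the frame formulas of Lemma~\ref{lem:gradientsFrames}, the transport property of the signed distances, and the contact-point compatibility conditions of Lemma~\ref{lem:higherOrderCompConditions}---is exactly the paper's, and your treatment of \eqref{eq:lengthControlAuxXiInterface}, \eqref{eq:lengthControlAuxXiBuondary}, \eqref{eq:evolEquLengthAuxXiInterface} and \eqref{eq:evolEquLengthAuxXiBuondary} is correct and matches it. However, you have mis-located the difficulty in the two first-order equations. For \eqref{eq:evolEquAuxXiInterface} there is no cancellation of $s_{\mathcal{T}_i}$-linear terms to track: the target is only $O(\dist(\cdot,\mathcal{T}_i))$, so after writing $\xi^c_{\mathcal{T}_i} = \xi^i + \alpha_{\mathcal{T}_i}s_{\mathcal{T}_i}\tau_{I_v} - \tfrac12\alpha_{\mathcal{T}_i}^2 s_{\mathcal{T}_i}^2 n_{I_v}$ and using that $\xi^i=n_{I_v}$ already satisfies \eqref{eq:evolEquXiInterface}, every remaining contribution either carries an explicit factor of $s_{\mathcal{T}_i}$ (hence is admissible), or equals $\alpha_{\mathcal{T}_i}\big(\partial_t s_{\mathcal{T}_i} + (v\cdot\nabla)s_{\mathcal{T}_i}\big)\tau_{I_v}$, which is $O(\dist(\cdot,\mathcal{T}_i))$ by the very Lipschitz estimate you quote; likewise the difference of the nonlinear terms for $\xi^c_{\mathcal{T}_i}$ and $\xi^i$ is bounded by $C|\xi^c_{\mathcal{T}_i}-\xi^i| = O(\dist(\cdot,\mathcal{T}_i))$. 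So what you flag as ``the real obstacle'' is immediate, and this is precisely how the paper argues.

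Conversely, the genuine cancellation sits in \eqref{eq:evolEquAuxXiBuondary}, and there your sketch asserts rather than proves it. One must check that the $O(1)$ contribution $(v\cdot\tau_{\partial\Omega})H_{\partial\Omega}\,n_{\partial\Omega}$ produced by $(v\cdot\nabla)\tau_{\partial\Omega}$ (using $v\cdot n_{\partial\Omega}=0$ along $\partial\Omega$) is cancelled at the contact point by $(\Id-\tau_{\partial\Omega}\otimes\tau_{\partial\Omega})(\nabla v)^{\mathsf{T}}\tau_{\partial\Omega} = \big(\tau_{\partial\Omega}\cdot(n_{\partial\Omega}\cdot\nabla)v\big)n_{\partial\Omega}$. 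This uses not only \eqref{eq:compHigher}, but also the zeroth-order identities \eqref{eq:compZero} and the fact that $v(c(t),t)\cdot\tau_{I_v}(c(t),t)=0$, so that the term $H_{I_v}(v\cdot\tau_{I_v})$ arising from \eqref{eq:gradientNo} drops when converting $n_{I_v}\cdot(\tau_{I_v}\cdot\nabla)v$ into $(\tau_{I_v}\cdot\nabla)(v\cdot n_{I_v})$; only then does \eqref{eq:compHigher} turn the latter into $H_{\partial\Omega}(v\cdot\tau_{\partial\Omega})$, and a Lipschitz estimate yields the stated $O(\dist(\cdot,\partial\Omega)\vee\dist(\cdot,\mathcal{T}_c))$. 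With this computation supplied your argument coincides with the paper's proof; without it, the key step of the boundary estimate is only claimed.
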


\begin{proof}
		\textit{Step 1 (Proof of~\eqref{eq:evolEquAuxXiInterface}):}
		Note that because of the definitions~\eqref{eq:DefXiInterface} and~\eqref{eq:AnsatzAuxiliaryXiInterface},
		it holds $\xi^c_{\mathcal{T}_i} = \xi^i + \alpha_{\mathcal{T}_i}s_{\mathcal{T}_i}\tau_{I_v}
	 - \frac{1}{2}\alpha_{\mathcal{T}_i}^2s_{\mathcal{T}_i}^2n_{I_v}$. Since 
		we already proved~\eqref{eq:evolEquXiInterface}, we only need to show that
		\begin{align*}
			\alpha_{I_v} (\partial_t s_{\mathcal{T}_i})\tau_{I_v} 
			+ \alpha_{I_v}  (v\cdot\nabla s_{\mathcal{T}_i}) \tau_{I_v}
			= O(\dist(\cdot,\mathcal{T}_i)).
		\end{align*}
		However, the above relation is an immediate consequence of the identity $\partial_t s_{\mathcal{T}_i}(x,t) 
		= -\big(v(P_{\mathcal{T}_i}(x,t),t)\cdot\nabla\big) s_{\mathcal{T}_i}(x,t)$ and
		the regularity of~$v$, see Definition~\ref{Def_strongsol} of a strong solution,
		through a Lipschitz estimate. This proves~\eqref{eq:evolEquAuxXiInterface}.
		
		\textit{Step 2 (Proof of~\eqref{eq:evolEquAuxXiBuondary}):}
			From the definition~\eqref{eq:AnsatzAuxiliaryXiBuondary} 
			and~$\alpha_{\partial\Omega}\in C^1_t([0,T])$ it directly follows
			\begin{align*}
				\partial_t \xi^c_{\partial \Om} 
				= (\partial_t \alpha_{\partial \Om}) s_{\partial \Om }n_{\partial \Om }
				= O(\dist(\cdot, \partial \Om) ). 
			\end{align*}

			Having~$\xi^c_{\partial \Om} = \tau_{\partial \Om } + \alpha_{\partial \Om} s_{\partial \Om } \no 
			- \frac{1}{2}\alpha_{\partial\Omega}^2s_{\partial\Omega}^2\tau_{\partial\Omega}$, 
			cf.\ the definition~\eqref{eq:AnsatzAuxiliaryXiBuondary}, it follows from
			$\nabla s_{\partial \Om} = n_{\partial \Om} $, the analogues of~\eqref{eq:gradientNo}--\eqref{eq:gradientTau}
		  for the frame~$(n_{\partial\Omega},\tau_{\partial\Omega})$, as well as the boundary 
			condition~$v\cdot n_{\partial\Omega}=0$ along~$\partial\Omega$ that
			\begin{align*}
			(v \cdot \nabla ) \xi^c_{\partial \Om} &= 	
			(v \cdot \nabla ) (\tau_{\partial \Om } + \alpha_{\partial \Om} s_{\partial \Om } \no ) 
			+  O(\dist(\cdot, \partial \Om))
			\\&
			= (v\cdot\tau_{\partial\Omega}) \tau_{\partial \Om } \cdot (H_{\partial \Om}   \tau_{\partial \Om } \otimes \no 
			+ \alpha_{\partial \Om} \no \otimes \no )    + O(\dist(\cdot, \partial \Om) )
			\\&
			= (v\cdot\tau_{\partial\Omega}) H_{\partial \Om}  \no + O(\dist(\cdot, \partial \Om)).
			\end{align*}
			Moreover, based on~$\xi^c_{\partial\Omega}=\tau_{\partial\Omega} + O(\dist(\cdot,\partial\Omega))$
			due to~\eqref{eq:AnsatzAuxiliaryXiBuondary}, $v(c(t),t)=\big(v(c(t),t)\cdot n_{I_v}(c(t),t)\big)
			n_{I_v}(c(t),t)$ along the moving contact point~$\{c(t)\}=\mathcal{T}_c(t)$, the formula~\eqref{eq:gradientNo},
			and the compatibility conditions~\eqref{eq:compZero}--\eqref{eq:compHigher} we infer that
			\begin{align*}
			&(\Id - \xi^c_{\partial \Om} \otimes \xi^c_{\partial \Om})
			(\nabla v)^\mathsf{T}  \xi^c_{\partial \Om} 
			\\
			&= (\Id - \tau_{\partial\Omega} \otimes \tau_{\partial\Omega})
			(\nabla v)^\mathsf{T}  \tau_{\partial\Omega} + O(\dist(\cdot, \partial \Om))
			\\
			&= (\tau_{\partial\Omega}\cdot(n_{\partial\Omega}\cdot\nabla) v) \no
			+ O(\dist(\cdot, \partial \Om))
			\\
			&= - \big(n_{I_v}(c(t),t)\cdot\big(\tau_{I_v}(c(t),t)\cdot\nabla\big) v(c(t),t)\big) \no
			+ O(\dist(\cdot, \partial \Om) \vee \dist(\cdot,\mathcal{T}_c))
			\\
			&= -\big(\big(\tau_{I_v}(c(t),t)\cdot\nabla\big)(v\cdot n_{I_v})(c(t),t) \big)\no
			+ O(\dist(\cdot, \partial \Om) \vee \dist(\cdot,\mathcal{T}_c))
			\\
			&= -(v\cdot\tau_{\partial\Omega}) H_{\partial \Om}  \no 
			+ O(\dist(\cdot, \partial \Om) \vee \dist(\cdot,\mathcal{T}_c)).
			\end{align*}
			Hence, the estimate~\eqref{eq:evolEquAuxXiBuondary}
			follows as a consequence of the previous three displays.
			
		\textit{Step 3 (Proof of~\eqref{eq:evolEquLengthAuxXiInterface}--\eqref{eq:lengthControlAuxXiInterface}
		and~\eqref{eq:evolEquLengthAuxXiBuondary}--\eqref{eq:lengthControlAuxXiBuondary}):} 
		Simply note that~\eqref{eq:evolEquLengthAuxXiInterface}--\eqref{eq:lengthControlAuxXiInterface}
		as well as~\eqref{eq:evolEquLengthAuxXiBuondary}--\eqref{eq:lengthControlAuxXiBuondary}
		directly follow from the definitions~\eqref{eq:AnsatzAuxiliaryXiInterface} resp.\
		\eqref{eq:AnsatzAuxiliaryXiBuondary} of the vector field $\xi^c_{\mathcal{T}_i}$ resp.\ 
		the vector field~$\xi^c_{\partial\Omega}$	in form of
		\begin{align}
		|\xi^c_{\mathcal{T}_i}|^2 &= \Big(1 {-} \frac{1}{2}\alpha_{\mathcal{T}_i}^2s_{\mathcal{T}_i}^2\Big)^2
		+ \alpha_{\mathcal{T}_i}^2s_{\mathcal{T}_i}^2
		= 1 + \frac{1}{4}\alpha_{\mathcal{T}_i}^4s_{\mathcal{T}_i}^4,
		\\
		|\xi^c_{\partial\Omega}|^2 &= \Big(1 {-} \frac{1}{2}\alpha_{\partial\Omega}^2s_{\partial\Omega}^2\Big)^2
		+ \alpha_{\partial\Omega}^2s_{\partial\Omega}^2
		= 1 + \frac{1}{4}\alpha_{\partial\Omega}^4s_{\partial\Omega}^4.
		\end{align}
		This concludes the proof of Lemma~\ref{lemma:PropertiesAuxPairs}.
\end{proof}  

\subsection{From building blocks to contact point extensions by interpolation}
\label{subsec:step_2CP}
As we discussed in the previous subsections, the auxiliary vector fields~$\xi^c_{\mathcal{T}_i}$ and 
$\xi^c_{\partial \Omega}$ provide main building block for a contact point extension
of the interface unit normal near the connected interface~$\mathcal{T}_i$ or
near the domain boundary~$\partial\Omega$, respectively.
More precisely, we will make use of the auxiliary vector field~$\xi^c_{\mathcal{T}_i}$
on the wedges~$W^c_{\mathcal{T}_i}\cup W^c_{\Omega^+_v}\cup W^c_{\Omega^-_v}$,
and of the auxiliary vector field~$\xi^c_{\partial\Omega}$
on the wedges~$W^{+,c}_{\partial\Omega}\cup W^{-,c}_{\partial\Omega}
\cup W^c_{\Omega^+_v}\cup W^c_{\Omega^-_v}$. Note that this is indeed
admissible thanks to the inclusions~\eqref{eq:interpolWedge1},
\eqref{eq:inclusionWedges3} and~\eqref{eq:inclusionWedges4}.
As the domains of definition for the auxiliary vector fields overlap, 
we adopt an interpolation procedure on the interpolation wedges~$W^c_{\Om_v^\pm}$.
To this end, we first define suitable interpolation functions.

\begin{lemma}
\label{lemma:existenceInterpolationFunction}
Let the assumptions and notation of Definition~\ref{def:locRadius} be in place.
	Then there exists a pair of interpolation functions
	\begin{align*}
	\lambda^\pm_c\colon\bigcup_{t\in [0,T]} 
	\big(W^c_{\Om_v^\pm}(t)\setminus \mathcal{T}_c(t)\big)
	{\times}\{t\}\to [0,1]
	\end{align*}
	which satisfies the following list of properties:
	\begin{itemize}[leftmargin=0.7cm]
		\item[i)] On the boundary of the interpolation wedges~$W^c_{\Om_v^\pm}$
							intersected with~$B_{r_c}(\mathcal{T}_c)$, the values of~$\lambda^\pm_c$ and 
							its derivatives up to second order are given by
		\begin{align}
		\lambda^\pm_c(\cdot,t) &= 0 &&\textnormal{ on } 
		\big(\partial W^c_{\Om_v^\pm}(t)\cap\partial W^{\pm,c}_{\partial \Om }(t)\big) 
		\setminus\mathcal{T}_c(t), \label{eq:LambdaProp0}
		\\
		\lambda^\pm_c(\cdot,t) & = 1 &&\textnormal{ on }
		\big(\partial W^c_{\Om_v^\pm}(t) \cap\partial W^c_{\mathcal{T}_i}(t)\big)
		\setminus\mathcal{T}_c(t), \label{eq:LambdaProp1}
		\\ 
		\nabla\lambda^\pm_c(\cdot,t) & = 0, 
		&&\textnormal{ on } 	\big(\partial W^c_{\Om_v^\pm}(t) \cap B_{r_c}(\mathcal{T}_c(t))\big)
		\setminus\mathcal{T}_c(t),
		\label{first_derivative_lambda_vanishes_on_wedge}
		\\ 
		\nabla^2\lambda^\pm_c(\cdot,t) &  = 0,  \,\,	\partial_t\lambda^\pm_c(\cdot,t) = 0 
		&&\textnormal{ on }	\big(\partial W^c_{\Om_v^\pm}(t)\cap B_{r_c}(\mathcal{T}_c(t))\big)
		\setminus\mathcal{T}_c(t)
		\label{second_derivative_lambda_vanishes_on_wedge}
		\end{align}
		for all $t\in [0,T]$.
		\item[ii)] There exists a constant $C$ such that the estimates
		\begin{align}
		|\partial_t\lambda^\pm_c(\cdot,t)| + |\nabla\lambda^\pm_c(\cdot,t)|
		&\leq C|\dist(\cdot,\mathcal{T}_c(t))|^{-1}, \label{eq:LambdaFirstDeriv}
		\\
		|\nabla\partial_t\lambda^\pm_c(\cdot,t)| + |\nabla^2\lambda^\pm_c(\cdot,t)|
		& \leq C|\dist(\cdot,\mathcal{T}_c(t))|^{-2} \label{eq:LambdaSecondDeriv}
		\end{align}
	  hold true on~$W^c_{\Om_v^\pm}(t)\setminus \mathcal{T}_c(t)$ for all $t\in [0,T]$.
\item[iii)] We have an improved estimate on the advective derivative in form of
		\begin{align} \label{eq:AdvectiveDerivativeLambda}
		\big|\partial_t\lambda^\pm_c(\cdot,t) + \big(v\cdot\nabla\big)\lambda^\pm_c(\cdot,t)\big|	\leq C
		\end{align}
		on~$W^c_{\Om_v^\pm}(t)\setminus \mathcal{T}_c(t)$ for all $t\in [0,T]$.
	\end{itemize}
\end{lemma}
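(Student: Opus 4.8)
The plan is to construct the interpolation functions $\lambda^\pm_c$ explicitly by working in polar-type coordinates adapted to the wedge geometry described in Definition~\ref{def:locRadius}. Since each interpolation wedge $W^c_{\Omega^\pm_v}(t)$ is a cone with apex at the contact point $\mathcal{T}_c(t)$, bounded by the two rays in directions $X_{\Omega^\pm_v}(t)$ and $X^\pm_{\mathcal{T}_i}(t)$, and intersected with $\overline{B_{r_c}(\mathcal{T}_c(t))}$, I would parametrize a point $x$ in this wedge by its distance $\rho := \dist(x,\mathcal{T}_c(t)) = |x - c(t)|$ together with an angular variable $\varphi = \varphi(x,t) \in [0,\pi/6]$ measuring (up to the fixed opening angle $\pi/6$ prescribed by~\eqref{eq:openingAnglesWedges}) the angle between $x - c(t)$ and the boundary ray $\partial W^c_{\Omega^\pm_v} \cap \partial W^{\pm,c}_{\partial\Omega}$. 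The key point is that this angular variable is \emph{scale-invariant}: it is homogeneous of degree zero in $x - c(t)$, so its spatial gradient scales like $\rho^{-1}$ and its second gradient like $\rho^{-2}$. I would then set $\lambda^\pm_c(x,t) := \beta(\varphi(x,t))$ for a fixed smooth cutoff profile $\beta\colon [0,\pi/6] \to [0,1]$ with $\beta \equiv 0$ near $0$, $\beta \equiv 1$ near $\pi/6$, and all derivatives of $\beta$ vanishing at both endpoints (a standard mollified monotone transition).

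The verification then splits into the three itemized claims. For item~(i): the boundary-value conditions~\eqref{eq:LambdaProp0}--\eqref{eq:LambdaProp1} follow because on the ray $\partial W^c_{\Omega^\pm_v}\cap\partial W^{\pm,c}_{\partial\Omega}$ one has $\varphi = 0$ (hence $\beta = 0$) and on $\partial W^c_{\Omega^\pm_v}\cap\partial W^c_{\mathcal{T}_i}$ one has $\varphi = \pi/6$ (hence $\beta = 1$); the vanishing of $\nabla\lambda^\pm_c$, $\nabla^2\lambda^\pm_c$, and $\partial_t\lambda^\pm_c$ on those rays follows from the chain rule together with $\beta'(0)=\beta''(0)=\beta'(\pi/6)=\beta''(\pi/6)=0$, since $\partial_t\lambda^\pm_c = \beta'(\varphi)\partial_t\varphi$ and $\beta'$ vanishes there regardless of the (bounded, by $C^1_t$-regularity of $c(t)$ and the frame vectors $X$) size of $\partial_t\varphi$. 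For item~(ii): the pointwise bounds~\eqref{eq:LambdaFirstDeriv}--\eqref{eq:LambdaSecondDeriv} are immediate from the homogeneity of $\varphi$, namely $|\nabla\varphi| \leq C\rho^{-1}$ and $|\nabla^2\varphi| \leq C\rho^{-2}$, combined with $|\partial_t\varphi|\leq C\rho^{-1}$ (the apex $c(t)$ moves at bounded speed, so a time derivative costs one inverse power of $\rho$), and boundedness of $\beta',\beta''$.

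The main obstacle — and the only genuinely nontrivial point — is item~(iii), the improved advective-derivative estimate~\eqref{eq:AdvectiveDerivativeLambda}: one must show $\partial_t\lambda^\pm_c + (v\cdot\nabla)\lambda^\pm_c$ is bounded even though $\partial_t\lambda^\pm_c$ and $(v\cdot\nabla)\lambda^\pm_c$ are each only $O(\rho^{-1})$. The mechanism is the usual transport cancellation: $\partial_t\lambda^\pm_c + (v\cdot\nabla)\lambda^\pm_c = \beta'(\varphi)\big(\partial_t\varphi + (v\cdot\nabla)\varphi\big)$, and one must show the material derivative of the angular variable, $\partial_t\varphi + (v\cdot\nabla)\varphi$, is only $O(1)$ rather than $O(\rho^{-1})$. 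Here is where the $90^\circ$ contact angle condition and the compatibility relations from Lemma~\ref{lem:higherOrderCompConditions} enter. Writing $\varphi$ in terms of $x - c(t)$ and the rotating frame, one computes $\partial_t\varphi + (v\cdot\nabla)\varphi$ and finds it equals $\rho^{-1}$ times the component, transverse to $x - c(t)$, of $v(x,t) - v(c(t),t) - \dot c(t) - (\text{rotation rate of frame}) \times (x - c(t))$; the leading $\rho^{-1}$ singularity has numerator $v(x,t) - v(c(t),t) - (\text{linear-in-}(x-c)\text{ terms})$, which by the $C^1$-regularity of $v$ up to the interface and boundary (Definition~\ref{Def_strongsol}) is $O(\rho)$ provided the first-order Taylor expansion of $v$ at $c(t)$ matches $\dot c(t) + (\text{frame rotation})$. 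That matching is precisely the content of $\dot c(t) = (n_{I_v}\cdot v)(c(t),t)\,n_{I_v}(c(t),t)$ together with the higher-order compatibility condition~\eqref{eq:compHigher} and the frame evolution equations derived in the proof of Lemma~\ref{lem:higherOrderCompConditions}. Thus the $\rho^{-1}$ term has an $O(\rho)$ numerator and the product is $O(1)$; combined with $|\beta'(\varphi)|\leq C$ this yields~\eqref{eq:AdvectiveDerivativeLambda}. I would carry out the steps in this order: (1) set up the wedge coordinates $(\rho,\varphi)$ and record their scaling and regularity; (2) fix the profile $\beta$ and define $\lambda^\pm_c = \beta(\varphi)$; (3) verify (i) and (ii) by direct chain-rule computation; (4) establish (iii) via the transport cancellation, invoking~\eqref{eq:compZero}, \eqref{eq:compHigher}, the expression for $\dot c(t)$, and the frame evolution formulas.
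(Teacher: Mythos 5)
Your construction is essentially the paper's: the paper also defines $\lambda^\pm_c$ as a fixed smooth profile (constant near its endpoints) composed with the scale-invariant angular variable $X^\pm_{\mathcal{T}_i}(t)\cdot\frac{x-c(t)}{|x-c(t)|}$, obtains (i) and (ii) directly from the $0$-homogeneity and the boundedness of $\frac{\mathrm{d}}{\mathrm{d}t}X^\pm_{\mathcal{T}_i}$, and proves (iii) by exactly the transport cancellation you describe, writing $\partial_t\lambda^\pm_c=-(\dot c(t)\cdot\nabla)\lambda^\pm_c+O(1)$ so that the advective derivative reduces to $\big((v-v(c(t),t))\cdot\nabla\big)\lambda^\pm_c$ plus a bounded frame-rotation term, with the Lipschitz bound $|v(x,t)-v(c(t),t)|\leq C\,\mathrm{dist}(x,\mathcal{T}_c(t))$ counteracting the blow-up of $\nabla\lambda^\pm_c$. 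One remark: your step (iii) is over-engineered --- no matching of first-order Taylor terms, and in particular neither the higher-order compatibility condition \eqref{eq:compHigher} nor the frame evolution formulas are needed; since the frame-rotation contribution to $\partial_t\varphi$ is already $O(1)$ and $\dot c(t)=v(c(t),t)$ at the contact point (by the $90^\circ$ condition and $v\cdot n_{\partial\Omega}=0$), plain Lipschitz regularity of $v$ alone yields \eqref{eq:AdvectiveDerivativeLambda}.
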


	\begin{proof}	
We fix a smooth function $\widetilde\lambda\colon\mathbb{R}\to [0,1]$
such that $\widetilde\lambda\equiv 0$ on $[\frac{2}{3},\infty)$
and $\widetilde\lambda\equiv 1$ on $(-\infty,\frac{1}{3}]$.
Recall the representation~\eqref{eq:defWedgeInterpol} of
the interpolation wedges~$W_{\Omega^\pm_v}$, and that their opening angle
is determined via $X^\pm_{\mathcal{T}_i}\cdot X_{\Omega^\pm_v} = \cos(\pi/6)$ along~$\mathcal{T}_c$, 
see~\eqref{eq:openingAnglesWedges}. We then define a function $\lambda\colon [-1,1]\to [0,1]$
by $\lambda(u):=\widetilde\lambda(\frac{1{-}u}{1{-}\cos(\pi/6)})$, and set
\begin{align*}
\lambda^\pm_c(x,t) := \lambda\bigg(X_{\mathcal{T}_i}^\pm(t)
\cdot\frac{x{-}c(t)}{|x{-}c(t)|}\bigg),
\quad t\in[0,T],\,x\in W_{\Omega^{\pm}_v}(t)\setminus\mathcal{T}_c(t).
\end{align*}
The assertions of the first two items of Lemma~\ref{lemma:existenceInterpolationFunction}
are now immediate consequences of the definitions due to~$\frac{\mathrm{d}}{\mathrm{d}t}X^{\pm}_{\mathcal{T}_i}\in C^0([0,T])$,
cf.\ Definition~\ref{def:locRadius}.

It remains to prove the estimate~\eqref{eq:AdvectiveDerivativeLambda} on the advective derivative.
To this end, abbreviating~$u^\pm:=X_{\mathcal{T}_i}^\pm(t)
\cdot\frac{x{-}c(t)}{|x{-}c(t)|}$ we compute
\begin{align*}
\partial_t \lambda_c^\pm(x,t)
&= \lambda'(u^\pm) X_{\mathcal{T}_i}^\pm(t)
\cdot \partial_t \frac{x{-}c(t)}{|x{-}c(t)|}
+ \lambda'(u^\pm) \frac{x{-}c(t)}{|x{-}c(t)|} 
\cdot \frac{\mathrm{d}}{\mathrm{d}t} X_{\mathcal{T}_i}^\pm(t)
\\
&= \lambda'(u^\pm) 
X_{\mathcal{T}_i}^\pm(t)\cdot\frac{1}{|x{-}c(t)|}
\Big( \Id {-} \frac{x{-}c(t)}{|x{-}c(t)|} 
\otimes \frac{x{-}c(t)}{|x{-}c(t)|}  \Big)
\frac{\mathrm{d}}{\mathrm{d}t}c(t) 
\\&~~~
+ \lambda'(u^\pm) \frac{x{-}c(t)}{|x{-}c(t)|} 
\cdot \frac{\mathrm{d}}{\mathrm{d}t} X_{\mathcal{T}_i}^\pm(t)
\\&
=-\Big(\frac{\mathrm{d}}{\mathrm{d}t} c(t) \cdot \nabla\Big)\lambda_c^\pm(x,t)
+ \lambda'(u^\pm) \frac{x{-}c(t)}{|x{-}c(t)|} 
\cdot \frac{\mathrm{d}}{\mathrm{d}t} X_{\mathcal{T}_i}^\pm(t).
\end{align*}
This in turn yields the asserted estimate \eqref{eq:AdvectiveDerivativeLambda} 
due to~$\frac{\mathrm{d}}{\mathrm{d}t}X^{\pm}_{\mathcal{T}_i}\in C^0([0,T])$,
cf.\ Definition~\ref{def:locRadius}, $\frac{\mathrm{d}}{\mathrm{d}t} c(t)
= v(c(t),t)$, and a Lipschitz estimate based on the regularity of the
fluid velocity~$v$ from Definition~\ref{Def_strongsol} (which counteracts
the blow-up~\eqref{eq:LambdaFirstDeriv} of~$\nabla \lambda_c^\pm$).
This concludes the proof.
\end{proof} 

We have by now everything in place to state the definition of a
vector field which in the end will give rise to 
a contact point extension of the interface unit normal in the precise sense of 
Proposition~\ref{prop:CalibrationContactPoint}.

\begin{construction}
\label{const:ansatzContactPointExtension}
Let the assumptions and notation of Definition~\ref{def:locRadius},
Construction~\ref{AnsatzAuxiliaryXi} and Lemma~\ref{lemma:existenceInterpolationFunction}
be in place. In particular, let~$r_c\in (0,1]$ be an admissible localization
radius for the contact point~$\mathcal{T}_c$.
We define a vector field
\begin{align*}
\widehat\xi^{c}\colon \mathcal{N}_{r_c,c}(\Omega) \to \mathbb{R}^2
\end{align*}
on the space-time domain~$\mathcal{N}_{r_c,c}(\Omega)
:=\bigcup_{t\in [0,T]} \big(B_{r_c}(\mathcal{T}_c(t)) 
\cap \Omega\,\big) {\times} \{t\}$ 
as follows (recall the decomposition~\eqref{eq:decompContactPoint}
of the neighborhood $B_r(\mathcal{T}_c(t))\cap \overline\Omega$):
\begin{align}
\label{eq:ansatzXiContactPoint}
\widehat\xi^{c}(\cdot,t) &:=
\begin{cases}
\xi^c_{\mathcal{T}_i}(\cdot,t) & \text{on } W^c_{\mathcal{T}_i}(t) \cap \overline\Omega, \\
\xi^c_{\partial\Omega}(\cdot,t) & \text{on } W^{\pm,c}_{\partial\Omega}(t) \cap \overline\Omega, \\
\lambda^\pm_c(\cdot,t)\xi^c_{\mathcal{T}_i}(\cdot,t) 
+ \big(1{-}\lambda^\pm_c(\cdot,t)\big)\xi^c_{\partial\Omega}(\cdot,t)
& \text{on } W_{\Omega^{\pm}_v}(t)\setminus\mathcal{T}_c(t) \cap \overline\Omega, \\ 
\end{cases}
\end{align}
for all~$t\in [0,T]$. Note that the vector field~$\widehat\xi^{c}$ is not yet normalized to unit length,
which is the reason for denoting it by~$\widehat\xi^{c}$ instead of $\xi^{c}$. 
Observe also that~\eqref{eq:ansatzXiContactPoint} is well-defined in view of the 
inclusions~\eqref{eq:interpolWedge1}, \eqref{eq:inclusionWedges3} and~\eqref{eq:inclusionWedges4}.
\end{construction}

\subsection{Proof of Proposition~\ref{prop:CalibrationContactPoint}} 
\label{sec:proofCalibrationContactPoint} 
The proof proceeds in several steps. We first establish the required properties 
in terms of the vector field~$\widehat\xi^c$. The penultimate step is devoted to fixing $\widehat r_c\in (0,r_c]$
such that~$\big|\widehat\xi^c\big|\geq \frac{1}{2}$ on~$\mathcal{N}_{\widehat r_c,c}(\Omega)$,
so that one may define $\xi:=\big|\widehat\xi^c\big|^{-1}\widehat\xi^c\in\mathbb{S}^1$
throughout~$\mathcal{N}_{\widehat r_c,c}(\Omega)$ and transfer the properties of~$\widehat\xi^c$ to~$\xi^c$.
Finally, in the last step we verify the asserted compatibility conditions between
a contact point extension and a bulk extension of the interface unit normal.

\textit{Step 1: Regularity of~$\widehat \xi^c$ and properties i)--iii).} 	
Because of the inclusion~\eqref{eq:interpolWedge1} as well as
the definitions~\eqref{eq:AnsatzAuxiliaryXiInterface} and~\eqref{eq:ansatzXiContactPoint},
it follows that $\widehat \xi^c (\cdot,t) = n_{I_v}(\cdot,t)$
along $\mathcal{T}_i(t) \cap B_{r_c}(\mathcal{T}_c(t))$ 
for all $t\in [0,T]$. By the same reasons, relying also on
$\xi^c_{\mathcal{T}_i} = \xi^i + \alpha_{\mathcal{T}_i}s_{\mathcal{T}_i}\tau_{I_v}
- \frac{1}{2}\alpha_{\mathcal{T}_i}^2s_{\mathcal{T}_i}^2n_{I_v}$,
cf.\  the definitions~\eqref{eq:DefXiInterface} and~\eqref{eq:AnsatzAuxiliaryXiInterface},
$\nabla s_{\mathcal{T}_i}=n_{I_v}$ and~\eqref{eq:divMeanCurvature},
we deduce that~$\nabla\cdot\widehat \xi^c (\cdot,t) = - H_{I_v}(\cdot,t)$
along $\mathcal{T}_i(t) \cap B_{r_c}(\mathcal{T}_c(t))$ 
for all $t\in [0,T]$. Moreover, in view of the inclusion~\eqref{eq:interpolWedge2}
as well as the definitions~\eqref{eq:AnsatzAuxiliaryXiBuondary} and~\eqref{eq:ansatzXiContactPoint},
we obtain $\widetilde \xi^c(\cdot,t)\cdot n_{\partial\Omega}=\tau_{\partial\Omega}\cdot n_{\partial\Omega}=0$
along $B_{r_c}(\mathcal{T}_c(t))\cap\partial\Omega$.
This yields the asserted properties i)--iii) of a contact point extension in terms of~$\widehat\xi^c$
on scale~$r_c$. 

The vector fields $\widehat\xi^c$, $\partial_t\widehat\xi^c$, $\nabla\widehat\xi^c$ and $\nabla^2\widehat\xi^c$
exist in a pointwise sense and are continuous throughout~$\mathcal{N}_{r_c,c}(\Omega)\setminus\mathcal{T}_c$
due to the definition~\eqref{eq:ansatzXiContactPoint} of~$\widehat\xi^c$,
the regularity of the local building blocks~$\xi^c_{\mathcal{T}_i}$ and~$\xi_{\partial\Omega}^c$
as provided by Lemma~\ref{lem:firstOrderCompXi}, as well as the regularity of the
interpolation parameter~$\lambda^\pm_c$ from Lemma~\ref{lemma:existenceInterpolationFunction}.
Note in this context that no jumps occur across the boundaries of the interpolation wedges
as a consequence of the conditions~\eqref{eq:LambdaProp0}--\eqref{second_derivative_lambda_vanishes_on_wedge}.
It remains to prove the bounds
\begin{align}
\label{eq:regularityEstimateAnsatzXiContactPoint}
|\partial_t\widehat\xi^c(\cdot,t)| + |\nabla^k\widehat\xi^c(\cdot,t)|
\leq C \quad\text{on } \big(\overline{B_{r_c}(\mathcal{T}_c(t))}
\setminus\mathcal{T}_c\big) \cap \overline{\Omega}
\end{align}
for $k \in \{0,1,2\}$, for all $t\in [0,T]$ and some constant~$C>0$.

In the wedges~$W^c_{\mathcal{T}_i}$ and $W^{\pm,c}_{\partial \Om}$ containing the interface or 
the boundary of the domain, respectively, the estimate follows directly from the 
estimates~\eqref{eq:reqXiInterface}--\eqref{eq:reqXiBoundary}
and the definition~\eqref{eq:ansatzXiContactPoint}. On interpolation wedges~$W^c_{\Om_v^\pm}$, 
we compute recalling~\eqref{eq:ansatzXiContactPoint}
	\begin{align*}
	\partial_t  \widehat \xi^c
	&= \lambda^\pm_c  \partial_t  \xi^c_{\mathcal{T}_i}
	+(1{-}\lambda^\pm_c) \partial_t  \xi^c_{\partial \Om}
	+ (\xi^c_{\mathcal{T}_i}{-}\xi^c_{\partial \Om})\partial_t\lambda^\pm_c
	\\
	\nabla\widehat \xi^c
	&= \lambda^\pm_c  \nabla  \xi^c_{\mathcal{T}_i}
	+(1{-}\lambda^\pm_c) \nabla   \xi^c_{\partial \Om}
	+ (\xi^c_{\mathcal{T}_i}{-}\xi^c_{\partial \Om})\otimes \nabla\lambda^\pm_c,
	\\
	\nabla^2\widehat \xi^c &= \lambda^\pm_c  \nabla^2 \xi^c_{\mathcal{T}_i}
	+ (1{-}\lambda^\pm_c) \nabla^2  \xi^c_{\partial \Om}
  + (\nabla\lambda^\pm_c\otimes\nabla^{\mathrm{sym}})(\xi^c_{\mathcal{T}_i}{-}\xi^c_{\partial \Om})
	+ (\xi^c_{\mathcal{T}_i}{-}\xi^c_{\partial \Om})\otimes\nabla^2\lambda^\pm_c.
	\end{align*}
	Then we recall the bounds~\eqref{eq:LambdaFirstDeriv} and~\eqref{eq:LambdaSecondDeriv}
	for the derivatives of the interpolation functions, the estimates~\eqref{eq:reqXiInterface} and~\eqref{eq:reqXiBoundary}
	as well as the compatibility conditions~\eqref{eq:compatibilityXiContactPoint} for the auxiliary vector 
	fields~$\xi^c_{\mathcal{T}_i}$ and~$\xi^c_{\partial \Om}$. Feeding these into
	the previous display establishes~\eqref{eq:regularityEstimateAnsatzXiContactPoint} on the interpolation wedges.
	
\textit{Step 2: Evolution equation in terms of~$\widehat\xi^c$.}
We claim that
\begin{align}
\label{eq:transportXiContactPoint}
\partial_t\widehat\xi^c + (v\cdot\nabla)\widehat\xi^c + (\nabla v)^\mathsf{T}\widehat\xi^c
&= O(\dist(\cdot,\mathcal{T}_i)) \quad\text{in } \mathcal{N}_{r_c,c}(\Omega).
\end{align}

The validity of~\eqref{eq:transportXiContactPoint} on the wedges~$W^c_{\mathcal{T}_i}$ 
and~$W^{\pm,c}_{\partial \Om}$ follows directly from the estimates~\eqref{eq:evolEquAuxXiInterface} 
resp.\ \eqref{eq:evolEquAuxXiBuondary}, the definition~\eqref{eq:ansatzXiContactPoint}
and the bound~\eqref{eq:distCondition}. Hence, we only need to prove the bound~\eqref{eq:transportXiContactPoint}
on the interpolation wedges~$W^c_{\Om_v^\pm}$.

To this end, recall first that on the interpolation wedges~$W^c_{\Om_v^\pm}$ the distance with respect to 
the contact point~$\mathcal{T}_c$ or the distance with respect to the domain boundary~$\partial\Omega$
is dominated by the distance to the connected interface $\mathcal{T}_i$, see~\eqref{eq:distCondition}.
Writing $\widehat \xi^c = \xi^c_{\mathcal{T}_i} + (1 {-} \lambda^\pm_c)
 ( \xi^c_{\partial \Om} {-} \xi^c_{\mathcal{T}_i} )$, 
and resp. $\widehat \xi^c = \xi^c_{\partial \Om} 
+ \lambda^\pm_c (\xi^c_{I_v} {-} \xi^c_{\partial \Om}  )$, 
we then immediately see that
\begin{align}
\label{eq:TensProdXiInterface}
	 \widehat \xi^c \otimes \widehat \xi^c 
	 &= \xi^c_{\mathcal{T}_i}  \otimes \xi^c_{\mathcal{T}_i} + O(\dist^2(\cdot, \mathcal{T}_i)),
\\ 
\label{eq:TensProdXiBuondary}
\widehat \xi^c \otimes \widehat \xi^c &= \xi^c_{\partial \Om } \otimes \xi^c_{\partial \Om } 
+ O(\dist^2(\cdot, \mathcal{T}_i)), 
\end{align}
due to compatibility~\eqref{eq:compatibilityXiContactPoint} up to first order at the contact point~\(\mathcal{T}_c\),
and the regularity estimates~\eqref{eq:reqXiInterface}--\eqref{eq:reqXiBoundary}.
Using the product rule and the definition~\eqref{eq:ansatzXiContactPoint} 
of~$\widehat \xi^c$ on~$W^c_{\Om_v^\pm}$, we thus obtain 
\begin{align} \label{proof:step4}
	&\pt \widehat \xi^c + (v \cdot \nabla ) \widehat \xi^c 
	+ (\Id - \widehat \xi^c \otimes \widehat \xi^c) (\nabla v)^\mathsf{T} \widehat \xi^c \nonumber  \\
	&= \lambda^\pm_c \big( \pt + (v \cdot \nabla )  + 
	(\Id -  \xi^c_{\mathcal{T}_i} \otimes \xi^c_{\mathcal{T}_i}) 
	(\nabla v)^\mathsf{T}\big)  \xi^c_{\mathcal{T}_i} \\
	&\quad + (1- \lambda^\pm_c) \big( \pt + (v \cdot \nabla )  
	+ (\Id -  \xi^c_{\partial \Om }  \otimes  \xi^c_{\partial \Om } )
	(\nabla v)^\mathsf{T}\big) \xi^c_{\partial \Om } \nonumber \\
	& \quad + (\pt \lambda^\pm_c+(v \cdot \nabla )\lambda^\pm_c ) 
	(\xi^c_{\mathcal{T}_i} - \xi^c_{\partial \Om }) \nonumber 
	+ O(\dist^2(\cdot, \mathcal{T}_i)).  
\end{align}
Hence, we obtain~\eqref{eq:transportXiContactPoint} on interpolation
wedges as a consequence of the estimates~\eqref{eq:evolEquAuxXiInterface} 
resp.\ \eqref{eq:evolEquAuxXiBuondary}, the bound~\eqref{eq:AdvectiveDerivativeLambda}
on the advective derivative of the interpolation parameter,
as well as the compatibility condition~\eqref{eq:compatibilityXiContactPoint}.

\textit{Step 3:} We next claim that
\begin{align}
\label{eq:transportXiContactLine3}
\partial_t\big|\widehat\xi^c\,\big|^2 + (v\cdot\nabla)\big|\widehat\xi^c\,\big|^2
&= O(\dist(\cdot,\mathcal{T}_i)) &&\text{in } \mathcal{N}_{r_c,c}(\Omega),
\\
\label{eq:derivLenghtHatXi}
\Big|\nabla|\widehat\xi^c\,\big|^2\Big| &= O(\dist(\cdot,\mathcal{T}_i)) 
&&\text{in } \mathcal{N}_{r_c,c}(\Omega).
\end{align}

Outside of interpolation wedges, both claims are already established
in view of the estimates~\eqref{eq:evolEquLengthAuxXiInterface}--\eqref{eq:lengthControlAuxXiInterface}
resp.\ \eqref{eq:evolEquLengthAuxXiBuondary}--\eqref{eq:lengthControlAuxXiBuondary},
the estimate~\eqref{eq:distCondition} as well as the definition~\eqref{eq:ansatzXiContactPoint}.
Using the latter, we may compute on interpolation wedges~$W^{c}_{\Om_v^\pm}$
\begin{align}
\label{eq:auxLengthAnsatzXi}
|\widehat\xi^c|^2 - 1 
&= \lambda^{\pm \, 2}_c(|\xi^c_{\mathcal{T}_i}|^2-1)
+ (1-\lambda^\pm_c)^2(|\xi^c_{\partial \Om}|^2-1) 
\\&~~~\nonumber
+ 2\lambda^\pm_c(1-\lambda^\pm_c)(\xi^c_{\mathcal{T}_i}\cdot \xi^c_{\partial \Om}-1),
\end{align}
and thus 
\begin{align}
\nonumber
\big(\partial_t{+}(v\cdot\nabla)\big)\big|\widehat\xi^c\,\big|^2
&= \big(\partial_t{+}(v\cdot\nabla)\big)\big((\lambda_c^\pm)^2|\xi^c_{\mathcal{T}_i}|^2
{+} (1{-}\lambda_c^\pm)^2|\xi^c_{\partial\Omega}|^2 + 2\lambda_c^\pm(1{-}\lambda_c^\pm)\big)
\\&~~~\label{eq:evolutionLengthContactLineAux}
+ (\xi^c_{\mathcal{T}_i}\cdot\xi^c_{\partial\Omega}{-}1)  
\big(\partial_t{+}(v\cdot\nabla)\big)\big(
2\lambda_c^\pm(1{-}\lambda_c^\pm)\big)
\\&~~~\nonumber
+ 2\lambda_c^\pm(1{-}\lambda_c^\pm)\big(\partial_t{+}(v\cdot\nabla)\big)
(\xi^c_{\mathcal{T}_i}\cdot\xi^c_{\partial\Omega}{-}1).
\end{align}

Because of~\eqref{eq:evolEquLengthAuxXiInterface}--\eqref{eq:lengthControlAuxXiInterface} and~\eqref{eq:evolEquLengthAuxXiBuondary}--\eqref{eq:lengthControlAuxXiBuondary},
the first right hand side term
of~\eqref{eq:evolutionLengthContactLineAux} is of required order. For an estimate
of the second and third right hand side term of~\eqref{eq:evolutionLengthContactLineAux}, 
observe that it suffices to prove
$\xi^c_{\mathcal{T}_i}\cdot\xi^c_{\partial\Omega}{-}1 = O(\dist^2(\cdot,\mathcal{T}_i))$ on interpolation
wedges as the advective derivative of the interpolation parameter is bounded, 
see~\eqref{eq:AdvectiveDerivativeLambda}. However, it follows immediately
from the definitions~\eqref{eq:AnsatzAuxiliaryXiInterface} and~\eqref{eq:AnsatzAuxiliaryXiBuondary},
the formulas~\eqref{eq:gradientXiInterface} and~\eqref{eq:gradientXiBoundary},
as well as the compatibility condition~\eqref{eq:compatibilityXiContactPoint}, that
at the contact point~$\mathcal{T}_c$ it holds $\xi^c_{\mathcal{T}_i}\cdot\xi^c_{\partial\Omega} = 1$,
$(\nabla\xi^c_{\mathcal{T}_i})^\mathsf{T}\xi^c_{\partial\Omega}= 0$ and
$(\nabla\xi^c_{\partial\Omega})^\mathsf{T}\xi^c_{\mathcal{T}_i}= 0$. Hence,
$\xi^c_{\mathcal{T}_i}\cdot\xi^c_{\partial\Omega}{-}1 = O(\dist^2(\cdot,\mathcal{T}_i))$ is
a consequence of a Lipschitz estimate making use of the estimates~\eqref{eq:reqXiInterface}--\eqref{eq:reqXiBoundary}
and the bound~\eqref{eq:distCondition}.

In summary, the above arguments upgrade~\eqref{eq:evolutionLengthContactLineAux} 
to~\eqref{eq:transportXiContactLine3}, and analogous considerations
based on~\eqref{eq:auxLengthAnsatzXi} also entail~\eqref{eq:derivLenghtHatXi}
on interpolation wedges.

\textit{Step 4: Choice of $\widehat r_c$ and definition of the normalized vector field $\xi^c$.}
By the definition~\eqref{eq:ansatzXiContactPoint} of the vector field~$\widehat \xi^c$ we have 
$|\widehat \xi^c(\cdot,t)|=1$ on $ B_{r_c}(\mathcal{T}_c(t))\cap (\partial \Om \cup \mathcal{T}_i(t))$ 
for all $t\in [0,T]$. Due to its Lipschitz continuity, see \textit{Step~1} of the proof,
we may choose a radius~$\widehat r_c \leq r_c$ such that~$|\widehat\xi^c|\geq\frac{1}{2}$
holds true in the space-time domain~$\mathcal{N}_{\widehat r_c,c}(\Omega)$.
We then define~$\xi^c:=\big|\widehat\xi^c\,\big|^{-1}\widehat\xi^c\in\mathbb{S}^1$
throughout~$\mathcal{N}_{\widehat r_c,c}(\Omega)$, so that it
remains to argue that the properties of~$\widehat\xi^c$ are inherited by~$\xi^c$. 

Since $\xi^c(\cdot,t)=\widehat\xi^c(\cdot,t)$ on 
$B_{r_c}(\mathcal{T}_c(t))\cap (\partial \Om \cup \mathcal{T}_i(t))$ 
for all $t\in [0,T]$, it immediately follows that
$\xi^c(\cdot,t)=n_{I_v}(\cdot,t)$ along~$\mathcal{T}_i(t)\cap B_{\widehat r_c}(\mathcal{T}_c(t))$
as well as $\xi^c(\cdot,t)\cdot n_{\partial\Omega}(\cdot)=0$
along~$\partial\Omega\cap B_{\widehat r_c}(\mathcal{T}_c(t))$
for all~$t\in [0,T]$. Moreover, $\nabla\cdot\xi^c
= |\widehat\xi^c|^{-1}\nabla\cdot\widehat\xi^c
- \frac{(\widehat\xi^c\cdot\nabla)|\widehat\xi^c|^2}{2|\widehat\xi^c|^3}$
so that~$\nabla\cdot\xi^c = -H_{I_v}(\cdot,t)$ holds true
on~$\mathcal{T}_i(t)\cap B_{\widehat r_c}(\mathcal{T}_c(t))$
for all~$t\in[0,T]$ because of~\eqref{eq:derivLenghtHatXi},
the validity of this equation in terms of~$\widehat\xi^c$,
and the fact that~$|\widehat\xi^c(\cdot,t)|=1$ on
$\mathcal{T}_i(t)\cap B_{\widehat r_c}(\mathcal{T}_c(t))$ for all~$t\in [0,T]$.
In summary, properties~\textit{ii)--iii)} are satisfied.

The required regularity is obtained by the choice of the radius~$\widehat r_c$, the 
definition~$\xi^c:=\big|\widehat\xi^c\,\big|^{-1}\widehat\xi^c$,
and the fact that the vector field~$\widehat\xi^c$ already satisfies it
as argued in~\textit{Step~1} of this proof. Since~$\xi^c\in\mathbb{S}^1$
throughout~$\mathcal{N}_{\widehat r_c,c}(\Omega)$,
\eqref{eq:timeEvolutionLengthXiContactPoint} holds true for trivial reasons.
For a proof of~\eqref{eq:timeEvolutionXiContactPoint}, one may argue as follows.
Recalling that~$|\widehat\xi^c|\geq\frac{1}{2}$
holds true in~$\mathcal{N}_{\widehat r_c,c}(\Omega)$,
adding zero and using the product rule yields
\begin{align*}
& \partial_t \xi^c + (v\cdot \nabla) \xi^c +(\Id - \xi^c \otimes \xi^c)(\nabla v)^\mathsf{T} \xi^c 
\\
&=  \partial_t \xi^c + (v\cdot \nabla) \xi^c +
(\Id - \widehat \xi^c \otimes  \widehat \xi^c)(\nabla v)^\mathsf{T} \xi^c 
-(1-|\widehat \xi^c|^2)( \xi^c \otimes  \xi^c ) (\nabla v)^\mathsf{T} \xi^c 
\\
&=\frac1{|\widehat \xi^c|} \big( \partial_t \widehat \xi^c + (v\cdot \nabla) \widehat \xi^c 
+(\Id - \widehat \xi^c \otimes  \widehat \xi^c)(\nabla v)^\mathsf{T} \widehat \xi^c\big)
-\frac{\widehat\xi^c}{2|\widehat \xi^c|^3}  ( \partial_t |\widehat \xi^c|^2+ (v\cdot \nabla) |\widehat \xi^c|^2 ) 
\\
&\quad -(1-|\widehat \xi^c|^2)( \xi^c \otimes  \xi^c ) (\nabla v)^\mathsf{T} \xi^c 
\end{align*}
throughout~$\mathcal{N}_{\widehat r_c,c}(\Omega)$. Observe that the first right hand side term
is estimated by~\eqref{eq:transportXiContactPoint}, the second by~\eqref{eq:transportXiContactLine3},
and the third by a Lipschitz estimate based on the fact~$|\widehat\xi^c(\cdot,t)|=1$ along
$\mathcal{T}_i(t)\cap B_{\widehat r_c}(\mathcal{T}_c(t))$ for all~$t\in [0,T]$. 
Hence, \eqref{eq:timeEvolutionXiContactPoint} holds true.

\textit{Step 5: Contact point extensions as perturbations of bulk extensions.}
As a preparation for the proof of the compatability estimates, we claim that
\begin{align}
\label{eq:higherOrderControlLength}
|\xi^{c}{-}\widehat\xi^{\, c}| \leq C\dist^2(\cdot,\mathcal{T}_i).
\end{align}
Note that because of the definition~\eqref{eq:ansatzXiContactPoint},
the compatibility conditions~\eqref{eq:compatibilityXiContactPoint} at the contact point,
the regularity estimates~\eqref{eq:reqXiInterface}--\eqref{eq:reqXiBoundary}
for the local building blocks, the controlled blow-up~\eqref{eq:LambdaFirstDeriv}
, the coercivity estimate~\eqref{eq:lengthControlAuxXiInterface}
,
and the estimate~\eqref{eq:distCondition}, it holds
\begin{align*}
\nabla \frac{1}{|\widehat\xi^{\,c}|} 
= - \frac{(\widehat\xi^{\,c}\cdot\nabla)\widehat\xi^{\,c}}{|\widehat\xi^{\,c}|^3}
&= - \frac{(\xi^{c}_{\mathcal{T}_i}\cdot\nabla)\widehat\xi^{\,c}}{|\widehat\xi^{\,c}|^3}
+ O(\dist(\cdot,\mathcal{T}_i))
\\
&= - \frac{(\xi^{c}_{\mathcal{T}_i}\cdot\nabla)\xi^{\,c}_{\mathcal{T}_i}}{|\widehat\xi^{\,c}|^3}
+ O(\dist(\cdot,\mathcal{T}_i))
= O(\dist(\cdot,\mathcal{T}_i)).
\end{align*}
Hence, the asserted estimate~\eqref{eq:higherOrderControlLength}
follows from $\xi^{c}{-}\widehat\xi^{\, c}=(|\widehat\xi^{\,c}|^{-1}{-}1)\widehat\xi^{\,c}$,
the fact that $\xi^{c}(\cdot,t)=\widehat\xi^{\,c}(\cdot,t)\equiv n_{I_v}(\cdot,t)$
along the local interface patch~$\mathcal{T}_i(t)\cap B_{\widehat r^{\,c}}(\mathcal{T}_c(t))$
for all $t\in [0,T]$, and the previous display.

We exploit~\eqref{eq:higherOrderControlLength} as follows.
Within the interface wedge~$W^c_{\mathcal{T}_i}$, it now follows from the
definitions~\eqref{eq:DefXiInterface}, \eqref{eq:AnsatzAuxiliaryXiInterface} 
and~\eqref{eq:ansatzXiContactPoint} that
\begin{align*}
\xi^c - \xi^i =\xi^c_{\mathcal{T}_i} - \xi^i + O(\dist^2(\cdot,\mathcal{T}_i))
=  \alpha_{\mathcal{T}_i}s_{\mathcal{T}_i}\tau_{I_v}
- \frac{1}{2}\alpha_{\mathcal{T}_i}^2s_{\mathcal{T}_i}^2n_{I_v}
+ O(\dist^2(\cdot,\mathcal{T}_i)).
\end{align*}
Within interpolation wedges, we have the same representation
thanks to the first-order compatibility~\eqref{eq:compatibilityXiContactPoint}
in form of
\begin{align*}
\xi^c - \xi^i &=\widehat \xi^c - \xi^i + O(\dist^2(\cdot,\mathcal{T}_i))
\\&
= (\xi^c_{\mathcal{T}_i} - \xi^i)
+ (1{-}\lambda^\pm_c)(\xi^c_{\partial\Omega} - \xi^c_{\mathcal{T}_i})
+ O(\dist^2(\cdot,\mathcal{T}_i))
\\&
= \alpha_{\mathcal{T}_i}s_{\mathcal{T}_i}\tau_{I_v}
- \frac{1}{2}\alpha_{\mathcal{T}_i}^2s_{\mathcal{T}_i}^2n_{I_v}
+ O(\dist^2(\cdot,\mathcal{T}_i)).
\end{align*}
In particular, the compatibility bounds~\eqref{eq:compBoundLocalCalibrations1} 
and~\eqref{eq:compBoundLocalCalibrations2} are satisfied within
interface and interpolation wedges, respectively. 
\qed 

\section{Existence of boundary adapted extensions of the unit normal}
\label{sec:globalConstruction}

\subsection{From local to global extensions}
\label{subsec:partitionOfUnity}
The idea for proving Proposition~\ref{prop:existenceCalibration}
consists of stitching together the local extensions from
the previous two sections by means of a suitable partition of unity
on the interface $I_v$. For a construction of the latter, recall first the decomposition of
the interface $I_v$ into its topological features, namely,
the connected components of $I_v\cap\Omega$ and the connected components
of $I_v\cap \partial\Omega$. Denoting by $N\in\mathbb{N}$ the total
number of such topological features present in the interface $I_v$ we split
$\{1,\ldots,N\}=:\mathcal{I}\cupdot\mathcal{C}$ by means of two disjoint subsets.
Here, the subset $\mathcal{I}$ enumerates the space-time connected components
of $I_v\cap\Omega$ (being time-evolving connected \textit{interfaces}), whereas
the subset $\mathcal{C}$ enumerates the space-time connected components of $I_v\cap\partial\Omega$
(being time-evolving \textit{contact points}).
If $i\in\mathcal{I}$, we let $\mathcal{T}_i\subset I_v$ denote the space-time trajectory in $\Omega$ of the 
corresponding connected interface. Furthermore, for every $c\in\mathcal{C}$ we write $\mathcal{T}_c$
representing the space-time trajectory in $\partial\Omega$ of the corresponding contact point. 
Finally, let us write $i\sim c$ for $i\in\mathcal{I}$
and $c\in\mathcal{C}$ if and only if $\mathcal{T}_i$ ends at $\mathcal{T}_c$; otherwise $i\not\sim c$.

\begin{lemma}[Construction of a partition of unity]
\label{lem:partitionOfUnity}
Let $d=2$, and let $\Omega\subset\R^2$ be a bounded domain with orientable and smooth boundary.
Let $(\chi_v,v)$ be a strong solution to the incompressible
Navier--Stokes equation for two fluids in the sense of Definition~\ref{Def_strongsol}
on a time interval $[0,T]$. For each $i\in\mathcal{I}$ let $r_i$
be the localization radius of Definition~\ref{def:locRadiusInterface}, and for each $c\in\mathcal{C}$
denote by $\widehat r_c$ the localization radius of Proposition~\ref{prop:CalibrationContactPoint}.
There then exists a family $(\eta_1,\ldots,\eta_N)$ of cutoff functions
\begin{align}
\nonumber
&\eta_n\colon \mathbb{R}^2\times [0,T] \to [0,1],
\quad n\in\{1,\ldots,N\},
\\\label{eq:regularityCutoff}
&\text{with the regularity }\eta_n\in (C^0_tC^2_x\cap C^1_tC^0_x)
\Big(\mathbb{R}^2{\times}[0,T]\setminus\bigcup_{c\in\mathcal{C}}\mathcal{T}_c\Big),
\end{align}
and a localization radius $\widehat r \in 
(0,\min_{i\in\mathcal{I}} r_i \wedge \min_{c\in\mathcal{C}} \widehat r_c)$, which 
together are subject to the following
list of conditions:
\begin{itemize}[leftmargin=0.7cm]
\item The family $(\eta_1,\ldots,\eta_N)$ is a partition of unity
			along the interface $I_v$. Defining a bulk cutoff
			by means of $\eta_{\mathrm{bulk}}:= 1 - \sum_{n=1}^N \eta_n$,
			it holds $\eta_{\mathrm{bulk}}\in [0,1]$. On top we have coercivity estimates in form of
			\begin{align}
			\label{eq:coercivityBulkCutoff}
			\frac{1}{C}(\dist^2(\cdot,I_v) \wedge 1) &\leq
			\eta_{\mathrm{bulk}} \leq C(\dist^2(\cdot,I_v) \wedge 1) 
			&&\text{in } \R^2\times [0,T],
			\\ \label{eq:coercivityGradientBulkCutoff}
			|\nabla\eta_{\mathrm{bulk}}| &\leq C(\dist(\cdot,I_v) \wedge 1) 
			&&\text{in } \R^2\times [0,T],
			\end{align}
\item For all two-phase interfaces $i\in\mathcal{I}$ it holds
			\begin{align}
			\label{eq:supportCutoffTwoPhase}
			\supp \eta_i(\cdot,t) \subset \Psi_{\mathcal{T}_i}
			(\mathcal{T}_i(t){\times}\{t\}{\times}[-\widehat r,\widehat r])
			\quad\text{for all } t\in [0,T],
			\end{align}
			with $\Psi_{\mathcal{T}_i}$ denoting the change of variables 
			from Definition~\ref{def:locRadiusInterface}.	For contact points $c\in\mathcal{C}$, it is required that
			\begin{align}
			\label{eq:supportCutoffContact}
			\supp \eta_c(\cdot,t) \subset B_{\widehat r}\big(\mathcal{T}_c(t)\big)
			\quad\text{for all } t\in [0,T].
			\end{align}
\item For all distinct two-phase interfaces $i,i'\in\mathcal{I}$ it holds
			\begin{align}
			\label{eq:supportCutoffDistinctTwoPhase}
			\supp \eta_i(\cdot,t) \cap \supp \eta_{i'}(\cdot,t) = \emptyset
			\quad\text{for all } t\in [0,T].
			\end{align}
			The same is required for all distinct contact points $c,c'\in\mathcal{I}$
			\begin{align}
			\label{eq:supportCutoffDistinctContact}
			\supp \eta_c(\cdot,t) \cap \supp \eta_{c'}(\cdot,t) = \emptyset
			\quad\text{for all } t\in [0,T].
			\end{align}
\item Let a two-phase interface $i\in\mathcal{I}$ and a contact point $c\in\mathcal{C}$
			be fixed. Then $\supp\eta_i \cap \supp\eta_c \neq \emptyset$ if and only if $i\sim c$,
			and in that case it holds
			\begin{align}
			\label{eq:LocalizationSupportTwoPhaseContact}
			\hspace*{4ex}\supp \eta_i(\cdot,t) \cap  \supp \eta_c(\cdot,t) 
			\subset B_{\widehat r}(\mathcal{T}_c(t)) \cap 
			\big(W^c_{\mathcal{T}_i}(t) \cup W^c_{\Omega^\pm_v}(t)\big)
			\end{align}
			for all~$t\in [0,T]$,
			with the wedges $W^c_{\mathcal{T}_i}$ and $W^c_{\Omega^\pm_v}$ 
			introduced in Definition~\ref{def:locRadius}.
\end{itemize}
\end{lemma}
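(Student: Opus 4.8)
The plan is to construct the partition of unity by working in local charts adapted to the topological features of $I_v$ and then gluing. First I would fix, for each two-phase interface $i \in \mathcal{I}$, the admissible localization radius $r_i$ from Definition~\ref{def:locRadiusInterface}, and for each contact point $c \in \mathcal{C}$ the radius $\widehat r_c$ from Proposition~\ref{prop:CalibrationContactPoint}. By compactness of $I_v$ (finitely many connected components, each a uniform $C^3_x$ submanifold evolving $C^1_t$-smoothly), the collection of space-time trajectories $\{\mathcal{T}_i\}_{i \in \mathcal{I}} \cup \{\mathcal{T}_c\}_{c \in \mathcal{C}}$ has a positive ``mutual separation'': distinct $\mathcal{T}_i, \mathcal{T}_{i'}$ stay uniformly apart, distinct $\mathcal{T}_c, \mathcal{T}_{c'}$ stay uniformly apart, and if $i \not\sim c$ then $\mathcal{T}_i$ stays uniformly away from $\mathcal{T}_c$. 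Choosing $\widehat r \in (0, \min_i r_i \wedge \min_c \widehat r_c)$ small enough relative to this separation (and small enough that, near an adjacent pair $i \sim c$, the tubular neighborhood of $\mathcal{T}_i$ intersected with $B_{\widehat r}(\mathcal{T}_c)$ is contained in $W^c_{\mathcal{T}_i} \cup W^c_{\Omega^\pm_v}$, using the $90^\circ$ contact angle and the wedge decomposition~\eqref{eq:decompContactPoint}) will secure the disjointness and localization properties~\eqref{eq:supportCutoffDistinctTwoPhase}--\eqref{eq:LocalizationSupportTwoPhaseContact}.

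Next I would build the individual cutoffs. For a contact point $c$, I take $\eta_c(x,t) := \beta\big(|x - \mathcal{T}_c(t)|^2 / \widehat r^2\big)$ for a fixed smooth $\beta \colon [0,\infty) \to [0,1]$ with $\beta \equiv 1$ near $0$ and $\supp\beta \subset [0,1)$; since $t \mapsto \mathcal{T}_c(t)$ is a $C^1$ curve by Definition~\ref{Def_strongsol}, this has the regularity~\eqref{eq:regularityCutoff} away from $\mathcal{T}_c$ (the squared distance is smooth in $x$ away from the point, and $C^1$ in $t$), and support in $B_{\widehat r}(\mathcal{T}_c(t))$. For a two-phase interface $i$, I work in the diffeomorphism $\Psi_{\mathcal{T}_i}$ of Definition~\ref{def:locRadiusInterface}: using the signed distance $s_{\mathcal{T}_i}$ and an arclength-type coordinate along $\mathcal{T}_i(t)$, I set $\eta_i$ to be a product of $\beta(s_{\mathcal{T}_i}^2/\widehat r^2)$ with a tangential cutoff that equals $1$ on $\mathcal{T}_i(t) \setminus \bigcup_{c \sim i} B_{\widehat r/2}(\mathcal{T}_c(t))$ and is cut off near the contact points so as not to exceed the total mass $1$ there. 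Precisely, I would first form provisional cutoffs $\widetilde\eta_i, \widetilde\eta_c$ each $\equiv 1$ in a neighborhood of the corresponding feature, then renormalize on the overlap regions $\supp\widetilde\eta_i \cap \supp\widetilde\eta_c$ (for $i \sim c$) so that the resulting $\eta_i + \eta_c \le 1$ while still $\equiv 1$ on $I_v$; because there is at most one $c$ with $i \sim c$ for each end of $\mathcal{T}_i$, this renormalization is local and preserves the regularity class.

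The coercivity estimates~\eqref{eq:coercivityBulkCutoff}--\eqref{eq:coercivityGradientBulkCutoff} for $\eta_{\mathrm{bulk}} = 1 - \sum_n \eta_n$ then follow from the construction: on the tubular neighborhoods the cutoffs are $\beta$ of a squared signed distance, so $1 - \eta_n$ vanishes quadratically at $I_v$ with gradient vanishing linearly, and away from $\widehat r$-neighborhoods of $I_v$ one has $\eta_{\mathrm{bulk}} \equiv 1$ and $\dist(\cdot, I_v) \wedge 1 \gtrsim 1$; near a contact point $c$, one uses $\dist(\cdot, \partial\Omega \cup \mathcal{T}_i) \sim \dist(\cdot, \mathcal{T}_c) + \dist(\cdot, \mathcal{T}_i)$-type comparisons on the wedges (a consequence of~\eqref{eq:distCondition} and the $90^\circ$ angle) to see that $\dist(\cdot, I_v)$ is comparable to the argument of $\beta$ entering $\eta_c$ and the $\eta_i$'s. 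The main obstacle is this last point: arranging the cutoffs near a contact point so that (i) their sum is exactly $1$ on $I_v$ near $\mathcal{T}_c$ (which meets $\partial\Omega$ so $I_v$ has a boundary point there), (ii) the individual supports respect the wedge containment~\eqref{eq:LocalizationSupportTwoPhaseContact}, and (iii) one retains the $C^0_tC^2_x \cap C^1_tC^0_x$ regularity \emph{uniformly up to} $\mathcal{T}_c$ with the correct quadratic/linear vanishing of $\eta_{\mathrm{bulk}}$ and $\nabla\eta_{\mathrm{bulk}}$ — the radial cutoff $\eta_c$ centered at $\mathcal{T}_c$ does not itself vanish quadratically at $I_v$, so the interface-type cutoff $\eta_i$ must carry that vanishing on the overlap, and one must check the renormalized $\eta_i$ still does. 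I expect this to be handled by choosing the overlap region to lie strictly inside the interpolation wedges $W^c_{\Omega^\pm_v}$, where by~\eqref{eq:distCondition} all relevant distances are comparable, so that multiplying by the radial factor $\eta_c$ does not destroy the quadratic decay in $\dist(\cdot, I_v)$.
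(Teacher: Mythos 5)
There is a genuine gap, and it sits exactly in the coercivity estimate~\eqref{eq:coercivityBulkCutoff}. That estimate is \emph{two-sided}: it requires $\frac{1}{C}(\dist^2(\cdot,I_v)\wedge 1)\leq \eta_{\mathrm{bulk}}$ as well as the upper bound. Your building block $\beta$ is a plateau cutoff ($\beta\equiv 1$ near $0$), so on a full tubular neighborhood of each $\mathcal{T}_i$ (away from contact points) you get $\eta_i\equiv 1$, hence $\eta_{\mathrm{bulk}}\equiv 0$ at points with $0<\dist(\cdot,I_v)$ small, and the lower bound fails. Your own justification ("$1-\eta_n$ vanishes quadratically at $I_v$") only addresses the upper bound. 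This lower bound is not cosmetic: it is precisely what is used later (proof of~\eqref{eq:coercivityByModulationOfLength}) via $|\xi|\leq \sum_n\eta_n = 1-\eta_{\mathrm{bulk}}\leq 1-\frac1C\dist^2(\cdot,I_v)$, i.e.\ it encodes the quantitative length deficit of the extension $\xi$ away from $I_v$. The paper avoids this by choosing the explicit quadratic profile $\zeta(r)=(1-r^2)\theta(r^2)$, so that $1-\zeta_i$ is comparable, from above \emph{and} below, to $s_{\mathcal{T}_i}^2$, and similarly $1-\zeta_c\sim\dist^2(\cdot,\mathcal{T}_c)$, which in the boundary and interpolation wedges is comparable to $\dist^2(\cdot,I_v)$ by~\eqref{eq:distCondition}. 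Replacing your plateau $\beta$ by such a profile is the missing ingredient; with a plateau no renormalization can restore the lower bound near the interface away from contact points.

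A second, lesser issue is that your "renormalization on overlaps" near a contact point is left unspecified, and the specification matters for two of the claimed properties. The paper's choice is multiplicative: in the interface wedge $\eta_i:=(1-\zeta_c)\zeta_i$ and $\eta_c:=\zeta_c\zeta_i$ (so the sum is $\zeta_i$, which equals $1$ on $\mathcal{T}_i$), and in the interpolation wedges one interpolates with the functions $\lambda^\pm_c$ of Lemma~\ref{lemma:existenceInterpolationFunction}. The point is that the derivatives of any wedge-adapted cutoff (your tangential cutoff, or $\lambda_c^\pm$) necessarily blow up like $\dist^{-1}(\cdot,\mathcal{T}_c)$ resp.\ $\dist^{-2}(\cdot,\mathcal{T}_c)$, while~\eqref{eq:regularityCutoff} in the paper's convention demands that all derivatives up to second order in $x$ stay \emph{bounded} on $\mathbb{R}^2{\times}[0,T]\setminus\bigcup_c\mathcal{T}_c$; this is rescued exactly by the factor $1-\zeta_c=O(\dist^2(\cdot,\mathcal{T}_c))$ multiplying the interpolation parameter. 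Your instinct to confine the overlap to the interpolation wedges (where~\eqref{eq:distCondition} makes the distances comparable) is the right one, but without inserting such a quadratically vanishing factor the regularity and the two-sided bulk bound near $\mathcal{T}_c$ do not follow from what you wrote.
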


\begin{proof}
The proof proceeds in several steps.

\textit{Step 1: (Definition of auxiliary cutoff functions)} 
Fix a smooth cutoff function $\theta\colon\R\to[0,1]$
with the properties that $\theta(r)=1$ for $|r|\leq \frac{1}{2}$
and $\theta(r)=0$ for $|r|\geq 1$. Define
\begin{align}
\label{eq:quadraticProfileBuildingBlock}
\zeta(r) := (1 - r^2)\theta( r^2  ),\quad r\in\R.
\end{align}
Based on this quadratic profile, we may introduce two classes of
cutoff functions associated to the two different natures of
topological features present in the interface~$I_v$. 
To this end, let $\widehat r \in (0,\min_{i\in\mathcal{I}} r_i 
\wedge \min_{c\in\mathcal{C}} \widehat r_c)$. Moreover, let $\delta\in (0,1]$ be a constant.
Both constants $\widehat r$ and $\delta$ will be determined in
the course of the proof.

For two-phase interfaces $\mathcal{T}_i\subset I_v$, $i\in\mathcal{I}$, we 
may then define
\begin{align}
\label{eq:cutOffProfileTwoPhase}
\zeta_i(x,t) := \zeta\Big(\frac{\sdist(x,\mathcal{T}_i(t))}{\delta \widehat r}\Big),
\, (x,t) \in \mathrm{im}(\Psi_{\mathcal{T}_i})
:=\Psi_{\mathcal{T}_i}\big(\mathcal{T}_{i} {\times} (-2r_i,2r_i)\big) 
\end{align}
where the change of variables $\Psi_{\mathcal{T}_i}$ 
and the associated signed distance $\sdist(\cdot,\mathcal{T}_i)$
are from Definition~\ref{def:locRadiusInterface} of the admissible localization radius~$r_i$.
Furthermore, for contact points $\mathcal{T}_c$, $c\in\mathcal{C}$, we define
\begin{align}
\label{eq:cutOffProfileContact}
\zeta_c(x,t) := \zeta\Big(\frac{\dist(x,\mathcal{T}_c(t))}{\delta \widehat r}\Big),
\quad (x,t) \in \R^2 \times [0,T].
\end{align}

\textit{Step 2: (Choice of the constant $\widehat r \in (0,\min_{i\in\mathcal{I}} r_i 
\wedge \min_{c\in\mathcal{C}} \widehat r_c)$)}
It is a consequence of the uniform regularity of the interface $I_v$ in space-time
that one may choose $\widehat r \in (0,\min_{i\in\mathcal{I}} r_i 
\wedge \min_{c\in\mathcal{C}} \widehat r_c)$ small
enough such that the following localization properties hold true
\begin{align}
\label{eq:locPropertyCutoff1}
\Psi_{\mathcal{T}_i}(\mathcal{T}_i(t){\times}\{t\}{\times}[-\widehat r,\widehat r]) \cap 
\Psi_{\mathcal{T}_{i'}}(\mathcal{T}_{i'}(t){\times}\{t\}{\times}[-\widehat r,\widehat r]) &= \emptyset
\quad\forall i'\in\mathcal{I},\,i'\neq i,
\\ \label{eq:locPropertyCutoff2}
\Psi_{\mathcal{T}_i}(\mathcal{T}_i(t){\times}\{t\}{\times}[-\widehat r,\widehat r]) \cap 
B_{\widehat r}(\mathcal{T}_c(t)) &\neq \emptyset
\,\,\Leftrightarrow\,\, \exists c \in \mathcal{C} \colon i \sim c,
\\ \label{eq:locPropertyCutoff4}
B_{\widehat r}(\mathcal{T}_c(t)) \cap B_{\widehat r}(\mathcal{T}_{c'}(t)) &= \emptyset
\quad\forall c,c'\in\mathcal{C},\,c'\neq c.
\end{align}
for all $t\in [0,T]$ and all $i\in\mathcal{I}$.

\textit{Step 3: (Construction of the partition of unity, part I)}
We start with the construction of the cutoffs $\eta_i$ for two-phase
interfaces $i\in\mathcal{I}$. Away from contact points, we set
\begin{align}
\label{eq:defCutoffTwoPhaseAwayContactPoints}
&\eta_i(x,t) := \zeta_i(x,t),
\quad
(x,t) \in \mathrm{im}(\Psi_{\mathcal{T}_i})\setminus
\bigcup_{c\in\mathcal{C}}\bigcup_{t'\in [0,T]} 
B_{\widehat r}\big(\mathcal{T}_c(t')\big) {\times} \{t'\},
\end{align}
which is well-defined due to the choice of $\widehat r$.

Assume now there exists $c\in\mathcal{C}$ such that $i\sim c$.
Recall from Definition~\ref{def:locRadius} of the admissible localization radius $r_c$
that for all $t\in[0,T]$ we decomposed $\Omega \cap B_{r_c}(\mathcal{T}_c(t))$
by means of five pairwise disjoint open wedges $W^{\pm,c}_{\partial\Omega}(t), W^c_{\mathcal{T}_i}(t),
W^c_{\Omega^\pm_v}(t)\subset\R^2$. In the wedge $W^c_{\mathcal{T}_i}$
containing the two-phase interface $\mathcal{T}_i\subset I_v$, we define
\begin{align}
\label{eq:defCutoffTwoPhaseInterfaceWedge}
&\eta_i(x,t) := (1-\zeta_c(x,t))\zeta_i(x,t),
\,
(x,t) \in \bigcup_{t'\in [0,T]} \big(B_{\widehat r}\big(\mathcal{T}_c(t')\big) 
\cap  W^c_{\mathcal{T}_i}(t')\big) {\times} \{t'\}. 
\end{align}
This is indeed well-defined by the choice of $\widehat r$ and having
\begin{align*}
B_{r_c}(\mathcal{T}_c(t)) \cap W^c_{\mathcal{T}_i}(t) \subset
\Psi_{\mathcal{T}_{i}}(\mathcal{T}_{i}(t){\times}\{t\}{\times}(-2 r_c,2 r_c))
\end{align*}
for all $t\in [0,T]$; the latter in turn being a consequence of Definition~\ref{def:locRadius} of the 
admissible localization radius $r_c$.

Within the ball $B_{\widehat r}(\mathcal{T}_c(t))$, we aim to restrict the support 
of $\eta_i(\cdot,t)$ to the region $B_{\widehat r}(\mathcal{T}_c(t)) \cap 
\big(W^c_{\mathcal{T}_i}(t) \cup W^c_{\Omega^\pm_v}(t)\big)$ for all $t\in [0,T]$. 
This will be done by means of the interpolation functions $\lambda^{\pm}_{c}$ 
of Lemma~\ref{lemma:existenceInterpolationFunction}. Recall in this context the convention that $\lambda^\pm_c(\cdot,t)$
was set equal to one on $\big(\partial W^c_{\Omega^\pm_v}(t) \cap \partial W^c_{\mathcal{T}_i}(t)\big)
\setminus \mathcal{T}_c(t)$ and set equal to zero on
 $\big(\partial W^c_{\Omega^\pm_v}(t) 
\cap \partial W^{\pm,c}_{\partial\Omega}(t)\big) \setminus \mathcal{T}_c(t)$ for all $t\in [0,T]$.
In particular, we may define in the interpolation wedges $W^c_{\Omega^\pm_v}$ 
\begin{align}
\label{eq:defCutoffTwoPhaseInterpolationWedge}
&\eta_i(x,t) := \lambda^\pm_c(x,t)(1-\zeta_c(x,t))\zeta_i(x,t),
\\& \nonumber
(x,t) \in \bigcup_{t'\in [0,T]} \big(B_{\widehat r}\big(\mathcal{T}_c(t')\big) 
\cap  W^c_{\Omega^\pm_v}(t)\big) {\times} \{t'\}. 
\end{align}
Again, this is well-defined because of the choice of $\widehat r$ and the fact that
\begin{align*}
B_{r_c}(\mathcal{T}_c(t)) \cap W^c_{\Omega^\pm_v}(t) \subset
\Psi_{\mathcal{T}_{i}(t)}(\mathcal{T}_{i}(t){\times}\{t\}{\times}(-2 r_c,2 r_c))
\end{align*}
for all $t\in [0,T]$ due to Definition~\ref{def:locRadius} of the 
admissible localization radius $r_c$.

Outside of the space-time domains appearing in the
definitions~\eqref{eq:defCutoffTwoPhaseAwayContactPoints}--\eqref{eq:defCutoffTwoPhaseInterpolationWedge},
we simply set $\eta_i$ equal to zero. 

In view of the definitions~\eqref{eq:quadraticProfileBuildingBlock}--\eqref{eq:cutOffProfileContact}
and the definitions~\eqref{eq:defCutoffTwoPhaseAwayContactPoints}--\eqref{eq:defCutoffTwoPhaseInterpolationWedge},
it now suffices to choose $\delta\in (0,1]$ sufficiently small such that~\eqref{eq:supportCutoffTwoPhase} holds true,
and in case there exists $c\in\mathcal{C}$ such that $i\sim c$ one may on top achieve
\begin{align}
\label{eq:auxLocalizationSupportTwoPhase}
\supp \eta_i(\cdot,t) \cap  B_{\widehat r}(\mathcal{T}_c(t))
\subset B_{\widehat r}(\mathcal{T}_c(t)) \cap 
\big(W^c_{\mathcal{T}_i}(t) \cup W^c_{\Omega^\pm_v}(t)\big)
\end{align}
for all $t\in [0,T]$. Moreover, in light of~\eqref{eq:supportCutoffTwoPhase}
and~\eqref{eq:locPropertyCutoff1} we also obtain~\eqref{eq:supportCutoffDistinctTwoPhase}.

\textit{Step 4: (Construction of the partition of unity, part II)}
We proceed with the construction of the cutoffs $\eta_c$
for contact points $c\in\mathcal{C}$. To this end,
let $i\in\mathcal{I}$ be the unique two-phase interface such that $i\sim c$.
In the wedge $W^c_{\mathcal{T}_i}$
containing the two-phase interface $\mathcal{T}_i\subset I_v$ we set
\begin{align}
\label{eq:defCutoffContactInterfaceWedge}
&\eta_c(x,t) := \zeta_c(x,t)\zeta_i(x,t),
\,
(x,t) \in \bigcup_{t'\in [0,T]} \big(B_{\widehat r}\big(\mathcal{T}_c(t')\big) 
\cap  W^c_{\mathcal{T}_i}(t')\big) {\times} \{t'\},
\end{align}
which is well-defined based on the same reason as for~\eqref{eq:defCutoffTwoPhaseInterfaceWedge}.

Moreover, in the interpolation wedges $W^c_{\Omega^\pm_v}$ we define
\begin{align}
\label{eq:defCutoffContactInterpolationWedge}
&\eta_c(x,t) := \lambda^\pm_c(x,t)\zeta_c(x,t)\zeta_i(x,t)
								+ (1-\lambda^\pm_c(x,t))\zeta_c(x,t),
\\& \nonumber
(x,t) \in \bigcup_{t'\in [0,T]} \big(B_{\widehat r}\big(\mathcal{T}_c(t')\big) 
\cap  W^c_{\Omega^\pm_v}(t)\big) {\times} \{t'\}. 
\end{align}
By the same argument as for~\eqref{eq:defCutoffTwoPhaseInterpolationWedge},
this is again well-defined.

Outside of the space-time domains appearing in the previous two definitions
we simply set $\eta_c := \zeta_c$. In particular, we register for reference purposes that
\begin{align}
\label{eq:defCutoffContactBoundaryWedge}
&\eta_c(x,t) := \zeta_c(x,t),
\, 
(x,t) \in \bigcup_{t'\in [0,T]} \big(B_{\hat r}\big(\mathcal{T}_c(t')\big) 
\setminus \big(W^c_{\mathcal{T}_i}(t') \cup W^c_{\Omega^\pm_v}(t)\big)\big) {\times} \{t'\}.
\end{align}

It now immediately follows from the definition~\eqref{eq:cutOffProfileContact} 
that~\eqref{eq:supportCutoffContact} is satisfied.
In particular, for pairs $i\in\mathcal{I}$ and $c\in\mathcal{C}$
such that $i\sim c$, $\supp\eta_i \cap \supp\eta_c \neq \emptyset$ and we obtain~\eqref{eq:LocalizationSupportTwoPhaseContact}
as an update of~\eqref{eq:auxLocalizationSupportTwoPhase}. 
Moreover, by~\eqref{eq:supportCutoffContact} and~\eqref{eq:locPropertyCutoff4}
we deduce the validity of~\eqref{eq:supportCutoffDistinctContact}.
In the case of pairs $i\in\mathcal{I}$ and $c\in\mathcal{C}$ with $i\not\sim c$, 
due to~\eqref{eq:locPropertyCutoff2}, \eqref{eq:supportCutoffTwoPhase} and~\eqref{eq:supportCutoffContact}, 
we can conclude that $\supp\eta_i \cap \supp\eta_c = \emptyset$.

\textit{Step 5: (Partition of unity property along the interface)} Fix $t\in [0,T]$,
and consider first the case of $x\in I_v(t)\setminus \bigcup_{c\in\mathcal{C}} B_{\widehat r}(\mathcal{T}_c(t))$.
The combination of the support properties~\eqref{eq:supportCutoffTwoPhase} and
\eqref{eq:supportCutoffContact} with the localization property~\eqref{eq:locPropertyCutoff1}
implies there exists a unique two-phase interface $i_*=i_*(x)\in\mathcal{I}$ such that
$\sum_{n=1}^N \eta_n(x,t) = \eta_{i_*}(x,t)$. Hence, we may deduce 
from~\eqref{eq:defCutoffTwoPhaseAwayContactPoints} that $\sum_{n=1}^N \eta_n(x,t)=1$
for all $t\in [0,T]$ and all $x\in I_v(t)\setminus \bigcup_{c\in\mathcal{C}} B_{\widehat r}(\mathcal{T}_c(t))$.

Fix a contact point $c\in\mathcal{C}$ and a point $x\in I_v(t) \cap B_{\widehat r}(\mathcal{T}_c(t))$.
Let $i\in\mathcal{I}$ be the unique two-phase interface such that $i\sim c$.
By the support properties~\eqref{eq:supportCutoffTwoPhase} and~\eqref{eq:supportCutoffContact}
in combination with the localization properties~\eqref{eq:locPropertyCutoff1}--\eqref{eq:locPropertyCutoff4}
it follows that $\sum_{n=1}^N \eta_n(x,t) = \eta_c(x,t) + \eta_i(x,t)$.
In particular $\sum_{n=1}^N \eta_n(x,t) = 1$ due to the definitions~\eqref{eq:defCutoffTwoPhaseInterfaceWedge}
and~\eqref{eq:defCutoffContactInterfaceWedge}.
The two discussed cases thus imply that
\begin{align}
\label{eq:partitionOfUnityInterface}
\sum_{n=1}^N \eta_n(x,t) = 1,
\quad (x,t) \in \bigcup_{t'\in [0,T]} I_v(t') \times \{t'\}.
\end{align}

\textit{Step 6: (Regularity)} Outside of interpolation wedges,
the required regularity
is an immediate consequence of the uniform regularity of the interface $I_v$
and the definitions~\eqref{eq:defCutoffTwoPhaseAwayContactPoints}, 
\eqref{eq:defCutoffTwoPhaseInterfaceWedge}, \eqref{eq:defCutoffContactInterfaceWedge}
and~\eqref{eq:defCutoffContactInterpolationWedge}. 

In interpolation wedges, one has to argue based on the definitions~\eqref{eq:defCutoffTwoPhaseInterpolationWedge}
and~\eqref{eq:defCutoffContactInterpolationWedge}. In terms of regularity, the critical
cases originating from an application of the product rule consist of those when derivatives 
hit the interpolation parameter. However, the by~\eqref{eq:LambdaFirstDeriv}--\eqref{eq:LambdaSecondDeriv} 
controlled blow-up of the derivatives of the
interpolation parameter is always counteracted by the presence of the term $1-\zeta_c$
(cf.\ \eqref{eq:defCutoffTwoPhaseInterpolationWedge}
and~\eqref{eq:defCutoffContactInterpolationWedge}) which is of second
order in the distance to the contact point due to~\eqref{eq:quadraticProfileBuildingBlock} 
and~\eqref{eq:cutOffProfileContact}. In other words, the required regularity
also holds true within interpolation wedges.

The two considered cases taken together entail the asserted regularity.

\textit{Step 7: (Estimate for the bulk cutoff)} In the course of 
establishing the desired coercivity estimates~\eqref{eq:coercivityBulkCutoff}
and~\eqref{eq:coercivityGradientBulkCutoff},
we also convince ourselves of the fact that 
\begin{align}
\label{eq:partitionOfUnityBulk}
\eta_{\mathrm{bulk}} = 1 - \sum_{n=1}^N \eta_n \in [0,1]
\end{align}
throughout $\R^2\times [0,T]$. By the support 
properties~\eqref{eq:supportCutoffTwoPhase} and~\eqref{eq:supportCutoffContact},
in both cases it suffices to argue for points contained in 
$\Psi_{\mathcal{T}_i}\big(\mathcal{T}_i(t){\times}\{t\}{\times}[-\widehat r,\widehat r]\big)
\setminus \bigcup_{c\in\mathcal{C}} B_{\widehat r}\big(\mathcal{T}_c(t)\big)$
or $B_{\widehat r}(\mathcal{T}_c(t))$ for all $i\in\mathcal{I}$, all $c\in\mathcal{C}$ and all $t\in [0,T]$.

We start with the latter and fix $i\in\mathcal{I}$ as well as $t\in [0,T]$. Due to the localization 
property~\eqref{eq:locPropertyCutoff1} and subsequently plugging in~\eqref{eq:defCutoffTwoPhaseAwayContactPoints},
we get
\begin{align}
\label{eq:representationBulkCutoff1}
\hspace*{-1.1ex}
\eta_{\mathrm{bulk}}(\cdot,t) &= 1 {-} \eta_i(\cdot,t)
= 1 {-} \zeta_i(\cdot,t) 
\text{ in } \Psi_{\mathcal{T}_i}\big(\mathcal{T}_i(t){\times}\{t\}{\times}[-\widehat r,\widehat r]\big)
\setminus \bigcup_{c\in\mathcal{C}} B_{\widehat r}\big(\mathcal{T}_c(t)\big).
\end{align}
The validity of~\eqref{eq:coercivityBulkCutoff}, \eqref{eq:coercivityGradientBulkCutoff} 
and~\eqref{eq:partitionOfUnityBulk}
in $\Psi_{\mathcal{T}_i}\big(\mathcal{T}_i(t){\times}\{t\}{\times}[-\widehat r,\widehat r]\big)
\setminus \bigcup_{c\in\mathcal{C}} B_{\widehat r}\big(\mathcal{T}_c(t)\big)$
thus follows immediately from definition~\eqref{eq:cutOffProfileTwoPhase}. 

Fix $c\in\mathcal{C}$, and let $i\in\mathcal{I}$ be the unique two-phase interface
with $i\sim c$. Due to~\eqref{eq:supportCutoffTwoPhase}, \eqref{eq:supportCutoffContact}
as well as~\eqref{eq:locPropertyCutoff1}--\eqref{eq:locPropertyCutoff4} we have
\begin{align}
\label{eq:representationBulkCutoff5}
\eta_{\mathrm{bulk}}(\cdot,t) &= 1 - \eta_c(\cdot,t) - \eta_i(\cdot,t)
\quad\text{in } B_{\widehat r}\big(\mathcal{T}_c(t)\big) 
\cap  \big(W^c_{\mathcal{T}_i}(t) \cup W^c_{\Omega^\pm_v}(t)\big). 
\end{align}
Plugging in~\eqref{eq:defCutoffTwoPhaseInterfaceWedge} and~\eqref{eq:defCutoffContactInterfaceWedge}
or~\eqref{eq:defCutoffTwoPhaseInterpolationWedge} and~\eqref{eq:defCutoffContactInterpolationWedge}, respectively,
yields
\begin{align}
\label{eq:representationBulkCutoff2}
\eta_{\mathrm{bulk}}(\cdot,t) &= 1 - \zeta_i(\cdot,t) 
\quad\text{in } B_{\widehat r}\big(\mathcal{T}_c(t)\big) 
\cap W^c_{\mathcal{T}_i}(t), 
\end{align}
as well as
\begin{align}
\label{eq:representationBulkCutoff3}
\eta_{\mathrm{bulk}}(\cdot,t) &= \lambda^\pm_c(\cdot,t)(1{-}\zeta_i(\cdot,t)) 
																	+ (1{-}\lambda^\pm_c(\cdot,t))(1{-}\zeta_c(\cdot,t))
&&\text{in } B_{\widehat r}\big(\mathcal{T}_c(t)\big)
\cap W^c_{\Omega^\pm_v}(t).
\end{align}
Hence, we can infer by means of~\eqref{eq:cutOffProfileTwoPhase} and~\eqref{eq:cutOffProfileContact}
that~\eqref{eq:coercivityBulkCutoff}, \eqref{eq:coercivityGradientBulkCutoff} and~\eqref{eq:partitionOfUnityBulk} 
hold true in the domain~$B_{\widehat r}\big(\mathcal{T}_c(t)\big) 
\cap \big(W^c_{\mathcal{T}_i}(t) \cup W^c_{\Omega^\pm_v}(t)\big)$.
Finally, we have
\begin{align}
\label{eq:representationBulkCutoff4}
\eta_{\mathrm{bulk}}(\cdot,t) &= 1 - \eta_c(\cdot,t)
= 1 - \zeta_c(\cdot,t) 
\quad\text{in } B_{\widehat r}\big(\mathcal{T}_c(t)\big) 
\setminus \big(W^c_{\mathcal{T}_i}(t) \cup W^c_{\Omega^\pm_v}(t)\big)
\end{align}
as a consequence of~\eqref{eq:supportCutoffTwoPhase}, \eqref{eq:supportCutoffContact},
\eqref{eq:locPropertyCutoff1}--\eqref{eq:locPropertyCutoff4} and~\eqref{eq:defCutoffContactBoundaryWedge}.
The previous display in turn implies~\eqref{eq:coercivityBulkCutoff},
\eqref{eq:coercivityGradientBulkCutoff} and~\eqref{eq:partitionOfUnityBulk}
in $B_{\widehat r}\big(\mathcal{T}_c(t)\big) 
\setminus \big(W^c_{\mathcal{T}_i}(t) \cup W^c_{\Omega^\pm_v}(t)\big)$ 
because of~\eqref{eq:cutOffProfileContact}.
This eventually concludes the proof of Lemma~\ref{lem:partitionOfUnity}.
\end{proof}

\begin{construction}[From local to global extensions]
\label{const:globalCalibrations}
Let $d=2$, and let $\Omega\subset\R^2$ be a bounded domain with orientable and smooth boundary.
Let $(\chi_v,v)$ be a strong solution to the incompressible
Navier--Stokes equation for two fluids in the sense of Definition~\ref{Def_strongsol}
on a time interval $[0,T]$. Let $(\eta_1,\ldots,\eta_N)$ be a partition
of unity along the interface~$I_v$ as given by the proof of Lemma~\ref{lem:partitionOfUnity}.
For each two-phase interface~$i\in\mathcal{I}$ denote by~$\xi^{i}$ the
bulk extension of Proposition~\ref{prop:CalibrationInterface}, and for
each contact point~$c\in\mathcal{C}$ let~$\xi^{c}$ be the contact point
extension of Proposition~\ref{prop:CalibrationContactPoint}. 

We then define a vector field~$\xi\colon \overline{\Omega} \times [0,T] \to \R^2$
with regularity
\begin{align}
\label{eq:regXi}
\xi \in \big(C^0_tC^2_x\cap C^1_tC^0_x \big)\big(\overline{\Omega{\times}[0,T]}
			\setminus(I_v\cap(\partial\Omega{\times}[0,T]))\big)
\end{align}
by means of the formula
\begin{align}
\label{eq:defGlobalCalibration}
\xi := \sum_{n=1}^N \eta_n \xi^n.
\end{align}
\end{construction}

Before we proceed on with a proof of Proposition~\ref{prop:existenceCalibration},
we first deduce that the bulk cutoff~$\eta_{\mathrm{bulk}}$ 
of Lemma~\ref{lem:partitionOfUnity} is transported by the fluid velocity~$v$ 
up to an admissible error in the distance to the interface of the strong solution.

\begin{lemma}[Transport equation for bulk cutoff]
\label{lem:transportCutOffs}
Let $d=2$, and let $\Omega\subset\R^2$ be a bounded domain with orientable and smooth boundary.
Let $(\chi_v,v)$ be a strong solution to the incompressible
Navier--Stokes equation for two fluids in the sense of Definition~\ref{Def_strongsol}
on a time interval $[0,T]$. Let $(\eta_1,\ldots,\eta_N)$ be a partition
of unity along the interface $I_v$ as given by the proof of Lemma~\ref{lem:partitionOfUnity}.

The bulk cutoff $\eta_{\mathrm{bulk}} = 1 - \sum_{n=1}^N \eta_n$ is
then transported by the fluid velocity~$v$ to second order in form of
\begin{align}
\label{eq:transportBulkCutoff}
|\partial_t \eta_{\mathrm{bulk}} + (v\cdot\nabla)\eta_{\mathrm{bulk}}|
\leq C(1 \wedge \dist^2(\cdot,I_v))
\quad\text{in } \Omega\times [0,T].
\end{align}
\end{lemma}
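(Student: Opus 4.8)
The plan is to establish \eqref{eq:transportBulkCutoff} by a case distinction according to the local structure of the partition of unity from Lemma~\ref{lem:partitionOfUnity}. Since $\eta_{\mathrm{bulk}} = 1 - \sum_{n=1}^N \eta_n$ is identically $1$ (hence stationary) away from $\bigcup_{i\in\mathcal{I}}\Psi_{\mathcal{T}_i}(\mathcal{T}_i(t){\times}\{t\}{\times}[-\widehat r,\widehat r])$ and $\bigcup_{c\in\mathcal{C}} B_{\widehat r}(\mathcal{T}_c(t))$, the advective derivative vanishes there and the estimate is trivial. It thus suffices to treat: (a) the tubular neighborhood of an interface $\mathcal{T}_i$ away from all contact points, where by~\eqref{eq:representationBulkCutoff1} one has $\eta_{\mathrm{bulk}} = 1 - \zeta_i = 1 - \zeta(\sdist(\cdot,\mathcal{T}_i)/(\delta\widehat r))$; (b) the interface wedge $W^c_{\mathcal{T}_i}\cap B_{\widehat r}(\mathcal{T}_c)$, where by~\eqref{eq:representationBulkCutoff2} again $\eta_{\mathrm{bulk}} = 1 - \zeta_i$; (c) the boundary/residual wedge $B_{\widehat r}(\mathcal{T}_c)\setminus(W^c_{\mathcal{T}_i}\cup W^c_{\Omega^\pm_v})$, where by~\eqref{eq:representationBulkCutoff4} $\eta_{\mathrm{bulk}} = 1 - \zeta_c = 1 - \zeta(\dist(\cdot,\mathcal{T}_c)/(\delta\widehat r))$; and (d) the interpolation wedges $W^c_{\Omega^\pm_v}\cap B_{\widehat r}(\mathcal{T}_c)$, where by~\eqref{eq:representationBulkCutoff3} $\eta_{\mathrm{bulk}} = \lambda^\pm_c(1-\zeta_i) + (1-\lambda^\pm_c)(1-\zeta_c)$.

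For cases (a) and (b), the key computation is that $s_{\mathcal{T}_i}(\cdot,t)$ is transported by $v$ up to an error of order $\dist(\cdot,\mathcal{T}_i)$: we have $\partial_t s_{\mathcal{T}_i}(x,t) = -\big(v(P_{\mathcal{T}_i}(x,t),t)\cdot\nabla\big)s_{\mathcal{T}_i}(x,t)$, which was already used in the proof of Proposition~\ref{prop:CalibrationInterface}, and $\nabla s_{\mathcal{T}_i} = n_{I_v}$, so that
\begin{align*}
\partial_t s_{\mathcal{T}_i} + (v\cdot\nabla)s_{\mathcal{T}_i}
= \big((v - v\circ P_{\mathcal{T}_i})\cdot n_{I_v}\big)
= O(\dist(\cdot,\mathcal{T}_i))
\end{align*}
by the Lipschitz regularity of $v$ from Definition~\ref{Def_strongsol}. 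Since $\zeta$ is smooth with bounded first derivative, the chain rule gives $\partial_t\zeta_i + (v\cdot\nabla)\zeta_i = \zeta'(s_{\mathcal{T}_i}/(\delta\widehat r))\cdot(\delta\widehat r)^{-1}\cdot O(\dist(\cdot,\mathcal{T}_i))$; using that $\zeta'(r) = 0$ for $|r|\geq 1$ (so the factor is supported where $\dist(\cdot,\mathcal{T}_i) \leq \delta\widehat r$, keeping the prefactor $(\delta\widehat r)^{-1}$ harmless) and that $\zeta'(r) = O(|r|)$ near $r=0$ because $\zeta(r) = (1-r^2)\theta(r^2)$ has a critical point at the origin, one obtains $|\partial_t\zeta_i + (v\cdot\nabla)\zeta_i| \leq C\dist^2(\cdot,\mathcal{T}_i) \leq C(1\wedge\dist^2(\cdot,I_v))$. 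This is exactly the advective derivative of $\eta_{\mathrm{bulk}}$ (up to sign) in cases (a), (b).

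For case (c), one uses $\frac{\mathrm{d}}{\mathrm{d}t}c(t) = v(c(t),t)$ for the contact point trajectory $\{c(t)\} = \mathcal{T}_c(t)$, so that the advective derivative of $\dist(\cdot,\mathcal{T}_c)$ along $v$ equals $\big((v - v(c(t),t))\cdot\nabla\dist(\cdot,\mathcal{T}_c)\big) = O(\dist(\cdot,\mathcal{T}_c))$ by Lipschitz regularity of $v$; combined with the quadratic vanishing of $\zeta'$ at the origin this again yields $|\partial_t\zeta_c + (v\cdot\nabla)\zeta_c| \leq C\dist^2(\cdot,\mathcal{T}_c) \leq C(1\wedge\dist^2(\cdot,I_v))$, where in the last step we use~\eqref{eq:distCondition} to bound $\dist(\cdot,\mathcal{T}_c)$ by $\dist(\cdot,\mathcal{T}_i)\lesssim\dist(\cdot,I_v)$ on the residual wedge (which is contained in $W^{\pm,c}_{\partial\Omega}$). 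Case (d) is the genuinely delicate one: writing $\eta_{\mathrm{bulk}} = (1-\zeta_i) + (1-\lambda^\pm_c)(\zeta_i - \zeta_c)$ and applying the product rule, the advective derivatives of $1-\zeta_i$ and of $\zeta_i-\zeta_c$ are controlled as in (a)--(c), while the term where the advective derivative hits $\lambda^\pm_c$ is handled by the improved bound~\eqref{eq:AdvectiveDerivativeLambda}, namely $|\partial_t\lambda^\pm_c + (v\cdot\nabla)\lambda^\pm_c| \leq C$, multiplied by $|\zeta_i - \zeta_c|$. The point is that on the interpolation wedge $\zeta_i$ and $\zeta_c$ agree up to second order in the distance to $\mathcal{T}_i$ — both equal $1 - O(\dist^2)$ near $\mathcal{T}_c$ by the quadratic profile~\eqref{eq:quadraticProfileBuildingBlock}, and~\eqref{eq:distCondition} equates $\dist(\cdot,\mathcal{T}_c)\sim\dist(\cdot,\mathcal{T}_i)$ there, so $|\zeta_i - \zeta_c| \leq C\dist^2(\cdot,\mathcal{T}_i)$; hence even the $O(1)$ advective derivative of $\lambda^\pm_c$ produces only a quadratically small contribution. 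Collecting the four cases and using $\dist(\cdot,\mathcal{T}_i)\leq\dist(\cdot,I_v)$ throughout gives~\eqref{eq:transportBulkCutoff}. The main obstacle is case (d): one must verify carefully that the blow-up of $\lambda^\pm_c$ never sees the full quadratic smallness of $1-\zeta_c$ alone but is always paired with the \emph{difference} $\zeta_i - \zeta_c$, which requires invoking~\eqref{eq:distCondition} to compare the two distance functions on the interpolation wedge.
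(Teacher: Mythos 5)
Your argument follows essentially the same route as the paper's proof: the same case decomposition based on the explicit representations of $\eta_{\mathrm{bulk}}$ from the construction of the partition of unity, the quadratic profile of $\zeta$ combined with the first-order transport of the signed distance $s_{\mathcal{T}_i}$ (resp.\ the exact transport of $\zeta_c$ by the frozen velocity $v(c(t),t)$ plus a Lipschitz estimate for $v$), and, on interpolation wedges, the bounded advective derivative~\eqref{eq:AdvectiveDerivativeLambda} of $\lambda^\pm_c$ paired with the second-order compatibility $|\zeta_i-\zeta_c|\leq C\dist^2(\cdot,\mathcal{T}_i)$ via~\eqref{eq:distCondition}. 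One small slip: the comparison you invoke at the end, ``$\dist(\cdot,\mathcal{T}_i)\leq\dist(\cdot,I_v)$'', is oriented the wrong way (trivially the reverse holds, since $\mathcal{T}_i\subset I_v$); what you actually need---and what is true on the regions in question by the choice of $\widehat r$, because all other topological features of $I_v$ stay at distance of order $\widehat r$ from the support of the relevant cutoffs---is $\dist(\cdot,\mathcal{T}_i)\leq C\dist(\cdot,I_v)$. The paper glosses over this point in the same way, so it does not affect the substance of your proof.
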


\begin{proof}
Let $\widehat r\in (0,\frac{1}{2}]$ be the localization radius of Lemma~\ref{lem:partitionOfUnity}.
In view of the regularity estimate~\eqref{eq:regularityCutoff}
and the fact that
\begin{align*}
\Omega \setminus \bigg(\bigcup_{c\in\mathcal{C}} B_{\widehat r}(\mathcal{T}_c(t))
\cup \bigcup_{i\in\mathcal{I}} \mathrm{im}(\Psi_{\mathcal{T}_i})\bigg)
\subset \Omega\cap\big\{x\in\R^2\colon \dist(x,I_v(t)) > \widehat r\big\}
\end{align*}
for all $t\in [0,T]$,
it suffices to establish~\eqref{eq:transportBulkCutoff} within
$\Omega\cap\Psi_{\mathcal{T}_i}\big(\mathcal{T}_i(t){\times}\{t\}{\times}[-\widehat r,\widehat r]\big)
\setminus \bigcup_{c\in\mathcal{C}} B_{\widehat r}\big(\mathcal{T}_c(t)\big)$
or $\Omega\cap B_{\widehat r}(\mathcal{T}_c(t))$ for all $i\in\mathcal{I}$, all $c\in\mathcal{C}$
and all $t\in [0,T]$.

\textit{Step 1: (Estimate near the interface but away from contact points)}
Fix a two-phase interface $i\in\mathcal{I}$.
As a consequence of the two identities in~\eqref{eq:representationBulkCutoff1}, 
we may compute
\begin{align}
\label{eq:transportBulkCutoffAux1}
\partial_t \eta_{\mathrm{bulk}} + (v\cdot\nabla)\eta_{\mathrm{bulk}}
= -\big(\partial_t \zeta_i + (v\cdot\nabla)\zeta_i\big)
+ \eta_{\mathrm{bulk}}(v\cdot\nabla)\zeta_i
\end{align}
in $\Omega\cap\Psi_{\mathcal{T}_i}\big(\mathcal{T}_i(t){\times}\{t\}{\times}[-\widehat r,\widehat r]\big)
\setminus \bigcup_{c\in\mathcal{C}} B_{\widehat r}\big(\mathcal{T}_c(t)\big)$
for all $t\in [0,T]$. Recall that the signed distance
to the two-phase interface~$\mathcal{T}_i\subset I_v$ is transported to
first order by the fluid velocity~$v$, and that the profile $\zeta$ from~\eqref{eq:quadraticProfileBuildingBlock}
is quadratic around the origin. Hence, by the chain rule and the
definition~\eqref{eq:cutOffProfileTwoPhase} we obtain
\begin{align}
\label{eq:transportProfileTwoPhase}
\big|\partial_t \zeta_i + (v\cdot\nabla)\zeta_i\big|
\leq C\dist^2(\cdot,I_v)
\quad\text{in } \Omega\cap\Psi_{\mathcal{T}_i}
\big(\mathcal{T}_i(t){\times}\{t\}{\times}[-\widehat r,\widehat r]\big)
\end{align}
for all $t\in [0,T]$. Since we also have the coercivity estimate~\eqref{eq:coercivityBulkCutoff}
for the bulk cutoff at our disposal, we may thus upgrade~\eqref{eq:transportBulkCutoffAux1}
to~\eqref{eq:transportBulkCutoff} in $\Omega\cap\Psi_{\mathcal{T}_i}
\big(\mathcal{T}_i(t){\times}\{t\}{\times}[-\widehat r,\widehat r]\big)
\setminus \bigcup_{c\in\mathcal{C}} B_{\widehat r}\big(\mathcal{T}_c(t)\big)$
for all $t\in [0,T]$. 

\textit{Step 2: (Estimate near contact points, part I)}
Fix $c\in\mathcal{C}$, and denote by $i\in\mathcal{I}$
the unique two-phase interface such that $i\sim c$.
This step is devoted to the proof of~\eqref{eq:transportBulkCutoff}
in the wedge $\Omega\cap B_{\widehat r}(\mathcal{T}_c(t)) \cap W^c_{\mathcal{T}_i}(t)$
containing the interface $\mathcal{T}_i(t)\subset I_v(t)$, $t\in [0,T]$.
Because of~\eqref{eq:representationBulkCutoff5}, \eqref{eq:representationBulkCutoff2} 
and~\eqref{eq:defGlobalCalibration} we have
\begin{equation}
\begin{aligned}
\label{eq:transportBulkCutoffAux2}
&\partial_t \eta_{\mathrm{bulk}} + (v\cdot\nabla)\eta_{\mathrm{bulk}}
= - \big(\partial_t \zeta_i + (v\cdot\nabla)\zeta_i\big)
+ \eta_{\mathrm{bulk}} (v\cdot\nabla)\zeta_i
\end{aligned}
\end{equation}
in $\Omega\cap B_{\widehat r}(\mathcal{T}_c(t)) \cap W^c_{\mathcal{T}_i}(t)$ for all $t\in [0,T]$.
Due to Definition~\ref{def:locRadius} of the admissible localization radius $r_c$
and $\widehat r \leq r_c$ by Lemma~\ref{lem:partitionOfUnity}, it holds
$B_{\widehat r}(\mathcal{T}_c(t)) \cap W^c_{\mathcal{T}_i}(t) \subset 
\Psi_{\mathcal{T}_i}\big(\mathcal{T}_i(t){\times}\{t\}{\times}[-\widehat r,\widehat r]\big)$
for all $t\in [0,T]$. In particular, the estimate~\eqref{eq:transportProfileTwoPhase}
is applicable in $\Omega\cap B_{\widehat r}(\mathcal{T}_c(t)) \cap W^c_{\mathcal{T}_i}(t)$ for all $t\in [0,T]$.
Hence, the estimate~\eqref{eq:transportProfileTwoPhase} 
in combination with the coercivity 
estimate~\eqref{eq:coercivityBulkCutoff} for the bulk cutoff allow to
deduce~\eqref{eq:transportBulkCutoff} from~\eqref{eq:transportBulkCutoffAux2}
in $\Omega\cap B_{\widehat r}(\mathcal{T}_c(t)) \cap W^c_{\mathcal{T}_i}(t)$ for all $t\in [0,T]$.

\textit{Step 3: (Estimate near contact points, part II)}
Fix a contact point $c\in\mathcal{C}$. The goal of this step
is to prove~\eqref{eq:transportBulkCutoff} in the wedges
$\Omega \cap B_{\widehat r}(\mathcal{T}_c(t)) \cap W^{\pm,c}_{\partial\Omega}(t)$
containing the boundary $\partial\Omega$ for all $t \in [0,T]$.
To this end, it follows from~\eqref{eq:representationBulkCutoff4}
and~\eqref{eq:defGlobalCalibration} that
\begin{align}
\label{eq:transportBulkCutoffAux3}
&\partial_t \eta_{\mathrm{bulk}} + (v\cdot\nabla)\eta_{\mathrm{bulk}}
= -\big(\partial_t \zeta_c + (v\cdot\nabla)\zeta_c\big)
+ \eta_{\mathrm{bulk}}(v\cdot\nabla)\zeta_c
\end{align}
in $\Omega \cap B_{\widehat r}(\mathcal{T}_c(t)) \cap W^{\pm,c}_{\partial\Omega}(t)$ for all $t \in [0,T]$.
Note that because of~\eqref{eq:quadraticProfileBuildingBlock} 
one can view the profile $\zeta_c$ from~\eqref{eq:cutOffProfileContact} 
as a smooth function of the contact point $\mathcal{T}_c$. 
Performing a slight
yet convenient abuse of notation $\mathcal{T}_c(t)=\{c(t)\}$, we obtain as a consequence
of $\frac{\mathrm{d}}{\mathrm{d}t} c(t) = v(c(t),t)$ and 
an application of the chain rule that $\partial_t\zeta_c(\cdot,t) 
+ \big(v(c(t),t)\cdot\nabla\big)\zeta_c(\cdot,t) = 0$ 
at $c(t)$ for all $t\in [0,T]$. 
Furthermore, proceeding similarly as done in the proof of \cite[Lemma 11]{Fischer2020}, we can also deduce that 
$\partial_t\zeta_c(\cdot,t) 
+ \big(v(c(t),t)\cdot\nabla\big)\zeta_c(\cdot,t) = 0$ 
in $\Omega\cap B_{\widehat r}(\mathcal{T}_c(t))$ for all $t\in [0,T]$. 
By the regularity of the fluid
velocity~$v$, this in turn implies by adding zero (and exploiting the 
quadratic behaviour of the profile $\zeta$ from~\eqref{eq:quadraticProfileBuildingBlock} 
around the origin) that
\begin{align}
\label{eq:transportProfileContactAux}
\big|\partial_t \zeta_c + (v\cdot\nabla)\zeta_c\big|
\leq C\dist^2(\cdot,\mathcal{T}_c)
\quad\text{in } \Omega\cap B_{\widehat r}(\mathcal{T}_c(t))
\end{align}
for all $t\in [0,T]$. Since $\widehat r \leq r_c$ by Lemma~\ref{lem:partitionOfUnity},
we can infer from Definition~\ref{def:locRadius} of the
admissible localization radius $r_c$ that $\dist(\cdot,\mathcal{T}_c)$
is dominated by $\dist(\cdot,I_v)$ in $B_{\widehat r}(\mathcal{T}_c(t)) 
\cap \big(W^{\pm,c}_{\partial\Omega}(t) \cup W^c_{\Omega^\pm_v}(t)\big)$
for all $t\in [0,T]$. Hence, we deduce from~\eqref{eq:transportProfileContactAux} that
\begin{align}
\label{eq:transportProfileContact}
\big|\partial_t \zeta_c + (v\cdot\nabla)\zeta_c\big|
\leq C\dist^2(\cdot,I_v)
\,\text{in } \Omega\cap B_{\widehat r}(\mathcal{T}_c(t)) 
\cap \big(W^{\pm,c}_{\partial\Omega}(t) \cup W^c_{\Omega^\pm_v}(t)\big)
\end{align}
for all $t\in [0,T]$. Inserting the estimate~\eqref{eq:transportProfileContact}
and the coercivity estimate~\eqref{eq:coercivityBulkCutoff}
for the bulk cutoff into~\eqref{eq:transportBulkCutoffAux3} thus
yields~\eqref{eq:transportBulkCutoff} 
in $\Omega \cap B_{\widehat r}(\mathcal{T}_c(t)) \cap W^{\pm,c}_{\partial\Omega}(t)$ for all $t \in [0,T]$.

\textit{Step 4: (Estimate near contact points, part III)}
Fix $c\in\mathcal{C}$, and denote by $i\in\mathcal{I}$
the unique two-phase interface such that $i\sim c$.
We aim to verify~\eqref{eq:transportBulkCutoff} in
the interpolation wedges $\Omega\cap B_{\widehat r}(\mathcal{T}_c(t)) \cap W^c_{\Omega^\pm_v}(t)$
for all $t\in [0,T]$. To this end, we may employ~\eqref{eq:representationBulkCutoff5}, 
\eqref{eq:representationBulkCutoff3} and~\eqref{eq:defGlobalCalibration} to argue that
\begin{equation}
\begin{aligned}
\label{eq:transportBulkCutoffAux4}
&\partial_t \eta_{\mathrm{bulk}} + (v\cdot\nabla)\eta_{\mathrm{bulk}}
\\&
= - \lambda^\pm_c \Big\{\big(\partial_t \zeta_i + (v\cdot\nabla)\zeta_i\big)
- \eta_{\mathrm{bulk}} (v\cdot\nabla)\zeta_i\Big\}
\\&~~~
- (1{-}\lambda^\pm_c) \Big\{\big(\partial_t \zeta_c + (v\cdot\nabla)\zeta_c\big)
- \eta_{\mathrm{bulk}} (v\cdot\nabla)\zeta_c\Big\}
\\&~~~
+ \big(\partial_t\lambda^\pm_c + (v\cdot\nabla)\lambda^\pm_c\big) (\zeta_c - \zeta_i)
\end{aligned}
\end{equation}
in $\Omega\cap B_{\widehat r}(\mathcal{T}_c(t)) \cap W^c_{\Omega^\pm_v}(t)$ for all $t\in [0,T]$.
Due to Definition~\ref{def:locRadius} of the admissible localization radius $r_c$
and $\widehat r \leq r_c$ by Lemma~\ref{lem:partitionOfUnity}, it holds
$B_{\widehat r}(\mathcal{T}_c(t)) \cap W^c_{\Omega^\pm_v}(t) \subset 
\Psi_{\mathcal{T}_i}\big(\mathcal{T}_i(t){\times}\{t\}{\times}[-\widehat r,\widehat r]\big)$
for all $t\in [0,T]$. The estimates~\eqref{eq:transportProfileTwoPhase} 
and~\eqref{eq:coercivityBulkCutoff} therefore imply that the first term on the right hand side
of~\eqref{eq:transportBulkCutoffAux4} is of required order. For the second term on the right hand side
of~\eqref{eq:transportBulkCutoffAux4}, we may instead rely on the estimates~\eqref{eq:transportProfileContact}
and~\eqref{eq:coercivityBulkCutoff}.

Note that in view of the definitions~\eqref{eq:quadraticProfileBuildingBlock}--\eqref{eq:cutOffProfileContact},
the auxiliary cutoffs $\zeta_i$ and $\zeta_c$ are compatible to second order in the sense that
$|\zeta_i - \zeta_c| \leq C\dist^2(\cdot,\mathcal{T}_c)$
in $\Omega\cap B_{\widehat r}(\mathcal{T}_c(t)) \cap W^c_{\Omega^\pm_v}(t)$ for all $t\in [0,T]$. 
Recall from the previous step that $\dist(\cdot,\mathcal{T}_c)$
is dominated by $\dist(\cdot,I_v)$ in $B_{\widehat r}(\mathcal{T}_c(t)) 
\cap \big(W^{\pm,c}_{\partial\Omega}(t) \cup W^c_{\Omega^\pm_v}(t)\big)$
for all $t\in [0,T]$. Hence,
\begin{align}
\label{eq:compCutoffProfiles}
|\zeta_i - \zeta_c| \leq C\dist^2(\cdot,I_v)
\end{align}
in $\Omega\cap B_{\widehat r}(\mathcal{T}_c(t)) \cap W^c_{\Omega^\pm_v}(t)$ for all $t\in [0,T]$.
In particular, together with~\eqref{eq:AdvectiveDerivativeLambda} the bound~\eqref{eq:compCutoffProfiles}
allows to upgrade~\eqref{eq:transportBulkCutoffAux4}
to the desired estimate~\eqref{eq:transportBulkCutoff} 
in $\Omega\cap B_{\widehat r}(\mathcal{T}_c(t)) \cap W^c_{\Omega^\pm_v}(t)$ for all $t\in [0,T]$.

\textit{Step 5: (Conclusion)} Recall from Definition~\ref{def:locRadius} of the admissible localization radius $r_c$
that for all $t\in[0,T]$ the set $\Omega \cap B_{r_c}(\mathcal{T}_c(t))$ is decomposed
by means of the five pairwise disjoint open wedges $W^{\pm,c}_{\partial\Omega}(t), W^c_{\mathcal{T}_i}(t),
W^c_{\Omega^\pm_v}(t)\subset\R^2$. Hence, the previous three steps entail
the validity of~\eqref{eq:transportBulkCutoff} in $\Omega \cap B_{r_c}(\mathcal{T}_c(t))$ for all $t\in[0,T]$.
In particular, based on the discussion at the beginning of this proof and the argument in 
the vicinity of the interface but away from contact points (see \textit{Step 1}), we may
conclude the proof of Lemma~\ref{lem:partitionOfUnity}.
\end{proof}

\subsection{Proof of Proposition~\ref{prop:existenceCalibration}}
\label{eq:proofExistenceCalibration}
All ingredients are in place to proceed with the proof of the main result of this section,
i.e., that the vector field~$\xi$ of Construction~\ref{const:globalCalibrations}
gives rise to a boundary adapted extension of the interface unit normal
for two-phase fluid flow in the sense of Definition~\ref{def:calibrationTwoPhaseFluidFlow}
with respect to~$(\chi_v,v)$.

\begin{proof}[Proof of~\eqref{eq:coercivityByModulationOfLength}]
This is an easy consequence of the lower bound in the coercivity 
estimate~\eqref{eq:coercivityBulkCutoff} for the bulk cutoff,
the definition~\eqref{eq:defGlobalCalibration} of the global vector field $\xi$,
the fact that the local vector fields $(\xi^n)_{n\in\{1,\ldots,N\}}$
as provided by Proposition~\ref{prop:CalibrationInterface} and 
Proposition~\ref{prop:CalibrationContactPoint} are of unit length, 
and the triangle inequality in form of
$|\xi| = | \sum_{n=1}^N \eta_n \xi_n |
\leq \sum_{n=1}^N \eta_n |\xi^n| 
= \sum_{n=1}^N \eta_n = 1 - \eta_{\mathrm{bulk}}$
in $\Omega\times [0,T]$.
\end{proof}

\begin{proof}[Proof of~\eqref{eq:boundaryValueXi}]
By definition~\eqref{eq:defGlobalCalibration} of the candidate extension~$\xi$
and the localization properties~\eqref{eq:supportCutoffTwoPhase}--\eqref{eq:LocalizationSupportTwoPhaseContact}
of the partition of unity $(\eta_1,\ldots,\eta_N)$ from Lemma~\ref{lem:partitionOfUnity},
it suffices to verify~\eqref{eq:boundaryValueXi}
in terms of $\xi = \eta_c\xi^c$
in the associated region $B_{\widehat r}(\mathcal{T}_c(t))\cap\partial\Omega$ 
for all contact points $c\in\mathcal{C}$ and all $t\in [0,T]$.
However, this in turn is an immediate consequence of Proposition~\ref{prop:CalibrationContactPoint}.  
\end{proof}

\begin{proof}[Proof of~\eqref{eq:divConstraintXi}]
For a proof of~\eqref{eq:divConstraintXi}, we start computing
based on the definition~\eqref{eq:defGlobalCalibration} of the global vector field $\xi$
that $\nabla \cdot \xi = \sum_{n=1}^N \eta_n\nabla\cdot\xi^n 
+ \sum_{n=1}^N (\xi^n\cdot\nabla)\eta_n$.
As a consequence of the corresponding local versions of~\eqref{eq:divConstraintXi}
from Proposition~\ref{prop:CalibrationInterface} and 
Proposition~\ref{prop:CalibrationContactPoint}, and the fact that
$(\eta_1,\ldots,\eta_n)$ is a partition of unity along the interface $I_v$
by Lemma~\ref{lem:partitionOfUnity} we obtain
$\sum_{n=1}^N \eta_n\nabla\cdot\xi^n = - H_{I_v}
\quad\text{along } I_v \cap \Omega$.
Moreover, by adding zero and subsequently relying on 
the definition~\eqref{eq:defGlobalCalibration} of the global vector field $\xi$,
the localization properties~\eqref{eq:supportCutoffTwoPhase}--\eqref{eq:LocalizationSupportTwoPhaseContact}
of the partition of unity $(\eta_1,\ldots,\eta_N)$ from Lemma~\ref{lem:partitionOfUnity},
the compatibility estimate~\eqref{eq:compBoundLocalCalibrations1} and the
estimates~\eqref{eq:coercivityBulkCutoff} and~\eqref{eq:coercivityGradientBulkCutoff} 
for the bulk cutoff we may infer that
\begin{align*}
\sum_{n=1}^N (\xi^n\cdot\nabla)\eta_n 
& = - (\xi\cdot\nabla)\eta_{\mathrm{bulk}} - \sum_{n=1}^N((\xi - \xi^n)\cdot\nabla)\eta_n  \\
&= - (\xi\cdot\nabla)\eta_{\mathrm{bulk}}
+ \eta_{\mathrm{bulk}}\sum_{n=1}^N(\xi^n\cdot\nabla)\eta_n
\\&~~~
+ \sum_{i\in\mathcal{I}}\sum_{c\in\mathcal{C},i\sim c}
\eta_c\big((\xi^{i}{-}\xi^c)\cdot\nabla\eta_i)
+ \sum_{c\in\mathcal{C}}\sum_{i\in\mathcal{I},i\sim c}
\eta_i\big((\xi^{c}{-}\xi^i)\cdot\nabla\eta_c)
\\&
= O(1 \wedge \dist(\cdot,I_v)) \quad\text{in } \Omega\times [0,T].
\end{align*}
In summary, we thus obtain~\eqref{eq:divConstraintXi}. 
\end{proof}

\begin{proof}[Proof of~\eqref{eq:timeEvolutionXi}]
For a proof of~\eqref{eq:timeEvolutionXi}, we start estimating
based on the definition~\eqref{eq:defGlobalCalibration} of the global vector field $\xi$
as well as the corresponding local versions of~\eqref{eq:timeEvolutionXi}
from Proposition~\ref{prop:CalibrationInterface} and 
Proposition~\ref{prop:CalibrationContactPoint}
\begin{align}
\nonumber
\partial_t \xi &= \sum_{n=1}^N \eta_n \partial_t\xi^n 
+ \sum_{n=1}^N \xi^n \partial_t\eta_n
\\& \label{eq:transportXiProof}
= - \sum_{n=1}^N \eta_n (v\cdot\nabla)\xi^n
+ \sum_{n=1}^N \xi^n \partial_t\eta_n
\\&~~~ \nonumber
- \sum_{n=1}^N \eta_n (\mathrm{Id}{-}\xi^n\otimes\xi^n)(\nabla v)^\mathsf{T}\xi^n 
+ O(1 \wedge \dist(\cdot,I_v))
\quad\text{in } \Omega\times [0,T].
\end{align}
Adding zero twice and applying the product rule, we may further rewrite
based on the definition~\eqref{eq:defGlobalCalibration} of the candidate extension~$\xi$ and
the localization properties~\eqref{eq:supportCutoffTwoPhase}--\eqref{eq:LocalizationSupportTwoPhaseContact}
of the partition of unity $(\eta_1,\ldots,\eta_N)$ from Lemma~\ref{lem:partitionOfUnity}
\begin{align*}
&-\sum_{n=1}^N \eta_n (v\cdot\nabla)\xi^n + \sum_{n=1}^N \xi^n \partial_t\eta_n
\\&
= - (v\cdot\nabla)\xi
+ \sum_{n=1}^N \xi^n \big(\partial_t \eta_n + (v\cdot\nabla)\eta_n\big)
\\&
= - (v\cdot\nabla)\xi 
- \xi\big(\partial_t \eta_{\mathrm{bulk}} + (v\cdot\nabla)\eta_{\mathrm{bulk}}\big)
+ \sum_{n=1}^N (\xi^n {-} \xi) \big(\partial_t \eta_n + (v\cdot\nabla)\eta_n\big)
\\&
= - (v\cdot\nabla)\xi 
- \xi\big(\partial_t \eta_{\mathrm{bulk}} + (v\cdot\nabla)\eta_{\mathrm{bulk}}\big)
+ \eta_{\mathrm{bulk}} \sum_{n=1}^N \xi^n \big(\partial_t \eta_n + (v\cdot\nabla)\eta_n\big)
\\&~~~
+ \sum_{i\in\mathcal{I}}\sum_{c\in\mathcal{C},i\sim c}
\eta_c(\xi^i {-} \xi^c) \big(\partial_t \eta_i {+} (v\cdot\nabla)\eta_i\big)
+ \sum_{c\in\mathcal{C}}\sum_{i\in\mathcal{I},i\sim c}
\eta_i(\xi^c {-} \xi^i) \big(\partial_t \eta_c {+} (v\cdot\nabla)\eta_c\big)
\end{align*}
in $\Omega\times [0,T]$. 
Hence, estimating based on the compatibility estimate~\eqref{eq:compBoundLocalCalibrations1} 
as well as the 
estimates~\eqref{eq:coercivityBulkCutoff} and~\eqref{eq:transportBulkCutoff} 
for the bulk cutoff yields the bound
\begin{equation}
\label{eq:transportXiProof1}
\begin{aligned}
&-\sum_{n=1}^N \eta_n (v\cdot\nabla)\xi^n + \sum_{n=1}^N \xi^n \partial_t\eta_n
= - (v\cdot\nabla)\xi + O(1 \wedge \dist(\cdot,I_v)) \text{ in } \Omega\times [0,T].
\end{aligned}
\end{equation}
Adding zero twice and making use
of the definition~\eqref{eq:defGlobalCalibration} of the candidate extension~$\xi$ together with
the localization properties~\eqref{eq:supportCutoffTwoPhase}--\eqref{eq:LocalizationSupportTwoPhaseContact}
of the partition of unity $(\eta_1,\ldots,\eta_N)$ from Lemma~\ref{lem:partitionOfUnity},
we next compute
\begin{align} \label{eq_xinTensProd}
&\mathds{1}_{\supp\eta_n} \xi^n\otimes\xi^n
 \\&
= \mathds{1}_{\supp\eta_n} \xi\otimes\xi
+ \mathds{1}_{\supp\eta_n} (\xi^n{-}\xi)\otimes\xi^n
+ \mathds{1}_{\supp\eta_n} \xi\otimes(\xi^n{-}\xi) 
\nonumber\\&
= \mathds{1}_{\supp\eta_n} \xi\otimes\xi
\nonumber\\&~~~
+ \mathds{1}_{\supp\eta_n} \eta_{\mathrm{bulk}} \xi^n\otimes\xi^n
+ \mathds{1}_{\supp\eta_n} \eta_{\mathrm{bulk}} \xi\otimes\xi^n
\nonumber\\&~~~
+ \mathds{1}_{n=i\in\mathcal{I}} \mathds{1}_{\supp\eta_i} 
\sum_{c\in\mathcal{C},i\sim c} \eta_c(\xi^i{-}\xi^c)\otimes \xi^i
+ \mathds{1}_{n=c\in\mathcal{C}} \mathds{1}_{\supp\eta_c}
\sum_{i\in\mathcal{I},i\sim c} \eta_i(\xi^c{-}\xi^i)\otimes \xi^c
\nonumber\\&~~~
+ \mathds{1}_{n=i\in\mathcal{I}} \mathds{1}_{\supp\eta_i} 
\sum_{c\in\mathcal{C},i\sim c} \eta_c \xi \otimes (\xi^i{-}\xi^c)
+ \mathds{1}_{n=c\in\mathcal{C}} \mathds{1}_{\supp\eta_c}
\sum_{i\in\mathcal{I},i\sim c} \eta_i \xi \otimes (\xi^c{-}\xi^i) \nonumber
\end{align}
in $\Omega\times [0,T]$.
Relying on the same ingredients as for the previous computation we also have
\begin{align*}
- \sum_{n=1}^N \eta_n (\nabla v)^\mathsf{T}\xi^n
&= - (\nabla v)^\mathsf{T}\xi 
- \sum_{n=1}^N \eta_n (\nabla v)^\mathsf{T}(\xi^n {-} \xi)
+ \eta_{\mathrm{bulk}} (\nabla v)^\mathsf{T}\xi
\\&
= - (\nabla v)^\mathsf{T}\xi 
+ \eta_{\mathrm{bulk}} (\nabla v)^\mathsf{T}\xi
- \eta_{\mathrm{bulk}} \sum_{n=1}^N \eta_n (\nabla v)^\mathsf{T}\xi^n
\\&~~~
- \sum_{i\in\mathcal{I}}\sum_{c\in\mathcal{C},i\sim c}
\eta_i\eta_c (\nabla v)^\mathsf{T}(\xi^i {-} \xi^c)
- \sum_{c\in\mathcal{C}}\sum_{i\in\mathcal{I},i\sim c}
\eta_c\eta_i (\nabla v)^\mathsf{T}(\xi^c {-} \xi^i)
\end{align*}
in $\Omega\times [0,T]$. 
The compatibility estimate~\eqref{eq:compBoundLocalCalibrations1} 
as well as the 
estimates~\eqref{eq:coercivityBulkCutoff} and~\eqref{eq:transportBulkCutoff} 
therefore imply in view of the previous two displays that
\begin{equation}
\label{eq:transportXiProof2}
\begin{aligned}
&- \sum_{n=1}^N \eta_n (\mathrm{Id}{-}\xi^n\otimes\xi^n)(\nabla v)^\mathsf{T}\xi^n
\\&
= - (\mathrm{Id}{-}\xi\otimes\xi)(\nabla v)^\mathsf{T}\xi 
+ O(1 \wedge \dist(\cdot,I_v)) \quad\text{in } \Omega\times [0,T]. 
\end{aligned}
\end{equation}
The combination of the bounds~\eqref{eq:transportXiProof}--\eqref{eq:transportXiProof2} 
now immediately entails the desired estimate~\eqref{eq:timeEvolutionXi} on the time 
evolution of the global vector field $\xi$.
\end{proof}

\begin{proof}[Proof of~\eqref{eq:timeEvolutionLengthXi}]
We get as a consequence of the product rule and inserting the local
versions of~\eqref{eq:timeEvolutionLengthXi} from Proposition~\ref{prop:CalibrationInterface} and 
Proposition~\ref{prop:CalibrationContactPoint}
\begin{align*}
\xi\cdot\partial_t \xi 
&= \sum_{n=1}^N \eta_n \xi \cdot \partial_t\xi^n
+ \sum_{n=1}^N (\xi \cdot \xi^n) \partial_t\eta_n
\\&
= - \sum_{n=1}^N \eta_n \xi^n \cdot (v \cdot \nabla) \xi^n
+ \sum_{n=1}^N \eta_n (\xi {-} \xi^n) \cdot \partial_t\xi^n
\\&~~~
+ \sum_{n=1}^N (\xi \cdot \xi^n) \partial_t\eta_n
+ O(\dist(\cdot,I_v)^2 \wedge 1)
\quad\text{in } \Omega\times [0,T].
\end{align*}
Adding zero to produce the left hand sides of the local versions 
of~\eqref{eq:timeEvolutionXi} from Proposition~\ref{prop:CalibrationInterface} and 
Proposition~\ref{prop:CalibrationContactPoint} 
further updates the previous display to 
\begin{align*}
\xi\cdot\partial_t \xi 
&= - \sum_{n=1}^N \eta_n \xi \cdot (v \cdot \nabla) \xi^n + \sum_{n=1}^N (\xi \cdot \xi^n) \partial_t\eta_n
\\&~~~
- \sum_{n=1}^N \eta_n (\xi {-} \xi^n) \cdot (\Id - \xi^n \otimes \xi^n)(\nabla v)^\mathsf{T}\xi^n
\\&~~~
+ \sum_{n=1}^N \eta_n (\xi {-} \xi^n) \cdot 
\big(\partial_t\xi^n {+} (v\cdot\nabla)\xi^n {+} (\Id - \xi^n \otimes \xi^n)(\nabla v)^\mathsf{T}\xi^n\big)
\\&~~~
+ O(\dist(\cdot,I_v)^2 \wedge 1)
\quad\text{in } \Omega\times [0,T].
\end{align*}
We then continue with adding zeros to obtain
\begin{equation}
\label{eq:auxComputationGlobalLenghtXi}
\begin{aligned}
\xi\cdot\partial_t \xi 
&= - \xi \cdot (v\cdot\nabla) \xi
\\&~~~
+ \sum_{n=1}^N \big(\xi \cdot (\xi^n {-} \xi)\big) \big(\partial_t\eta_n {+} (v\cdot\nabla)\eta_n\big)
- |\xi|^2 \big(\partial_t\eta_{\mathrm{bulk}} {+} (v\cdot\nabla)\eta_{\mathrm{bulk}}\big)
\\&~~~
- \sum_{n=1}^N \eta_n (\xi {-} \xi^n) \cdot (\xi \otimes \xi  - \xi^n \otimes \xi^n)(\nabla v)^\mathsf{T}\xi^n
\\&~~~
- \sum_{n=1}^N \eta_n (\xi {-} \xi^n) \cdot (\Id - \xi \otimes \xi)(\nabla v)^\mathsf{T}(\xi^n - \xi)
\\&~~~
+ \sum_{n=1}^N \eta_n (\xi {-} \xi^n) \cdot 
\big(\partial_t\xi^n {+} (v\cdot\nabla)\xi^n {+} (\Id - \xi^n \otimes \xi^n)(\nabla v)^\mathsf{T}\xi^n\big)
\\&~~~
+ O(\dist(\cdot,I_v)^2 \wedge 1)
\quad\text{in } \Omega\times [0,T].
\end{aligned}
\end{equation}
As it is by now routine, we may employ the localization 
properties~\eqref{eq:supportCutoffTwoPhase}--\eqref{eq:LocalizationSupportTwoPhaseContact}
of the partition of unity $(\eta_1,\ldots,\eta_N)$ from Lemma~\ref{lem:partitionOfUnity}
and the estimates~\eqref{eq:coercivityBulkCutoff} and~\eqref{eq:transportBulkCutoff}
for the bulk cutoff to reduce the task of estimating the right hand side terms of~\eqref{eq:auxComputationGlobalLenghtXi}
to an application of the compatibility 
estimates~\eqref{eq:compBoundLocalCalibrations1}--\eqref{eq:compBoundLocalCalibrations2}.
More precisely, we obtain by straightforward applications of these two ingredients that
\begin{align}
\nonumber
&\sum_{n=1}^N \big(\xi \cdot (\xi {-} \xi^n)\big) \big(\partial_t\eta_n {+} (v\cdot\nabla)\eta_n\big)
\\& \label{eq:auxComputationGlobalLenghtXi2}
= \sum_{i\in\mathcal{I}} \sum_{c\in\mathcal{C},i\sim c} \eta_c^2
\big((\xi^c {-} \xi^i) \cdot (\xi^c {-} \xi^i)\big) \big(\partial_t\eta_i {+} (v\cdot\nabla)\eta_i\big)
\\&~~~~~ \nonumber
+ \sum_{c\in\mathcal{C}} \sum_{i\in\mathcal{I},i\sim c} \eta_c \eta_i
\big((\xi^c - \xi^i) \cdot (\xi^i {-} \xi^c)\big) \big(\partial_t\eta_c {+} (v\cdot\nabla)\eta_c\big)
\\&~~~~~ \nonumber
+\sum_{i\in\mathcal{I}} \sum_{c\in\mathcal{C},i\sim c} \eta_c^2
\big( \xi^i \cdot (\xi^c {-} \xi^i)\big) \big(\partial_t\eta_i {+} (v\cdot\nabla)\eta_i\big)
\\&~~~~~ \nonumber
+ \sum_{c\in\mathcal{C}} \sum_{i\in\mathcal{I},i\sim c} \eta_c \eta_i
\big(\xi^i \cdot (\xi^i {-} \xi^c)\big) \big(\partial_t\eta_c {+} (v\cdot\nabla)\eta_c\big)
\\&~~~~~ \nonumber
+ \sum_{i\in\mathcal{I}} \sum_{c\in\mathcal{C},i\sim c} \eta_i \eta_c
\big(\xi^i \cdot (\xi^c {-} \xi^i)\big) \big(\partial_t\eta_i {+} (v\cdot\nabla)\eta_i\big)
\\&~~~~~ \nonumber
+ \sum_{c\in\mathcal{C}} \sum_{i\in\mathcal{I},i\sim c} \eta_i^2
\big(\xi^i \cdot (\xi^i {-} \xi^c)\big) \big(\partial_t\eta_c {+} (v\cdot\nabla)\eta_c\big)
\\&~~~~~ \nonumber
+ O(\dist(\cdot,I_v)^2 \wedge 1)
\quad\text{in } \Omega\times [0,T],
\\
\nonumber
&\sum_{n=1}^N \eta_n (\xi {-} \xi^n) \cdot (\Id - \xi \otimes \xi )(\nabla v)^\mathsf{T}(\xi {-} \xi^n)
\\& \label{eq:auxComputationGlobalLenghtXi3}
= \sum_{i\in\mathcal{I}} \sum_{c\in\mathcal{C},i\sim c} \eta_i\eta_c^2 
(\xi^c {-} \xi^i) \cdot (\Id - \xi \otimes \xi )(\nabla v)^\mathsf{T}(\xi^c {-} \xi^i)
\\&~~~~~ \nonumber
+ \sum_{c\in\mathcal{C}} \sum_{i\in\mathcal{I},i\sim c} \eta_c\eta_i^2
(\xi^i {-} \xi^c) \cdot (\Id - \xi \otimes \xi )(\nabla v)^\mathsf{T}(\xi^i {-} \xi^c)
\\&~~~~~ \nonumber
+ O(\dist(\cdot,I_v)^2 \wedge 1) \quad\text{in } \Omega\times [0,T],
\\
\nonumber
&\sum_{n=1}^N \eta_n (\xi {-} \xi^n) \cdot 
\big(\partial_t\xi^n {+} (v\cdot\nabla)\xi^n {+} (\Id - \xi^n \otimes \xi^n )(\nabla v)^\mathsf{T}\xi^n\big)
\\& \label{eq:auxComputationGlobalLenghtXi4}
= \sum_{i\in\mathcal{I}} \sum_{c\in\mathcal{C}} \eta_i\eta_c (\xi^c {-} \xi^i) \cdot
\big(\partial_t\xi^i {+} (v\cdot\nabla)\xi^i {+} (\Id - \xi^i \otimes \xi^i )(\nabla v)^\mathsf{T}\xi^i\big)
\\&~~~~~ \nonumber
+ \sum_{c\in\mathcal{C}} \sum_{i\in\mathcal{I},i\sim c} \eta_c\eta_i (\xi^i {-} \xi^c) \cdot
\big(\partial_t\xi^c {+} (v\cdot\nabla)\xi^c {+} (\Id - \xi^c \otimes \xi^c )(\nabla v)^\mathsf{T}\xi^c\big)
\\&~~~~~ \nonumber
+ O(\dist(\cdot,I_v)^2 \wedge 1) \quad\text{in } \Omega\times [0,T],
\end{align}
and finally
\begin{align}
\nonumber
&\sum_{n=1}^N \eta_n (\xi {-} \xi^n) \cdot (\xi \otimes \xi - \xi^n \otimes \xi^n)( \nabla v)^\mathsf{T}\xi^n
\\& \label{eq:auxComputationGlobalLenghtXi6}
= \sum_{i\in\mathcal{I}} \sum_{c\in\mathcal{C},i\sim c} \eta_c (\xi^c {-} \xi^i) 
\cdot  (\xi \otimes \xi - \xi^i \otimes \xi^i)( \nabla v)^\mathsf{T}\xi^i
\\&~~~ \nonumber
+ \sum_{c\in\mathcal{C}} \sum_{i\in\mathcal{I},i\sim c} 
\eta_i(\xi^i {-} \xi^c) \cdot  (\xi \otimes \xi - \xi^c \otimes \xi^c)(\nabla v)^\mathsf{T}\xi^c
\\&~~~ \nonumber
+ O(\dist(\cdot,I_v)^2 \wedge 1) \quad\text{in } \Omega\times [0,T].
\end{align}
We then exploit  the compatibility 
estimates~\eqref{eq:compBoundLocalCalibrations1} and~\eqref{eq:compBoundLocalCalibrations2}
for an estimate of~\eqref{eq:auxComputationGlobalLenghtXi2}, the compatibility
estimate~\eqref{eq:compBoundLocalCalibrations1} for an estimate of~\eqref{eq:auxComputationGlobalLenghtXi3},
the local versions of~\eqref{eq:timeEvolutionXi} from Proposition~\ref{prop:CalibrationInterface} and 
Proposition~\ref{prop:CalibrationContactPoint}
in combination with the compatibility estimate~\eqref{eq:compBoundLocalCalibrations1} 
for an estimate of~\eqref{eq:auxComputationGlobalLenghtXi4}, 
and finally~\eqref{eq_xinTensProd} together with the estimate for the bulk cutoff~\eqref{eq:coercivityBulkCutoff} 
and the compatibility estimate~\eqref{eq:compBoundLocalCalibrations1} to 
estimate~\eqref{eq:auxComputationGlobalLenghtXi6}. In summary,
using also the bound on the advection derivative~\eqref{eq:transportBulkCutoff}
as well as the coercivity estimate~\eqref{eq:coercivityBulkCutoff},
we may upgrade~\eqref{eq:auxComputationGlobalLenghtXi} to the desired estimate~\eqref{eq:timeEvolutionLengthXi}. 
\end{proof}

\section{Existence of transported weights: Proof of Lemma~\ref{lem:existenceTransportedWeight}}
\label{sec:constructionWeight}
We decompose the argument for the construction of a transported
weight $\vartheta$ in the sense of Definition~\ref{def:transportedWeight} in several steps.

\textit{Step 1: (Choice of suitable profiles)}
Let $\bar\vartheta\colon\R\to\R$ be chosen such that it represents a smooth
truncation of the identity in the sense that $\bar\vartheta(r)=r$ for $|r|\leq\frac{1}{2}$, 
$\bar\vartheta(r)= -1$ for $r\leq -1$, $\bar\vartheta(r)=1$ for $r\geq 1$,  
$0\leq\bar\vartheta'\leq 2$ as well as $|\bar\vartheta''|\leq C$.

For each two-phase interface $i\in\mathcal{I}$ present in the interface $I_v$
of the strong solution, we then define an auxiliary weight
\begin{align}
\label{eq:weightProfileTwoPhase}
&\bar\vartheta_i(x,t) := -\bar\vartheta\Big(\frac{\sdist(x,\mathcal{T}_i(t))}{\delta \widehat r}\Big),
\quad
(x,t) \in \mathrm{im}(\Psi_{\mathcal{T}_i})
\end{align}
where the change of variables $\Psi_{\mathcal{T}_i}$ 
and the associated signed distance $\sdist(\cdot,\mathcal{T}_i)$
are the ones from Definition~\ref{def:locRadiusInterface} of the admissible localization radius $r_i$.
Moreover, $\widehat r$ represents the localization scale of Lemma~\ref{lem:partitionOfUnity}
and $\delta \in (0,1]$ denotes a constant to be chosen in the course of the proof.

Recalling also from Definition~\ref{def:locRadius} of the admissible localization radii $(r_c)_{c\in\mathcal{C}}$
the definition of the change of variables $\Psi_{\partial\Omega}$ 
with associated signed distance $\sdist(\cdot,\partial\Omega)$ we define another two auxiliary weights
by means of
\begin{align}
\label{eq:weightProfileBoundary}
&\bar\vartheta_{\partial\Omega}^\pm(x,t) := \mp\bar\vartheta
\Big(\frac{\sdist(x,\partial\Omega)}{\delta \widehat r}\Big),
\\& \nonumber
(x,t) \in \bigcup_{t'\in [0,T]} \big(\Omega^\pm_v(t') \cap
\Psi_{\partial\Omega}\big(\partial\Omega{\times}(-2\widehat r,2\widehat r)\big)\big) {\times} \{t'\}.
\end{align}

\textit{Step 2: (Construction of the transported weight)}
Away from contact points and the interface but in the vicinity of the domain boundary, 
we introduce the following notational shorthand
\begin{align}
\label{eq:contactInterfaceExcluded}
\mathcal{U}_{\widehat r}(t) := \bigcup_{i\in\mathcal{I}}\Psi_{\mathcal{T}_i}
\big(\mathcal{T}_i(t){\times}\{t\}{\times}[-\widehat r,\widehat r]\big) \cup
\bigcup_{c\in\mathcal{C}} B_{\widehat r}\big(\mathcal{T}_c(t)\big),\quad t\in [0,T],
\end{align}
and then define
\begin{align}
\label{eq:defWeightNearBoundary}
&\vartheta(x,t) := \bar\vartheta_{\partial\Omega}^\pm(x,t),
\\& \nonumber
(x,t) \in \bigcup_{t'\in [0,T]} \big(\Omega^\pm_v(t') \cap
\Psi_{\partial\Omega}\big(\partial\Omega{\times}[-\widehat r,\widehat r]\big)
\setminus \mathcal{U}_{\widehat r}(t')  \big) {\times} \{t'\}.
\end{align}

Fix next a two-phase interface $i\in\mathcal{I}$.
Away from contact points but in the vicinity of the interface, we then define
\begin{align}
\label{eq:defWeightAwayContactPoints}
&\vartheta(x,t) := \bar\vartheta_i(x,t),
\\& \nonumber
(x,t) \in \bigcup_{t'\in [0,T]} \bigg(\Omega \cap \Psi_{\mathcal{T}_i}
\big(\mathcal{T}_i(t'){\times}\{t'\}{\times}[-\widehat r,\widehat r]\big)
\setminus \bigcup_{c\in\mathcal{C}} B_{\widehat r}\big(\mathcal{T}_c(t')\big) \bigg) {\times} \{t'\}.
\end{align}

Let now a contact point $c\in\mathcal{C}$ be fixed, and denote by $i\in\mathcal{I}$
the unique two-phase interface with $i\sim c$.
Recall from Definition~\ref{def:locRadius} of the admissible localization radius~$r_c$
that for all $t\in[0,T]$ we decomposed $\Omega \cap B_{r_c}(\mathcal{T}_c(t))$
by means of five pairwise disjoint open wedges $W^{\pm,c}_{\partial\Omega}(t), W^c_{\mathcal{T}_i}(t),
W^c_{\Omega^\pm_v}(t)\subset\R^2$. In the wedge $W^c_{\mathcal{T}_i}$
containing the two-phase interface $\mathcal{T}_i\subset I_v$, we still define
\begin{align}
\label{eq:defWeightInterfaceWedge}
&\vartheta(x,t) := \bar\vartheta_i(x,t),
\quad 
(x,t) \in \bigcup_{t'\in [0,T]} \big(\Omega \cap B_{\widehat r}\big(\mathcal{T}_c(t')\big) 
\cap W^c_{\mathcal{T}_i}(t') \big) {\times} \{t'\}.
\end{align}
In the wedges $W^{\pm,c}_{\partial\Omega}$ containing the domain boundary $\partial\Omega$,
we instead set
\begin{align}
\label{eq:defWeightBoundaryWedge}
&\vartheta(x,t) := \bar\vartheta_{\partial\Omega}^\pm(x,t),
\quad 
(x,t) \in \bigcup_{t'\in [0,T]} \big(\Omega \cap B_{\widehat r}\big(\mathcal{T}_c(t')\big) 
\cap W^{\pm,c}_{\partial\Omega}(t') \big) {\times} \{t'\}.
\end{align}
In the interpolation wedges $W^c_{\Omega^\pm_v}$, we make use of the interpolation
parameter~$\lambda^\pm_c$ of Lemma~\ref{lemma:existenceInterpolationFunction} to interpolate between
the two constructions near the interface~\eqref{eq:defWeightInterfaceWedge}
and near the domain boundary~\eqref{eq:defWeightBoundaryWedge}. 
Recall in this context the convention that $\lambda^\pm_c(\cdot,t)$
was set equal to one on $\big(\partial W^c_{\Omega^\pm_v}(t) \cap \partial W^c_{\mathcal{T}_i}(t)\big)
\setminus \mathcal{T}_c(t)$ and set equal to zero on
 $\big(\partial W^c_{\Omega^\pm_v}(t) 
\cap \partial W^{\pm,c}_{\partial\Omega}(t)\big) \setminus \mathcal{T}_c(t)$ for all $t\in [0,T]$.
With this notation in place, we define on the interpolation wedges
\begin{align}
\label{eq:defWeightInterpolationWedge}
&\vartheta(x,t) := \lambda^\pm_c(x,t)\bar\vartheta_i(x,t)
										+ (1{-}\lambda^\pm_c(x,t))\bar\vartheta_{\partial\Omega}^\pm(x,t),
\\& \nonumber
(x,t) \in \bigcup_{t'\in [0,T]} \big(\Omega \cap B_{\widehat r}\big(\mathcal{T}_c(t')\big) 
\cap W^c_{\Omega^\pm_v}(t') \big) {\times} \{t'\}.
\end{align}
 
Finally, choosing $\delta$ small enough in the definition~\eqref{eq:weightProfileTwoPhase}
of the auxiliary weights $(\vartheta_i)_{i\in\mathcal{I}}$ and recalling the
localization properties~\eqref{eq:locPropertyCutoff1}--\eqref{eq:locPropertyCutoff4}
of the scale $\widehat r$, it is safe to define in the space-time domain not captured
by the definitions~\eqref{eq:defWeightNearBoundary}--\eqref{eq:defWeightInterpolationWedge}
\begin{align}
\label{eq:defWeightBulk}
&\vartheta(x,t) := \mp 1,
\\& \nonumber
(x,t) \in \bigcup_{t'\in [0,T]} \big(\Omega^\pm_v(t') \setminus 
\big(\mathcal{U}_{\widehat r}(t') \cup \Psi_{\partial\Omega}(\partial\Omega{\times}[-\widehat r,\widehat r]
)\big) \big) {\times} \{t'\}.
\end{align}
Recall for this definition also the notation~\eqref{eq:contactInterfaceExcluded}.

\textit{Step 3: (Regularity and coercivity)}
The validity of the asserted sign conditions in Definition~\ref{def:transportedWeight}
are immediate from~\eqref{eq:defWeightNearBoundary}--\eqref{eq:defWeightBulk}.
Since the first-order derivatives of the interpolation
parameter $\lambda^\pm_c$ feature controlled blow-up~\eqref{eq:LambdaFirstDeriv}, it is also a direct
consequence of the definitions~\eqref{eq:defWeightNearBoundary}--\eqref{eq:defWeightBulk}
that $\vartheta\in W^{1,\infty}_{x,t}(\Omega\times [0,T])$ as asserted. 

In view of the definition~\eqref{eq:defWeightBulk}
of the weight in the bulk it suffices to establish~\eqref{eq:lowerBoundTransportedWeight} in the regions
$\Omega \cap \Psi_{\partial\Omega}\big(\partial\Omega{\times}[-\widehat r,\widehat r]\big)
\setminus \mathcal{U}_{\widehat r}(t)$,
$\Omega\cap\Psi_{\mathcal{T}_i}\big(\mathcal{T}_i(t){\times}\{t\}{\times}[-\widehat r,\widehat r]\big)
\setminus \bigcup_{c\in\mathcal{C}} B_{\widehat r}\big(\mathcal{T}_c(t)\big)$
and $\Omega\cap B_{\widehat r}(\mathcal{T}_c(t))$ for all $i\in\mathcal{I}$, all $c\in\mathcal{C}$
and all $t\in [0,T]$. However, in these regions the asserted 
estimate~\eqref{eq:lowerBoundTransportedWeight} is immediately
implied by the properties of the truncation of unity~$\bar\vartheta$ from \textit{Step 1} of this
proof and the definitions~\eqref{eq:defWeightNearBoundary}--\eqref{eq:defWeightInterpolationWedge}.

\textit{Step 4: (Advection equation)} Because of the definition~\eqref{eq:defWeightBulk}
of the weight $\vartheta$ in the bulk, it suffices to establish~\eqref{eq:advDerivTransportedWeight}
in the regions $\Omega \cap \Psi_{\partial\Omega}\big(\partial\Omega{\times}[-\widehat r,\widehat r]\big)
\setminus \mathcal{U}_{\widehat r}(t)$,
$\Omega\cap\Psi_{\mathcal{T}_i}\big(\mathcal{T}_i(t){\times}\{t\}{\times}[-\widehat r,\widehat r]\big)
\setminus \bigcup_{c\in\mathcal{C}} B_{\widehat r}\big(\mathcal{T}_c(t)\big)$
and $\Omega\cap B_{\widehat r}(\mathcal{T}_c(t))$ for all $i\in\mathcal{I}$, all $c\in\mathcal{C}$
and all $t\in [0,T]$. 

Observe first that it follows from the definitions~\eqref{eq:weightProfileBoundary},
\eqref{eq:defWeightNearBoundary} and~\eqref{eq:defWeightBoundaryWedge}
as well as the boundary condition for the fluid velocity $(v\cdot n_{\partial\Omega})|_{\partial\Omega} = 0$
that 
\begin{align}
\label{eq:transportAuxWeightBoundary}
\partial_t\vartheta + (v\cdot\nabla)\vartheta = 0
\quad\text{along } \partial\Omega\setminus\bigcup_{c\in\mathcal{C}} \mathcal{T}_c(t)
\end{align}
for all $t\in [0,T]$. By a Lipschitz estimate together with the coercivity 
estimate~\eqref {eq:lowerBoundTransportedWeight}, the desired estimate~\eqref{eq:advDerivTransportedWeight}
follows in $\Omega \cap \Psi_{\partial\Omega}\big(\partial\Omega{\times}[-\widehat r,\widehat r]\big)
\setminus \mathcal{U}_{\widehat r}(t)$ for all $t\in [0,T]$.

Fix next a two-phase interface $i\in\mathcal{I}$. We then claim that
\begin{align}
\label{eq:transportAuxWeight}
\big|\partial_t \bar\vartheta_i + (v\cdot\nabla) \bar\vartheta_i\big|
\leq C\dist(\cdot,I_v)
\quad\text{in } \Omega\cap\Psi_{\mathcal{T}_i(t)}\big(\mathcal{T}_i(t){\times}[-\widehat r,\widehat r]\big)
\end{align}
for all $t\in [0,T]$. Indeed, one only needs to recall that
the signed distance to the two-phase interface $\mathcal{T}_i\subset I_v$
is transported by the fluid velocity~$v$ to first order in the distance to the interface.
In particular, combining~\eqref{eq:transportAuxWeight} with the definition~\eqref{eq:defWeightAwayContactPoints}
and the coercivity estimate~\eqref{eq:lowerBoundTransportedWeight} entails~\eqref{eq:advDerivTransportedWeight}
in  $\Omega\cap\Psi_{\mathcal{T}_i}\big(\mathcal{T}_i(t){\times}\{t\}{\times}[-\widehat r,\widehat r]\big)
\setminus \bigcup_{c\in\mathcal{C}} B_{\widehat r}\big(\mathcal{T}_c(t)\big)$ for all $t\in [0,T]$.

Let now a contact point $c\in\mathcal{C}$ be given, and let $i\in\mathcal{I}$
be the unique two-phase interface such that $i\sim c$. The desired estimate~\eqref{eq:advDerivTransportedWeight}
follows immediately from~\eqref{eq:transportAuxWeight} and~\eqref{eq:defWeightInterfaceWedge}
in the wedge $\Omega \cap  B_{\widehat r}\big(\mathcal{T}_c(t)\big) \cap W^c_{\mathcal{T}_i}(t)$
for all $t\in [0,T]$.	For the wedges containing the domain boundary $\partial\Omega$,
the estimate~\eqref{eq:advDerivTransportedWeight} in form of 
\begin{align}
\label{eq:transportAuxWeight2}
\hspace*{-1.3ex}
\big|\partial_t \bar\vartheta_{\partial\Omega}^\pm + (v\cdot\nabla)\bar\vartheta_{\partial\Omega}^\pm\big|
\leq C\dist(\cdot,\partial\Omega)
\text{ in } \Omega\cap B_{\widehat r}\big(\mathcal{T}_c(t)\big)
\cap \big(W^c_{\Omega^\pm_v}(t)\cup W^{\pm,c}_{\partial\Omega}(t)\big)
\end{align}
for all $t\in [0,T]$, is satisfied because of the analogue of~\eqref{eq:transportAuxWeightBoundary} 
and a Lipschitz estimate. Finally, in the interpolation wedges one may estimate
\begin{align*}
|\partial_t\vartheta {+} (v\cdot\nabla)\vartheta|
&\leq |\bar\vartheta_i - \bar\vartheta^\pm_{\partial\Omega}|
|\partial_t\lambda^\pm_c {+} (v\cdot\nabla)\lambda^\pm_c|
\\&~~~
+ \lambda^\pm_c |\partial_t\bar\vartheta_i {+} (v\cdot\nabla)\bar\vartheta_i|
+ (1{-}\lambda^\pm_c)|\partial_t\bar\vartheta^\pm_{\partial\Omega} {+} (v\cdot\nabla)\bar\vartheta^\pm_{\partial\Omega}|.
\end{align*}
The desired bound thus follows from the estimate~\eqref{eq:AdvectiveDerivativeLambda} for the advective 
derivative of the interpolation parameter $\lambda^\pm_c$,
the estimates~\eqref{eq:transportAuxWeight} and~\eqref{eq:transportAuxWeight2}, and the fact that the auxiliary weights 
from~\eqref{eq:weightProfileTwoPhase} and~\eqref{eq:weightProfileBoundary} are compatible in the sense
\begin{align*}
|\bar\vartheta_i - \bar\vartheta^\pm_{\partial\Omega}| \leq 
C(\dist(\cdot,\partial\Omega) \wedge \dist(\cdot,I_v))
\end{align*} 
in $\Omega \cap  B_{\widehat r}\big(\mathcal{T}_c(t)) \cap W^c_{\Omega^\pm_v}(t)$
for all $t\in [0,T]$. 
This concludes the proof of Lemma~\ref{lem:existenceTransportedWeight}. \qed

\appendix
\section{Existence of varifold solutions to two-phase fluid flow with surface tension} \label{appendix}
The aim of this Appendix is to give a sketch of a proof regarding
existence of varifold solutions to two-phase fluid flow with surface tension and with ninety degree contact angle (see Definition \ref{Def_varsol}). Note that this is not treated by the work
of Abels~\cite{Abels} in which the existence of a varifold solution in the 
presence of surface tension is only established in a full space setting.
However, in principle it still suggests itself to follow, where possible, the structure of the proof
for the case of an unbounded domain by Abels~\cite{Abels}. 
In this regard, we first discuss two tools which are needed due 
to the different setting of the present work,
i.e., geometric evolution with a ninety degree contact angle condition and
the associated boundary conditions for the solenoidal fluid velocity.
These tools concern an existence result for weak solutions to the
required transport equation (for sufficiently regular transport velocities) 
and elliptic regularity estimates for the 
Helmholtz decomposition associated with the bounded and smooth domain~$\Omega$. 
In a second step, we present the corresponding approximate problem, 
focusing again on the key steps of the proof which differ with respect to the 
case of an unbounded domain studied by Abels~\cite{Abels}. 
Note that analogous to the existence theory of~\cite{Abels},
we will assume some regularity for the geometry of the initial data
and, for simplicity, that the densities of the two fluids
coincide and are normalized to~$1$.

{\it Transport equation.}
In order to construct approximate solutions of the two-phase flow with surface tension and with ninety degree contact angle, one first needs an existence result for weak 
solutions to the transport equation in a bounded domain.
In particular, it suffices to motivate the validity of \cite[Lemma 2.3, $\Omega \equiv \R^d$]{Abels} 
in case of a smooth and bounded domain $\Omega\subset\R^d,\,d\in\{2,3\}$. 

To this aim, let the open subset $\Omega_0^+ \subset \Omega$ be subject to the regularity conditions in Definition \ref{Def_smdom}, let $\chi_0:= \chi_{\Omega_0^+} \in \BV(\Omega;\{0,1\})$,
let $T \in (0,\infty)$, and consider a sufficiently regular fluid velocity 
$v \in C([0,T];C_b^2(\Omega)) \cap C(\overline{\Omega} {\times} [0,T])$ such that  
$\operatorname{div }v = 0$ in~$\Omega$ and $(\no \cdot v )|_{\partial \Omega} = 0$. 
Consider any $C([0,T];C^2_b(\R^d))$ extension of~${v}$ which we 
denote by $\widetilde v$. Then, a solution $\widetilde\chi$
to the transport equation associated with~$\widetilde v$
can be constructed on~$\R^d$ by the usual method of characteristics
(see, e.g., \cite[Proof of Lemma~2.3]{Abels}).
The associated flow map is a $C^1$-diffeomorphism at any time $t \in [0,T]$. 
However, note that it maps $\partial \Omega$ onto itself, 
due to $v|_{\partial\Omega}=\widetilde v|_{\partial\Omega}$ being tangential along~$\partial \Omega$. 
Moreover, since the flow map is a global diffeomorphism (and since continuous images of connected sets are connected), it also maps~$\Omega$ onto itself.
Then, one can conclude by means of the same computations as in the proof of \cite[Lemma 2.3]{Abels} --- using 
in the process the fact that $\operatorname{div} v = 0$ in $\Omega$ ---
that the restriction $\chi:=\widetilde\chi|_{\Omega{\times}[0,T]}
\in L^\infty (0, T ; \BV (\Omega ; \{0, 1\}))$ is a weak solution 
of the transport equation associated with~$v$ in the sense of
	\begin{align}
	\label{eq:transportEquBoundedDomain}
	\int_0^T \int_{\Omega} \chi\left(\partial_{t} \varphi
	+   {v} \cdot \nabla \varphi\right) \mathrm{d}x  \mathrm{d}t
	+		\int_{\Omega} \chi_{0} \varphi(x, 0) \mathrm{d} x=0
	\end{align}
	for any $\varphi \in C^1_c([0,T);C(\overline{\Om})) \cap C_c([0,T);C^1(\overline{\Om}))$.
	Moreover, we have
	\begin{align}\|\chi\|_{L^{\infty}\left(0, T ; B V\left(\Omega\right)\right)} & \leqslant M\left(\|v\|_{C\left([0, T] ; C_{b}^{2}\left(\Omega\right)\right)}\right)\left\|\chi_{0}\right\|_{B V\left(\Omega\right)},
	\\ \frac{\mathrm{d}}{\mathrm{d} t}|\nabla \chi(\cdot,t)|\left(\Omega\right) &=-\left\langle H_{\chi(\cdot,t)}, v(\cdot,t)\right\rangle \quad \text { for all } t \in(0, T) \label{eq:transport} \end{align}
	for some continuous function $M$. Note that the latter holds because the $90$~degree contact angle condition is preserved by sufficiently regular transport velocities
	(see, e.g., the remark after Definition~\ref{Def_strongsol}).

{\it Helmholtz decomposition associated with  bounded domains.}
We recall properties of the Helmholtz projection $P_\Omega$ associated with the smooth bounded domain $\Omega$, referring the reader to \cite[Corollaries 7.4.4-5]{Pruess2016} (see also \cite{Simader1992}).

Define $W_p(\Omega) := \{ g \in W^{1,p}(\Omega;\R^d) : \operatorname{div} g = 0, (g \cdot \no)|_{\partial \Om}=0\}$. 
Given $f \in  W^{1,p}(\Omega;\R^d)$, $2 \leq  p < \infty$, there are unique functions $\phi \in  W^{2,p}(\Omega)$ and $w \in W_p(\Omega) $ such that $f = \nabla \phi + w$.
The bounded linear operator $P_\Om \in \mathcal{B}(W^{1,p}(\Om;\R^d),W_p(\Omega))$ defined by $P_\Om f := w$ is a projection, which is the Helmholtz projection associated with the smooth bounded domain $\Omega$.
Moreover, if $f \in  W^{2,p}(\Omega;\R^d)$ it holds $\phi \in  W^{3,p}(\Omega)$ and 
\begin{align} \label{eq:Helmregularity}
\| P_\Omega f \|_{W^{2,p}(\Omega;\R^d)} \leq C \| f\|_{W^{2,p}(\Omega;\R^d)},
\end{align}
and if $f \in W^{k,2}(\Omega;\R^d)$, $k \geq 2$, then $\phi \in  W^{k,2}(\Omega)$ and
\begin{align} \label{eq:Helmregularity2}
\| P_\Omega f \|_{W^{k,2}(\Omega;\R^d)} \leq C \| f\|_{W^{k,2}(\Omega;\R^d)}.
\end{align}
This follows from existence and regularity theory of the
associated Neumann problem (see for the case $p>2$ the result of \cite[Corollary 7.4.5]{Pruess2016})
\begin{align*}
	\Delta \phi &= \operatorname{div} f && \text{ in } \Omega,\\
 (\no \cdot  \nabla )\phi  &= f \cdot \no  && \text{ on } \partial \Omega.
\end{align*}

{\it Solutions to approximate two-phase fluid flow.}
In order to formulate the approximate equations, let $\psi$ be a standard mollifier, for every~$k\in\mathbb{N}$ we denote by $\psi_k:=k^d\psi(k\cdot)$ its usual rescaling, and by~$P_{\Omega}$ the Helmholtz projection associated with the smooth domain~$\Omega$.
Moreover, let $\Psi_k \cdot = P_\Omega(\Psi_k \ast \cdot )$. 
Consider the initial data $v_0 \in L^2(\Omega)$ with $\operatorname{div} v_0 = 0$ and $(\no \cdot v_0 )|_{\partial \Omega} = 0$, and let $\chi_0:= \chi_{\Omega_0^+} \in \BV(\Omega;\{0,1\})$, where $\Omega_0^+ \subset \Omega$ is subject to the regularity conditions in Definition~\ref{Def_smdom}.
Let $\mu,\sigma > 0$. 
Then, we consider an approximate two-phase flow on $(0, T_w )$, $T_w \in (0,\infty)$. 
This is a pair~$(v_k,\chi_k)$ consisting on one side of a fluid velocity field
$v_k  \in L^\infty([0,T_w];L^2(\Omega))\cap L^2([0,T_w];W_2(\Omega))$ solving
	\begin{align} 
& \into  v_k(\cdot, T) \cdot \eta (\cdot, T) \dx - \into  v_0 \cdot \eta (\cdot, 0) \dx
- \int_{0}^{T} \into  v_k \cdot \partial_t \eta \dx \dt \nonumber \\
&- \int_{0}^{T}   \into \Psi_{k} v_{k} \otimes \psi_{k} * v_{k}  : \nabla (\psi_{k} *\eta) \dx \dt  
+ \int_{0}^{T}  \into \mu (\nabla v_k + \nabla v_k^\mathsf{T}) : \nabla \eta \dx \dt \nonumber \\
&= \sigma \int_{0}^{T} \int_{\partial^*\{\chi_k=1\}\cap\Omega} \mathrm{H}_{\chi_k} \cdot  \Psi_{k}\eta  \dS \dt \label{eq:approxeq}
\end{align}
for a.e.\ \(T \in  [0, T_w)\) and every
$\eta \in C^\infty ([0, T_w); C^1(\overline{\Omega};\R^d ) \cap\bigcap_{p\geq 2}W^{2,p}(\Omega;\R^d) )$ with $\operatorname{div} \eta = 0$ and $(\no \cdot \eta )_{\partial \Omega} = 0$, 
and on the other side an evolving phase indicator
$\chi_k \in L^\infty ([0, T_w] ; \BV (\Omega ; \{0, 1\}))$ 
which is the unique weak solution --- in the sense of~\eqref{eq:transportEquBoundedDomain} --- to 
the transport equation
\begin{align*}
	\partial_{t} \chi_{k}+\left(\Psi_{k} v_{k}\right) \cdot \nabla \chi_{k} &= 0 
	&&  \text { in } (0, T_w) \times \Omega,\\ 
	\chi_{k}|_{t=0} &= \chi_{0} &&  \text { in } \Omega.
\end{align*}
The existence of approximate solutions $(v_k, \chi_k)$ satisfying the energy equality 
\begin{align}
\nonumber
&\frac{1}{2}\|v_k(\cdot,T)\|^2_{L^2(\Omega)}
+ \sigma|\nabla\chi_k(\cdot,T)|(\Omega)
+ \frac{\mu}{2}\|\nabla v_k\|^2_{L^2(\Omega{\times}(0,T))}
\\& \label{eq:energyEqualityApproximateFlow}
= \frac{1}{2}\|v_0\|^2_{L^2(\Omega)}
+ \sigma|\nabla\chi_0|(\Omega),
\quad T \in (0,T_w),
\end{align}
and satisfying
\begin{align}
\label{eq:helly}
\text{the map} \quad (0,T_w) \ni t \mapsto |\nabla\chi_k(\cdot,t)|(\Omega)
\quad\text{is absolutely continuous},
\end{align}
can then be proved by means of a fixed-point argument as done in~\cite[Proof of Theorem~4.2]{Abels}, 
relying in the process on the above two ingredients corresponding to 
the different setting of the present work: the existence result for weak solutions to the transport 
equation~\eqref{eq:transportEquBoundedDomain}
with sufficiently regular transport velocity, and the
elliptic regularity estimates~\eqref{eq:Helmregularity2} for the Helmholtz projection
associated with~$\Omega$. In particular, one obtains uniform bounds
\begin{align}
\label{eq:uniformBoundsApproximateVelocity}
\sup_{k \in \mathbb{N}} \sup_{t \in (0,T_w)} \|v_k(\cdot,t)\|^2_{L^2(\Omega)}
+ \sup_{k \in \mathbb{N}} \|\nabla v_k\|^2_{L^2(\Omega{\times}(0,T_w))} &< \infty,
\\ \label{eq:uniformBoundsApproximatePhase}
\sup_{k \in \mathbb{N}} \sup_{t \in (0,T_w)} |\nabla\chi_k(\cdot,t)|(\Omega) &< \infty.
\end{align}

{\it Limit passage in the approximation scheme to a varifold solution.}
As for the passage to the limit, we only discuss the surface tension term on the right hand side of the approximate problem~\eqref{eq:approxeq} as well as the validity of 
the energy inequality~\eqref{endisineq}. The other terms 
as well as the passage to the limit in the transport equation can be treated as in~\cite{Abels}.
First, we define a varifold $V_k \in \mathcal{M}((0, T_w)\times \overline{\Omega} \times \Ss)$ by
\begin{align}
\label{eq:disintTimeApproxVarifold}
	{V}_k := \mathcal{L}^{1}\llcorner(0, T_w) \otimes \left( V_k(t)  \right)_{t \in (0, T_w)},
\end{align}
where
\[
  V_k(t) := |\nabla \chi_k (\cdot, t) | \llcorner \Omega \otimes 
	\big(\delta_{\frac{\nabla\chi_k(\cdot,t)}{|\nabla\chi_k(\cdot,t)|}}\big)_{x \in \Omega}
	\in \mathcal{M}(\overline{\Omega}{\times}\Ss)\quad \text{ for any } t \in (0, T_w).
\]
	Since $\chi_k \in L^\infty([0,T_w]; \BV(\Omega; \{0,1\}))$ is uniformly bounded
	in the sense of~\eqref{eq:uniformBoundsApproximatePhase}, there then exists 
	$\chi \in L^\infty([0,T_w]; \BV(\Omega; \{0,1\}))$ such that, up to taking a subsequence,
	\begin{align}
	\label{eq:convPhase}
	\chi_k &\rightharpoonup^* \chi
	&&\text{ in } L^\infty(\Omega{\times}(0,T_w)),
	\\ \label{eq:convGradientPhase}
	\nabla \chi_k &\rightharpoonup^* \nabla \chi 
	&&\text{ in } L^\infty([0,T_w]; \mathcal{M}(\Omega)).
	\end{align}
	Moreover, we have $\sup_k \| {V}_k\|_\mathcal{M} < \infty$ due to~\eqref{eq:uniformBoundsApproximatePhase}
	and the definition of~$V_k$. In particular,
	there exists ${V} \in \mathcal{M}((0, T_w)\times \overline{\Omega} \times \Ss)$ 
	such that, up to taking a subsequence,
	\begin{align}
	\label{eq:convergenceVarifold}
	{V}_k &\rightharpoonup^*  {V} 
	&&\text{in }\mathcal{M}((0, T_w)\times \overline{\Omega} \times \Ss).
	\end{align}
	Note that the compatibility condition~\eqref{compcond} then
	simply follows from exploiting~\eqref{eq:convGradientPhase} and~\eqref{eq:convergenceVarifold}.
	As a preparation for the remaining arguments, note also that
	thanks to the condition~\eqref{eq:helly}
	a careful inspection of the argument of~\cite[Lemma~2]{Hensel2021e} reveals
	that one may disintegrate the limit varifold~$V$ in form of
	\begin{align}
	\label{eq:disintTimeLimitVarifold}
	{V}= \mathcal{L}^{1}\llcorner(0, T_w) \otimes \left( V_t\right)_{t \in (0, T_w)},
	\quad V_t \in \mathcal{M}( \overline{\Omega} {\times} \Ss),\,t \in (0,T_w),
	\end{align}
  and that the limit interface energy satisfies
	\begin{align}
	\label{eq:energyInterfaceLimit}
	|V_t|_{\Ss}(\overline\Omega) \leq \liminf_k |\nabla\chi_k(\cdot,t)|(\Omega)
	\quad \text{for a.e.\ } t \in [0, T_w).
	\end{align}

	For any $\eta \in C^\infty ([0, T_w); C^1(\overline{\Omega};\R^d ) \cap\bigcap_{p\geq 2}W^{2,p}(\Omega;\R^d) )$ such that $\operatorname{div} \eta = 0$ and $(\eta \cdot \no)|_{\partial \Omega} =0 $, we discuss the limit of 
	\begin{align*}
	\int_{0}^{T}	\int_{  \Om} \left(\Id -  \frac{\nabla \chi_k}{|\nabla \chi_k|}\otimes  \frac{\nabla \chi_k}{|\nabla \chi_k|} \right): \nabla (\Psi_k \eta) \, \mathrm{d} |\nabla \chi_k|  \dt \quad \text{ for } k \rightarrow \infty,
	\end{align*}
	for almost every \(T \in  [0, T_w)\).
	By adding a zero, we obtain 
	\begin{align*}
	&\int_{0}^{T} \int_{  \Om} \left(\Id -  \frac{\nabla \chi_k}{|\nabla \chi_k|}\otimes  \frac{\nabla \chi_k}{|\nabla \chi_k|} \right): \nabla (\Psi_k \eta - \eta ) \,  \mathrm{d} |\nabla \chi_k|  \dt \\
	&+ \int_{0}^{T} \int_{ \overline{\Omega} \times \Ss } \left(\Id -  s \otimes  s \right): \nabla \eta  \,    \mathrm{d} V_k(t,x,s) \,,
	\end{align*}
   where the second term converges to 
	$
	\int_{0}^{T} 	\int_{ \overline{\Omega} \times \Ss } \left(\Id -  s \otimes  s\right): \nabla \eta  \, \mathrm{d} V_t(x,s)$ for $k \rightarrow \infty$ for any $ \eta \in C^\infty_{0} ([0, T_w); C^1(\overline{\Omega};\R^d ) \cap\bigcap_{p\geq 2}W^{2,p}(\Omega;\R^d) )$. 
	Indeed, the latter guarantees $(\mathrm{Id}{-}s\otimes s):\nabla\eta \in C_0((0,T_w)
	{\times}\overline{\Omega}{\times}\Ss)$ so that one may use~\eqref{eq:convergenceVarifold}
	for such~$\eta$. However, the additional support assumption on the time variable
	can be removed by means of a standard truncation argument relying on the disintegration
	formulas~\eqref{eq:disintTimeApproxVarifold} and~\eqref{eq:disintTimeLimitVarifold}, respectively,
	and the uniform bound $\sup_k \| {V}_k\|_\mathcal{M} < \infty$.
	As for the first term, we exploit the regularity properties of the Helmholtz 
	projection. More precisely, we may estimate
	for any $p>3$ based on~\eqref{eq:Helmregularity}
	and the Sobolev embedding $W^{1,p}({\Om}) 
	\hookrightarrow C(\overline{\Om})$, $d \in \{2,3\}$,
	\begin{align*}
	& \left| \int_{0}^{T} 	\int_{  \Om} \left(\Id -  \frac{\nabla \chi_k}{|\nabla \chi_k|}\otimes  \frac{\nabla \chi_k}{|\nabla \chi_k|} \right): \nabla (\Psi_k \eta - \eta ) \, \mathrm{d} |\nabla \chi_k|  \dt  \right| \\
	&\leq C \int_{0}^{T}	\left\|	 \nabla (\Psi_k \eta - \eta )   \right\|_{C(\overline{\Om}; \R^{d \times d })} \dt \\
	&\leq C	  \int_{0}^{T}\left\|	 \nabla P_\Omega(\psi_k \ast  \eta - \eta )   \right\|_{C(\overline{\Om} ; \R^{d \times d })} \dt \\
	& \leq C  \int_{0}^{T} \left\|	 \psi_k \ast  \eta - \eta    \right\|_{W^{2,p}({\Om};   \R^{d })}  \dt.
	\end{align*}
The right hand side obviously goes to zero by letting $k \rightarrow \infty$. In summary,
we obtain as desired
\begin{align*}
&\int_{0}^{T}	\int_{  \Om} \left(\Id -  \frac{\nabla \chi_k}{|\nabla \chi_k|}\otimes  \frac{\nabla \chi_k}{|\nabla \chi_k|} \right): \nabla (\Psi_k \eta) \, \mathrm{d} |\nabla \chi_k|  \dt
\\&
\to \int_{0}^{T} 	\int_{ \overline{\Omega} \times \Ss } 
\left(\Id -  s \otimes  s\right): \nabla \eta  \, \mathrm{d} V_t(x,s)
\quad \text{for } k \to \infty,
\end{align*}
for almost every $T \in [0,T_w)$
and all $\eta \in C^\infty ([0, T_w); C^1(\overline{\Omega};\R^d ) \cap\bigcap_{p\geq 2}W^{2,p}(\Omega;\R^d) )$ such that $\operatorname{div} \eta = 0$ and $(\eta \cdot \no)|_{\partial \Omega} =0 $.

	At last, we comment how to recover the energy inequality~\eqref{endisineq}.
	This can be obtained from combining the energy equality~\eqref{eq:energyEqualityApproximateFlow}
	with the lower-semicontinuity property~\eqref{eq:energyInterfaceLimit}
	and the convergence properties of~$v_k$ to its limit~$v$
	(i.e., up to a subsequence, $v_k \rightharpoonup v$ in $L^2(0,T_w;H^1(\Omega))$
	and $v_k \rightharpoonup^* v$ in $L^\infty(0,T_w;L^2(\Omega))$
	due to the uniform bound~\eqref{eq:uniformBoundsApproximateVelocity}).

\section*{Acknowledgments}
The authors warmly thank their former resp.\ current PhD advisor Julian Fischer 
for the suggestion of this problem and for valuable initial discussions on the subjects of this paper. 
This project has received funding from the European Research Council 
(ERC) under the European Union's Horizon 2020 research and innovation 
programme (grant agreement No 948819)
\begin{tabular}{@{}c@{}}\includegraphics[width=8ex]{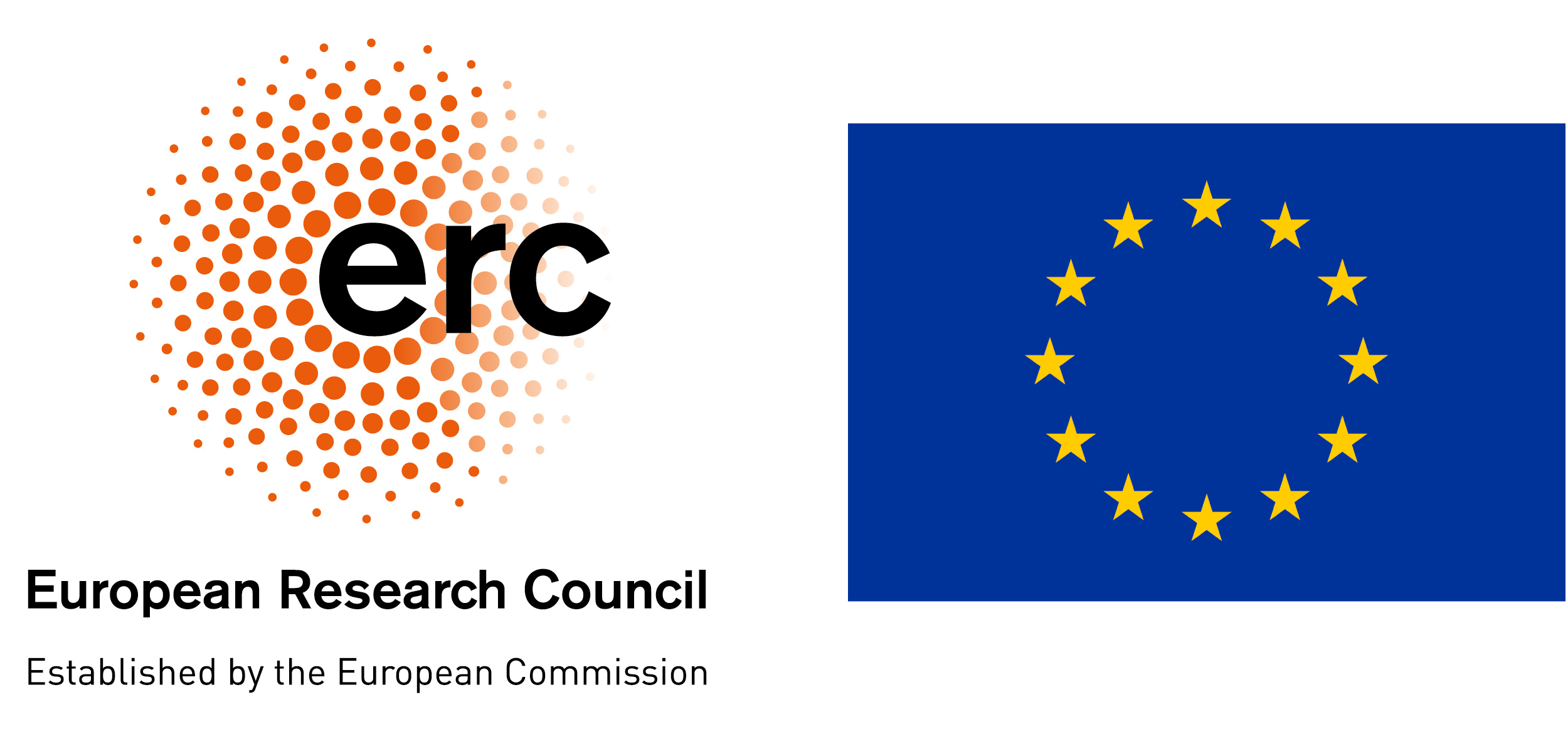}\end{tabular}, and 
from the Deutsche Forschungsgemeinschaft (DFG, German Research Foundation) under 
Germany's Excellence Strategy -- EXC-2047/1 -- 390685813.

\bibliographystyle{abbrv}
\bibliography{two_fluid_contact}

\end{document}